\documentclass[a4paper]{article}

\usepackage{comment}

\usepackage[all]{xy}\usepackage[latin1]{inputenc}        
\usepackage[dvips]{graphics,graphicx}
\usepackage{amsfonts,amssymb,amsmath,xcolor,mathrsfs, dsfont,amstext}
\usepackage{amsbsy, amsopn, amscd, amsxtra, amsthm,authblk, enumerate}
\usepackage{upref}
\usepackage{geometry}
\geometry{left=2.5cm,right=2.5cm,top=3cm,bottom=3cm}
\usepackage[displaymath]{lineno}
\usepackage{float}
\usepackage{bbm}

\usepackage[colorlinks,
            linkcolor=blue,
            anchorcolor=green,
            citecolor=blue
            ]{hyperref}

\numberwithin{equation}{section}

\def\e{\varepsilon}
\def\epsilon{\varepsilon}
\def\eps{\varepsilon}

\newcommand{\ol}{\overline}
\newcommand{\wt}{\widetilde}
\def\alb#1\ale{\begin{align*}#1\end{align*}}
\newcommand{\eqb}{\begin{equation}}
\newcommand{\eqe}{\end{equation}}

\newcommand{\bbC}{\mathbb{C}}
\newcommand{\bbD}{\mathbb{D}}
\newcommand{\bbE}{\mathbb{E}}
\newcommand{\bbH}{\mathbb{H}}

\newcommand{\bbR}{\mathbb{R}}
\newcommand{\bbP}{\mathbb{P}}

\newcommand{\cC}{\mathcal{C}}
\newcommand{\cT}{\mathcal{T}}
\newcommand{\cD}{\mathcal{D}}

\newcommand{\cL}{\mathcal{L}}

\newcommand{\QD}{\mathrm{QD}}
\newcommand{\QT}{\mathrm{QT}}
\newcommand{\LF}{\mathrm{LF}}
\newcommand{\SLE}{\mathrm{SLE}}
\newcommand{\CLE}{\mathrm{CLE}}
\newcommand{\Wd}{\mathrm{Weld}}
\newcommand{\Md}{{\mathcal{M}}^\mathrm{disk}}
\newcommand{\Mfd}{{\mathcal{M}}^\mathrm{f.d.}}

\newcommand{\LL}{L}
\newcommand{\RR}{R}
\newcommand{\Loop}{\mathcal{L}^o}

\newtheorem{theorem}{Theorem}[section]
\newtheorem{lemma}[theorem]{Lemma}
\newtheorem{proposition}[theorem]{Proposition}
\newtheorem*{proposition*}{Proposition}

\newtheorem*{corollary*}{Corollary}
\newtheorem{definition}[theorem]{Definition}
\newtheorem*{definitions*}{Definitions}

\newtheorem*{example*}{\bf Example}
\theoremstyle{remark}
\newtheorem{remark}[theorem]{Remark}

\numberwithin{equation}{section}

\title{Boundary touching probability and nested-path exponent for non-simple CLE}
\author{Morris Ang\thanks{Columbia University}  \qquad Xin Sun\thanks{Beijing International Center for Mathematical Research, Peking University.}\qquad Pu Yu\thanks{Massachusetts Institute of Technology} \qquad Zijie Zhuang\thanks{University of Pennsylvania}}
\date{}

\begin{document}

\maketitle

\begin{abstract}
The conformal loop ensemble (CLE) has two phases: for $\kappa \in (8/3, 4]$, the loops are simple and do not touch each other or the boundary; for $\kappa \in (4,8)$, the loops are non-simple and may touch each other and the boundary. For $\kappa\in(4,8)$, we derive the probability that the loop surrounding a given point touches the domain boundary.  We also obtain the law of the conformal radius of this loop seen from the given point conditioned on the loop touching the boundary or not, refining a result of Schramm-Sheffield-Wilson (2009). As an application, we exactly evaluate the CLE counterpart of the nested-path exponent for the Fortuin-Kasteleyn (FK) random cluster model recently introduced by Song-Tan-Zhang-Jacobsen-Nienhuis-Deng (2022). This exponent describes the asymptotic behavior of the number of nested open paths in the open cluster containing the origin  when  the cluster is large.
For Bernoulli percolation, which corresponds to $\kappa=6$, the exponent was derived recently in Song-Jacobsen-Nienhuis-Sportiello-Deng (2023) by a color switching argument. For $\kappa\neq 6$, and in particular for the FK-Ising case, our formula appears to be new. Our derivation begins with Sheffield's construction of CLE from which the quantities of interest can be expressed by radial SLE. We solve the radial SLE problem using the coupling between SLE and Liouville quantum gravity, along with the exact solvability of Liouville conformal field theory.
\end{abstract}

\section{Introduction}

The conformal loop ensemble (CLE$_\kappa$) is a natural random collection of non-crossing planar loops initially introduced in~\cite{SheffieldCLE, Sheffield-Werner-CLE} that possesses the conformal invariance property. It is conjectured that CLE$_\kappa$ describes the scaling limit of many statistical mechanics models including the Fortuin-Kasteleyn percolation and the $O(n)$ loop model. There is an extensive  literature on CLE. For instance, its relation with discrete models is explored in~\cite{Smirnov-01, camia-newman-sle6, Smirnov-10, Kemppainen-Smirnov-19, benoist-hongler-cle3, lupu-loop-soup-cle}, and its continuum properties are studied in~\cite{SSW09, MSWCLEgasket, werner-wu-explorations, werner-sphere-cle, ALS-CLE4, gmq-cle-inversion, CLE-percolations, MSWsimpleCLE, MSW-non-simple-2021, AS21}.

In this paper, we focus on non-simple CLE, namely CLE$_\kappa$ with $\kappa \in (4,8)$, in which case the loops are non-simple and may touch each other and the boundary. Our first main result is the exact evaluation of  the probability that the loop surrounding a given point touches the domain boundary; see Theorem~\ref{thm:touch}. Moreover, we obtain the law of the conformal radius of this loop seen from the given point conditioned on the loop touching the boundary or not. This refines the main result from~\cite{SSW09}. We also obtain the law  of the conformal radius of another naturally defined domain; see Theorem~\ref{thm:CR-widetilde-D}. This result yields the exact value of the CLE counterpart of the so-called nested-path exponent for the Fortuin-Kasteleyn (FK) random cluster model, which was introduced by~\cite{STZJ22} and describes the asymptotic behavior of the number of nested open paths in the percolation cluster containing the origin  when  the cluster is large.
For Bernoulli percolation, which corresponds to $\kappa=6$, the exponent was derived recently in~\cite{SJNS23}. For $\kappa\neq 6$, including the FK-Ising case (i.e. $\kappa=\frac{16}{3}$), our formula appears to be new.

Our derivation begins with the continuum tree construction of CLE$_\kappa$ as described in~\cite{SheffieldCLE} from which the aforementioned quantities can be expressed through the radial SLE exploration. More precisely, they are encoded by the conformal radius of the explored region at certain stopping times of a radial SLE curve. Then we solve the radial SLE problem using the coupling between SLE and Liouville quantum gravity (LQG), along with the exact solvability of Liouville conformal field theory (LCFT). This approach for extracting quantitative information about SLE curves was developed in prior works by the first and second named authors and their collaborators~\cite{AHS21, ARS21, AS21}.

Our paper is organized as follows. In Section~\ref{subsec:touching} and~\ref{subsec:nested-path} we state our main results.  In Sections~\ref{subsec:overview} and \ref{subsec:outlook}, we overview our proof strategy and discuss related works. In Section~\ref{sec:pre} we provide preliminaries on CLE and LQG. In Sections~\ref{sec:weld} and~\ref{sec:proof} we prove results on the conformal radii as outlined in Section~\ref{subsec:overview}. In Section~\ref{sec:thm1.4} we derive the nested path exponent.

\subsection{Boundary touching probability for non-simple CLE}
\label{subsec:touching}

For $\kappa \in (4,8)$, the CLE$_\kappa$ loops may touch the boundary, and a natural quantity to study is the probability that the CLE$_\kappa$ loop surrounding a given point touches the boundary. This is equivalent to the expected fraction of area surrounded by the boundary touching CLE$_\kappa$ loops. For concreteness, we let $\bbD$ be the unit disk and $\Gamma$ be a non-nested ${\rm CLE}_{\kappa}$ on $\bbD$. Let $\Loop$ be the loop in $\Gamma$ that surrounds the origin. Our first main result is:

\begin{theorem}
\label{thm:touch}
For $\kappa \in (4,8)$, we have
\eqb\label{eq:thm:touch}
\mathbb{P}[ \Loop \cap \partial \bbD \not = \emptyset ] =  1 - \frac{\sin(\pi(\frac{\kappa}{4} + \frac{8}{\kappa}))}{\sin(\pi \frac{\kappa-4}{4})}.
\eqe
\end{theorem}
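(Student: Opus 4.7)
The plan is to translate the boundary-touching probability into a radial $\SLE$ observable via Sheffield's continuum tree construction of $\CLE_\kappa$, then evaluate this observable by coupling to Liouville quantum gravity (LQG) and invoking the exact solvability of Liouville conformal field theory (LCFT), in the spirit of \cite{AHS21, ARS21, AS21}.

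First I would use the branching radial $\SLE_\kappa(\kappa-6)$ exploration targeted at the origin underlying Sheffield's construction of non-simple $\CLE_\kappa$. In this picture, $\Loop$ is the outermost loop swallowing $0$, and the event $\{\Loop \cap \partial \bbD \neq \emptyset\}$ corresponds to a specific stopping-time event for the exploration, governing whether the $\SLE_\kappa(\kappa-6)$ force point reaches the tip of the curve before the origin is swallowed; geometrically, this is the dichotomy between the loop being traced while touching the boundary versus being closed off as an internal bubble. The problem thus reduces to identifying the law of the conformal radius (seen from $0$) of the unswallowed component at the relevant stopping time, separately on the two events.

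Next I would couple the radial $\SLE_\kappa(\kappa-6)$ to a $\gamma$-LQG disk with $\gamma = 4/\sqrt{\kappa}$, carrying a bulk insertion at $0$ and boundary insertions chosen to match the force-point data. Applying the conformal welding theorems for quantum disks, the SLE-decorated quantum surface, restricted to each of the two events, decomposes into welded quantum surfaces whose joint law is given by explicit Liouville correlation functions. Integrating out the LQG field while tracking only the conformal radius yields an identity between each of the two conditional conformal-radius laws and a (boundary) DOZZ-type structure constant; summing the two laws must recover the marginal law of \cite{SSW09}, which fixes all normalizations. The desired probability is then the ratio of the touching-side mass to the total, which is a ratio of LCFT structure constants.

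Finally, I would simplify this ratio via the shift equations for the double gamma function: the Liouville prefactors cancel, and the remaining trigonometric factors collapse to the right-hand side of \eqref{eq:thm:touch}. The main obstacle I foresee is the first step of the LQG translation: correctly identifying an LCFT observable that factorizes into \emph{known} structure constants rather than an unsolved multi-point correlation, and tracking the weights (the logarithmic insertion at $0$, the asymmetric boundary parameters reflecting the force point, and the stopping-time condition that encodes boundary touching) precisely through the welding. Once the observable is pinned down, the structure-constant manipulation is routine, and the sanity check $p(6) = 1/2$ for Bernoulli percolation drops out by direct substitution.
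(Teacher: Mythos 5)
Your high-level roadmap matches the paper's: Sheffield's continuum tree construction to recast the boundary-touching event as a radial $\SLE_\kappa(\kappa-6)$ dichotomy (counterclockwise vs.\ clockwise first loop closing around $0$), then an LQG/LCFT coupling, and finally fixing normalizations by summing against the known SSW conformal-radius law. The reduction of Theorem~\ref{thm:touch} to the $\lambda=0$ case of a moment formula split over $T$ and $T^c$ is also the paper's strategy.

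However, two steps that you treat as routine are in fact where the bulk of the work lives, and as written the proposal would stall there. First, the conformal welding you invoke is not in the existing literature: the paper must prove a genuinely new welding theorem (Theorem~\ref{thm:welding}) for radial $\SLE_\kappa(\kappa-6)$ on a $\gamma$-LQG disk with a bulk marked point, $\gamma=4/\sqrt\kappa\in(\sqrt2,2)$, in which the curve cuts the disk into a \emph{forested quantum triangle} $\QT^f(2-\tfrac{\gamma^2}{2},2-\tfrac{\gamma^2}{2},\gamma^2-2)$ welded to itself. Prior results cover chordal non-simple weldings and radial simple weldings but not this combination; proving it occupies all of Section~3 via a mating-of-trees decomposition of the space-filling $\SLE_\kappa$ loop on $\QD_{1,1}$ and a reassembly using Proposition~\ref{prop:space-filling-loop-radial}. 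The forested quantum triangle is also a new notion (Definition~\ref{def:q.t.f.s.}); the boundary-touching event becomes the sign of $L_2-L_1$ for the generalized boundary lengths of this triangle, not a stopping-time statement about force points.

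Second, the claim that once the observable is pinned down ``the structure-constant manipulation is routine'' via shift equations is precisely what the paper says it could not carry out. Feeding the quantum triangle's boundary-length law through the three-point boundary structure constant of~\cite{RZ22} produces expressions the authors deem ``highly complicated.'' Instead they introduce an auxiliary conformal welding (Lemma~\ref{lem:5.5}, Figure~\ref{fig:weldnew2}) which glues a forested line segment and a weight-$(\gamma^2-2)$ disk onto the triangle, reducing everything to the two-pointed quantum disk $\Md_{0,2,\bullet}(\tfrac{3\gamma^2}{2}-2)$, whose boundary-length Laplace transform is the boundary reflection coefficient — a far more tractable object than the three-point function. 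The computation then still needs nontrivial contour manipulations (Lemmas~\ref{lem:levy} and~\ref{lem:integral2}) to extract the trigonometric answer. So while your strategy points in the right direction, the missing ideas are the new welding theorem and the reduction from quantum triangles to quantum disks that sidesteps the three-point function; both are essential and neither is an application of existing results.
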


By the conformal invariance of CLE, the formula~\eqref{eq:thm:touch} holds if $\bbD$ is replaced by any simply-connected domain $D$ with boundary and $\Loop$ is defined to be the loop surrounding any given interior point in $D$. Now we discuss the implications of Theorem~\ref{thm:touch} for the Fortuin-Kasteleyn (FK) percolation, a statistical mechanics model introduced in \cite{FK72}. Consider critical FK percolation with cluster-weight $q \in (0,4]$ on the discretized box $\mathbb{B}_N := \frac{1}{N} \mathbb{Z}^2 \cap [-1,1]^2$ equipped with the wired boundary condition. It is conjectured that the interfaces between open and dual open clusters converge, under a natural topology, to ${\rm CLE}_\kappa$ with $\kappa = \frac{4 \pi}{\pi - \arccos(\sqrt{q}/2)} \in [4,8)$. This conjecture has been confirmed in  the FK Ising case (when $q = 2$ and $\kappa = 16/3$) in~\cite{Smirnov-10, Kemppainen-Smirnov-19}; see also the recent work~\cite{DC-invariance} on the rotational invariance of sub-sequential limits for $q \in [1,4]$. For the Bernoulli percolation case (when $q = 1$ and $\kappa = 6$), the site percolation variant on the triangular lattice was proved in~\cite{Smirnov-01, camia-newman-sle6}.
Assuming this conjecture, Theorem~\ref{thm:touch} also applies to critical FK percolation and describes the limiting probability of the outermost open cluster that surrounds the origin touching the boundary as $N \rightarrow \infty$.\footnote{To transition from the convergence of interfaces to this result, we also need to show that if the outermost interface surrounding the origin is close to the boundary, then it is very likely to touch the boundary. When $q \in [1,4)$, we can deduce this using the fact that the half-plane three-arm exponent is larger than 1, see~\cite{DCMT21}.}

We observe that $\mathbb{P}[\Loop \cap \partial \bbD \not = \emptyset] = \frac{1}{2}$ at $\kappa =6$, tends to $0$ as $\kappa$ approaches $4$, and tends to $\frac{1}{2}$ as $\kappa$ approaches $8$. The behavior as $\kappa$ approaches $4$ can be seen from the continuity of the law of CLE$_\kappa$ in $\kappa$, and the absence of boundary-touching loops in CLE$_4$. For critical Bernoulli percolation, by duality and the independence of boundary conditions, we see that the outermost open cluster has asymptotically equal probabilities of touching or not touching the boundary. This is consistent with $\mathbb{P}[\Loop \cap \partial \bbD \not = \emptyset] = \frac{1}{2}$ at $\kappa =6$. To see why  $\mathbb{P}[\Loop \cap \partial \bbD \not = \emptyset]$ tends to $\frac{1}{2}$ when $\kappa$ approaches $8$, consider a uniform spanning tree on $\mathbb{B}_N$ with wired boundary condition. The $\kappa\to 8$ limit of $\CLE_\kappa$ can be viewed as a single space-filling loop describing the scaling limit of the interface separating this uniform spanning tree and its dual tree. Furthermore, $\{\Loop \cap \partial \bbD \not = \emptyset\}$ corresponds to the event that the origin is surrounded by this loop which covers asymptotically half of the domain. Therefore, $\lim_{\kappa\to 8}\mathbb{P}[\Loop \cap \partial \bbD \not = \emptyset]$ should be  $\frac{1}{2}$. It would be interesting to find a discrete explanation for the value of $\mathbb{P}[\Loop \cap \partial \bbD \not = \emptyset]$ for other values of $\kappa$. In~\cite{MW18}, a similar quantity about CLE is calculated and the authors gave such an explanation. We also observe that the function $\kappa \mapsto \mathbb{P}[\Loop \cap \partial \bbD \not = \emptyset]$ is increasing in $(4,\kappa_0)$ and decreasing in $(\kappa_0,8)$, where $\kappa_0 \approx 6.95061$ is the unique solution to $\tan(\pi(\frac{x}{4} + \frac{8}{x})) = \frac{x^2-32}{x^2} \tan(\frac{\pi x}{4})$ within $(4,8)$.

We prove Theorem~\ref{thm:touch} by proving the stronger Theorem~\ref{thm:CR} below. For a simply connected domain $D \subset \mathbb{C}$ and $z \in D$, let $f: \bbD \rightarrow D $ be a conformal map with $f(0) = z$. The conformal radius of $D$ seen from $z$ is defined as ${\rm CR}(z,D) := |f'(z)|$. Let $D_{\Loop}$ be the connected component of $\bbD \backslash \Loop$ that contains the origin; see Figure~\ref{fig:D} (left). In \cite[Theorem 1]{SSW09}, the law of ${\rm CR}(0, D_{\Loop})$ is obtained: for $\lambda \leq \frac{3\kappa}{32}+\frac{2}{\kappa}-1$, $\mathbb{E} [ {\rm CR}(0, D_{\Loop})^{\lambda}] = \infty$ and for $\lambda > \frac{3\kappa}{32}+\frac{2}{\kappa}-1$, we have
\begin{equation}
    \label{eq:SSW09}
    \mathbb{E} [ {\rm CR}(0, D_{\Loop})^\lambda] = \frac{\cos(\pi \frac{\kappa-4}{\kappa}) }{ \cos(\frac{\pi}{\kappa} \sqrt{(\kappa-4)^2 - 8 \kappa \lambda})}\,.
\end{equation}
Theorem~\ref{thm:CR} gives the moments of ${\rm CR}(0, D_{\Loop})$ restricted to the event $\{ \Loop \cap \partial \bbD \not = \emptyset \}$ or its complement.
\begin{theorem}
\label{thm:CR}
For $4<\kappa<8$, let $T= \{ \Loop \cap \partial \bbD \not = \emptyset \}$. We have:
\begin{itemize}
\item[(1).] For $\lambda \leq \frac{\kappa}{8}-1$, $\mathbb{E} [ {\rm CR}(0, D_{\Loop})^{\lambda} \mathbbm{1}_T ] =\infty$, and for $\lambda > \frac{\kappa}{8}-1$,

\eqb\label{eq:thm:CR-1}
\mathbb{E} [ {\rm CR}(0, D_{\Loop})^{\lambda}  \mathbbm{1}_T ] =  \frac{2\cos(\pi \frac{\kappa-4}{\kappa})\sin(\pi\frac{\kappa-4}{4\kappa} \sqrt{(\kappa-4)^2 - 8 \kappa \lambda})}{\sin(\frac{\pi}{4} \sqrt{(\kappa-4)^2 - 8 \kappa \lambda})}.
\eqe

\item[(2).] For $\lambda \leq \frac{3\kappa}{32}+\frac{2}{\kappa}-1$, $\mathbb{E} [ {\rm CR}(0, D_{\Loop})^{\lambda} \mathbbm{1}_{T^c} ] =\infty$, and for $\lambda > \frac{3\kappa}{32}+\frac{2}{\kappa}-1$,

\eqb\label{eq:thm:CR-2} \mathbb{E} [ {\rm CR}(0, D_{\Loop})^{\lambda} \mathbbm{1}_{T^c} ] = \frac{\cos(\pi \frac{\kappa-4}{\kappa})\sin(\pi\frac{8 - \kappa}{4\kappa} \sqrt{(\kappa-4)^2 - 8 \kappa \lambda})}{\cos(\frac{\pi}{\kappa} \sqrt{(\kappa-4)^2 - 8 \kappa \lambda}) \sin(\frac{\pi}{4} \sqrt{(\kappa-4)^2 - 8 \kappa \lambda})}.
\eqe
\end{itemize}
\end{theorem}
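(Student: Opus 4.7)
The plan is to follow the CLE/LQG strategy developed in~\cite{AHS21, ARS21, AS21}: first, use Sheffield's continuum tree construction to express both moments as explicit functionals of a radial ${\rm SLE}_\kappa(\kappa-6)$ exploration; then couple this exploration with a suitable LQG quantum disk so that the desired conformal radius moment becomes an LQG partition function; finally evaluate that partition function using Liouville conformal field theory.

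For the first step, by Sheffield's construction $\Loop$ is produced by a radial ${\rm SLE}_\kappa(\kappa-6)$ trunk $\eta$ targeted at the origin: if $\tau$ is the first time $\eta$ disconnects the origin from its starting point and $\sigma$ is the last time before $\tau$ that $\eta$ hits $\partial\bbD$, then $D_{\Loop}$ is the connected component of $\bbD\setminus\eta([0,\tau])$ containing the origin, while $T$ is the event that $\eta([\sigma,\tau])$ touches $\partial\bbD$. It then suffices to compute $\bbE[{\rm CR}(0, D_{\Loop})^\lambda \mathbbm{1}_T]$ and $\bbE[{\rm CR}(0, D_{\Loop})^\lambda \mathbbm{1}_{T^c}]$ as functionals of this stopped radial ${\rm SLE}_\kappa(\kappa-6)$.

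For the second step, I would equip $\bbD$ with a quantum disk field of parameter $\gamma = 4/\sqrt{\kappa}$ together with a bulk Liouville insertion at the origin whose weight is tuned so that ${\rm CR}(0, D_{\Loop})^\lambda$ arises as a Girsanov-type reweighting of the LQG measure at a heavy vertex insertion. The radial ${\rm SLE}_\kappa(\kappa-6)$ welding decomposition (the non-simple/radial analog of the quantum triangle decomposition of~\cite{AHS21}) should then split this weighted quantum disk, along $\eta([0,\tau])$, into independent quantum surfaces with explicit laws: an inner quantum disk containing the origin, plus a chain of forested boundary beads produced by the excursions of $\eta$ from $\partial\bbD$. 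On $T^c$ the inner disk is genuinely separated from $\partial\bbD$; on $T$ it is pinched to $\partial\bbD$ along the last excursion $\eta([\sigma,\tau])$. In both cases the total mass should factorize into a product of explicit LCFT structure constants.

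For the third step, these structure constants (the boundary reflection coefficient and bulk-boundary one-point functions of Liouville CFT, as computed in~\cite{ARS21}) evaluate via the BPZ shift equations to ratios of sines and cosines in $\beta := \sqrt{(\kappa-4)^2 - 8\kappa\lambda}$, which I expect to match the right-hand sides of~\eqref{eq:thm:CR-1} and~\eqref{eq:thm:CR-2}; the critical moment thresholds in (1) and (2) should correspond to the Seiberg-bound constraints for these correlators to be finite. The main obstacle, compared with the simple-CLE analysis of~\cite{AS21}, is handling the event $T$, where the pinched inner disk does not fit the standard quantum disk/triangle framework and requires a new welding identity for ${\rm SLE}_\kappa(\kappa-6)$ boundary-touching excursions. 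A useful consistency check is available throughout: summing~\eqref{eq:thm:CR-1} and~\eqref{eq:thm:CR-2} must recover~\eqref{eq:SSW09}, which reduces to the trigonometric identity $2\sin(\pi\tfrac{\kappa-4}{4\kappa}\beta)\cos(\tfrac{\pi\beta}{\kappa}) + \sin(\pi\tfrac{8-\kappa}{4\kappa}\beta) = \sin(\tfrac{\pi\beta}{4})$, an immediate consequence of the product-to-sum formula, which I have verified.
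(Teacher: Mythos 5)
Your high-level strategy --- Sheffield's continuum exploration tree, radial $\SLE_\kappa(\kappa-6)$, conformal welding of forested quantum surfaces, and LCFT structure constants --- is indeed the one used in the paper, and the trigonometric consistency check you performed is correct. But there are three concrete gaps in the plan as written.

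First, the characterization of $T$ is not right. You define $\sigma$ as the last time before the first disconnection $\tau$ at which $\eta$ hits $\partial\bbD$, and then say $T$ is the event that $\eta([\sigma,\tau])$ touches $\partial\bbD$ --- but $\eta(\sigma)\in\partial\bbD$ by construction, so this event always holds. The correct characterization (Proposition~\ref{prop:prelim} in the paper) is orientational: $T$ occurs if and only if $\eta([0,\sigma_1])$ forms a \emph{counterclockwise} loop around the origin, and on $T^c$ the loop $\Loop$ is a.s.\ disjoint from $\partial\bbD$.

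Second, your claim that $D_{\Loop}$ is the connected component of $\bbD\setminus\eta([0,\tau])$ containing the origin is only valid on $T$. On $T^c$, the first loop is clockwise and $D_{\Loop}$ is a \emph{strict} subset of the cut-off domain $D_1$; pinning down $\Loop$ requires further exploration inside $D_1$. This is handled in the paper by the Markov property of $\CLE_\kappa$: conditioned on $T^c$, the ratio ${\rm CR}(0,D_{\Loop})/{\rm CR}(0,D_1)$ is an independent copy of the unconditional ${\rm CR}(0,D_{\Loop})$. This multiplicative recursion is essential and is entirely absent from your plan; without it you cannot express $\bbE[{\rm CR}(0,D_{\Loop})^\lambda\mathbbm{1}_{T^c}]$ in terms of a finite-time welding.

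Third, you propose to compute both moments directly as LQG partition functions. The decorated quantum disk cut by a stopped radial $\SLE_\kappa(\kappa-6)$ produces a forested quantum triangle $\QT^f(\frac{3\gamma^2}2-2, 2-\frac{\gamma^2}2, \gamma^2-2)$ (Theorem~\ref{thm:welding}), and computing its partition function requires the three-point boundary LCFT structure constant of \cite{RZ22}, which the paper explicitly avoids because it is ``highly involved.'' Instead the paper computes only the \emph{ratio}
\[
\frac{\bbE[{\rm CR}(0,D_{\widetilde\eta})^{2\Delta_\alpha-2}\mathbbm{1}_{T^c}]}{\bbE[{\rm CR}(0,D_{\widetilde\eta})^{2\Delta_\alpha-2}\mathbbm{1}_T]}
\]
(Proposition~\ref{prop:CRratio}), using an auxiliary welding (Lemma~\ref{lem:5.5}) to reduce the computation to the two-point reflection coefficient of the quantum disk $\Md_{0,2}(\frac32\gamma^2-2)$. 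It then uses the SSW09 formula \eqref{eq:SSW09} as a genuine input --- supplying the equation $A(\alpha)+B(\alpha)=\bbE[{\rm CR}^{2\Delta_\alpha-2}]$ --- together with the recursion from Proposition~\ref{prop:prelim} to solve for both $A(\alpha)$ and $B(\alpha)$. You treat SSW09 only as a consistency check, but without it (or the recursion) your two-by-two system is underdetermined. To make your plan rigorous you would need to either incorporate the ratio-plus-SSW09 argument, or establish the analogue of Lemma~\ref{lem:lem5.8} for the full three-boundary-length law of the quantum triangle.
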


To prove Theorems~\ref{thm:touch} and~\ref{thm:CR}, we first use the coupling between SLE and LQG and the integrability of LCFT to compute the ratio $\frac{\mathbb{E} [ {\rm CR}(0, D_{\Loop})^{\lambda} \mathbbm{1}_T ]}{\mathbb{E} [ {\rm CR}(0, D_{\Loop})^{\lambda} \mathbbm{1}_{T^c} ] }$; see Section~\ref{subsec:computation-proof}. Then combined with~\eqref{eq:SSW09} we get both theorems. See Section~\ref{subsec:overview} for an overview of our derivation of this ratio.

\begin{figure}[H]
\centering
\includegraphics[scale=0.8]{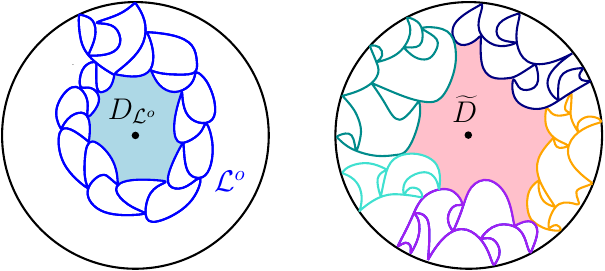}
\caption{Illustration of the domains considered in Theorems~\ref{thm:CR} and \ref{thm:CR-widetilde-D}. \textbf{Left:} The domain $D_{\Loop}$ 
on the event $\{ \Loop \cap \partial \bbD \not = \emptyset \}$. \textbf{Right:} The domain $\widetilde D$  on the event $\{\Loop \cap \partial \bbD = \emptyset\}$. The colored loops represent the CLE$_\kappa$ loops that touch the boundary, and $\Loop$ is contained within the pink domain in this case. The boundary of $\widetilde D$ is the first open circuit in the defitition of the CLE nested-path exponent $X_{\rm NP}$. }\label{fig:D}
\end{figure}

\subsection{The nested-path exponent}
\label{subsec:nested-path}
The coupling between SLE and LQG also allows us to prove the following Theorem~\ref{thm:CR-widetilde-D}, which is of a similar form as Theorem~\ref{thm:CR}.  In the setting of Theorem~\ref{thm:CR}, on the event $T^c= \{ \Loop \cap \partial \bbD  = \emptyset \} $,  let $\widetilde D$ be  the connected component containing the origin after all the boundary-touching loops in $\Gamma$ are removed from $\bbD$; see Figure~\ref{fig:D} (right). 
Theorem~\ref{thm:CR-widetilde-D} gives the moment of ${\rm CR}(0, \widetilde D)$.
\begin{theorem}
    \label{thm:CR-widetilde-D}
    Fix $\kappa \in (4,8)$. For $\lambda \leq  \frac{\kappa}{8}-1$, $\mathbb{E} [ {\rm CR}(0, \widetilde D)^{\lambda} \mathbbm{1}_{T^c}] = \infty$, and for $\lambda > \frac{\kappa}{8}-1$,
\eqb\label{eq:thm:CR-widetilde-D}\mathbb{E} [ {\rm CR}(0, \widetilde D)^{\lambda} \mathbbm{1}_{T^c} ] = \frac{\sin(\pi\frac{8 - \kappa}{4\kappa} \sqrt{(\kappa-4)^2 - 8 \kappa \lambda})}{\sin(\frac{\pi}{4} \sqrt{(\kappa-4)^2 - 8 \kappa \lambda})}.
\eqe
\end{theorem}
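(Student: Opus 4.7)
The plan is to combine a Markov decomposition for non-nested CLE$_\kappa$ with Theorem~\ref{thm:CR} and the SSW formula~\eqref{eq:SSW09}. The key structural input is the following restriction property, which can be read off Sheffield's branching SLE construction of non-simple CLE: on the event $T^c$, conditional on the collection of boundary-touching loops of $\Gamma$ (equivalently, on $\widetilde D$), the loops of $\Gamma$ contained in $\widetilde D$ form a fresh non-nested CLE$_\kappa$ on $\widetilde D$, independent of $\widetilde D$ up to conformal reparametrization. In particular $\Loop$ is the unique loop of this internal CLE$_\kappa$ that surrounds the origin.

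Assuming this restriction property, conformal invariance of CLE gives the factorization: with $f \colon \bbD \to \widetilde D$ conformal and $f(0)=0$, the preimage $f^{-1}(\Loop)$ has the law of the origin-surrounding loop in an independent CLE$_\kappa$ on $\bbD$. Since the conformal radius transforms by multiplication by $|f'(0)| = {\rm CR}(0,\widetilde D)$, on $T^c$ we obtain
\[{\rm CR}(0, D_\Loop) \;=\; {\rm CR}(0, \widetilde D) \cdot X,\]
with $X$ independent of $\widetilde D$ and distributed as ${\rm CR}(0,D_\Loop)$ under an unconditioned CLE$_\kappa$ on $\bbD$. Taking $\lambda$-moments yields
\[\mathbb{E}[{\rm CR}(0,D_\Loop)^\lambda \mathbbm{1}_{T^c}] \;=\; \mathbb{E}[{\rm CR}(0,\widetilde D)^\lambda \mathbbm{1}_{T^c}] \cdot \mathbb{E}[X^\lambda].\]
For $\lambda > \frac{3\kappa}{32}+\frac{2}{\kappa}-1$, both Theorem~\ref{thm:CR}(2) and~\eqref{eq:SSW09} are finite, and dividing the two cleanly produces the formula~\eqref{eq:thm:CR-widetilde-D}.

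The remaining task is to extend to the full range $\lambda > \frac{\kappa}{8}-1$ (on the intermediate interval $\frac{\kappa}{8}-1 < \lambda \le \frac{3\kappa}{32}+\frac{2}{\kappa}-1$ the decomposition above is the indeterminate form $\infty = \mathrm{finite}\cdot\infty$) and to establish the divergence $\mathbb{E}[{\rm CR}(0,\widetilde D)^\lambda \mathbbm{1}_{T^c}]=\infty$ for $\lambda \leq \frac{\kappa}{8}-1$. For this I would rerun the SLE/LQG welding argument that underlies Theorem~\ref{thm:CR}, but stop the radial SLE$_\kappa(\kappa-6)$ exploration of the branching construction at the earlier time when the boundary-touching gasket first disconnects the origin, so that $\widetilde D$ is the pocket containing the origin at this stopping time. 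After conformally welding the quantum disks representing the explored region and $\widetilde D$, the resulting LCFT integral has a range of convergence controlled by a reflection coefficient whose singularity sits precisely at $\lambda = \frac{\kappa}{8}-1$, which simultaneously gives the analytic extension of~\eqref{eq:thm:CR-widetilde-D} and the claimed divergence. The main obstacle is the first step: formulating and justifying the Markov/restriction property cleanly in the non-simple regime, where the boundary-touching gasket has positive Hausdorff dimension and loops of the internal CLE$_\kappa$ may touch $\partial\widetilde D$; this has to be handled through Sheffield's continuum tree framework together with the strong Markov property of the exploration SLE used elsewhere in the paper.
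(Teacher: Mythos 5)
Your core decomposition matches the paper's. The factorization
\[
\mathbb{E}[{\rm CR}(0,D_\Loop)^\lambda\,\mathbbm{1}_{T^c}] = \mathbb{E}[{\rm CR}(0,\widetilde D)^\lambda\,\mathbbm{1}_{T^c}]\cdot\mathbb{E}[{\rm CR}(0,D_\Loop)^\lambda]
\]
followed by dividing Theorem~\ref{thm:CR}(2) by the SSW formula \eqref{eq:SSW09} and then invoking real-analytic continuation is exactly what the paper does. There are, however, two points to flag.

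First, you posit the restriction/Markov property directly in terms of $\widetilde D$ (``conditional on $\widetilde D$, the internal loops form a fresh $\CLE_\kappa$ independent of $\widetilde D$'') and acknowledge it as ``the main obstacle,'' but you do not resolve it, and this is where the genuine work lies. The paper's route is different and cleaner: it establishes the Markov property at the natural stopping time $\sigma_1$ of the radial $\SLE_\kappa(\kappa-6)$ exploration (Proposition~\ref{prop:prelim}(3), which gives a fresh $\CLE_\kappa$ in $D_1$ independent of $D_1$), and then separately proves $\widetilde D = D_1$ a.s.\ on $T^c$ (Lemma~\ref{lem:wt-D}). The second step is not obvious in the non-simple regime: $D_1\subset\widetilde D$ follows from Proposition~\ref{prop:prelim}(1) applied to interior points, but $\widetilde D\subset D_1$ requires showing that the boundary-touching loops of $\Gamma$ cover a dense subset of $\eta([0,\sigma_1])$, which the paper gets via a density-of-counterclockwise-loops argument for chordal $\SLE_\kappa(\kappa-6)$ (Lemma~\ref{lem:dense-chordal}, using \cite[Proposition 7.30]{MS16a} and a $0$--$1$ law). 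Without this identification your factorization is not justified as stated; the independence you need is between the ratio and $\widetilde D$, and the only accessible Markov property is the one at $\sigma_1$.

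Second, the new welding argument you sketch for the range extension and the divergence at $\lambda\le\frac{\kappa}{8}-1$ is unnecessary. Once the formula is established on $\lambda>\frac{3\kappa}{32}+\frac2\kappa-1$, the function $\lambda\mapsto\mathbb{E}[{\rm CR}(0,\widetilde D)^\lambda\mathbbm{1}_{T^c}]$ is nonnegative, increasing and real-analytic on the interval where it is finite, and the explicit right-hand side has its only singularity in the relevant range at $\lambda=\frac{\kappa}{8}-1$ (where $\sqrt{(\kappa-4)^2-8\kappa\lambda}=4$ makes the denominator vanish). The standard analyticity argument (as in the cited \cite[Lemma 4.15]{NQSZ}) then forces the interval of finiteness to be exactly $(\frac{\kappa}{8}-1,\infty)$ and pins down the formula there, without revisiting the welding.
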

Theorem~\ref{thm:CR-widetilde-D} allows us  to derive the CLE counterpart of the nested-path exponent introduced in \cite{STZJ22}, which we now recall. Consider critical FK percolation on $\mathbb{B}_N$ with the wired boundary condition. We define open circuit to be a self-avoiding polygon consisting of open edges. We also view a single vertex as an open circuit of length zero.
Let $\mathcal{R}_N$ be the event that there exists an open path connecting the origin to the boundary. On this event, let the boundary of $\mathbb{B}_N$ be the zeroth open circuit by convention. Inductively, given the $k$-th open circuit, if it passes through the origin, we stop and set $\ell_N=k$. Otherwise, among all open circuits that surround the origin and do not use edges in the first  $k$ open circuits, there exists a unique outermost one, which we call the $(k+1)$-th open circuit. This defines a sequence of nested open circuits with a total count of $\ell_N$. For each $a > 0$, the nested-path exponent $X_{\rm NP}(a)$ in \cite{STZJ22} is specified by:
\begin{equation}
\label{eq:def-nested-path}
\mathbb{E}[a^{\ell_N} \mathbbm{1}_{\mathcal{R}_N}] = N^{-X_{\rm NP}(a) + o(1)} \quad \mbox{as }N \rightarrow \infty\,.
\end{equation}
A priori, we do not know whether this exponent exists. However, under the assumption that critical FK percolation converges to CLE (which is known to hold for the FK-Ising case), this exponent can be derived from its continuum counterpart in the range of $q \in [1,4)$, as explained in Remark~\ref{remark:cont-to-discrete}.

\begin{figure}[H]
\centering
\includegraphics[scale=0.4]{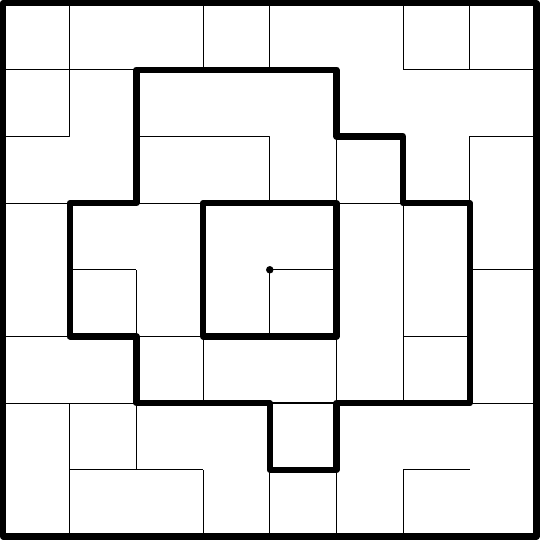}
\caption{Illustration of the nested open circuits for critical FK percolation with the wired boundary condition on a discretized box. There is an open path from the origin to the boundary, and the three bold circuits together with the origin are the four nested open circuits explored from outside in.}\label{fig:nested}
\end{figure}

Arm exponents in critical FK percolation capture important geometric information of critical percolation clusters. Previously, people have studied the watermelon exponent which describes the probability that there exists a given number, say $2k$, of disjoint percolation interfaces from the origin to distance $N$ as $N$ tends to infinity. Another family of exponents, the nested-loop exponents, is defined similarly to~\eqref{eq:def-nested-path} but replaces $\ell_N$ with the number of disjoint percolation interfaces surrounding the origin (see~\eqref{eq:def-nested-loop} below). These two families of exponents appear in the spectrum of the physical conformal field theory (CFT) describing FK percolation (see e.g.~\cite{NRJ-2024}). Their values were first calculated using physical approaches~\cite{SaleurDuplantier-1987, dN83, MN04}, and the mathematical derivations can be found in~\cite{SW01, Wu18, SSW09}. A natural question is what these exponents will be if we count the number of percolation paths instead of percolation interfaces. In this case, the watermelon exponent becomes the monochromatic arm exponent, and the nested-loop exponent leads to the nested-path exponent. 

Now we define the CLE counterpart of $X_{\rm NP}(a)$ by counting the number of nested ``open circuits'' that surround a small disk with respect to CLE. In the setting of Theorem~\ref{thm:CR-widetilde-D}, recall the loop $\Loop$ and the event $T = \{{\Loop} \cap \partial \bbD \neq \emptyset \}$. For $\epsilon>0$, let $\mathcal{R}_\epsilon := \{ \epsilon \bbD \not \subset D_{\Loop} \}$, which is the continuum analog of the event that the origin is connected to boundary by an open path.  We view $\partial \bbD$ as the zeroth open circuit. 
On the event $\mathcal{R}_\epsilon$, if $T$ occurs, we set $\ell_\eps=0$. Otherwise, ${\Loop} \cap \partial \bbD =  \emptyset$, and we let $\partial \widetilde D$ be the first open circuit. By the domain Markov property of CLE, inside $\widetilde D$ we have a CLE. Inductively, given the $k$-th open circuit, which is a simple loop surrounding the origin, if it intersects either $\epsilon\bbD$ or $\Loop$, we stop and let $\ell_\eps=k$. Otherwise we iterate the procedure to find the $(k+1)$-th open circuit surrounding the origin. For each $a > 0$, the CLE nested-path exponent $\widetilde{X}_{\rm NP}(a)$ is defined similarly to~\eqref{eq:def-nested-path} by:
\begin{equation}
\label{eq:def-nested-path-continuum}
    \mathbb{E}[a^{\ell_\epsilon} \mathbbm{1}_{\mathcal{R}_\epsilon}] = \epsilon^{\widetilde{X}_{\rm NP}(a) + o(1)} \quad \mbox{as }\epsilon \rightarrow 0\,.
\end{equation}
The following theorem gives the existence and exact value of $\widetilde{X}_{\rm NP}(a)$.

\begin{theorem}
\label{thm:nested-path-value}
    Fix $\kappa \in (4, 8)$. For any $a > 0$, the CLE nested-path exponent $\widetilde{X}_{\rm NP}(a)$ exists. Moreover, it is the unique solution smaller than $1-\frac{\kappa}{8}$ to the equation:
    \begin{equation}
    \label{eq:sol-nested-path}
    \sin\Big(\frac{\pi}{4} \sqrt{(\kappa-4)^2 + 8 \kappa x } \Big) = a \cdot \sin\Big(\frac{\pi (8 - \kappa)}{4 \kappa} \sqrt{(\kappa-4)^2 + 8 \kappa x }  \Big)\,. 
    \end{equation}
\end{theorem}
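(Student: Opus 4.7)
The plan is to iterate Theorem~\ref{thm:CR-widetilde-D} using the conformal invariance and domain Markov property of CLE, and to extract the asymptotic exponent from the resulting moment recursion. Set $\widetilde D_0 := \bbD$ and inductively let $\widetilde D_k$ be the origin-component obtained from $\widetilde D_{k-1}$ by removing all loops of the CLE inside $\widetilde D_{k-1}$ that touch $\partial \widetilde D_{k-1}$; write $R_k := {\rm CR}(0, \widetilde D_k)$ and let $E_n$ denote the event that $\Loop$ does not touch $\partial \widetilde D_k$ for any $0 \leq k \leq n-1$ (so the iteration has not yet been stopped by the $\Loop$-criterion through step $n$). By the domain Markov property and conformal invariance of CLE, conditional on $\widetilde D_{n-1}$ and $E_{n-1}$ the CLE inside $\widetilde D_{n-1}$ is a fresh CLE; mapping $\widetilde D_{n-1}$ conformally to $\bbD$ and applying Theorem~\ref{thm:CR-widetilde-D} gives $\bbE\bigl[(R_n/R_{n-1})^\lambda \mathbbm{1}_{E_n} \mid \widetilde D_{n-1}, E_{n-1}\bigr] = M(\lambda)$, where $M(\lambda)$ denotes the right-hand side of~\eqref{eq:thm:CR-widetilde-D}. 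Iterating,
\[
\bbE[R_n^\lambda \mathbbm{1}_{E_n}] = M(\lambda)^n, \qquad \lambda > \tfrac{\kappa}{8} - 1.
\]

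The self-consistency equation $aM(-x) = 1$ emerges from the recursive structure. Writing $f(\epsilon) := \bbE[a^{\ell_\epsilon} \mathbbm{1}_{\mathcal{R}_\epsilon}]$ and conditioning on $T$ versus $T^c$, Koebe's distortion theorem approximates the image $\phi(\epsilon\bbD)$ of $\epsilon\bbD$ under the conformal map $\phi: \widetilde D_1 \to \bbD$ by a disk of radius $\epsilon/R_1$ when $\epsilon \ll R_1$. Combined with conformal invariance of CLE, this heuristically yields
\[
f(\epsilon) \approx \bbP[T, \mathcal{R}_\epsilon] + a\,\bbE\bigl[\mathbbm{1}_{T^c} f(\epsilon/R_1)\bigr].
\]
The boundary term satisfies $\bbP[T, \mathcal{R}_\epsilon] \asymp \epsilon^{1-\kappa/8}$, obtained by inverting the moment formula in Theorem~\ref{thm:CR}(1) at its pole $\lambda = \kappa/8 - 1$ (which encodes the tail of ${\rm CR}(0, D_\Loop)$ near $0$ on $T$). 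The ansatz $f(\epsilon) \sim C\epsilon^x$ with $x < 1 - \kappa/8$ makes this boundary term subdominant and forces $aM(-x) = 1$, which upon substituting the explicit formula for $M$ is precisely~\eqref{eq:sol-nested-path} with $\lambda = -x$.

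To make this rigorous, the upper bound comes from Koebe together with a Markov estimate: on $\mathcal{R}_\epsilon$ one has $R_{\ell_\epsilon} \leq C\epsilon$, so $\mathbbm{1}_{\mathcal{R}_\epsilon} \leq (C\epsilon/R_{\ell_\epsilon})^{x'}$ for any $x' > 0$, and dominating $\mathbbm{1}_{\ell_\epsilon \geq n} \leq \mathbbm{1}_{E_{n-1}}$ gives
\[
f(\epsilon) \leq (C\epsilon)^{x'} \sum_{n \geq 0} a^n \bbE\bigl[R_n^{-x'} \mathbbm{1}_{E_{n-1}}\bigr],
\]
a geometric series of ratio $aM(-x')$ (up to a bounded one-step factor from Theorem~\ref{thm:CR}(1)) which converges for every $x' > \widetilde{X}_{\rm NP}(a)$, proving $f(\epsilon) \lesssim \epsilon^{x'}$. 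For the matching lower bound I would construct an explicit favorable regime: iterate $n \asymp \log(1/\epsilon)$ steps with the random walk $-\log R_n$ tilted via an exponential change of measure so that $R_n \asymp \epsilon$, and then use the positive probability that $\Loop$ becomes boundary-touching at the final scale, together with the tail of ${\rm CR}(0, D_\Loop)$ given $T$ from Theorem~\ref{thm:CR}(1), to realize $\mathcal{R}_\epsilon$. Uniqueness of the solution in $(-\infty, 1 - \kappa/8)$ follows from a direct analysis of the explicit formula: $M(-x)$ is strictly monotone on each of $(-\infty, -(\kappa-4)^2/(8\kappa)]$ and $[-(\kappa-4)^2/(8\kappa), 1 - \kappa/8)$, with $1/a$ always in its range. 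The main obstacle will be the matching lower bound: controlling the stopping time $\ell_\epsilon$, the scale $R_{\ell_\epsilon}$, and the event $\mathcal{R}_\epsilon$ jointly with enough sharpness to recover the exact exponent, and handling the conformal distortion between $\epsilon\bbD$ and its iterated images $\phi_k(\epsilon\bbD)$ at each step.
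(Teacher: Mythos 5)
You take the same high-level route as the paper: the nested sequence of open circuits $g_0,g_1,\dots,g_\tau$, the i.i.d.\ renewal structure of the increments $\log\big({\rm CR}(0,g_{k+1})/{\rm CR}(0,g_k)\big)$ coupled with a geometric killing time, and a large-deviation extraction of the exponent that lands on the self-consistency equation $aM(-x)=1$. The architecture and the identification of $\mathrm{Root}(a)$ are exactly right.

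However, one step in your upper bound is wrong as written: on $\mathcal{R}_\epsilon$ you do \emph{not} have $R_{\ell_\epsilon}\le C\epsilon$. The index $\ell_\epsilon$ is the last circuit $g_k$ that still surrounds $\epsilon\bbD$, so ${\rm CR}(0,g_{\ell_\epsilon})\ge\epsilon$ and can be much larger; what is small is either ${\rm CR}(0,g_{\ell_\epsilon+1})\le 4\epsilon$ by Koebe (when $\ell_\epsilon<\tau$) or ${\rm CR}(0,D_{\Loop})\lesssim\epsilon$ (when $\ell_\epsilon=\tau$). You cannot simply shift $n\mapsto n+1$ in your geometric series because $E_{\ell_\epsilon+1}$ need not hold (it fails when $\ell_\epsilon=\tau$), so the moment recursion $\bbE[R_n^{\lambda}\mathbbm{1}_{E_n}]=M(\lambda)^n$ does not directly control the needed quantity. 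The paper resolves precisely this one-step gap by partitioning according to ${\rm CR}(0,g_{\ell_\epsilon})\in[\epsilon^{1-n\delta'},\epsilon^{1-(n+1)\delta'}]$ and, at the final step, bounding $\bbP[\mathcal{R}_\epsilon\mid\ell_\epsilon,\cdot]$ via the small-radius tails of ${\rm CR}(0,\widetilde D)$ and ${\rm CR}(0,D_{\Loop})$ from Theorems~\ref{thm:CR}(1) and~\ref{thm:CR-widetilde-D}, using that $\mathrm{Root}(a)<1-\tfrac{\kappa}{8}$ so the last-step factor is subdominant. You will need to add this. Your plan for the matching lower bound (exponential tilting to force $R_n\asymp\epsilon$, then paying for the final scale using the Theorem~\ref{thm:CR}(1) tail) is essentially what the paper does via Cram\'er's theorem and the two-sided deviation estimate~\eqref{eq:large-deviation-nested-two} together with Koebe sandwiching; this is not spelled out in your proposal but there is no conceptual obstruction.
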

For $a>0$, let $\mathrm{Root}(a) $ be the unique solution smaller than $1-\frac{\kappa}{8}$ to the equation~\eqref{eq:sol-nested-path}.
By Theorem~\ref{thm:CR-widetilde-D}, we have $\mathbb{E}[{\rm CR}(0, \widetilde D)^{-\mathrm{Root}(a) } \mathbbm{1}_{T^c}] = \frac{1}{a}$. 
In Section~\ref{sec:thm1.4}, we will use this observation and a large deviation argument in a similar spirit to \cite{MWW16} to prove that $\widetilde{X}_{\rm NP}(a)$ exists and equals $\mathrm{Root}(a)$.

\begin{remark}
\label{remark:cont-to-discrete}
    For $\epsilon>0$ and integer $N \geq 1$, let $\bbD_{\epsilon N, N} = \frac{1}{N} \mathbb{Z}^d \cap \epsilon \bbD$. Then the event $\mathcal{R}_\epsilon$ can be seen as the $N \rightarrow \infty$ limit of the event that $\bbD_{\epsilon N, N}$ is connected to the boundary of $\mathbb{D}_N = \frac{1}{N} \mathbb{Z}^d \cap \bbD$ by an open path in critical $q$-FK percolation with the wired boundary condition, and $\ell_\epsilon$ is the $N \rightarrow \infty$ limit of the maximal count of nested open circuits surrounding $\bbD_{\epsilon N, N}$ in $\mathbb{D}_N$. Assuming the convergence of FK percolation to CLE, Equation~\eqref{eq:def-nested-path-continuum} implies that $\lim_{N \rightarrow \infty} \mathbb{E}[a^{\ell_{\epsilon N, N}} \mathbbm{1}_{\mathcal{R}_{\epsilon N, N}}] = \epsilon^{\widetilde{X}_{\rm NP}(a) + o(1)}$. 
    For $q \in [1,4)$ where quasi-multiplicative inequalities are available from~\cite{DCMT21} (their Proposition 6.3 is stated for the arm events, but similar inequalities are expected to hold for the number of nested paths), we expect that Equation~\eqref{eq:def-nested-path} follows. This reasoning is used in \cite{SW01}, and later in e.g.~\cite{Wu18, KL-Fuzzy}. For brevity, we will not pursue it here. \end{remark}

    Equation~\eqref{eq:sol-nested-path} greatly simplifies when $q=1$ and $q=2$, which yields:
    \begin{align}
        &\widetilde{X}_{\rm NP}(a)= \frac{3}{4 \pi^2}\arccos (\frac{a-1}{2})^2 - \frac{1}{12}, \quad && q=1,\kappa=6;\label{eq:per}\\
        &\widetilde{X}_{\rm NP}(a)= \frac{3}{2 \pi^2}\arccos (\frac{a}{2})^2 - \frac{1}{24}, \quad && q=2,\kappa=16/3. \label{eq:Ising}
    \end{align}
Our~\eqref{eq:per} agrees with the formula for $X_{\rm NP}(a)$ in the Bernoulli percolation case derived by \cite{SJNS23}. Our~\eqref{eq:Ising} agrees with the unpublished numerical finding by Youjin Deng et.al.\ for the FK Ising case\footnote{Private communication with Youjin Deng.}.

The argument in~\cite{SJNS23} for Bernoulli percolation is based on a link to the so-called nested-loop exponent. Let $t_N$ be the number of interfaces that surround the origin in critical FK percolation on $\mathbb{B}_N$. For $a>0$, the nested-loop exponent $X_{\rm NL}(a)$  is defined by:
$$
\mathbb{E}[a^{t_N}] = N^{-X_{\rm NL}(a) + o(1)} \quad \mbox{as } N \rightarrow \infty.
$$  For critical Bernoulli percolation, an exact formula for $X_{\rm NL}(a)$ was given in \cite{dN83, MN04}. An elementary color switching argument in \cite{SJNS23}, which is specific to the critical site Bernoulli percolation on the triangular lattice or the bond one on the square lattice, yields that  $X_{\rm NP}(a + 1) = X_{\rm NL}(a)$ in this case.

For $\kappa\in (4,8)$, similar to $\widetilde{X}_{\rm NP}(a)$ in~\eqref{eq:def-nested-path-continuum}, we can define the CLE nested-loop exponent $\widetilde{X}_{\rm NL}(a)$ for $a>0$ by $  \mathbb{E}[a^{t_\epsilon}] = \epsilon^{\widetilde{X}_{\rm NL}(a) + o(1)}$, where  $t_\epsilon$ counts the number of nested loops in a CLE on $\bbD$ lying inside $\bbD\setminus \epsilon\bbD$.  The proof of Theorem~\ref{thm:nested-path-value} then gives that $\widetilde{X}_{\rm NL}(a)$ exists and satisfies $\mathbb{E}[{\rm CR}(0, D_{\Loop})^{-\widetilde{X}_{\rm NL}(a)}] = \frac{1}{a}$. This can essentially be extracted from \cite[Lemma 3.2]{MWW16}, which is based on \cite{SSW09}. We leave the detail to the reader.
By Equation~\eqref{eq:SSW09} (\cite[Theorem 1]{SSW09}), we conclude that $\widetilde{X}_{\rm NL}(a)$ is the unique solution smaller than $1-\frac{2}{\kappa} - \frac{3 \kappa}{32}$ to the equation:
$$
    \cos\Big(\frac{\pi}{\kappa} \sqrt{(\kappa-4)^2 + 8 \kappa x } \Big) = a \cdot \cos\Big(\pi \frac{\kappa-4}{\kappa}\Big)\,. 
$$

\subsection{Overview of the proof based on Liouville quantum gravity}
\label{subsec:overview}

 Originated from string theory, Liouville quantum gravity (LQG) is introduced by Polyakov in his seminal work~\cite{polyakov1981quantum}. LQG has a parameter $\gamma\in(0,2]$, and it has close relation with the scaling limits of random planar maps, see e.g.~\cite{LeGall13, BM17,HS19,gwynne2021convergence}. As observed by Sheffield~\cite{She16a}, one key aspect of random planar geometry is the \emph{conformal welding} of random surfaces, where the interface under the conformal welding of two LQG surfaces is an SLE curve. Similar type of results were also proved in~\cite{DMS14,AHS20,ASY22,AHSY23, AG23a}.
 
Liouville conformal field theory (LCFT) is a 2D quantum field theory rigorously developed in~\cite{DKRV16} and subsequent works. LCFT is closely related to LQG, as it has been demonstrated that many natural LQG surfaces can be described by LCFT~\cite{AHS17,cercle2021unit,AHS21,ASY22}. In the framework of Belavin, Polyakov, and Zamolodchikov's conformal field theory~\cite{BPZ84}, extensively explored in physics literature~\cite{DO94, ZZ96, PT02} and mathematically in~\cite{KRV20,RZ22,ARS21, GKRV20_bootstrap, GKRV21_Segal, ARSZ23}, LCFT enjoys rich and deep exact solvability. Alongside the conformal welding of LQG surfaces mentioned earlier, in~\cite{AHS21,ARS21,AS21}, the first and second named authors, along with Holden and Remy, derived several exact formulae regarding SLE and CLE.

Our proof of Theorems~\ref{thm:touch}-\ref{thm:CR-widetilde-D} is another example of  exact formula of SLE/CLE based on conformal welding of LQG surfaces and LCFT. In earlier works of~\cite{MSWsimpleCLE, MSW-non-simple-2021}, the coupling between CLE and LQG was crucially used to derive properties of CLE. There the authors relied on the advanced exploration mechanisms for CLE percolations from~\cite{CLE-percolations}. In contrast, we work directly with the classical construction of CLE in~\cite{SheffieldCLE} in terms of the continuum exploration tree. Based on this construction, the boundary touching event along with the quantities in these theorems can be expressed in terms of radial $\SLE_\kappa(\kappa-6)$; see Section~\ref{subsec:pre-CLE} for more details. In Section~\ref{sec:weld}, we derive Theorem~\ref{thm:welding}, a novel result on conformal welding of $\gamma$-LQG surfaces with radial $\SLE_\kappa(\kappa-6)$ being the interface where $\gamma = \frac{4}{\sqrt\kappa}$. This allows us to express~\eqref{eq:thm:touch}-\eqref{eq:thm:CR-widetilde-D} in terms of boundary lengths of LQG surfaces. 
The key LQG surfaces in Theorem~\ref{thm:welding} is what we call a generalized quantum triangle; see Definition~\ref{def:line-segment}.  It extends the notation of generalized quantum surfaces considered in~\cite{DMS14,MSW-non-simple-2021,AHSY23} to quantum triangles introduced in~\cite{ASY22} by three of us. A priori, we  need the three-point structure constant  for boundary LCFT from~\cite{RZ22} to handle quantum triangles, which is highly involved. We circumvent this difficulty in Section~\ref{sec:proof} via an auxiliary conformal welding result.

The proof of Theorem~\ref{thm:welding} has its own interest as well. In the mating-of-trees theory established by~\cite{DMS14}, one can identify an independent coupling between space-filling SLE and LQG with a pair of correlated Brownian motions. Several variants are also studied in~\cite{MS19,AG19,AY23}. We start with the Brownian excursion description of the  LQG disk $\cD$  decorated with space-filling SLE loop $\eta$ in~\cite{AG19}. Then we add an interior marked point $z$ on $\cD$ and look at the two parts $(\cD_1,\eta_1)$ and $(\cD_2,\eta_2)$ of $(\cD,\eta)$ before and  after $\eta$ hits $z$. We identify the law of $(\cD_1,\eta_1)$ and $(\cD_2,\eta_2)$ via the corresponding Brownian excursions, which further gives the conformal welding result  in Proposition~\ref{prop-mot-QD11}. Since the ``spine'' of $\eta$ stopped when hitting $z$ is the radial $\SLE_\kappa(\kappa-6)$ targeted at $z$ (see Proposition~\ref{prop:space-filling-loop-radial}), a re-arrangement of $(\cD_1,\eta_1)$ and $(\cD_2,\eta_2)$ gives the desired Theorem~\ref{thm:welding}.
We expect that Theorem~\ref{thm:welding} will be useful for extending exact results for simple CLE proved in~\cite{AS21} to the non-simple case.

\subsection{Outlook and perspectives}
\label{subsec:outlook}

In this section, we discuss related works and future directions.

\begin{itemize}
    \item With Remy, the first and second named authors have derived the annulus partition function of the dilute $O(n)$ loop model, as predicted by physicists~\cite{Saleur-Bauer-1989, Car06}, in~\cite{ARS2022moduli}. This approach can be extended to the dense $O(n)$ case by using the conformal welding of non-simple SLE. In a forthcoming work by the second and fourth named authors with Xu, we will apply the approach in~\cite{ARS2022moduli} to obtain the annulus crossing probabilities for critical percolation as predicted by Cardy~\cite{Car02, Car06}, where Theorem~\ref{thm:CR} will be a crucial input.
    
    \item In~\cite{CLE-percolations}, a variant of CLE known as boundary conformal loop ensembles (BCLE) was introduced. BCLE$_\kappa(\rho)$, involving an additional parameter $\rho$, can be expressed in terms of an SLE variant called SLE$_\kappa(\rho; \kappa - 6 - \rho)$ and describes the conjectural scaling limit of the fuzzy Potts model, a generalization of the $q$-Potts model; see~\cite{MSW-non-simple-2021, KL-Fuzzy}. In a future work, we hope to extend the results in this paper to SLE$_\kappa(\rho; \kappa - 6 - \rho)$ and derive the probability that a given point is surrounded by various loops in BCLE as well as the corresponding conformal radii. These results can be used to give the one-arm exponent for the fuzzy Potts model which is not known yet; see \cite{KL-Fuzzy} for the derivation of all the other arm exponents.
    
    \item For the case of percolation, i.e.\ $\kappa = 6$, predictions for the nested-loop exponent have been given in~\cite{dN83, MN04} based on conformal field theory (CFT) considerations and subsequently applied to the nested-path exponent in~\cite{SJNS23}. A CFT derivation for the nested-loop and path exponents for other values of $\kappa$ would be highly desirable. We also observe that the nested-path exponent has a similar look to the backbone exponent recently derived in \cite{NQSZ}, which is also obtained using the SLE/LQG coupling and the integrability of LCFT. It would be interesting to find an explanation about this phenomenon.
\end{itemize}

\medskip
\noindent\textbf{Acknowledgements.} 
We thank Ewain Gwynne for telling us the touching probability question, which he learned from Jason Miller. We thank Youjin Deng for bringing to our attention the question of deriving the nested-path exponent and sharing an earlier version of~\cite{SJNS23} and their unpublished numerical work for the FK Ising case. We thank Baojun Wu for earlier discussions on the nested-path exponent. We also thank the anonymous referees for their careful reading and many helpful comments. M.A.\ was supported by the Simons Foundation as a Junior Fellow at the Simons Society of Fellows.
X.S.\ was partially supported by the NSF Career award 2046514, a start-up grant from the University of Pennsylvania, and a fellowship from the Institute for Advanced Study (IAS) at Princeton. 
P.Y.\  was partially supported by NSF grant DMS-1712862.  Z.Z.\ was partially supported by NSF grant DMS-1953848. 

\section{Preliminaries}\label{sec:pre}
In this paper we work with non-probability measures and extend the terminology of ordinary probability to this setting. For a finite or $\sigma$-finite  measure space $(\Omega, \mathcal{F}, M)$, we say $X$ is a random variable if $X$ is an $\mathcal{F}$-measurable function with its \textit{law} defined via the push-forward measure $M_X=X_*M$. In this case, we say $X$ is \textit{sampled} from $M_X$ and write $M_X[f]$ for $\int f(x)M_X(dx)$. \textit{Weighting} the law of $X$ by $f(X)$ corresponds to working with the measure $d\tilde{M}_X$ with Radon-Nikodym derivative $\frac{d\tilde{M}_X}{dM_X} = f$, and \textit{conditioning} on some event $E\in\mathcal{F}$ (with $0<M[E]<\infty$) refers to the probability measure $\frac{M[E\cap \cdot]}{M[E]} $  over the space $(E, \mathcal{F}_E)$ with $\mathcal{F}_E = \{A\cap E: A\in\mathcal{F}\}$. If $M$ is finite, we write $|M| = M(\Omega)$ and $M^\# = \frac{M}{|M|}$ for its normalization. Throughout this section, we also fix the notation $|z|_+:=\max\{|z|,1\}$ for $z\in\bbC$.

\subsection{${\rm CLE}_\kappa$ and radial ${\rm SLE}_\kappa(\kappa-6)$}\label{subsec:pre-CLE}

We start with the chordal \emph{Schramm Loewner evolution (SLE)} process on the upper half plane $\bbH$. Let $(B_t)_{t\ge0}$ be the standard Brownian motion. For $\kappa > 0$, the $\SLE_\kappa$ is the probability measure on non-self-crossing curves $\eta$ in $\overline{\bbH}$, whose mapping out function $(g_t)_{t\ge0}$ (i.e., the unique conformal transformation from the unbounded component of $\mathbb{H}\backslash \eta([0,t])$ to $\mathbb{H}$ such that $\lim_{|z|\to\infty}|g_t(z)-z|=0$) can be described by
\begin{equation}\label{eq:def-sle}
g_t(z) = z+\int_0^t \frac{2}{g_s(z)-W_s}ds, \ z\in\mathbb{H},
\end{equation} 
where $W_t=\sqrt{\kappa}B_t$ is the Loewner driving function.

For $\kappa>0$, the radial $\SLE_\kappa$ in $\bbD$ from $1$ to $0$ is a random curve $\eta:[0,\infty) \to \ol \bbD$ with $\eta(0) = 1$ and $\lim_{t \to \infty} \eta(t) = 0$. Let $K_t$ be the compact subset of $\ol \bbD$ such that $\ol \bbD \backslash K_t$ is the connected component of $\bbD \backslash \eta([0,t])$ containing $0$, and let $g_t: \bbD \backslash K_t \to \bbD$ be the conformal map with $g_t(0) = 0$ and $g_t'(0) > 0$. The curve $\eta$ is parametrized by log conformal radius, meaning that for each $t$ we have $g_t'(0) = e^t$. It turns out that there is a random process $U_t \stackrel d=e^{i\sqrt{\kappa}B_t}$ (where $B_t$ is standard Brownian motion) such that 
\begin{equation}\label{eq-rad-sle}
dg_t(z) = \Phi(U_t, g_t(z))\,dt\quad \text{ for } z\in\mathbb{D}\backslash K_t  \text{ and }\Phi(u,z) := z\frac{u+z}{u-z}.
\end{equation}
{In fact,~\eqref{eq-rad-sle} 
and the initial condition $g_0(z) = z$ define the family  of conformal maps $(g_t)_{t \geq 0}$ and hence radial $\SLE_\kappa$}, see \cite{Law08} for details.

Let $\rho>-2$ and $x\in\partial \bbD$. The radial $\SLE_\kappa(\rho)$ process with force point at $x$ is characterized by the same radial Loewner evolution~\eqref{eq-rad-sle}, except that $U_t$ is the solution to 
\begin{equation*}
    dU_t = -\frac{\kappa}{2}U_tdt + i\sqrt{\kappa}U_t dB_t + \frac{\rho}{2}\Phi(g_t(x), W_t)dt.
\end{equation*}
It has been shown in~\cite{ig4} that the radial $\SLE_\kappa(\rho)$ process exists and generate a continuous curve up to time $\infty$. Moreover, for  $\rho<\frac{\kappa}{2}-2$ and $x = e^{i0^-}$, the curve a.s.\ hits the boundary $\partial\bbD\backslash\{1\}$. 

By taking conformal maps, one can also define radial $\SLE_\kappa(\rho)$ processes from 1 targeted at a given interior point $w\in\bbD$. For $\kappa\in(4,8)$ and $\rho = \kappa-6$, it has been shown in~\cite{SW05} that the radial $\SLE_\kappa(\kappa-6)$ satisfies \emph{target invariance}:
\begin{proposition}[Proposition 3.14 and Section 4.2 of~\cite{SheffieldCLE}]
\label{prop:target-invariance}
    Let $(a_k)_{k\ge1}$ be a countable dense sequence in $\bbD$. For $\kappa\in(4,8)$, there exists a coupling of radial $\SLE_\kappa(\kappa-6)$  curves $\eta^{a_k}$ in $\bbD$ from 1 and targeted at $a_k$ with force point $e^{i0^-}$ such that for any $k,l\ge 1$, $\eta^{a_k}$ and $\eta^{a_l}$ agree a.s. (modulo time change) up to the first time that the curves separate $a_k$ and $a_l$, and evolve independently thereafter. 
\end{proposition}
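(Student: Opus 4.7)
The plan is to establish target invariance pairwise, and then extend to the countable family by a Kolmogorov consistency argument. Fix distinct interior points $a,b\in\bbD$ and let $\tau_{a,b}$ denote the first time a candidate curve separates $a$ from $b$. I would start from a single curve $\eta$ described in the coordinate chart based at $a$: let $\psi_t^a$ be the conformal map from the component of $\bbD\setminus\eta([0,t])$ containing $a$ onto $\bbD$ with $\psi_t^a(a)=0$ and $(\psi_t^a)'(a)=e^t$, and let $U_t^a$ be the driving process satisfying the radial $\SLE_\kappa(\kappa-6)$ SDE with force point $\psi_t^a(e^{i0^-})$. The aim is to show that, when viewed in the $b$-based chart via the Möbius automorphism $\phi_t:=\psi_t^b\circ(\psi_t^a)^{-1}$ of $\bbD$, the image process $\widetilde U_t:=\phi_t(U_t^a)$ is (after the deterministic time change $s(t)=t+\log|\phi_t'(0)|$) the driving process of a radial $\SLE_\kappa(\kappa-6)$ targeted at $b$.

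The central step would be an It{\^o} expansion of $\widetilde U_t$, decomposing its drift into three pieces: (i) the intrinsic $-\tfrac{\kappa}{2}\widetilde U_t$ term from the Brownian component, (ii) the $\SLE_\kappa(\rho)$ force-point drift transported by $\phi_t$, and (iii) an It{\^o} correction arising from the time-dependent Möbius change of frame, computed from $\partial_t\phi_t$ via the radial Loewner equation~\eqref{eq-rad-sle}. The role of the exponent $\rho=\kappa-6$ is that these three contributions reassemble exactly into the driving SDE of radial $\SLE_\kappa(\kappa-6)$ targeted at $b$; equivalently, $\kappa-6$ is the unique value for which the natural Radon--Nikodym derivative between the two target laws on the filtration up to $\tau_{a,b}$ is identically $1$. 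After $\tau_{a,b}$, the point $b$ lies in a distinct component of $\bbD\setminus\eta^a([0,\tau_{a,b}])$, and on this component the continuation of $\eta^b$ can be sampled as an independent radial $\SLE_\kappa(\kappa-6)$ by the domain Markov property, yielding the independence claimed in the statement.

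Given the pairwise coupling, I would produce the joint coupling of $\{\eta^{a_k}\}_{k\ge1}$ by Kolmogorov extension: the pairwise agreements up to successive separation times yield consistent finite-dimensional marginals, and the resulting branching tree structure is automatic from the construction. The main obstacle is the It{\^o} calculation in the central step, since one must simultaneously track the evolution of $\phi_t'(0)$, $\phi_t(U_t^a)$, and $\phi_t(\psi_t^a(e^{i0^-}))$ under $\eta$'s growth and verify that the Schwarzian-type correction precisely cancels the excess drift at $\rho=\kappa-6$. A clean way to organize the computation is to exhibit a harmonic-measure martingale observable, such as the harmonic measure from $\phi_t^{-1}(0)$ of the arc between $U_t^a$ and $\psi_t^a(e^{i0^-})$ in $\bbD$, whose conformal covariance at $\rho=\kappa-6$ makes the required cancellation transparent; this is essentially the identity established by Schramm--Wilson in the cited work underlying Proposition 3.14 of~\cite{SheffieldCLE}.
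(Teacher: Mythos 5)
The paper does not prove this proposition; it simply cites Proposition~3.14 and Section~4.2 of~\cite{SheffieldCLE}, which in turn rest on the Schramm--Wilson target-invariance computation~\cite{SW05}. Your sketch follows exactly that route: a Girsanov/It\^o computation in the chart based at $a$ versus $b$ showing that the drift rearranges to the radial $\SLE_\kappa(\rho)$ form precisely at $\rho=\kappa-6$, then branching by the domain Markov property after the separation time, then Kolmogorov extension to a countable dense family. So the architecture is sound and matches the source.

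Two technical points are off, though. First, the time change from $a$-parametrization to $b$-parametrization is \emph{not} deterministic: the reparametrization is $s(t)=\log(\psi^b_t)'(b)$, which depends on the realized hull, and your formula $s(t)=t+\log|\phi_t'(0)|$ actually evaluates to $\log|(\psi^b_t)'(a)|$ (the derivative of the $b$-chart at $a$, not at $b$), which is a different quantity. This does not break the method --- the Girsanov computation is done in the $a$-parametrization and one reparametrizes afterward --- but the time change is random and should be stated as $\log(\psi^b_t)'(b)$. Second, the Kolmogorov-extension step deserves one more sentence: consistency of the finite-dimensional marginals is not automatic from the pairwise coupling; it follows because the single-curve construction (run the $a_1$-targeted curve, branch at each separation time into independent continuations) produces, for any finite subset, a curve system whose marginal to each pair is the pairwise coupling. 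That explicit construction, rather than an abstract consistency check, is how the cited references obtain the countable coupling, and it is also what gives you the branching tree structure you invoke.
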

The above target invariance extends to the setting where some points $a_k$ lie on $\partial\bbD$, in which case the corresponding $\eta^{a_k}$ curves are chordal $\SLE_\kappa(\kappa-6)$ (see Section~\ref{subsec:ig} for a brief introduction to chordal $\SLE_\kappa(\underline\rho)$ curves). For $a\notin (a_k)_{k\ge1}$, we may take a subsequence $(a_{k_n})_{n\ge1}$ converging to $a$, from which we can a.s.\ uniquely define a curve $\eta^a$ targeted at $a$ using $(\eta^{a_{k_n}})_{n\ge1}$ such that for any $n \geq 1$, $\eta^a$ agrees with $\eta^{a_{k_n}}$ before the first time that the curves separate $a$ and $a_{k_n}$; see Section 4.2 of~\cite{SheffieldCLE}. For any given $a\in\bbD$, the law of $\eta^a$ is the radial $\SLE_\kappa(\kappa-6)$ curve targeted at $a$, and the coupling $(\eta^{a})_{a\in\ol \bbD}$  introduced above, whose law is invariant of the choice of $(a_k)_k$, is referred as \emph{the continuum exploration tree}.

For $\kappa \in (4,8)$, (the non-nested) ${\rm CLE}_\kappa$  is a random collection $\Gamma$ of non-simple loops. It was first introduced in \cite{SheffieldCLE} who constructed it using the continuum exploration tree. Without loss of generality assume $a_1=0$. We review the construction of the loop $\Loop$ surrounding the origin which a.s.\ exists; for   $k\ge2$, the corresponding loop $\cL^{a_k}$ surrounding $a_k$ can be constructed analogously. The CLE$_\kappa$ is then defined by $\Gamma = \{\cL^{a_k} : k \ge 1  \}$.

\begin{enumerate}[(i)]

    \item Let $\eta:=\eta^o$, whose law is a radial ${\rm SLE}_\kappa(\kappa-6)$ in $\bbD$ from 1 and targeted at $0$ with the force point $e^{i0^-}$. Let $\sigma_0 = 0$, and let $\sigma_1< \sigma_2 < \ldots$ be the subsequent times at which $\eta$ makes a closed loop around $0$ in either the clockwise or counterclockwise direction,  i.e.,   $\sigma_n$ is the first time $t > \sigma_{n-1}$ that $\eta([\sigma_{n-1}, t])$ separates $0$ from $\eta([0,\sigma_{n-1}])$.

    \item Let $\sigma_m$ be the first time that the loop is formed in the counterclockwise direction for some integer $m \geq 1$. Let $z$ be leftmost intersection point of $\eta([\sigma_{m-1}, \sigma_m])\cap \partial (\bbD\backslash\eta([0,\sigma_{m-1}]))$ on the boundary of the connected component of $\bbD\backslash\eta([0,\sigma_{m-1}])$ containing 0; see Figure~\ref{fig:CLE-construct}. 

    \item Let $t_0$ be the last time before $\sigma_m$ that $\eta$ visits $z$. Let $\wt\eta$ be the branch $\eta^{z}$ reparametrized so that $\wt\eta|_{[0,t_0]} = \eta|_{[0,t_0]}$. Then $\Loop$ is defined to be the loop $\wt\eta|_{[t_0,\infty)}$.  
    
\end{enumerate}
Thus, the  loop $\Loop$ agrees in law with the concatenation of $\eta([t_0,\sigma_m])$ and an independent chordal $\SLE_\kappa$ curve in the connected component of $\bbD \backslash \eta([0,\sigma_m])$ containing $z$ from $\eta(\sigma_m)$ to $z$. 

\begin{figure}[H]
    \centering
    \includegraphics[scale = 0.7]{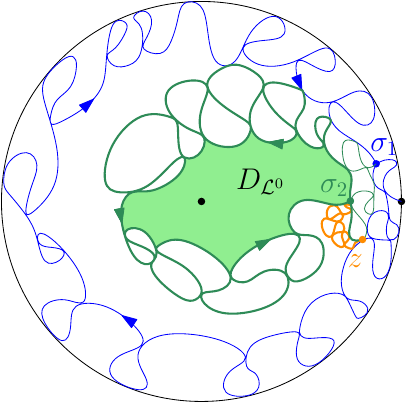}
    \caption{Illustration of a radial ${\rm SLE}_\kappa(\kappa-6)$ curve in the case of $m=2$. The loop $\Loop$ is the union of the bold green and orange curves, and $D_{\Loop}$ is the light green region.}
    \label{fig:CLE-construct}
\end{figure}

\begin{proposition}\label{prop:prelim}
In the setting of Theorems~\ref{thm:touch}-\ref{thm:CR-widetilde-D}, suppose  $\Loop$ is constructed using the curve $\eta$ as described above.  Let $D_1$ be the connected component of $\bbD \backslash \eta([0,\sigma_1])$ containing 0.    We have
    \begin{enumerate}
        \item The event $T=\{\Loop \cap \partial \bbD \neq \emptyset \}$ a.s.\ equals the event that  $\eta([0,\sigma_1])$ is a counterclockwise loop.
    
        \item On the event $T$ we have $D_{\Loop}=D_1$ hence ${\rm CR}(0, D_{\Loop}) = {\rm CR}(0, D_1)$ a.s.

        \item 
        The law of $\frac{{\rm CR}(0, D_{\Loop})}{{\rm CR}(0, D_1)}$ conditioned on the event $T^c$ is the same as  
        the unconditional law of ${\rm CR}(0, D_{\Loop})$.
    \end{enumerate}
\end{proposition}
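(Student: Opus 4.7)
My plan is to unpack the explicit construction of $\Loop$ in steps (i)--(iii) for each of the three parts, working directly with the geometric picture.

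For Part 1, I show that $T$ coincides with $\{m = 1\}$ almost surely. If $m = 1$, then $\sigma_{m-1} = 0$ so $\eta([0, \sigma_{m-1}]) = \{1\}$ and the boundary of $\bbD \setminus \eta([0, \sigma_{m-1}])$ is essentially $\partial \bbD$; by construction $z$ lies on this boundary, hence $z \in \partial \bbD$. Since $\Loop = \wt\eta|_{[t_0, \infty)}$ passes through $z$, the event $T$ holds. Conversely, if $m \geq 2$, the first loop $\eta([0, \sigma_1])$ is clockwise around $0$. The key geometric input from radial $\SLE_\kappa(\kappa-6)$ with force point $e^{i0^-}$ is that a clockwise first loop separates $0$ from $\partial \bbD$ cleanly, with $\overline{D_1} \cap \partial \bbD = \{1\}$. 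By the target invariance (Proposition~\ref{prop:target-invariance}), $\Loop \subset \overline{D_{m-1}} \subset \overline{D_1}$, and $\Loop$ does not pass through $1$ because $t_0 > \sigma_{m-1} > 0$. Hence $\Loop \cap \partial \bbD = \emptyset$.

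For Part 2, I use the ``agrees in law'' description stated after (iii): on $\{m=1\}$, $\Loop$ decomposes as $\eta([t_0, \sigma_1])$ concatenated with an independent chordal $\SLE_\kappa$ in the component $D_z$ of $\bbD \setminus \eta([0, \sigma_1])$ containing $z$. The inclusion $D_1 \subseteq D_{\Loop}$ is immediate: the chordal $\SLE_\kappa$ lies in $D_z$, disjoint from $D_1$, and $\eta([t_0, \sigma_1]) \subseteq \eta([0, \sigma_1])$ also does not meet $D_1$, so the open connected set $D_1 \ni 0$ is in one component of $\bbD \setminus \Loop$. For the reverse inclusion $D_{\Loop} \subseteq D_1$, the key topological fact I need is $\partial D_1 \cap \bbD \subseteq \eta([t_0, \sigma_1]) \subseteq \Loop$: the initial arc $\eta([0, t_0])$ from $1$ to $z$ lies on the ``$D_z$-side'' of the counterclockwise loop and touches $\overline{D_1}$ only at the point $z \in \Loop$. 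Consequently any path from $0$ that escapes $D_1$ must first cross $\partial D_1 \cap \bbD \subseteq \Loop$, which is forbidden for paths in $\bbD \setminus \Loop$.

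For Part 3, on $T^c = \{m \geq 2\}$ the first clockwise loop $\eta([0, \sigma_1])$ cuts out $D_1$. By the strong Markov property of radial $\SLE_\kappa(\kappa - 6)$ combined with target invariance, conditional on $\eta([0, \sigma_1])$ and $T^c$, the continuation of the exploration inside $D_1$ is a radial $\SLE_\kappa(\kappa - 6)$-based exploration in $D_1$ targeted at $0$. Hence Sheffield's CLE construction applied to this continuation shows that the conditional law of $(\Loop, D_{\Loop})$, given $D_1$, coincides with that of the corresponding objects for a non-nested $\CLE_\kappa$ in $D_1$. By conformal invariance of $\CLE_\kappa$, letting $\phi: \bbD \to D_1$ denote the conformal map with $\phi(0) = 0$ and $\phi'(0) > 0$, we have $D_{\Loop} \stackrel{d}{=} \phi(D'_{\Loop})$ where $D'_{\Loop}$ is the analogue for an independent $\CLE_\kappa$ on $\bbD$. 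Since $\phi'(0) = {\rm CR}(0, D_1)$, we obtain the factorization ${\rm CR}(0, D_{\Loop}) = {\rm CR}(0, D_1) \cdot {\rm CR}(0, D'_{\Loop})$ in distribution, from which the claim follows.

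The main obstacle will be Part 2: the topological identification $\partial D_1 \cap \bbD \subseteq \eta([t_0, \sigma_1])$ is delicate because $\eta$ is non-simple and can self-intersect, so Jordan-curve intuition is not directly applicable. One must carefully leverage the orientation convention for the counterclockwise loop at $\sigma_1$ (cf.\ the discussion in \cite{SheffieldCLE}) to show that $\eta([0, t_0])$ sits on the outside of $D_1$. The related statement $\overline{D_1} \cap \partial \bbD = \{1\}$ in the clockwise case of Part 1 needs similar care.
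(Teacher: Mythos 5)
Your Parts 1--2 hinge on the claim that, when $\eta([0,\sigma_1])$ is a clockwise loop, $\overline{D_1} \cap \partial \bbD = \{1\}$, and relatedly that $\eta([0,t_0])$ meets $\overline{D_1}$ only at $z$. You flag these as needing "similar care," but they are not merely delicate---the first one is likely false. For $\kappa \in (4,8)$ the radial $\SLE_\kappa(\kappa-6)$ curve is boundary-touching, and a boundary touch point $p$ made while tracing the clockwise loop generically lies on the boundary of both components of $\bbD \setminus \eta([0,\sigma_1])$ that meet at $p$; in particular $p$ can be a prime end of $D_1$ on $\partial\bbD$. So you cannot conclude $\Loop \cap \partial\bbD = \emptyset$ purely from $\Loop \subset \overline{D_1}$. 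The paper handles exactly this obstruction differently: rather than asserting $\overline{D_1}$ avoids $\partial\bbD$, it invokes the a.s.\ statement (from \cite[Proposition~4.9]{ig4}) that $\eta$ does not hit any single boundary point both while tracing a clockwise loop and again while tracing a later counterclockwise loop. That probability-zero double-hitting fact is what rules out $\Loop$ returning to such touch points, and it is the missing ingredient your argument needs; without it, the implication "$m \geq 2 \Rightarrow \Loop \cap \partial\bbD = \emptyset$'' does not go through.

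Part 3 is fine and takes essentially the same route as the paper: strong Markov property of radial $\SLE_\kappa(\kappa-6)$, domain Markov property of $\CLE_\kappa$ inside $D_1$, and conformal invariance to factor the conformal radius. The counterclockwise direction of Part 1 ($m=1 \Rightarrow T$) is also fine and matches the paper's "it is clear" step.
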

\begin{proof}
     In the CLE construction described above, it is clear that if $\eta([0,\sigma_1])$ is a counterclockwise loop, then the event $T$ occurs. On the other hand, the probability that $\eta$ hits any boundary point  first when tracing a clockwise loop and then again when tracing a subsequent counterclockwise loop is zero (see e.g.~\cite[Proposition 4.9]{ig4}). Therefore if $\eta([0,\sigma_1])$ is a  clockwise loop, then it is a.s.\ the case that $\Loop$ is disjoint from $\partial\bbD$ (see also~\cite[Figure 2]{MSWCLEgasket}). This gives the first two assertions. Furthermore, using the Markov property of radial ${\rm SLE}_\kappa(\kappa - 6)$ and $\CLE_\kappa$ as in~\cite[Proposition 2.3]{MSWCLEgasket}, conditioned on the event $T^c$, we have a CLE inside $D_1$ with $\Loop$ being the loop surrounding the origin. This gives the desired description for the conditional law of  $\frac{{\rm CR}(0, D_{\Loop})}{{\rm CR}(0, D_1)}$ in the third assertion.
\end{proof}

Recall the domain $\wt D$ from Theorem~\ref{thm:CR-widetilde-D}, which is the connected component of $\bbD$ containing 0 after removing all the boundary touching loops in the $\CLE_\kappa$ $\Gamma$.
\begin{lemma}\label{lem:wt-D}
    In the setting of Proposition~\ref{prop:prelim}, on the event where $\eta([0,\sigma_1])$ is a  clockwise loop, $\wt D$ a.s.\ agrees with the domain $D_1$.
\end{lemma}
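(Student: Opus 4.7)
The plan is to establish $\wt D = D_1$ on the event $E := \{\eta([0,\sigma_1]) \text{ is clockwise around } 0\}$, which coincides with $T^c$ almost surely by Proposition~\ref{prop:prelim}(1), by proving the two inclusions $D_1 \subseteq \wt D$ and $\wt D \subseteq D_1$ separately. The main tool in both directions is the target invariance of the continuum exploration tree (Proposition~\ref{prop:target-invariance}) together with the extension of the argument of Proposition~\ref{prop:prelim}(1) to branches $\eta^a$ targeted at other interior points $a$: for any such $a$, the loop $\cL^a$ of $\Gamma$ surrounding $a$ touches $\partial\bbD$ if and only if the first bubble of $\eta^a$ around $a$ is counterclockwise.

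For the inclusion $D_1 \subseteq \wt D$, it suffices to rule out boundary-touching loops of $\Gamma$ inside $D_1^\circ$. Fix any $a \in D_1$. Target invariance gives that $\eta^a$ and $\eta^0 = \eta$ agree as curves up to the first time they separate $a$ from $0$; because $\eta([0,\sigma_1])$ does not separate any two points of the connected set $D_1$, this time strictly exceeds $\sigma_1$. Consequently the first bubble of $\eta^a$ around $a$ coincides with $\eta([0,\sigma_1])$, which is clockwise around $a$ since $a$ lies in the interior of a clockwise loop around $0$. The extension of Proposition~\ref{prop:prelim}(1) noted above then shows that $\cL^a$ is a.s.\ disjoint from $\partial\bbD$. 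Since non-nested CLE$_\kappa$ assigns each interior point a unique surrounding loop, no boundary-touching loop of $\Gamma$ surrounds any point of $D_1$; and by the non-crossing of CLE loops with $\eta([0,\sigma_1])$ (which lies in the CLE gasket), any boundary-touching loop surrounding a point outside $D_1$ has its body contained in $\overline{\bbD \setminus D_1}$. Hence $D_1^\circ$ is disjoint from $\bigcup_{\ell \text{ boundary-touching}} \ell$, giving $D_1 \subseteq \wt D$.

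For the reverse inclusion $\wt D \subseteq D_1$, I would show that every $p \in \eta([0,\sigma_1]) \cap \bbD^\circ$ lies on some boundary-touching loop of $\Gamma$, which prevents any path in $\wt D$ starting at $0$ from crossing $\eta([0,\sigma_1])$ and escaping $D_1$. The strategy is to approximate $p$ by a sequence $a_n \in \bbD \setminus D_1$ with $a_n \to p$ from the outer side, and to analyze the first bubble of $\eta^{a_n}$ around $a_n$ via target invariance. The local orientation of $\eta$ near $p$ should force this bubble to be counterclockwise around $a_n$ (since $a_n$ lies on the side of $\eta$ opposite to $0$), so by the extension of Proposition~\ref{prop:prelim}(1), $\cL^{a_n}$ is boundary-touching; a Hausdorff-accumulation argument then places $p$ on a boundary-touching loop. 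This inclusion is the main obstacle, as one must separately handle the case where the first bubble around $a_n$ is realized as a sub-excursion of $\eta^0$ (requiring a local orientation analysis along $\eta$) and the case where it forms only after $\eta^{a_n}$ diverges from $\eta^0$ at time $\tau_{a_n} \le \sigma_1$, and then verify that the accumulated boundary-touching loops cover $\eta([0,\sigma_1]) \cap \bbD^\circ$ without gaps using the continuity of the SLE$_\kappa(\kappa-6)$ trace and of the CLE loops.
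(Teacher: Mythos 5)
Your argument for the inclusion $D_1 \subseteq \wt D$ is correct and is essentially the same as the paper's: both apply Proposition~\ref{prop:prelim} to branches of the continuum exploration tree targeted at interior points of $D_1$, using target invariance to see that the first bubble around such a point coincides with the clockwise loop $\eta([0,\sigma_1])$.

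The inclusion $\wt D \subseteq D_1$ is where your proposal has a genuine gap, and you are right to flag it as the main obstacle. The claim that for $a_n$ approaching $p \in \eta([0,\sigma_1]) \cap \bbD$ from the outer side, the first bubble of $\eta^{a_n}$ around $a_n$ is counterclockwise "by the local orientation of $\eta$ near $p$" is not justified and is in general false. The branch $\eta^{a_n}$ agrees with $\eta$ only until the separation time, and before that time $\eta$ may well trace a small \emph{clockwise} sub-bubble enclosing $a_n$ (but not $0$); when that happens, the extension of Proposition~\ref{prop:prelim}(1) forces $\cL^{a_n}$ to be interior, not boundary-touching. Whether the first bubble around $a_n$ is clockwise or counterclockwise depends on the global winding of $\eta^{a_n}$ around $a_n$, not on any local tangential data at $p$, so the proposed orientation analysis does not close. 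Even if one could select a good subsequence of $a_n$, the "Hausdorff-accumulation" step — promoting boundary-touching loops around nearby points to a boundary-touching loop through $p$ itself — is also left unaddressed.

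The paper avoids both of these problems by taking branches $\eta^{w_k}$ targeted at a dense set of \emph{boundary} points $w_k \in \partial\bbD$. These are chordal $\SLE_\kappa(\kappa-6)$ curves, and the crucial input is Lemma~\ref{lem:dense-chordal}, which says that points lying on counterclockwise loops of a chordal $\SLE_\kappa(\kappa-6)$ are a.s.\ dense on the trace. (That lemma in turn rests on a 0--1 argument for chordal $\SLE_\kappa$, the boundary-filling property of $\SLE_\kappa(\frac\kappa2-4;\frac\kappa2-4)$, and the conditional-law description from imaginary geometry.) Combined with the coupling of Proposition~\ref{prop:target-invariance}, this gives a dense set of points of $\eta([0,\sigma_1])$ lying on boundary-touching CLE loops, whence $\eta([0,\sigma_1]) \subset \bbD \setminus \wt D$ and hence $\wt D \subset D_1$. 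If you want to complete your proof, you should replace the "approach from interior points" scheme by the chordal-branch density argument; the interior-point route does not determine the bubble orientation and would require a substantially different idea to salvage.
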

We need the following technical input for the proof of Lemma~\ref{lem:wt-D}.
\begin{lemma}\label{lem:dense-chordal}
    Let $\kappa\in(4,8)$ and $\eta$ be a chordal $\SLE_\kappa(\kappa-6)$ curve in $\bbH$ with force point at $0^-$. Then the points on the boundary of some counterclockwise loop made by   $\eta$ is dense on the trace of $\eta$.
\end{lemma}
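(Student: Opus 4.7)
The plan is as follows. By a countable union over open balls $B \subset \bbH$ with rational centre and rational radius, it suffices to show that for each fixed such $B = B(z_0, r)$,
\[
\bbP\bigl[\eta \cap B \neq \emptyset \text{ and no counterclockwise loop of $\eta$ meets } B\bigr] = 0.
\]

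I would first apply the strong Markov property at $\tau := \inf\{t \geq 0 : \eta(t) \in B\}$: on $\{\tau < \infty\}$, the future $\eta|_{[\tau,\infty)}$ is, in the slit domain $U_\tau := \bbH \setminus \eta([0,\tau])$, a chordal $\SLE_\kappa(\kappa-6)$ from $\eta(\tau)$ to $\infty$ with force point inherited from the original configuration. I would then invoke target invariance (Proposition~\ref{prop:target-invariance}, extended to the half-plane setting via conformal change of coordinates) to couple this future with a radial $\SLE_\kappa(\kappa-6)$ curve $\wt\eta^w$ in $U_\tau$ targeted at an interior point $w \in B \cap U_\tau$ chosen close to $\eta(\tau)$; the two curves agree modulo reparameterisation up to the first time the curve separates $w$ from $\infty$. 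Mapping $U_\tau$ conformally to $\bbD$ via $\phi$ with $\phi(w)=0$ and $\phi(\eta(\tau))=1$ sends $\wt\eta^w$ to a standard radial $\SLE_\kappa(\kappa-6)$ in $\bbD$ from $1$ to $0$, and by Proposition~\ref{prop:prelim}(1) combined with Theorem~\ref{thm:touch} its first loop around $0$ is counterclockwise with universal positive probability $p := \bbP[T]$ depending only on $\kappa$.

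The crucial observation is that this first loop is a subarc of $\wt\eta^w$ starting at $\wt\eta^w(0) = \eta(\tau) \in B$; hence whenever it is counterclockwise, it is itself a counterclockwise loop of $\eta$ whose trace contains the point $\eta(\tau) \in B$. This yields that, conditional on $\cF_\tau$ and on $\{\tau < \infty\}$, the probability of finding a ccw loop of $\eta$ meeting $B$ is at least $p$.

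To upgrade this positive lower bound to probability one, I would iterate the above construction at successive entry times of $\eta$ into $B$, each providing an independent trial with conditional success probability at least $p$. The main obstacle I anticipate is the case in which $\eta$ enters $B$ only finitely often---for example, if a clockwise loop formed around an early target $w$ happens to enclose $B$ and block re-entry. To handle this I would run target invariance in parallel at a single stopping time for a countable sequence of target points $w^{(k)} \in B \cap U_\tau$ with $|w^{(k)} - \eta(\tau)| \to 0$: by scale invariance of radial $\SLE_\kappa(\kappa-6)$ together with the Koebe-type comparison between the hyperbolic geometries of $U_\tau$ from the $w^{(k)}$'s, the first-loop events at widely separated scales decouple, producing infinitely many asymptotically independent trials within a single application of the Markov property and hence joint failure probability zero.
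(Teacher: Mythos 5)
Your crucial step asserts that ``this first loop is a subarc of $\wt\eta^w$ starting at $\wt\eta^w(0) = \eta(\tau) \in B$; hence whenever it is counterclockwise, it is itself a counterclockwise loop of $\eta$ whose trace contains the point $\eta(\tau)\in B$.'' This is not correct. The first closed loop around $w$ made by $\wt\eta^w$ is $\wt\eta^w([s,\sigma_1])$, where $\sigma_1$ is the first loop-closing time and $\wt\eta^w(s)=\wt\eta^w(\sigma_1)$ is the closure point; generically $s>0$, so this loop does not pass through $\wt\eta^w(0)=\eta(\tau)$, and there is no reason for it to intersect the (possibly very small) ball $B$ at all. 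In addition, the target-invariance coupling of Proposition~\ref{prop:target-invariance} only identifies $\wt\eta^w$ with $\eta$ up to the first time they separate $w$ from $\infty$, which can occur strictly before $\sigma_1$ via a crosscut of $U_\tau$ that does not enclose $w$; so even a counterclockwise loop of $\wt\eta^w$ need not be traced by $\eta$. Finally, the proposed upgrade from a positive conditional probability to probability one---running target invariance at points $w^{(k)}\to\eta(\tau)$ and asserting the first-loop events ``decouple at widely separated scales''---is not a proof: these events are driven by a single realization of $\eta|_{[\tau,\infty)}$ and are strongly correlated, and scale invariance is a property of the law of $\eta$ in $\bbH$ started from $0$, not of the conditional law in the random slit domain $U_\tau$ from $\eta(\tau)$. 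Taken together these gaps make the argument unrecoverable in its present form.

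The paper's proof takes a shorter and structurally different route. It first proves density of counterclockwise loops for a plain chordal $\SLE_\kappa$ by combining exact scale invariance of the law in $\bbH$ with Blumenthal's $0$-$1$ law at the starting point and then propagating along the trace via the domain Markov property. It then transfers the statement to chordal $\SLE_\kappa(\frac{\kappa}{2}-4;\frac{\kappa}{2}-4)$ and finally to $\SLE_\kappa(\kappa-6)$ using the imaginary-geometry conditional law \cite[Proposition~7.30]{MS16a} together with the boundary-filling property of $\SLE_\kappa(\frac{\kappa}{2}-4;\frac{\kappa}{2}-4)$. This avoids the radial branch, avoids invoking the CLE touching probability (which in any case is only established later in the paper), and never needs to track an individual loop's trace.
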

\begin{proof} 
We first verify the analogous statement for a chordal $\SLE_\kappa$ curve $\hat \eta$ in $\bbH$.  Let $S$ be the union of the counterclockwise loops made by $\hat\eta$. Indeed, with positive probability $\hat p$, $\hat\eta([0,1])\cap S\neq\emptyset$, and by scale invariance   $\bbP(\hat\eta([0,\e])\cap S\neq\emptyset) = \hat p$ with every $\e>0$. Therefore by the Blumenthal's 0-1 law $\hat p = 1$, and by the domain Markov property $S$ is a dense subset of $\hat\eta$.

     Now by~\cite[Proposition 7.30]{MS16a}, the conditional law of $\hat\eta$ given its left and right boundaries is $\SLE_\kappa(\frac{\kappa}{2}-4;\frac{\kappa}{2}-4)$, and thus the same statement hold for chordal $\SLE_\kappa(\frac{\kappa}{2}-4;\frac{\kappa}{2}-4)$ curves. Since chordal $\SLE_\kappa(\frac{\kappa}{2}-4;\frac{\kappa}{2}-4)$ is boundary-filling, the lemma follows by applying ~\cite[Proposition 7.30]{MS16a} once again for the chordal $\SLE_\kappa(\kappa-6)$ curve $\eta$.
\end{proof}

\begin{proof}[Proof of Lemma~\ref{lem:wt-D}]
    We first prove that $D_1\subset \wt D$ a.s.\ under the event where $\eta([0,\sigma_1])$ is a clockwise loop. Let $(a_k)_{k\ge1}$ be the countable dense set in Proposition~\ref{prop:target-invariance}. By Proposition~\ref{prop:prelim} (with $0$ replaced by $a_k$), $\cL^{a_k}$ is a.s.\ disjoint from the boundary $\partial \bbD$ for every $k$ with $a_k\in D_1$. Therefore under this probability one event, we have $D_1\subset \wt D$, since otherwise there would be a boundary touching loop intersecting $D_1$.

    To prove  $ \wt D\subset D_1$ a.s., consider the coupling in Proposition~\ref{prop:target-invariance} between $\eta = \eta^o$ and $(\eta^{w_k})_{k\ge1}$ where $w_k$ is a dense subset of $\partial\bbD$. Then each $\eta^{w_k}$ is a chordal $\SLE_\kappa(\kappa-6)$ curve. Now by Lemma~\ref{lem:dense-chordal}, the points which lie on the boundary of some counterclockwise loop formed by $\eta^{w_k}$ is an a.s.\ dense subset of $\eta^{w_k}$. Then it follows from the continuum exploration tree construction that these counterclockwise loops formed by $\eta^{w_k}$ are parts of boundary touching loops in the CLE$_\kappa$ $\Gamma$, and from the coupling between $\eta$ and $(\eta^{w_k})_{k\ge1}$ the boundary touching loops contains a dense subset of $\eta([0,\sigma_1])$. Therefore  $\eta([0,\sigma_1])\subset \bbD\backslash\wt D$ a.s., which further implies that $ \wt D\subset D_1$ a.s.\ and conclude the proof. 
\end{proof}

\subsection{Liouville quantum gravity and Liouville fields}\label{subsec:lqg-lcft}

Let   $m_\bbH$ be the uniform probability measure on the unit circle   half circle $\bbH\cap\partial \bbD$.  Define the Dirichlet inner product $\langle f,g\rangle_\nabla = (2\pi)^{-1}\int_\bbH \nabla f\cdot\nabla g $ on the space $\{f\in C^\infty(\bbH):\int_\bbH|\nabla f|^2<\infty; \  \int f(z)m_\bbH(dz)=0\},$ and let $H(\bbH)$ be the closure of this space w.r.t.\ the inner product $\langle f,g\rangle_\nabla$. Let $(f_n)_{n\ge1}$ be an orthonormal basis of $H(\bbH)$, and $(\alpha_n)_{n\ge1}$ be a collection of independent standard Gaussian variables. Then the summation
$$h_\bbH = \sum_{n=1}^\infty \alpha_nf_n$$
a.s.\ converges in the space of distributions on $\bbH$, and $h_\bbH$ is the \emph{Gaussian free field (GFF)} on $\bbH$ normalized such that $\int h_\bbH(z)m_\bbH(dz) = 0$. See~\cite[Section 4.1.4]{DMS14} for more details.

Let $|z|_+ = \max\{|z|,1\}$. For $z,w\in\bar{\bbH}$, we define
$$
    G_\bbH(z,w)=-\log|z-w|-\log|z-\bar{w}|+2\log|z|_++2\log|w|_+;  \quad G_\bbH(z,\infty) = 2\log|z|_+.
$$
Then $h_\bbH$ is the centered Gaussian field on $\bbH$ with covariance structure $\bbE [h_\bbH(z)h_\bbH(w)] = G_\bbH(z,w)$.

We now introduce the notion of a \emph{Liouville quantum gravity (LQG)} surface. Let $\gamma\in(0,2)$ and $Q=\frac{2}{\gamma}+\frac{\gamma}{2}$. Consider the space of pairs $(D,h)$, where $D\subseteq \mathbb C$ is a planar domain and $h$ is a distribution on $D$ (often some variant of the GFF). For a conformal map $g:D\to\tilde D$ and a generalized function $h$ on $D$, define the generalized function $g\bullet_\gamma h$ on $\tilde{D}$ by setting 
\begin{equation}\label{eq:lqg-changecoord}
    g\bullet_\gamma h:=h\circ g^{-1}+Q\log|(g^{-1})'|.
\end{equation}
 Define the equivalence relation $\sim_\gamma$ {as follows. We say that} $(D, h)\sim_\gamma(\wt{D}, \wt{h})$ if there is a {conformal map $g:D\to\wt{D}$ such that $\tilde h = g\bullet_\gamma h$}.
A \textit{quantum surface} $S$ is an equivalence class of pairs $(D,h)$ under the {equivalence} relation $\sim_\gamma$, and we say {that} $(D,h)$ is an \emph{embedding} of $S$ if $S = (D,h)/\mathord\sim_\gamma$. Likewise, a \emph{quantum surface with} $k$ \emph{marked points} is an equivalence class of tuples of the form
	$(D, h, x_1,\dots,x_k)$, where $(D,h)$ is a quantum surface, the points  $x_i\in {\overline{D}}$, and with the further
	requirement that marked points (and their ordering) are preserved by the conformal map $\varphi$ in \eqref{eq:lqg-changecoord}. A \emph{curve-decorated quantum surface} is an equivalence class of tuples $(D, h, \eta_1, ..., \eta_k)$,
	where $(D,h)$ is a quantum surface, $\eta_1, ..., \eta_k$ are curves in $\overline D$, and with the further
	requirement that $\eta$ is preserved by the conformal map $g$ in \eqref{eq:lqg-changecoord}. Similarly, we can
	define a curve-decorated quantum surface with $k$ marked points. 

 For a $\gamma$-quantum surface $(D, h, z_1, ..., z){/{\sim_\gamma}}$, its \textit{quantum area measure} $\mu_h$ is defined by taking the weak limit $\epsilon\to 0$ of $\mu_{h_\epsilon}:=\epsilon^{\frac{\gamma^2}{2}}e^{\gamma h_\epsilon(z)}d^2z$, where $d^2z$ is the Lebesgue area measure on $D$ and $h_\epsilon(z)$ is the average of $h$ over $\partial B(z, \epsilon) \cap D$. When $D=\mathbb{H}$, we can also define the  \textit{quantum boundary length measure} $\nu_h:=\lim_{\epsilon\to 0}\epsilon^{\frac{\gamma^2}{4}}e^{\frac{\gamma}{2} h_\epsilon(x)}dx$ where $h_\epsilon (x)$ is the average of $h$ over the semicircle $\{x+\epsilon e^{i\theta}:\theta\in(0,\pi)\}$. It has been shown in \cite{DS11, SW16} that all these weak limits are well-defined  for the GFF and its variants we are considering in this paper, {and that} $\mu_{h}$ and $\nu_{h}$ {can} be conformally extended to other domains using the relation $\bullet_\gamma$.

Consider a pair $(D,h)$ where $D$ is now a closed set (not necessarily homeomorphic to a {closed} disk)
such that each component of its interior together with its prime-end boundary is homeomorphic to
the closed disk, and $h$ is only defined as a distribution on each of these components.  We extend the equivalence relation $\sim_\gamma$ described after~\eqref{eq:lqg-changecoord}, such that $g$ is now allowed to be any homeomorphism from $D$ to $\tilde D$ that is conformal on each component of the interior of $D$.  A \emph{beaded quantum surface} $S$ is an equivalence class of pairs $(D,h)$ under the equivalence relation $\sim_\gamma$ as described above, and we say $(D,h)$ is an embedding of $S$ if $S = (D,h)/{\sim_\gamma}$. Beaded quantum surfaces with marked points and curve-decorated beaded quantum surfaces can be defined analogously.

We now introduce Liouville fields, which are  closely related with Liouville quantum gravity. Note that these definitions implicitly depend on the choice of LQG parameter $\gamma$ via $Q = \frac\gamma2 + \frac2\gamma$. Write $P_\bbH$ for the law of the Gaussian free field $h_\bbH$ defined at the beginning of this section. 

\begin{definition}
Let $(h, \mathbf c)$ be sampled from $P_\bbH \times [e^{-Qc} dc]$ and $\phi = h-2Q\log|z|_+ + \mathbf c$. We call $\phi$ the Liouville field on $\bbH$, and we write
$\LF_\bbH$ for the law of $\phi$.
\end{definition}

\begin{definition}\label{def:lf-insertion}
Let  $(\alpha,w)\in\bbR\times\bbH$ and $(\beta,s)\in\bbR\times\partial\bbH$. Let
\begin{equation*}
\begin{split}
&C_{\bbH}^{(\alpha,w),(\beta,s)} = (2\,\mathrm{Im}\,w)^{-\frac{\alpha^2}{2}}|w|_+^{-2\alpha(Q-\alpha)}|s|_+^{-\beta(Q-\frac{\beta}{2})}e^{\frac{\alpha\beta}{2}G_\bbH(w,s)}.   
\end{split}
\end{equation*}
Let $(h,\mathbf{c})$ be sampled from $C_{\bbH}^{(\alpha,w),(\beta,s) }P_\bbH\times [e^{(\alpha+\frac{\beta}{2}-Q)c}dc]$, and 
\begin{equation*}
    \phi(z) = h(z)-2Q\log|z|_++\alpha G_\bbH(z,w)+\frac{\beta}{2}G_\bbH(z,s)+\mathbf{c}.
\end{equation*}
We write $\LF_{\bbH}^{(\alpha,w),(\beta,s) }$ for the law of $\phi$ and call a sample from $\LF_{\bbH}^{(\alpha,w),(\beta,s) }$ the Liouville field on $\bbH$ with insertions ${(\alpha,w),(\beta,s) }$.
\end{definition}
\begin{definition}[Liouville field with  boundary insertions]\label{def-lf-H-bdry}
	Let $\beta_i\in\mathbb{R}$  and $s_i\in \partial\mathbb{H}\cup\{\infty\}$ for $i = 1, ..., m,$ where $m\ge 1$ and all the $s_i$'s are distinct. Also assume $s_i\neq\infty$ for $i\ge 2$. We say $\phi$ is a \textup{Liouville Field on $\mathbb{H}$ with insertions $\{(\beta_i, s_i)\}_{1\le i\le m}$} if $\phi$ can be produced as follows by  first sampling $(h, \mathbf{c})$ from $C_{\mathbb{H}}^{(\beta_i, s_i)_i}P_\mathbb{H}\times [e^{(\frac{1}{2}\sum_{i=1}^m\beta_i - Q)c}dc]$ with
	$$C_{\mathbb{H}}^{(\beta_i, s_i)_i} =
	\left\{ \begin{array}{rcl} 
\prod_{i=1}^m  |s_i|_+^{-\beta_i(Q-\frac{\beta_i}{2})} \exp(\frac{1}{4}\sum_{j=i+1}^{m}\beta_i\beta_j G_\mathbb{H}(s_i, s_j)) & \mbox{if} & s_1\neq \infty\\
	\prod_{i=2}^m  |s_i|_+^{-\beta_i(Q-\frac{\beta_i}{2}-\frac{\beta_1}{2})}\exp(\frac{1}{4}\sum_{j=i+1}^{m}\beta_i\beta_j G_\mathbb{H}(s_i, s_j)) & \mbox{if} & s_1= \infty
	\end{array} 
	\right.
	 $$
	and then setting
	\begin{equation}\label{eqn-def-lf-H}
\phi(z) = h(z) - 2Q\log|z|_++\frac{1}{2}\sum_{i=1}^m\beta_i G_\mathbb{H}(s_i, z)+\mathbf{c}
	\end{equation}
with the convention $G_\mathbb{H}(\infty, z) = 2\log|z|_+$. We write $\textup{LF}_{\mathbb{H}}^{(\beta_i, s_i)_i}$ for the law of $\phi$.
\end{definition}

\subsection{Quantum disks and triangles}\label{subsec:qsurfaces}
In this section we gather the definitions for various quantum surfaces considered in this paper. These surfaces are constructed using the Gaussian free field and Liouville fields introduced in Section~\ref{subsec:lqg-lcft}. 

We begin with the  quantum disks with two points on the boundary introduced in~\cite{DMS14,AHS20}. Recall the space $H(\bbH)$ at the beginning of Section~\ref{subsec:lqg-lcft}. This space admits a natural decomposition $H(\bbH) = H_1(\bbH)\oplus H_2(\bbH)$, where $H_1(\bbH)$ (resp.\ $H_2(\bbH)$) is the set of functions in $H(\bbH)$ with  same value (resp.\ average zero) on the semicircle $\{z\in\bbH:\ |z| = r\}$ for each $r>0$.

\begin{definition}[Thick quantum disk]\label{def-quantum-disk}
Fix a weight parameter $W\ge\frac{\gamma^2}{2}$ and let $\beta = \gamma+ \frac{2-W}{\gamma}\le Q$. 
		Let $(B_s)_{s\ge0}$ and $(\wt{B}_s)_{s\ge0}$ be independent standard one-dimensional Brownian motions conditioned on  $B_{2t}-(Q-\beta)t<0$ and  $ \wt{B}_{2t} - (Q-\beta)t<0$ for all $t>0$.   Let $\mathbf{c}$ be sampled from the infinite measure $\frac{\gamma}{2}e^{(\beta-Q)c}dc$ on $\bbR$ independently from $(B_s)_{s\ge0}$ and $(\wt{B}_s)_{s\ge0}$.
			Let 	
			\begin{equation*}
				Y_t=\left\{ \begin{array}{rcl} 
					B_{2t}+\beta t+\mathbf{c} & \mbox{for} & t\ge 0,\\
					\wt{B}_{-2t} +(2Q-\beta) t+\mathbf{c} & \mbox{for} & t<0.
				\end{array} 
				\right.
			\end{equation*}
			 Let $h$ be a free boundary  GFF on $\mathbb{H}$ independent of $(Y_t)_{t\in\bbR}$ with projection onto $H_2(\mathbb{H})$ given by $h_2$. Consider the random distribution
			\begin{equation*}
				\psi(\cdot)=Y_{-\log|\cdot|} + h_2(\cdot) \, .
		\end{equation*}
		Let the infinite measure $\mathcal{M}_{0,2}^{\mathrm{disk}}(W)$ be the law of $({\mathbb{H}}, \psi,0,\infty)/\mathord\sim_\gamma $. 
		We call a sample from $\mathcal{M}_{0,2}^{\textup{disk}}(W)$ a \emph{quantum disk} of weight $W$ with two marked points.
		
		We call $\nu_\psi((-\infty,0))$ and $\nu_\psi((0,\infty))$ the left and right{, respectively,} quantum   {boundary} length of the quantum disk  $(\mathbb{H}, \psi, 0, \infty)/{\sim_\gamma}$.
	\end{definition}

\begin{definition}\label{def:QD}
  Let $(\mathbb{H},\phi, 0, \infty)$ be the embedding of a sample from $\Md_{0,2}(2)$ as in Definition~\ref{def-quantum-disk}. We write $\QD$ for the law of $(\bbH,\phi)/{\sim_\gamma}$ weighted by $\nu_\phi(\partial\bbH)^{-2}$, {and $\QD_{0,1}$ for the law of $(\bbH,\phi,0)/{\sim_\gamma}$ weighted by $\nu_\phi(\partial\bbH)^{-1}$}. Let $\QD_{1,1}$ be the law of $(\bbH,\phi,0,z)/{\sim_\gamma}$ where $(\bbH,\phi,0)$ is sampled from $\mu_\phi(\bbH)\QD_{0,1}$ and $z$ is sampled according to $\mu_\phi^\#$.  
\end{definition}

	When $0<W<\frac{\gamma^2}{2}$, we define the \emph{thin quantum disk} as the concatenation of weight $\gamma^2-W$ thick disks with two marked points as in \cite[Section 2]{AHS20}.

	\begin{definition}[Thin quantum disk]\label{def-thin-disk}
	For $W\in(0, \frac{\gamma^2}{2})$, the infinite measure $\mathcal{M}_{0,2}^{\textup{disk}}(W)$ is defined as follows. First sample a random variable $T$ from the infinite measure $(1-\frac{2}{\gamma^2}W)^{-2}\textup{Leb}_{\mathbb{R}_+}$; then sample a Poisson point process $\{(u, \mathcal{D}_u)\}$ from the intensity measure $\mathds{1}_{t\in [0,T]}dt\times \mathcal{M}_{0,2}^{\textup{disk}}(\gamma^2-W)$; and finally consider the ordered (according to the order induced by $u$) collection of doubly-marked thick quantum disks $\{\mathcal{D}_u\}$, called a \emph{thin quantum disk} of weight $W$.
		
		Let $\mathcal{M}_{0,2}^{\textup{disk}}(W)$ be the law of this ordered collection of doubly-marked quantum disks $\{\mathcal{D}_u\}$.
		The left (resp.\ right) boundary length of a sample from $\mathcal{M}_{0,2}^{\textup{disk}}(W)$ is defined to be the sum of the left (resp.\ right) boundary lengths of the quantum disks $\{\mathcal{D}_u\}$.
	\end{definition}

 We  also define quantum disks with one bulk and one boundary insertion.
\begin{definition}\label{def:mdisk11}
    Fix $\alpha\in\bbR,\beta<Q$.  Let $\phi$ be a sample from $\frac{1}{Q-\beta}\LF_{\bbH}^{(\alpha,i),(\beta,0)}$. We define the infinite measure $\Md_{1,1}(\alpha,\beta)$ to be the law of $(\bbH,\phi,i,0)/{\sim_\gamma}$.
\end{definition}
\begin{proposition}[Proposition 3.9 of~\cite{ARS21}]\label{prop:mdisk11-QD}
    For some constant $C$, we have $\Md_{1,1}(\gamma,\gamma) = C\QD_{1,1}$.
\end{proposition}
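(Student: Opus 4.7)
The strategy is to match the two measures by applying the standard Liouville field insertion identity (i.e.\ a Girsanov shift) to convert a $\gamma$-LQG area sample under $\QD_{0,1}$ into a bulk insertion of weight $\gamma$. By Definitions~\ref{def-quantum-disk} and~\ref{def:QD}, an embedding $(\bbH,\phi,0)$ of a sample from $\QD_{0,1}$ is essentially a Liouville field on $\bbH$ with boundary insertions $(\gamma,0)$ and $(\gamma,\infty)$, reweighted by $\nu_\phi(\partial\bbH)^{-1}$ to pass from $\Md_{0,2}(2)$ to $\QD_{0,1}$. By definition of $\QD_{1,1}$, enlarging the sample to include an interior marked point $z$ amounts to considering the joint measure $\mu_\phi(dz)\otimes\QD_{0,1}(d\phi)$, so the task reduces to identifying this joint measure with $\Md_{1,1}(\gamma,\gamma)$ up to a constant after quotienting by $\sim_\gamma$.

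The central input I would establish (or invoke from~\cite{AHS17,ARS21}) is the identity
\begin{equation*}
F(\phi)\,\mu_\phi(dz)\,\LF_\bbH^{(\gamma,0),(\gamma,\infty)}(d\phi) \;=\; C_1(z)\,dz\,\LF_\bbH^{(\gamma,z),(\gamma,0),(\gamma,\infty)}(d\phi)\,F(\phi)
\end{equation*}
with an explicit $C_1(z)$, valid for every nonnegative measurable $F$. To prove it, I would regularize as $\mu_\phi(dz)=\lim_{\e\to0}\e^{\gamma^2/2}e^{\gamma\phi_\e(z)}dz$ and apply the Cameron--Martin theorem to the Gaussian part: the shift of the mean by $\gamma\cdot\mathrm{Cov}(\phi_\e(z),\cdot)$ produces, in the limit, precisely the $\gamma G_\bbH(z,\cdot)$ singularity appearing in Definition~\ref{def:lf-insertion}, while the exponential tilt of the zero-mode $\mathbf c$ shifts its exponent from $(\gamma-Q)c$ to $(2\gamma-Q)c$, matching the modified total charge when a $(\gamma,z)$ insertion is added. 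The $\log\e$ counterterms cancel exactly against the $\e^{\gamma^2/2}$ prefactor and the Green's function renormalization built into $\LF_\bbH^{(\alpha,w),(\beta,s)}$.

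To pass from this identity to the stated proposition, I would use the M\"obius self-map of $\bbH$ fixing $0$ and $\infty$ that sends $z\mapsto i$, together with the change-of-coordinates rule $\bullet_\gamma$ from~\eqref{eq:lqg-changecoord}. The conformal factor produced by this map, combined with the $(2\,\mathrm{Im}\,z)^{-\gamma^2/2}|z|_+^{-2\gamma(Q-\gamma)}$ factors in Definition~\ref{def:lf-insertion}, exactly cancels the $z$-dependence of $C_1(z)$, reducing $C_1$ to a $z$-independent constant after quotienting by $\sim_\gamma$. The $(\gamma,\infty)$ boundary insertion is then absorbed (along with the $\nu_\phi(\partial\bbH)^{-1}$ and $\mu_\phi(\bbH)$ reweightings) against the $\frac{1}{Q-\gamma}$ normalization in Definition~\ref{def:mdisk11} and the two-pointed thick-disk normalization from Definition~\ref{def-quantum-disk}; this yields the desired equality with a single overall constant $C$. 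The main obstacle I anticipate is precisely this bookkeeping: the three descriptions of the field — Liouville field with insertions, weight-$2$ quantum disk via Bessel process, and equivalence class modulo $\sim_\gamma$ — each carry their own normalizations, and one must verify that after the Girsanov shift all orphan factors recombine into a finite nonzero $C$ independent of the embedding.
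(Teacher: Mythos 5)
The paper offers no proof of its own here: it simply cites \cite[Proposition 3.9]{ARS21}. So your write-up is an attempt at a self-contained argument, which is welcome, but as written it has two concrete problems.

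First, you say you ``use the M\"obius self-map of $\bbH$ fixing $0$ and $\infty$ that sends $z\mapsto i$.'' No such map exists in general: a M\"obius transformation of $\bbH$ fixing both $0$ and $\infty$ is a positive scaling $w\mapsto \lambda w$, which sends $z$ to $i$ only when $z$ already lies on the positive imaginary axis. To match the embedding of $\Md_{1,1}(\gamma,\gamma)$ in Definition~\ref{def:mdisk11} (marked points at $i$ and $0$) with that of $\QD_{1,1}$ (marked points at $z$ and $0$), you must use the unique M\"obius map fixing only $0$ and sending $z\mapsto i$.

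Second, once you use the correct map, the boundary insertion at $\infty$ does not ``absorb against normalizations.'' It pushes forward to a genuine third boundary insertion $(\gamma, s(z))$ at $s(z) = \psi_z(\infty)$, where $\psi_z$ is the M\"obius map above. After this step you are looking at $\LF_\bbH^{(\gamma,i),(\gamma,0),(\gamma,s(z))}$, not $\LF_\bbH^{(\gamma,i),(\gamma,0)}$. Removing that third insertion is exactly where the $\nu_\phi(\partial\bbH)^{-1}$ reweighting and the integration over $z\in\bbH$ must be used, via the boundary analogue of your bulk Girsanov identity (namely $\nu_\phi(ds)\,\LF_\bbH^{\dots}(d\phi) = C\,ds\,\LF_\bbH^{\dots,(\gamma,s)}(d\phi)$) together with a change of variables from $z\in\bbH$ to $(s(z),\text{scale})$. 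The scale direction produces the divergent factor that is what the $\frac{1}{Q-\gamma}$ normalization in Definition~\ref{def:mdisk11} is calibrating. None of this is a bookkeeping of constants; it is the central content of the lemma. An alternative route avoiding the third insertion altogether is to start directly from $\QD$ (no marked points), add \emph{both} a bulk point from $\mu_\phi$ and a boundary point from $\nu_\phi$ at once, apply Girsanov twice, and then use the three-real-parameter M\"obius freedom to fix the bulk point to $i$ and the boundary point to $0$; but then one needs the identification of $\QD$ with the no-insertion Liouville field on equivalence classes, which is itself a nontrivial statement. Either way, the step you compressed to ``absorbed against the normalization'' is the step that requires work.
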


Next we recall the notion of quantum triangle as in \cite{ASY22}. It is a quantum surface parameterized by weights $W_1,W_2,W_3>0$ and defined based on Liouville fields with three insertions and the thick-thin duality.

\begin{definition}[Thick quantum triangles]\label{def-qt-thick}
	Fix {$W_1, W_2, W_3>\frac{\gamma^2}{2}$}. Set $\beta_i = \gamma+\frac{2-W_i}{\gamma}<Q$ for $i=1,2,3$, and {let $\phi$ be sampled from $\frac{1}{(Q-\beta_1)(Q-\beta_2)(Q-\beta_3)}\textup{LF}_{\mathbb{H}}^{(\beta_1, \infty), (\beta_2, 0), (\beta_3, 1)}$}. Then we define the infinite measure $\textup{QT}(W_1, W_2, W_3)$ to be the law of $(\bbH, \phi, \infty, 0,1)/{\sim_\gamma}$.
\end{definition}

\begin{definition}[Quantum triangles with thin vertices]\label{def-qt-thin}
	Fix {$W_1, W_2, W_3\in (0,\frac{\gamma^2}{2})\cup(\frac{\gamma^2}{2}, \infty)$}. Let $I:=\{i\in\{1,2,3\}:W_i<\frac{\gamma^2}{2}\}$. Let $\tilde W_i = W_i$ if $i \not \in I$ and $\tilde W_i = \gamma^2 - W_i$ if $i \in I$. Sample $(S_0, (S_i)_{i \in I})$ from 
 \[\QT(\tilde W_1, \tilde W_2, \tilde W_3) \times \prod_{i\in I } (1-\frac{2W_i}{\gamma^2}) \Md_2(W_i).  \]
 Embed $S_0$ as $(\tilde D, \phi, \tilde a_1, \tilde a_2, \tilde a_3)$, for each $i \not \in I$ let $a_i = \tilde a_i$, and for each $i \in I$ embed $S_i$ as $(\tilde D_i, \phi, \tilde a_i, a_i)$ in such a way that the $\tilde D_i$ are disjoint and $\tilde D_i \cap \tilde D = \tilde a_i$. Let $D = \tilde D \cup \bigcup_{i \in I} \tilde D_i$ and let $\QT(W_1, W_2, W_3)$ be the law of $(D, \phi, a_1, a_2, a_3)/{\sim_\gamma}$.
\end{definition} 

\begin{figure}[ht]
	\centering
	\includegraphics[scale=0.43]{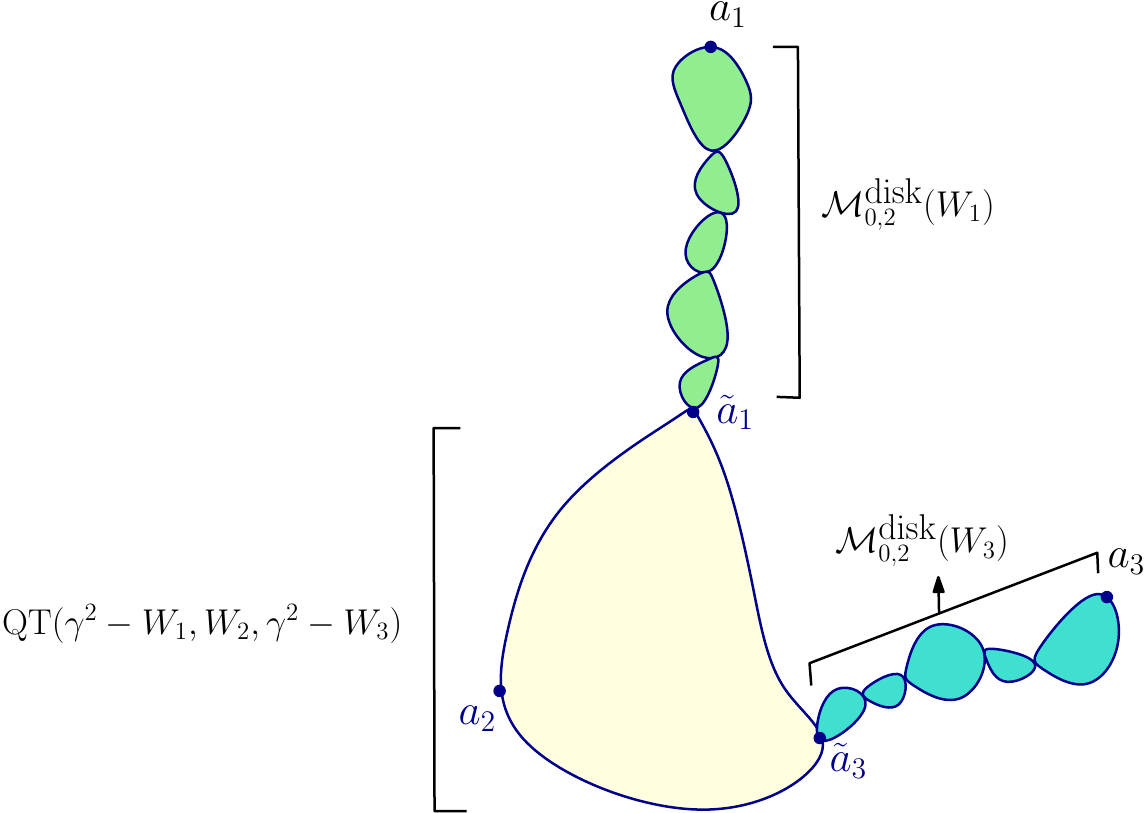}
	\caption{A quantum triangle with $W_2\geq\frac{\gamma^2}{2}$ and $W_1,W_3<\frac{\gamma^2}{2}$ embedded as $(D,\phi,a_1,a_2,a_3).$  The two thin disks (colored green) are concatenated with the thick triangle (colored yellow) at points $\tilde{a}_1$ and $\tilde{a}_3$.}\label{fig-qt}
\end{figure}

See Figure~\ref{fig-qt} for an illustration. The points $a_1,a_2,a_3$ are often referred as the weight $W_1,W_2,W_3$ vertices. 
If one or more of $W_1,W_2,W_3$ is equal to $\frac{\gamma^2}{2}$, then the measure $\QT(W_1,W_2,W_3)$ can be defined by a limiting procedure. See \cite[Section 2.5]{ASY22} for more details. This case is not needed in our paper hence we exclude it from certain statements.

For $W>0$, we write $\Md_{0,2,\bullet}(W)$ for the law of the three-pointed quantum surfaces obtained by (i) sampling a quantum disk from $\Md_{0,2}(W)$ and weighting its law by the quantum length of its left boundary arc and (ii) sampling a marked point on the left boundary arc from the probability measure proportional to the quantum boundary length measure. Then we have
\begin{lemma}
\label{lem:QT(W,2,W)}
For $W \in (0, \frac{\gamma^2}{2}) \cup (\frac{\gamma^2}{2}, \infty)$, we have
$$
  \mathcal{M}^{\rm disk}_{0,2, \bullet}(W) = \frac{\gamma(Q-\gamma)}{2} {\rm QT}(W,2,W)\,.
$$
\end{lemma}
\begin{proof}
    By~\cite[Lemma 6.12]{ASY22}, we have $\mathcal{M}^{\rm disk}_{0,2, \bullet}(W) = C {\rm QT}(W,2,W)$. The value of the constant $C$ follows from a comparison over~\cite[Proposition 2.18]{AHS21}, Definition~\ref{def-qt-thick} and Definition~\ref{def-qt-thin}.
\end{proof}

Given a measure $\mathcal{M}$ on quantum surfaces, we can disintegrate $\mathcal{M}$ over the quantum lengths of the boundary arcs.   For instance, for $W>0$, one can {disintegrate} the measure $\Md_{0,2}(W)$ according to its the quantum length of the left and right boundary arc, i.e., 
 \begin{equation}\label{eq:disint}
     \Md_{0,2}(W) = \int_0^\infty\int_0^\infty \Md_{0,2}(W;\ell_1,\ell_2)d\ell_1\,d\ell_2,
 \end{equation}
where each $\Md_{0,2}(W;\ell_1,\ell_2)$ is supported on the set of doubly-marked quantum surfaces with left and right boundary arcs having quantum lengths $\ell_1$ and $\ell_2$, respectively. One can also define  $\Md_{0,2}(W;\ell) := \int_0^\infty \Md_{0,2}(W;\ell,\ell')d\ell'$, i.e., the disintegration over the quantum length of the left (or right) boundary arc. 

We can also disintegrate $\Md_{1,1}(\alpha,\beta)$ over the boundary length, where for $\ell>0$, there exists a measure $\Md_{1,1}(\alpha,\beta;\ell)$ supported on quantum surfaces with one bulk marked point and one boundary marked point whose boundary has length $\ell$ such that 
     \begin{equation}\label{eq:M11-dis}
     \Md_{1,1}(\alpha,\beta) = \int_0^\infty \Md_{1,1}(\alpha,\beta;\ell)\,d\ell.
 \end{equation}
 Moreover, we have
\begin{lemma}\label{lem:M11-bdry-length}
    For $\alpha\in\bbR$, $\beta<Q$ with $\alpha+\frac{\beta}{2}>Q$,  one has $|\Md_{1,1}(\alpha,\beta;\ell)| = C\ell^{\frac{2\alpha+
     \beta-2Q}{\gamma}-1}$ for some finite constant $C>0$.
\end{lemma}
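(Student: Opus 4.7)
The plan is to read off the density directly from the definition of the Liouville field by isolating the additive constant in $\phi$ and using the LQG scaling relation $\nu_{\phi+c}(\partial\bbH) = e^{\gamma c/2}\nu_\phi(\partial\bbH)$.

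First I would unpack Definitions~\ref{def:mdisk11} and~\ref{def:lf-insertion} and write a sample $\phi$ from $\frac{1}{Q-\beta}\LF_\bbH^{(\alpha,i),(\beta,0)}$ as $\phi = \tilde\phi + \mathbf{c}$, where
\[
\tilde\phi(z) := h(z) - 2Q\log|z|_+ + \alpha\, G_\bbH(z,i) + \tfrac{\beta}{2}\,G_\bbH(z,0),
\]
$h$ has law $C_\bbH^{(\alpha,i),(\beta,0)} P_\bbH$, and $\mathbf{c}$ is independently sampled from $\frac{1}{Q-\beta}e^{(\alpha+\beta/2-Q)c}\,dc$. Setting $L := \nu_{\tilde\phi}(\partial\bbH)$, the total boundary length satisfies $\nu_\phi(\partial\bbH) = e^{\gamma\mathbf{c}/2} L$, so that the additive constant $\mathbf{c}$ is precisely the degree of freedom driving the scaling of the boundary length.

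Next, for an arbitrary bounded test function $f:\bbR_+\to\bbR$, I would compute $\Md_{1,1}(\alpha,\beta)[f(\nu_\phi(\partial\bbH))]$ by integrating out $\mathbf{c}$ first. Changing variables via $\ell = e^{\gamma c/2} L$ (so that $dc = \tfrac{2}{\gamma \ell}\,d\ell$) gives
\[
\int_\bbR f(e^{\gamma c/2}L)\, e^{(\alpha+\beta/2-Q)c}\,dc \;=\; \frac{2}{\gamma}\, L^{-(2\alpha+\beta-2Q)/\gamma}\int_0^\infty f(\ell)\,\ell^{(2\alpha+\beta-2Q)/\gamma-1}\,d\ell.
\]
Substituting back and comparing to the disintegration~\eqref{eq:M11-dis} identifies the density as
\[
|\Md_{1,1}(\alpha,\beta;\ell)| \;=\; C\cdot \ell^{\frac{2\alpha+\beta-2Q}{\gamma}-1}, \qquad C \;=\; \frac{2}{\gamma(Q-\beta)}\,\bbE\!\left[C_\bbH^{(\alpha,i),(\beta,0)}\, L^{-\frac{2\alpha+\beta-2Q}{\gamma}}\right],
\]
where the expectation is over $h \sim P_\bbH$. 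This is exactly the claimed form.

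The hard part is verifying $0 < C < \infty$. The hypothesis $\alpha + \beta/2 > Q$ makes the exponent $-(2\alpha+\beta-2Q)/\gamma$ strictly negative, so finiteness reduces to a negative moment bound for the GMC boundary measure $L = \nu_{\tilde\phi}(\partial\bbH)$. I would obtain this by controlling $\nu_{\tilde\phi}$ near the boundary insertion at $0$ (where $\frac{\beta}{2}G_\bbH(z,0)$ produces a log singularity in $\tilde\phi$, making $L$ bounded below by a GMC mass on a shrinking neighborhood) and near infinity (where $-2Q\log|z|_+$ dominates), together with Cameron--Martin shifts to absorb the deterministic log terms. Positivity of $C$ is immediate, so these standard Seiberg-type arguments, in the spirit of \cite{AHS21, ARS21, ASY22}, would complete the proof.
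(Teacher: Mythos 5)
Your proof is correct and follows the standard change-of-variables argument that the paper delegates to \cite[Lemma 2.7]{AY23}: isolate the additive constant $\mathbf{c}$, use the scaling $\nu_{\tilde\phi+\mathbf{c}}(\partial\bbH)=e^{\gamma\mathbf{c}/2}\nu_{\tilde\phi}(\partial\bbH)$, substitute $\ell=e^{\gamma\mathbf{c}/2}L$, and read off the power-law density with $C$ proportional to a negative moment of the GMC boundary mass $L$. One small remark: the finiteness of $C$ is easier than the Seiberg-style analysis you sketch, because you only need a \emph{lower} bound on $L$. Pick any fixed compact interval $I\subset\bbR\setminus\{0\}$, say $I=[1,2]$; on $I$ the deterministic terms $-2Q\log|z|_+ + \alpha G_\bbH(z,i)+\tfrac{\beta}{2}G_\bbH(z,0)$ are uniformly bounded, so $L\geq\nu_{\tilde\phi}(I)\geq c\,\nu_h(I)$ for a deterministic $c>0$, and all negative moments of $\nu_h(I)$ (boundary GMC of the free-boundary GFF on a fixed interval) are finite by the classical Kahane estimate. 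No Cameron--Martin shift or analysis near the insertion at $0$ or near $\infty$ is required; the hypothesis $\alpha+\tfrac\beta2>Q$ enters only to make the exponent on $L$ negative, so that this crude lower bound suffices.
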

\begin{proof}
    The proof is the same as that of~\cite[Lemma 2.7]{AY23}.
\end{proof}

Similarly, for quantum triangles, we have
\begin{equation}\label{eq:QT-dis}
    \QT(W_1,W_2,W_3) = \iiint_{\bbR_+^3}\QT(W_1,W_2,W_3;\ell_1,\ell_2,\ell_3)\ d\ell_1d\ell_2d\ell_3
\end{equation}
where $\QT(W_1,W_2,W_3;\ell_1,\ell_2,\ell_3)$ is the measure supported on the set of quantum surfaces $(D,\phi,a_1,a_2,a_3)/{\sim_\gamma}$ such that the boundary arcs between $a_1a_2$, $a_1a_3$ and $a_2a_3$ have quantum lengths $\ell_1,\ell_2,\ell_3$. We can also disintegrate over one or two boundary arc lengths of quantum triangles. For instance, we can define 
$$\QT(W_1,W_2,W_3;\ell_1,\ell_2) = \int_0^\infty \QT(W_1,W_2,W_3;\ell_1,\ell_2,\ell_3)\ d\ell_3$$
and $$\QT(W_1,W_2,W_3;\ell_1) = \iint_{\bbR^2_+}\QT(W_1,W_2,W_3;\ell_1,\ell_2,\ell_3)\ d\ell_2 d\ell_3.$$

\subsection{Generalized quantum surfaces for $\gamma \in (\sqrt2, 2)$}

In this section we recall the forested lines and generalized quantum surfaces considered in~\cite{DMS14,MSW-non-simple-2021,AHSY23}, following the treatment of~\cite{AHSY23}. For $\kappa\in(4,8)$, the forested lines are defined in~\cite{DMS14} using the \emph{$\frac{\kappa}{4}$-stable looptrees} studied in~\cite{CK13looptree}. We set $\gamma = \frac{4}{\sqrt{\kappa}}$.  Let $(X_t)_{t\ge0}$ be a stable L\'{e}vy process starting from 0 of index $\frac{\kappa}{4}\in(1,2)$ with only upward jumps, {so} $X_t\overset{d}{=}t^{\frac{4}{\kappa}}X_1$ for any $t>0$. As shown in~\cite{CK13looptree},  one can construct a tree of topological disks from $(X_t)_{t\geq0}$ as in Figure~\ref{fig:forestline-def}, and the forested line is defined by replacing each disk with an independent sample of the probability measure obtained from $\QD$ by conditioning on the boundary length to be the size of the corresponding jump. 
The quantum disks are glued in a clockwise length-preserving way, where the rotation is chosen uniformly at random. The unique point corresponding to $(0,0)$ on the graph of $X$ is called the \emph{root}.  
The closure of the collection of the points on the boundaries of the quantum disks is referred as the \emph{forested boundary arc}, while the set of the points corresponding to the running infimum of $(X_t)_{t\ge0}$ is called the \emph{line boundary arc}. Since $X$ only has positive jumps, the quantum disks are lying on the same side of the line boundary arc. 

\begin{figure}
    \centering
    \begin{tabular}{cc} 
		\includegraphics[scale=0.49]{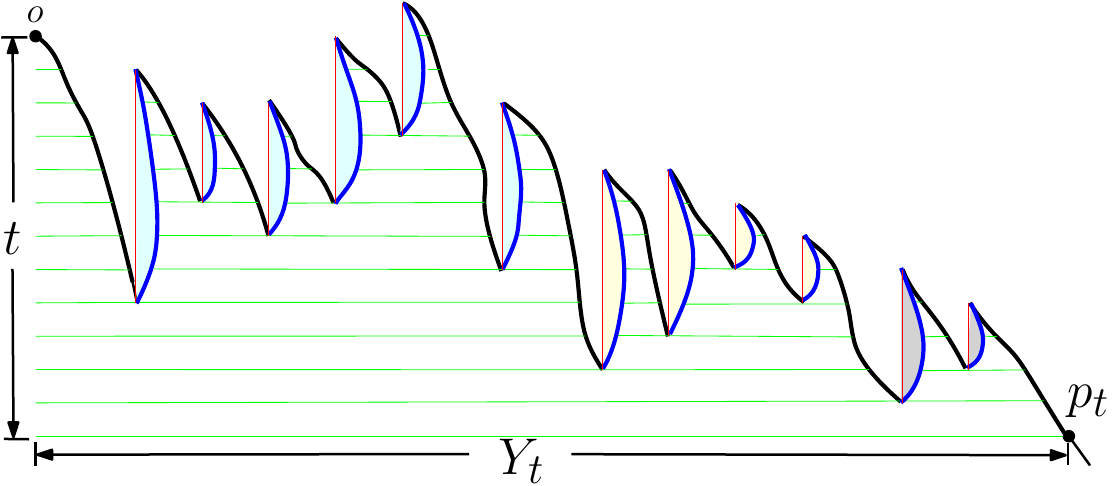}
		&
	   \includegraphics[scale=0.65]{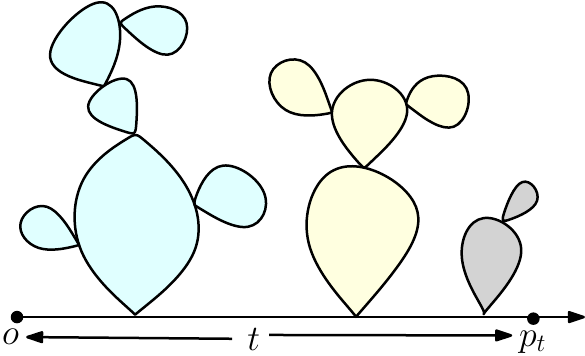}
	\end{tabular}
 \caption{\textbf{Left:} The graph of the L\'{e}vy process $(X_s)_{s>0}$ with only upward jumps. We draw the blue curves for each of the jump, and identify the points that are on the same green horizontal line.   \textbf{Right:} The L\'{e}vy tree of disks obtained from the left panel. For each topological disk we assign a quantum disk $\QD$ conditioned on having the same boundary length as the size of the jump, with the points on each red line in the left panel collapsed to a single point. The quantum length of the line segment between the root $o$ and the point $p_t$ is $t$, while the segment along the forested boundary between $o$ and $p_t$ has generalized quantum length $Y_t = \inf\{s>0:X_s\le -t\}$, i.e., $Y_t$ is the first time when $X_s$ lie below $-t$. As discussed in~\cite{AHSY23}, one can further make It\^{o} decomposition for $X_t - \inf_{s\leq t}X_s$ over 0, and each excursion would correspond to a single tree of disk on the right panel.  }\label{fig:forestline-def}
 \end{figure}

\begin{definition}[Forested line]\label{def:forested-line}
For $\gamma \in (\sqrt2,2)$, let $(X_t)_{t\geq 0}$ be a stable L\'evy process of index $\frac{4}{\gamma^2}>1$ with only positive jumps {satisfying $X_0=0$ a.s.}. For $t>0$, let $Y_t=\inf\{s>0:X_s\le -t\}$, and {fix the multiplicative constant of $X$ such that} $\bbE[e^{-Y_1}] = e^{-1}$. Define the forested line as {described above}.

 The line boundary arc is parametrized by quantum length. The forested boundary arc is parametrized by \emph{generalized quantum length}; that is, the length of the corresponding interval of $(X_t)$. For a point $p_t$ on the line boundary arc with LQG distance $t$ to the root, the segment of the forested boundary arc between $p_t$ and the root has generalized quantum length $Y_t$. 
\end{definition}
 
As  in~\cite{AHSY23}, one can define a \emph{truncation} operation on forested lines. For $t>0$ and a forested line $\Loop$ with root $o$, mark the point $p_t$ on the line boundary arc with quantum length $t$ from $o$. By \emph{truncation of $\Loop$ at quantum length $t$}, we refer to the surface $\cL_t$ which is the union of the line boundary arc and the quantum disks on the forested boundary arc between $o$ and $p_t$. In other words, $\cL_t$ is the surface generated by $(X_s)_{0\le s\le Y_t}$ in the same way as Definition~\ref{def:forested-line}, and the generalized quantum length of the forested boundary arc of $\cL_t$ is $Y_t$. The beaded quantum surface $\cL_t$ is called a forested line segment.

\begin{definition}\label{def:line-segment}
    Fix $\gamma\in(\sqrt{2},2)$. Define $\mathcal{M}_2^\mathrm{f.l.}$ as the law of the surface obtained by first sampling $\mathbf{t}\sim \mathrm{Leb}_{\bbR_+}$ and truncating an independent forested line at quantum length $\mathbf{t}$. 
\end{definition}

The following is from~\cite[Lemma 3.5]{AHSY23}.
\begin{lemma}[Law of forested segment length]\label{lem:fs-len}
Fix $q \in \bbR$. 
Suppose we sample $\mathbf t \sim 1_{t > 0} t^{-q} dt$ and independently sample a forested line $\Loop$.  
For $q<2$,  the law of  
$Y_{\mathbf t}$ is $\ C_q  \cdot1_{L>0} L^{-\frac{\gamma^2}4q + \frac{\gamma^2}4 - 1} dL.$ where $C_q := \frac{\gamma^2}4 \bbE[Y_1^{\frac{\gamma^2}4 (q-1)}]<\infty$. If $q\geq2$, then for any $0<a<b$, the event $\{Y_{\mathbf t}\in[a,b]\}$ has infinite measure. 
\end{lemma}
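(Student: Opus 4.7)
The plan is to reduce the lemma to the scaling of the first-passage process $Y$ together with one classical moment calculation. Since $X$ is a spectrally positive stable Lévy process of index $\frac{4}{\gamma^2} \in (1,2)$, the inverse first-passage process $Y_t = \inf\{s>0: X_s \le -t\}$ is a stable subordinator of index $\gamma^2/4 \in (\tfrac12,1)$, and in particular scales as $Y_t \stackrel{d}{=} t^{4/\gamma^2} Y_1$ for every $t>0$.

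First I would compute the pushforward of $\mathbf{1}_{t>0}\, t^{-q}\, dt$ under $t \mapsto Y_t$ by Fubini and a change of variable. Freezing $Y_1$ and substituting $L = t^{4/\gamma^2} Y_1$, so that $t = (L/Y_1)^{\gamma^2/4}$ and $dt = \frac{\gamma^2}{4} Y_1^{-\gamma^2/4} L^{\gamma^2/4 - 1}\, dL$, a direct exponent bookkeeping gives, for every measurable $f \ge 0$,
\begin{equation*}
\int_0^\infty t^{-q}\, \mathbb{E}[f(Y_t)]\, dt \;=\; \frac{\gamma^2}{4}\, \mathbb{E}\bigl[Y_1^{\frac{\gamma^2}{4}(q-1)}\bigr] \int_0^\infty f(L)\, L^{-\frac{\gamma^2}{4} q + \frac{\gamma^2}{4} - 1}\, dL,
\end{equation*}
which is precisely the claimed density, up to identifying the constant $C_q$.

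To finish the first part I would invoke the classical fact that a positive $\beta$-stable random variable has finite $p$-th moment if and only if $p < \beta$; applied to $Y_1$ with $\beta = \gamma^2/4$ and $p = \frac{\gamma^2}{4}(q-1)$, this gives $C_q < \infty$ exactly when $q < 2$. For the $q \ge 2$ claim I would rerun the same computation directly with $f = \mathbf{1}_{[a,b]}$: by scaling, $Y_t \in [a,b]$ is equivalent to $t \in \bigl[(a/Y_1)^{\gamma^2/4},\, (b/Y_1)^{\gamma^2/4}\bigr]$, and integrating $t^{-q}$ over this interval produces an explicit positive expression times $Y_1^{\frac{\gamma^2}{4}(q-1)}$ (with a logarithmic variant when $q=1$, which however falls under $q<2$). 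Taking expectation then yields $+\infty$, because $\frac{\gamma^2}{4}(q-1) \ge \gamma^2/4$.

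The only nontrivial input is the identification of $Y_1$ as a $\gamma^2/4$-stable random variable together with the moment characterization above, both of which are classical (see e.g.\ Bertoin, \emph{Lévy Processes}, Ch.~VIII), so I do not anticipate any real obstacle. The one thing to be careful about is simply keeping the exponents straight in the change of variable.
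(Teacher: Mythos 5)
Your proof is correct, and it is the natural argument: identify $Y$ as a $\gamma^2/4$-stable subordinator (first passage of the spectrally negative process $-X$), use the exact scaling $Y_t \stackrel{d}{=} t^{4/\gamma^2} Y_1$ to push forward $\mathbf 1_{t>0}\,t^{-q}\,dt$, and read off the finiteness threshold $q<2$ from the classical moment characterization of positive stable variables. Note that the paper does not supply its own proof: it records this as a citation to \cite[Lemma~3.5]{AHSY23}, so there is no in-paper argument to compare against; your route is almost certainly the same as in the cited reference. Two small points worth making explicit in a final write-up: the normalization $\mathbb{E}[e^{-Y_1}] = e^{-1}$ from Definition~\ref{def:forested-line} is what fixes the Laplace exponent to be exactly $\mathbb{E}[e^{-\lambda Y_t}] = e^{-t\lambda^{\gamma^2/4}}$, giving the equality in law $Y_t \stackrel{d}{=} t^{4/\gamma^2}Y_1$ rather than merely a self-similarity up to constants; and when $q<1$ the exponent $p=\frac{\gamma^2}{4}(q-1)$ is negative, so you are also implicitly invoking finiteness of negative moments of $Y_1$, which holds because the density of a positive $\beta$-stable variable decays super-polynomially at $0^+$. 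Neither is a gap, but both deserve a sentence.
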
 

Now we recall the  definition of generalized quantum surfaces in~\cite{AHSY23}. Let $n\ge1$, and $(D,\phi,z_1,...,z_n)$ be an embedding of a (possibly beaded) quantum surface $S$ of finite volume, 
with $z_1,...,z_n\in\partial D$  ordered clockwise. 
    We sample independent forested lines $\cL^1,...,\cL^n$, truncate them such that their quantum lengths match the length of boundary segments $[z_1,z_2],...,[z_n,z_1]$ and glue them to $\partial D$ correspondingly. 
    Let $S^f$ be the resulting beaded quantum surface. 
    
\begin{definition}\label{def:f.s.}
   We call a beaded quantum surface $S^f$ as above a (finite volume) \emph{generalized quantum surface}. 
   We call this procedure \emph{foresting the boundary} of $S$, and  
   say $S$  \emph{is the spine of}  $S^f$. 
\end{definition}

We present two types of generalized quantum surfaces needed in Theorem~\ref{thm:welding} below.
\begin{definition}\label{def:q.t.f.s.}
    Let $\alpha,W,W_1,W_2,W_3>0$ and $\beta<Q$. Recall from Definitions~\ref{def-qt-thick} and~\ref{def-qt-thin} the notion $\QT(W_1,W_2,W_3)$, and the notion $\Md_{1,1}(\alpha,\beta)$ from Definition~\ref{def:mdisk11}. We write $\QT^f(W_1,W_2,W_3)$ for the law of the generalized quantum surface obtained by foresting the three boundary arcs of a quantum triangle sampled from $\QT(W_1,W_2,W_3)$. Likewise, we write $\Mfd_{1,1}(\alpha,\beta)$ for the law of the generalized quantum surface obtained by foresting the   boundary arc of a quantum disk sampled from $\Md_{1,1}(\alpha,\beta)$, and define $\Mfd_{0,2}(W)$ via $\Md_{0,2}(W)$ similarly.
\end{definition}

Recall the disintegration \eqref{eq:disint} of the quantum disk measure. By disintegrating over the values of $Y_t$, we can similarly define a disintegration of the measure $\mathcal M_2^\mathrm{f.l.}$: 
\[
\mathcal{M}_2^\mathrm{f.l.} = \int_{\bbR_+^2}\mathcal{M}_2^\mathrm{f.l.}(t;\ell)\,dt\,d\ell.
\]
where $\mathcal{M}_2^\mathrm{f.l.}(t;\ell)$ is the measure on forested line segments with quantum length $t$ for the line boundary arc and generalized quantum length $\ell$ for the forested boundary arc. We write $\mathcal{M}_2^\mathrm{f.l.}(\ell):=\int_0^\infty\mathcal{M}_2^\mathrm{f.l.}(t;\ell)dt$, i.e., the law of forested line segments whose forested boundary arc has generalized quantum length $\ell$.  A similar disintegration holds as in~\eqref{eq:M11-dis} and ~\eqref{eq:QT-dis} for  $\Mfd_{1,1}(\alpha,\beta)$ and $\QT^f(W_1,W_2,W_3)$.
Indeed, this follows by defining the measure $\Mfd_{1,1}(\alpha,\beta;\ell)$ via
\begin{equation}\label{eq:def-forest-bdry}
\int_{\bbR_+}\mathcal{M}_2^\mathrm{f.l.}(t;\ell)\times \Md_{1,1}(\alpha,\beta;t) dt.    
\end{equation}
The measures  $\QT^f(W_1,W_2,W_3;\ell_1,\ell_2,\ell_3)$, $\QT^f(W_1,W_2,W_3;\ell_1,\ell_2)$ and $\QT^f(W_1,W_2,W_3;\ell_1)$ can be defined analogously.
The following is immediate from Lemma~\ref{lem:M11-bdry-length} and Lemma~\ref{lem:fs-len}.
\begin{lemma}\label{lem:M11-f.s.length}
Let $\alpha\in\bbR,\beta<Q$ with $\alpha+\frac{\beta}{2}>Q$. Then there exists a constant $c>0$ such that $|\Mfd_{1,1}(\alpha,\beta;\ell)| = c\ell^{\frac{\gamma}{4}(2\alpha+\beta-2Q)-1}.$    
\end{lemma}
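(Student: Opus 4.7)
\textbf{Proof plan for Lemma~\ref{lem:M11-f.s.length}.}
The plan is to unfold the definition~\eqref{eq:def-forest-bdry} of $\Mfd_{1,1}(\alpha,\beta;\ell)$ and reduce the claim to a direct application of Lemma~\ref{lem:M11-bdry-length} and Lemma~\ref{lem:fs-len}. Taking total masses on both sides of~\eqref{eq:def-forest-bdry} gives
\begin{equation*}
|\Mfd_{1,1}(\alpha,\beta;\ell)| \;=\; \int_0^\infty |\mathcal{M}_2^{\mathrm{f.l.}}(t;\ell)| \cdot |\Md_{1,1}(\alpha,\beta;t)| \, dt.
\end{equation*}
By Lemma~\ref{lem:M11-bdry-length} the second factor equals $C\, t^{q_0}$ where $q_0 := \frac{2\alpha+\beta-2Q}{\gamma}-1$, so
\begin{equation*}
|\Mfd_{1,1}(\alpha,\beta;\ell)| \;=\; C \int_0^\infty t^{q_0}\, |\mathcal{M}_2^{\mathrm{f.l.}}(t;\ell)| \, dt.
\end{equation*}

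The next step is to recognize the right-hand side as a marginal density. By Definition~\ref{def:line-segment}, $|\mathcal{M}_2^{\mathrm{f.l.}}(t;\ell)|\,dt\,d\ell$ is precisely the joint law of $(\mathbf{t}, Y_{\mathbf{t}})$ where $\mathbf{t}\sim\mathrm{Leb}_{\bbR_+}$ and $Y$ comes from an independent forested line. Consequently $t^{q_0}|\mathcal{M}_2^{\mathrm{f.l.}}(t;\ell)|\,dt\,d\ell$ is the joint law of $(\mathbf{t}, Y_{\mathbf{t}})$ when $\mathbf{t}\sim t^{q_0}\,dt = t^{-q}\,dt$ with $q := -q_0$. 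Integrating out $t$ then gives the marginal law of $Y_{\mathbf{t}}$, and Lemma~\ref{lem:fs-len} (once we verify $q<2$) identifies this marginal as a power of $\ell$ with exponent $-\frac{\gamma^2}{4}q + \frac{\gamma^2}{4}-1$. A short algebraic simplification with $q = 1 - \frac{2\alpha+\beta-2Q}{\gamma}$ gives
\begin{equation*}
-\frac{\gamma^2}{4}q + \frac{\gamma^2}{4}-1 \;=\; \frac{\gamma}{4}(2\alpha+\beta-2Q) - 1,
\end{equation*}
which is exactly the exponent claimed in the lemma.

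It remains to check that Lemma~\ref{lem:fs-len} applies, i.e.\ that $q<2$. The hypothesis $\alpha+\frac{\beta}{2}>Q$ gives $2\alpha+\beta-2Q>0$, so $q = 1 - \frac{2\alpha+\beta-2Q}{\gamma} < 1 < 2$, as required. Moreover this same inequality guarantees $q_0 > -1$, so the outer integral converges at $t=0$ and at $t=\infty$ the integrability follows because Lemma~\ref{lem:fs-len} produces a genuine $\sigma$-finite density on the $\ell$-axis. The constant $c$ in the statement is then the product of the constant $C$ from Lemma~\ref{lem:M11-bdry-length} and the constant $C_q = \tfrac{\gamma^2}{4}\bbE[Y_1^{\frac{\gamma^2}{4}(q-1)}]$ from Lemma~\ref{lem:fs-len}; both are finite and strictly positive, so $c\in(0,\infty)$. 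There is no substantive obstacle here: the argument is a routine two-line computation, and the only thing to be careful about is checking the integrability condition $q<2$ (equivalently $\alpha+\frac{\beta}{2}>Q-\frac{\gamma}{2}$), which is implied by the standing hypothesis $\alpha+\frac{\beta}{2}>Q$.
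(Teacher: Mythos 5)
Your proof is correct and fills in exactly the computation that the paper dismisses as "immediate" from Lemmas~\ref{lem:M11-bdry-length} and~\ref{lem:fs-len}: unfold the definition~\eqref{eq:def-forest-bdry}, substitute the power-law density from Lemma~\ref{lem:M11-bdry-length}, recognize the resulting $t$-integral as the marginal in Lemma~\ref{lem:fs-len} with $q = 1 - \frac{2\alpha+\beta-2Q}{\gamma}$, and verify $q<2$ from the hypothesis. This is the same route the paper intends.
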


\section{Radial $\SLE_\kappa(\kappa-6)$ from conformal welding of forested quantum triangles}\label{sec:weld}

Given a pair of certain quantum surfaces, following ~\cite{She16a,DMS14}, there exists a way to \emph{conformally weld} them together according to the length measure provided that the interface lengths agree; see e.g.~\cite[Section 4.1]{AHS21} and~\cite[Section 4.1]{ASY22} for more explanation. In~\cite{DMS14,AHSY23}, it is shown that for $\kappa\in(4,8)$, by drawing an independent {$\SLE_\kappa$} curve (or its variants) $\eta$ on top of a certain $\gamma$-LQG surface $S$ with $\gamma = \frac{4}{\sqrt\kappa}$, one cuts $S$ into independent generalized quantum surfaces $S_1$ and $S_2$ (conditioned on having the same interface length if $S$ has  finite volume). Moreover, given $(S_1,S_2)$, there a.s.\ exists a unique way to recover the pair $(S,\eta)$, and this procedure is defined to be the conformal welding of $S_1$ and $S_2$. As explained in~\cite{DMS14}, the points on the interfaces are glued together according to the generalized quantum length, which follows from the quantum natural time parametrization of the $\SLE_\kappa$ curves. This is originally done for forested lines in~\cite{DMS14} and later extended to forested line segments in~\cite{AHSY23}. As a consequence, this operation is well-defined for the generalized quantum surfaces from Definition~\ref{def:f.s.}. In light of the recent work~\cite{kavvadias2023conformal} on conformal removability of non-simple SLEs for $\kappa\in(4,\kappa_1)$, where $\kappa_1\approx 5.61$ (the constant is from \cite{GP-bubble-connect}), in this range it is possible to identify the recovery of $(S, \eta)$ from $(S_1,S_2)$ as actual conformal welding as in the $\kappa\in(0,4)$ case.

Let $\mathcal{M}^1, \mathcal{M}^2$ be measures on the space of (possibly generalized) quantum surfaces with boundary marked points. For $i=1,2$, fix a boundary arc $e_i$ of finite (possibly generalized) quantum length on a sample from $\mathcal{M}^i$, and define the measure $\mathcal{M}^i(\ell_i)$ via the disintegration 
$$\mathcal{M}^i = \int_0^\infty \mathcal{M}^i(\ell_i)d\ell_i$$
as in Section~\ref{subsec:qsurfaces}. For $\ell>0$, given a pair of surfaces sampled from the product measure $\mathcal{M}^1(\ell)\times \mathcal{M}^2(\ell)$, we can conformally weld them together according to (possibly generalized) quantum length. This yields a single quantum surface decorated  by a curve, namely, the welding interface. We write $\text{Weld}( \mathcal{M}^1(\ell), \mathcal{M}^2(\ell))$ for the law of the resulting curve-decorated surface, and let $$\text{Weld}(\mathcal{M}^1, \mathcal{M}^2):=\int_{\bbR}\, \text{Weld}(  \mathcal{M}^1(\ell), \mathcal{M}^2(\ell))\,d\ell$$ be the  welding of $\mathcal{M}^1,  \mathcal{M}^2$ along the boundary arcs $e_1$ and $e_2$. The case where we have only one surface and $e_1,e_2$ are different boundary arcs of this surface can be treated analogously. 

The aim of this section is to prove the following theorem; see Figure~\ref{fig:thm-welding} for an illustration.
\begin{theorem}\label{thm:welding}
    Let $\kappa\in(4,8)$ and $\gamma = \frac{4}{\sqrt\kappa}$. Then there exists a  $\gamma$-dependent constant $C_\gamma$  such that 
    \begin{equation}\label{eq:thm-welding}
        \Mfd_{1,1}(\gamma,\gamma)\otimes \mathrm{raSLE}_\kappa(\kappa-6) =C_\gamma \int_0^\infty \mathrm{Weld}\big(\QT^f(2-\frac{\gamma^2}{2},2-\frac{\gamma^2}{2},\gamma^2-2;\ell,\ell)\big)\,d\ell
    \end{equation}
    Here the left hand side of~\eqref{eq:thm-welding} stands for drawing an independent radial $\SLE_\kappa(\kappa-6)$ curve (with the force point lying immediately to the left of the root)  on top of a forested quantum disk from $\Mfd_{1,1}(\gamma,\gamma)$; on the right hand side of~\eqref{eq:thm-welding} the boundary arc between the weight $2-\frac{\gamma^2}2$ vertices is conformally welded to the boundary arc immediately counterclockwise to it.
\end{theorem}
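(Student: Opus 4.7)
The strategy, outlined in Section~\ref{subsec:overview}, is to realize both sides of~\eqref{eq:thm-welding} as the same marked decomposition of a quantum disk decorated by a space-filling $\SLE_\kappa$ loop, and to read off the welding identity via the mating-of-trees picture.

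I would begin with the Brownian excursion description, from~\cite{AG19}, of a quantum disk $\mathcal{D}$ decorated by an independent space-filling $\SLE_\kappa$ loop $\eta$ started and ended at a marked boundary point; in this description, the left and right (forested) quantum boundary lengths of $\eta([0,t])$ are encoded by a pair of correlated Brownian excursions $(L, R)$. Next, weighting the law of $\mathcal{D}$ by its quantum area inserts a ``typical'' interior marked point $z$; by the $\QD \leftrightarrow \Mfd_{1,1}(\gamma,\gamma)$ correspondence (Proposition~\ref{prop:mdisk11-QD} and its forested analog), the resulting marked surface together with $\eta$ has law proportional to $\Mfd_{1,1}(\gamma,\gamma) \otimes (\text{space-filling loop})$. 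In the Brownian encoding, this weighting marks a Lebesgue-uniform time $\tau$, which on the geometric side is the time at which $\eta$ first visits $z$.

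The heart of the argument is a Bismut-type decomposition of $(L, R)$ at the marked time $\tau$: conditional on $(L_\tau, R_\tau)$, the ``before'' and ``after'' parts of $(L, R)$ are independent. Geometrically, this corresponds to cutting $(\mathcal{D}, \eta)$ along the portion of $\eta([0,\tau])$ that borders $\eta([\tau,\infty))$ into two curve-decorated forested pieces $(\mathcal{D}_1, \eta_1)$ and $(\mathcal{D}_2, \eta_2)$. Matching the excursion pieces against the mating-of-trees characterization of generalized quantum triangles from~\cite{ASY22, AHSY23} would identify the joint law of $(\mathcal{D}_1, \mathcal{D}_2)$ as a pair of forested quantum triangles of weight $(2 - \gamma^2/2,\,2 - \gamma^2/2,\,\gamma^2 - 2)$ that are conditionally independent given their common interface length $\ell$ (equal to the forested quantum length of $\eta([0, \tau])$), with three marked vertices arising from $z$, the marked boundary point of $\mathcal{D}$, and a common ``branching'' vertex along the interface. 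I would package this identification as an intermediate Proposition~\ref{prop-mot-QD11}.

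To finish, I would invoke Proposition~\ref{prop:space-filling-loop-radial}: the spine of $\eta|_{[0,\tau]}$ is a radial $\SLE_\kappa(\kappa - 6)$ from the marked boundary point targeted at $z$ with the stated force point. Re-gluing $(\mathcal{D}_1, \mathcal{D}_2)$ in the alternative way that identifies their two ``$z$''-vertices (instead of the vertices matched before cutting) collapses the two-triangle picture into a \emph{single} forested quantum triangle $\QT^f(2-\gamma^2/2,\,2-\gamma^2/2,\,\gamma^2-2)$ with two of its arcs welded to each other --- precisely the arc between the two weight $2 - \gamma^2/2$ vertices and the arc immediately counterclockwise to it, as on the right-hand side of~\eqref{eq:thm-welding}. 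The integration over $\ell$ records the disintegration over the interface length at $\tau$, and the constant $C_\gamma$ absorbs the LCFT normalization factors and the $\nu_\phi(\partial\bbH)^{-2}$ weighting in the definition of $\QD$. The main technical obstacle will be the excursion-to-triangle identification above --- aligning the joint Brownian decomposition at a uniformly marked time (with its infinitely many jumps encoding the bubbles cut by the non-simple curve $\eta$) with the LCFT definition of $\QT$ from~\cite{ASY22}, and in particular verifying that the weight triple is exactly $(2-\gamma^2/2,\,2-\gamma^2/2,\,\gamma^2-2)$ --- after which the remaining steps (marking $z$, invoking the spine identification, and the topological rearrangement) are comparatively routine.
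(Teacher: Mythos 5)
Your high-level strategy is the one the paper sketches in Section~\ref{subsec:overview}: mark an interior point by area-weighting, cut along the ``before/after $z$'' interface of the space-filling loop, and reassemble. But the key identification you propose as an intermediate proposition is wrong, and that error propagates. Cutting $(\cD,\eta)$ at the first hitting time of $z$ does \emph{not} produce two forested triangles of weight $(2-\frac{\gamma^2}{2},2-\frac{\gamma^2}{2},\gamma^2-2)$. The paper's Proposition~\ref{prop-mot-QD11} (obtained from the Markov decomposition of the Brownian excursion together with Proposition~\ref{prop-mot-triangle-down} and Proposition~\ref{prop-thin-disk-mot}) identifies the two pieces as a $\QT(2-\frac{\gamma^2}{2},\gamma^2,\frac{\gamma^2}{2})$ and a $\QT(2-\frac{\gamma^2}{2},2-\frac{\gamma^2}{2},2)$, joint law given by welding along two boundary arcs of lengths $(b,x)$. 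Because the piece containing the root has a weight-$\gamma^2$ and a weight-$\frac{\gamma^2}{2}$ vertex, it cannot be ``re-glued to itself'' to produce the single triangle on the right-hand side of~\eqref{eq:thm-welding}; the weights simply do not match.

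The second gap is that the radial $\SLE_\kappa(\kappa-6)$ is not ``the spine of $\eta|_{[0,\tau]}$'' in the sense of the left and right boundaries $\eta_0^L \cup \wt\eta_0^R$ of $\eta([0,\tau])$. Proposition~\ref{prop:space-filling-loop-radial} requires extending $\wt\eta_0^R$ by an \emph{independent} $\SLE_{\wt\kappa}(\wt\kappa-4;-\frac{\wt\kappa}{2})$ curve and then filling each pocket between the boundaries with an independent $\SLE_\kappa(\frac{\kappa}{2}-4;\frac{\kappa}{2}-4)$, and these additional curves further cut the two pieces from Proposition~\ref{prop-mot-QD11}. The paper's proof needs exactly these extra cuts, followed by a change of variables on the three-piece welding~\eqref{eq:radial-weld-pf}, the identification of Lemma~\ref{lem:QT(W,2,W)} to absorb the weight-$(\gamma^2-2)$ disk into one triangle and re-mark the other as $\Md_{0,2}(2-\frac{\gamma^2}{2})$, Proposition~\ref{prop:weld:segment} to turn the latter into forested line segments, and a final foresting of the outer boundary. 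None of this re-arrangement is captured by your ``identify the two $z$-vertices'' step, which as stated is topologically a different operation and starts from the wrong pieces.
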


\begin{figure}
    \centering
    \includegraphics[scale=0.75]{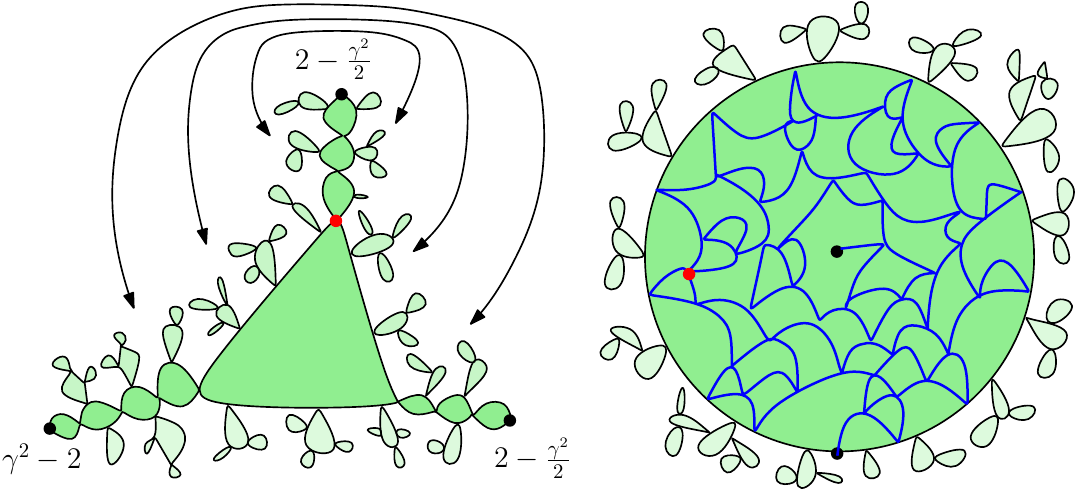}
    \caption{An illustration of the conformal welding in Theorem~\ref{thm:welding}. See Figure~\ref{fig:qt-decomposition} for a variant of this figure where the two pieces separated by the red point are colored differently.}
    \label{fig:thm-welding}
\end{figure}

 In Section~\ref{subsec:ig}, we recall certain variants of $\SLE$ and results of imaginary geometry in~\cite{MS16a,ig4}. In Section ~\ref{subsec:qt-qd-welding}, we recall the conformal welding of quantum disks and quantum triangles in~\cite{AHS20,ASY22,AHSY23}. In Section~\ref{subsec:mot-qs}, we give a mating-of-trees description of some special quantum disks and quantum triangles. Finally  in Section~\ref{subsec:pf-welding}, we prove Theorem~\ref{thm:welding}. Readers interested in the application of Theorem~\ref{thm:welding} to the proof of our mains theorem may skip the rest of this section in the first reading.

\subsection{Chordal $\SLE_\kappa(\underline\rho)$ and imaginary geometry}\label{subsec:ig}

 Fix force points $x^{k,L}<...<x^{1,L}<x^{0,L}= 0^-<x^{0,R}= 0^+< x^{1,R}<...<x^{\ell, R}$ and weights $\rho^{i,q}\in\bbR$, The $\SLE_\kappa(\underline{\rho})$ process is defined in the same way as $\SLE_\kappa$, except that its Loewner driving function $(W_t)_{t\ge 0}$ is now defined by 
\begin{equation}\label{eq:def-sle-rho}
\begin{split}
&W_t = \sqrt{\kappa}B_t+\sum_{q\in\{L,R\}}\sum_i \int_0^t \frac{\rho^{i,q}}{W_s-g_s(x^{i,q})}ds
\end{split}
\end{equation}
where $B_t$ is standard Brownian motion.
It has been proved in \cite{MS16a} that the SLE$_\kappa(\underline{\rho})$ process a.s. exists, is unique and generates a continuous curve until the \textit{continuation threshold}, the first time $t$ such that $W_t = V_t^{j,q}$ with $\sum_{i=0}^j\rho^{i,q}\le -2$ for some $j$ and $q\in\{L,R\}$.

 Let $D\subsetneq \mathbb{C}$ be a domain. We recall the construction the GFF on $D$ with \textit{Dirichlet} \textit{boundary conditions}  as follows. Consider the space of compactly supported smooth functions on $D$ with finite Dirichlet energy, and let $H_0(D)$ be its closure with respect to the inner product $(f,g)_\nabla=\int_D(\nabla f\cdot\nabla g)\ dx dy$. Then the (Dirichlet) GFF on $D$ is defined by 
\begin{equation}\label{eqn-def-gff}
h = \sum_{n=1}^\infty \xi_nf_n
\end{equation}
where $(\xi_n)_{n\ge 1}$ is a collection of i.i.d. standard Gaussians and $(f_n)_{n\ge 1}$ is an orthonormal basis of $H_0(D)$. The sum \eqref{eqn-def-gff} a.s.\ converges to a random distribution whose law is independent of the choice of the basis $(f_n)_{n\ge 1}$. For a function $g$ defined on $\partial D$ with harmonic extension $f$ in $D$ and a zero boundary GFF $h$, we say that $h+f$ is a GFF on $D$ with boundary condition specified by $g$. 
 See \cite[Section 4.1.4]{DMS14} for more details. 

Now we briefly recall the theory of imaginary geometry. For $\kappa>4$, let
$$ \wt\kappa = \frac{16}{\kappa}, \qquad  \lambda = \frac{\pi}{\sqrt\kappa}, \qquad  \wt\lambda = \frac{\pi\sqrt{\kappa}}{4}, \qquad \chi = \frac{\sqrt\kappa}{2} - \frac{2}{\sqrt\kappa}. $$
 Given a Dirichlet GFF $h^{\rm IG}$ on $\bbH$ with piecewise boundary conditions and $\theta\in\bbR$, it is possible to construct \emph{$\theta$-angle flow lines} $\eta_{\theta}^{z}$ of $h^{\rm IG}$ starting from $z\in \overline{\bbH}$ as shown in \cite{MS16a, ig4}. Informally, $\eta_{\theta}^{z}$ is the solution to the ODE $(\eta_{\theta}^{z})'(t) = \exp(ih^{\rm IG}(\eta_{\theta}^{z}(t))/\chi +\theta)$.  When $z\in\bbR$ and the flow line is targeted at $\infty$,  as shown in~\cite[Theorem 1.1]{MS16a}, $\eta_{\theta}^{z}$ is an $\SLE_{\wt\kappa}(\underline\rho)$ process. One can also construct \emph{counterflowlines} of $h^{\rm IG}$, which are variants of $\SLE_\kappa$ processes (with $\kappa>4$).

  Let $h^{\rm IG}$ be the Dirichlet GFF on $\bbH$ with boundary value $-\lambda$ on $\bbR$. For $\kappa\in(4,8)$ and $z\in\bbH$, let $\eta_z^{L}$ and $\eta_z^{R}$ be the flow lines started at $z$ with angles $\frac{\pi}{2}$ and $-\frac{\pi}{2}$. Then $\eta_z^L$ and $\eta_z^R$ may hit and bounce-off each other. Let $\tau_z^R$ be the first time $\eta_z^R$ hits $\bbR$, and $\sigma_z^R$ be the last time before $\tau_z^R$ when $\eta_z^R$ hits $\eta_z^L$.  By~\cite[Theorem 1.7]{ig4}, $\eta_z^R$ can bounce off upon hitting $\bbR$ and  be continued to $\infty$. In fact, $\eta_z^R|_{[\tau_z^R,\infty)}$ is an $\SLE_{\wt\kappa}(\wt\kappa-4;-\frac{\wt\kappa}{2})$ in the connected component of $\bbH\backslash(\eta_z^L\cup\eta_z^R|_{[0,\tau_z^R]})$ containing $\eta_z^R(\tau_z^R)^-$ with force point $\eta_z^R(\tau_z^R)^-$ and $\eta_z^R(\sigma_z^R)$. The \emph{counterclockwise space-filling $\SLE_\kappa$ loop} $\eta'$ in $\bbH$ from $\infty$ to $\infty$ is defined in~\cite[Section 1.2.3]{ig4}, with the property that for any $z\in\ol\bbH$, the left and right boundaries of $\eta'$ stopped when first hitting $z$ are a.s.\ the flow lines $\eta_z^L$ and $\eta_z^R|_{[0,\tau_z^R]}$\footnote{The reason we stop at time $\tau_z^R$ is that $\eta_z^R$ hits $\bbR$ at height difference zero if we choose the orientation of $\bbR$ to be counterclockwise; see the text between Theorem 1.15 and Theorem 1.16 in~\cite{ig4} for more details.}. On the other hand, following~\cite[Theorem 3.1]{ig4}, the counterflowline $\eta^z$ of $h^{\rm IG}$ from $\infty$ to $z\in\bbH$ is a radial $\SLE_\kappa(\kappa-6)$ curve with force point at $\infty^-$ (i.e., $+\infty$). By ~\cite[Theorem 4.1]{ig4}, the left and right boundaries of $\eta^z$ (when lifted to a path in the universal cover of $\mathbb{C}\backslash\{z\}$) are precisely the flow lines $\eta_z^L$ and $\eta_z^R$, and the law of $\eta^z$ given $\eta_z^L$ and $\eta_z^R$ is $\SLE_\kappa(\frac{\kappa}{2}-4;\frac{\kappa}{2}-4)$ in each pocket between $\eta_z^L$ and $\eta_z^R$, which is boundary-filling. To summarize, we have the following:
  \begin{proposition}\label{prop:space-filling-loop-radial}
     Let $\kappa\in(4,8)$, $\tilde\kappa = \frac{16}{\kappa}$ and $z\in\bbH$. Consider a counterclockwise space-filling $\SLE_\kappa$ loop $\eta'$ in $\bbH$ from $\infty$ to $\infty$. Let $\eta_z^L$ and $\wt\eta_z^R$ be the left and right boundaries of $\eta'$ stopped when first hitting $z$. Let $\tau_z^R$ be the first time $\wt\eta_z^R$ hits $\bbR$, and $\sigma_z^R$ be the last time before $\tau_z^R$ when $\wt\eta_z^R$ hits $\eta_z^L$. Let $\eta_z^R$ be the concatenation of $\wt\eta_z^R$ with an independent $\SLE_{\wt\kappa}(\wt\kappa-4;-\frac{\wt\kappa}{2})$ in the connected component of $\bbH\backslash(\eta_z^L\cup\wt\eta_z^R|_{[0,\tau_z^R]})$ containing $\wt\eta_z^R(\tau_z^R)^-$ with force points $\wt\eta_z^R(\tau_z^R)^-$ and $\wt\eta_z^R(\sigma_z^R)$. Further draw an independent $\SLE_\kappa(\frac{\kappa}{2}-4;\frac{\kappa}{2}-4)$ curve $\eta_D$ in each  connected component $D$ of $\bbH\backslash(\eta_z^L\cup\eta_z^R)$ between $\eta_z^L$ and $\eta_z^R$ process starting from the last point on the component boundary traced by $\eta_z^L$ and targeted at the first. Then the concatenation of all the $\eta_D$'s (with $\eta_z^L$, $\eta_z^R$ as the boundaries) has the law radial $\SLE_\kappa(\kappa-6)$ from $\infty$ targeted at $z$ with force point at $\infty^-$.
  \end{proposition}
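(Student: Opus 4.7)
The plan is to identify the curve constructed in the statement with the counterflowline $\eta^z$ of the Dirichlet GFF $h^{\rm IG}$ on $\bbH$ with boundary value $-\lambda$, from $\infty$ to $z$. Once the identification is made, the conclusion is immediate: by \cite[Theorem~3.1]{ig4}, such a counterflowline has precisely the law of radial $\SLE_\kappa(\kappa-6)$ targeted at $z$ with force point at $\infty^-$. So the proof reduces to matching the three ingredients in the statement (the space-filling left/right boundaries, the $\SLE_{\widetilde\kappa}(\widetilde\kappa-4;-\widetilde\kappa/2)$ continuation, and the chordal $\SLE_\kappa(\kappa/2-4;\kappa/2-4)$ pocket fillings) with the corresponding pieces of $\eta^z$ coming from imaginary geometry.

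First I would invoke \cite[Theorem~4.1]{ig4}, which says that the left and right outer boundaries of $\eta^z$, when lifted to the universal cover of $\mathbb{C}\setminus\{z\}$, are exactly the flow lines $\eta_z^L$ and $\eta_z^R$ of $h^{\rm IG}$ from $z$ with angles $\pi/2$ and $-\pi/2$. The same pair of flow lines shows up on the space-filling side: by the construction of the counterclockwise space-filling $\SLE_\kappa$ loop in \cite[Section~1.2.3]{ig4} (together with the discussion between Theorems~1.15 and 1.16 there), the left and right boundaries of $\eta'$ stopped upon first hitting $z$ are precisely $\eta_z^L$ and $\widetilde\eta_z^R=\eta_z^R|_{[0,\tau_z^R]}$. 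Thus the pieces of the two constructions up to time $\tau_z^R$ on the right side agree on the nose.

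Next I would verify that the concatenation described in the statement produces the full flow line $\eta_z^R$ past $\tau_z^R$. This is where \cite[Theorem~1.7]{ig4} comes in: after the flow line hits $\bbR$, the GFF boundary data is shifted so that the continuation, inside the connected component of $\bbH\setminus(\eta_z^L\cup\widetilde\eta_z^R)$ containing $\widetilde\eta_z^R(\tau_z^R)^-$, is an $\SLE_{\widetilde\kappa}(\widetilde\kappa-4;-\widetilde\kappa/2)$ with the two force points located at $\widetilde\eta_z^R(\tau_z^R)^-$ and at the last bounce-off point $\widetilde\eta_z^R(\sigma_z^R)$; this matches exactly the continuation prescribed in the proposition. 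Hence the concatenation $\eta_z^L\cup\eta_z^R$ equals in law the pair of flow lines appearing as the boundary of $\eta^z$. Finally, conditional on its outer boundary, the counterflowline $\eta^z$ traverses each pocket between $\eta_z^L$ and $\eta_z^R$ as an independent chordal $\SLE_\kappa(\kappa/2-4;\kappa/2-4)$ from the last to the first intersection point with $\eta_z^L$ (this is the boundary-filling variant of Theorem~4.1 of \cite{ig4}), and this matches the pocket-filling curves $\eta_D$ in the statement. Concatenating these pieces yields $\eta^z$, completing the identification.

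The main obstacle is the careful bookkeeping in the second step above: keeping track of the shift in boundary data for $h^{\rm IG}$ after $\eta_z^R$ bounces off $\bbR$, verifying that the force points and weights $(\widetilde\kappa-4,-\widetilde\kappa/2)$ arise with the correct signs and locations, and confirming the counterclockwise orientation convention implicit in the space-filling construction. Once these boundary-data computations are aligned with the conventions of \cite{ig4}, the statement follows without additional SDE arguments.
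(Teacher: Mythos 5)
Your proposal is correct and follows exactly the same route the paper takes: the paragraph preceding the proposition in the paper assembles precisely these imaginary-geometry ingredients — \cite[Theorems 1.7, 3.1, 4.1]{ig4} and the space-filling loop construction of \cite[Section 1.2.3]{ig4} — and identifies the constructed curve with the counterflowline $\eta^z$ in the same way. No gaps; this matches the paper's argument in substance and in citation.
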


One can also construct the space-filling $\SLE_\kappa$ curve $\eta_0'$ in $\bbH$ from 0 to $\infty$ in a similar manner, where the boundary condition of the GFF is now $\lambda$ on $\bbR_-$ and $-\lambda$ on $\bbR_+$. For $x\in\bbR_+$, the law of the left boundary $\eta_x^L$ of $\eta_0'$  stopped at the time $\eta_x$ when hitting $x$ is now $\SLE_{\wt\kappa}(\frac{\wt\kappa}{2}-2,-\frac{\wt\kappa}{2};-\frac{\wt\kappa}{2})$ from $x$ to $\infty$ with force points $x^-,0;x^+$. Note that this curve merges into $\bbR_-$ upon hitting $\bbR_-$ at some point $y\in\bbR_-$ due to the continuation threshold. Moreover, the conditional law of $\eta_0'([0,\tau_x])$ given $\eta_x^L$ is the space-filling $\SLE_\kappa(\frac{\kappa}{2}-4;0)$ from 0 to $x$ in the domain $\eta_0'([0,\tau_x])$ from 0 to $x$ with force point at $y$.

\subsection{Conformal welding for quantum disks and quantum triangles}\label{subsec:qt-qd-welding}
For a measure $\mathcal{M}$ on the space of quantum surfaces (possibly with marked points) and a conformally invariant measure $\mathcal{P}$ on curves, we write $\mathcal{M}\otimes\mathcal{P}$ for the law of curve decorated quantum surface described by sampling $(S,\eta)$ from  $\mathcal{M}\times\mathcal{P}$ and then drawing $\eta$ on top of $S$. To be more precise, for a domain $\cD = (D,z_1,...,z_n)$ with marked points, suppose for $\phi$ sampled from  some measure $\mathcal{M}_{\cD}$, $(D,\phi,z_1,...,z_n)/{\sim_\gamma}$ has the law $\mathcal{M}$. Let $\mathcal{P}_{\cD}$ be the measure $\mathcal{P}$ on the domain $\cD$,  and assume that for any conformal map $f$ one has $\mathcal{P}_{f\circ\cD} = f\circ \mathcal{P}_{\cD}$, i.e., $\mathcal{P}$ is invariant under conformal maps. Then $\mathcal{M}\otimes\mathcal{P}$ is defined by  $(D,\phi,\eta,z_1,...,z_n)/{\sim_\gamma}$ for $\eta\sim\mathcal{P}_{\cD}$. This notion is well-defined for the quantum surfaces and SLE-type curves considered in this paper.

We begin with the conformal welding of two quantum disks.
\begin{theorem}[Theorem 2.2 of \cite{AHS20}]\label{thm:disk-welding}
    Let $\gamma\in(0,2),\wt\kappa=\gamma^2$ and $W_1,W_2>0$. Then there exists a constant $c:=c_{W_1,W_2}\in(0,\infty)$ such that
$$\Md_{0,2}(W_1+W_2)\otimes \SLE_{\wt\kappa}(W_1-2;W_2-2) = c\,\Wd(\Md_{0,2}(W_1),\Md_{0,2}(W_2)).  $$
\end{theorem}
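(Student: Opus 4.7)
The plan is to derive this welding identity for finite-volume quantum disks from the analogous identity for infinite-volume quantum wedges established by Duplantier--Miller--Sheffield \cite{DMS14}, via a disintegration argument over the matching quantum boundary lengths. First I would invoke the wedge welding theorem of \cite{DMS14}: writing $\mathcal{W}(W)$ for the $\gamma$-quantum wedge of weight $W$, an independent $\SLE_{\wt\kappa}(W_1-2;W_2-2)$ drawn on top of a sample from $\mathcal{W}(W_1+W_2)$ (with force points at $0^\pm$) cuts it into two independent quantum wedges of weights $W_1$ and $W_2$ welded along their boundary arcs by quantum length. This identity holds exactly, with no multiplicative constant, in the infinite-volume setting.

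Next I would exploit the close relationship between quantum wedges and quantum disks of matching weight. Inspecting Definition~\ref{def-quantum-disk}, the two-sided process $Y_t$ encoding the radial field decomposition of a weight $W$ thick quantum disk is a drifted Brownian excursion-type process conditioned to remain negative on both sides, whereas the corresponding process for a quantum wedge of the same weight is a reflected Brownian motion with the same drift. Consequently, the wedge measure admits a disintegration over the total ``quantum boundary length'' functional of the encoding process, and the fibers of this disintegration coincide (up to an explicit Radon--Nikodym factor) with the disintegrated disk measures $\mathcal{M}_{0,2}^{\mathrm{disk}}(W;\ell)$. The plan is then to disintegrate both sides of the wedge welding identity over the quantum length of the welding interface and the outer boundary arcs: given these lengths, the conditional welded wedge reduces to the welding of two conditioned disks, yielding the desired disk identity after integrating back out. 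The constant $c_{W_1,W_2}$ arises as the ratio of the normalizing constants from the disintegrations, and its positivity and finiteness follow from the polynomial-tail behavior of the boundary length densities under $\mathcal{M}_{0,2}^{\mathrm{disk}}(W_i)$, themselves derivable from standard Bessel excursion scaling.

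The main obstacle is handling the thin-disk regime $W_i \in (0,\gamma^2/2)$. In this case the quantum disk is a beaded surface constructed as a Poissonian chain of thick disks of weight $\gamma^2 - W_i$ (Definition~\ref{def-thin-disk}), and the welding interface must traverse pinch points between beads. One must verify that the $\SLE_{\wt\kappa}(W_1-2;W_2-2)$ curve, upon hitting the continuation threshold at a pinch (which occurs precisely when $W_i-2 \le -2$, consistent with the thin regime), can be restarted in the next bead in a way that respects the Poisson chain structure on \emph{both} sides simultaneously and matches quantum lengths across pinches. This requires checking compatibility between the excursion decomposition of the driving Brownian motion of the SLE and the independent Poisson chain structures of the two thin disks, most cleanly done by first proving the thick case $W_1,W_2 \ge \gamma^2/2$ and then bootstrapping to the thin case via the Poissonian concatenation structure and the fact that a thin weight $W$ disk is (by definition) a chain of thick weight $\gamma^2-W$ disks.
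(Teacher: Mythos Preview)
This theorem is not proved in the present paper: it is stated as Theorem~2.2 of \cite{AHS20} and invoked as a black box. There is therefore no proof in the paper to compare your proposal against.

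That said, your proposal has a genuine gap in the step where you pass from the wedge welding of \cite{DMS14} to the disk welding. A quantum wedge of weight $W$ has \emph{infinite} total quantum boundary length on each side, so the sentence ``the wedge measure admits a disintegration over the total `quantum boundary length' functional of the encoding process, and the fibers of this disintegration coincide (up to an explicit Radon--Nikodym factor) with the disintegrated disk measures $\Md_{0,2}(W;\ell)$'' does not make sense as written. There is no finite boundary-length parameter on the wedge side to disintegrate over, and the radial-part comparison you sketch (conditioned drifted Brownian motion for the disk versus a two-sided process for the wedge) does not produce a disintegration of the latter over a finite quantity.

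The argument in \cite{AHS20} does start from the wedge welding of \cite{DMS14}, but the passage from wedges to disks is more delicate. In the thin regime $W<\gamma^2/2$ one uses that a thin wedge is an ordered Poissonian collection of thick disks and applies excursion-theoretic identities to extract a single bead; in the thick regime one uses a field-perturbation (re-weighting by $e^{\lambda \phi_\epsilon}$) or zooming argument rather than a direct disintegration. Your outline of the thin case bootstrap is closer to the actual structure, but the thick case needs a different mechanism than the one you describe.
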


\begin{figure}[ht]
	\centering
	\begin{tabular}{ccc} 
			\includegraphics[scale=0.7]{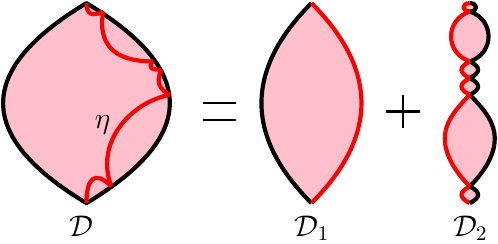}
			& & \ \ \ \
			\includegraphics[scale=0.5]{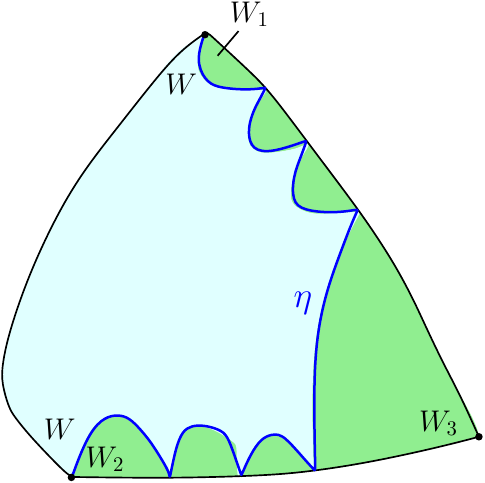}
		\end{tabular}
	\caption{\textbf{Left}: Illustration of Theorem~\ref{thm:disk-welding} with $W_1\ge\frac{\gamma^2}{2}$ and $W_2<\frac{\gamma^2}{2}$. \textbf{Right}: Illustration of Theorem~\ref{thm:disk+QT} with $W,W_3\ge\frac{\gamma^2}{2}$ and $W_1,W_2<\frac{\gamma^2}{2}$.}\label{fig-qt-weld} 
\end{figure}

Here, if $W_1+W_2<\frac{\gamma^2}{2}$, then $\Md_{0,2}(W_1+W_2)\otimes \SLE_{\wt\kappa}(W_1-2;W_2-2)$ is understood as drawing independent $ \SLE_{\wt\kappa}(W_1-2;W_2-2)$ curves in each bead of the weight $W_1+W_2$ disk, and the $\SLE_{\wt\kappa}(W_1-2;W_2-2)$ is defined by their concatenation.  To be more explicit, the concrete definition is given by replacing the measure $\Md_{0,2}(\gamma^2-W_1-W_2)$ with $\Md_{0,2}(\gamma^2-W_1-W_2)\otimes \SLE_{\wt\kappa}(W_1-2;W_2-2)$ in the Poisson point process construction of $\Md_{0,2}(W_1+W_2)$ in Definition~\ref{def-thin-disk}.

For a quantum triangle of weights $W+W_1,W+W_2,W_3$ with $W_2+W_3 = W_1+2$ embedded as $(D,\phi,a_1,a_2,a_3)$, we start by making sense of the $\SLE_{\wt\kappa}(W-2;W_1-2,W_2-W_1)$ curve $\eta$ from $a_2$ to $a_1$. If the domain $D$ is simply connected (which corresponds to the case where $W+W_1,W+W_2, W_3\ge\frac{\gamma^2}{2}$), $\eta$ is just the ordinary $\SLE_{\wt\kappa}(W-2;W_1-2,W_2-W_1)$  with force points at $a_2^-,a_2^+$ and $a_3$. Otherwise, let $(\tilde{D}, \phi,\tilde{a}_1,\tilde{a}_2, \tilde{a}_3)$ be the thick quantum triangle component as in Definition~\ref{def-qt-thin}, and sample an $\SLE_{\wt\kappa}(W-2;W_1-2,W_2-W_1)$ curve $\tilde{\eta}$ in $\tilde{D}$ from $\tilde{a}_2$ to $\tilde{a}_1$. Then our curve $\eta$ is the concatenation of $\tilde{\eta}$ with independent $\SLE_{\wt\kappa}(W-2;W_1-2)$ curves in each bead of the weight $W+W_1$   quantum disk (if $W+W_1<\frac{\gamma^2}{2}$) and $\SLE_{\wt\kappa}(W-2;W_2-2)$ curves in each bead of the weight $W+W_2$  quantum disk (if $W+W_2<\frac{\gamma^2}{2}$). In other words, if $W+W_1<\frac{\gamma^2}{2}$ and $W+W_2<\frac{\gamma^2}{2}$, then the notion $\QT(W_1,W_2,W_3)\otimes\SLE_{\wt\kappa}(W-2;W_1-2,W_2-W_1)$ is defined through $\QT(\gamma^2-W-W_1,\gamma^2-W-W_2,W_3)\otimes \SLE_{\wt\kappa}(W-2;W_1-2,W_2-W_1)$, $\Md_{0,2}(W+W_1)\otimes\SLE_{\wt\kappa}(W-2;W_1-2)$ and $\Md_{0,2}(W+W_2)\otimes\SLE_{\wt\kappa}(W-2;W_2-2)$ as in Definition~\ref{def-qt-thin}, while other cases follows similarly.

With this notation, we state the welding of quantum disks with quantum triangles below.
\begin{theorem}[Theorem 1.1 of \cite{ASY22}]\label{thm:disk+QT}
Let $\gamma\in(0,2)$ and $\wt\kappa=\gamma^2$. Fix $W,W_1,W_2,W_3>0$ such that $W_2+W_3=W_1+2$. There exists some constant $c:=c_{W,W_1,W_2,W_3}\in (0,
\infty)$ such that
\begin{equation}\label{eqn:disk+QT}
    \QT(W+W_1,W+W_2,W_3)\otimes \SLE_{\wt\kappa}(W-2;W_2-2,W_1-W_2) = c\Wd (\Md_{0,2}(W),\QT(W_1,W_2,W_3)).
\end{equation}
\end{theorem}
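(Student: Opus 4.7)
The plan is to follow the Liouville-field / conformal-welding strategy of \cite{ASY22}, leveraging the two-disk welding Theorem~\ref{thm:disk-welding} as the base case and upgrading to the quantum-triangle setting via a Liouville boundary insertion. The first step is a thin-to-thick reduction: because the thin definitions (Definitions~\ref{def-thin-disk} and~\ref{def-qt-thin}) are built by Poisson gluing of thick beads, and because both conformal welding and independent $\SLE_{\wt\kappa}(\underline\rho)$ decoration act bead-by-bead, one reduces to the fully thick regime in which $W\geq\gamma^2/2$ and each of $W_1,W_2,W_3\geq\gamma^2/2$, so that all surfaces involved admit direct Liouville-field embeddings via Definition~\ref{def-quantum-disk} and Definition~\ref{def-qt-thick}.

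In the thick regime, the identity is proved by combining two-disk welding with a Girsanov-type boundary insertion. Apply Theorem~\ref{thm:disk-welding} to weld $\Md_{0,2}(W)$ to $\Md_{0,2}(W_1+W_2)$ along a common boundary arc, yielding $\Md_{0,2}(W+W_1+W_2)\otimes \SLE_{\wt\kappa}(W-2;\,W_1+W_2-2)$. Next, insert a boundary marked point carrying a Liouville insertion of weight $\beta_3=\gamma+(2-W_3)/\gamma$ on the unwelded boundary arc of the $\Md_{0,2}(W_1+W_2)$ component. By the explicit Liouville-field description of $\QT$ in Definition~\ref{def-qt-thick}, this insertion (after the appropriate length-weighting required by the $\frac{1}{Q-\beta_3}$ normalization) converts $\Md_{0,2}(W_1+W_2)$ into $\QT(W_1,W_2,W_3)$ and simultaneously converts the welded total $\Md_{0,2}(W+W_1+W_2)$ into $\QT(W+W_1,W+W_2,W_3)$. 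The constraint $W_2+W_3=W_1+2$ enters precisely as the relation under which the inserted $\beta_3$ on the unwelded arc is compatible with the weight relabeling of the welded result.

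The main obstacle is tracking how the $\beta_3$-insertion modifies the SLE law. Via the imaginary-geometry coupling between the GFF and $\SLE_{\wt\kappa}(\underline\rho)$, adding a Liouville boundary insertion at a point $s_3$ is equivalent, after a Cameron--Martin shift of the field, to tilting the Loewner driving function by an additional Bessel-type drift corresponding to a new force point at $s_3$. A direct computation using the Girsanov transform gives that the new force-point weight works out to $W_3-2 = W_1-W_2$ (using $W_2+W_3=W_1+2$), and the original right force point of weight $W_1+W_2-2$ splits into a weight $W_2-2$ force point immediately to the right of the SLE starting point together with the new weight $W_1-W_2$ force point at $s_3$. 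This produces exactly $\SLE_{\wt\kappa}(W-2;\,W_2-2,\,W_1-W_2)$. Making this rigorous requires a joint Cameron--Martin computation for the Liouville field together with a Girsanov argument for the radial/chordal SLE driving function, carried out at the level of the boundary-length disintegrations $\Md_{0,2}(W;\ell)$ and $\QT(W_1,W_2,W_3;\ell_1,\ell_2)$ so that the welding framework recalled in Section~\ref{sec:weld} applies. The finite constant $c_{W,W_1,W_2,W_3}$ is the explicit prefactor of the Liouville insertion; its value can be cross-checked against the known special case of Lemma~\ref{lem:QT(W,2,W)}, where the specialization $W_3=2$ collapses the triangle to a length-marked disk.
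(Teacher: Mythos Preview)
The paper cites this result as \cite[Theorem~1.1]{ASY22} without reproducing the proof, so your proposal must stand on its own. It contains a genuine gap.

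The central error is the claim that inserting a single boundary singularity of size $\beta_3$ converts $\Md_{0,2}(W_1+W_2)$ into $\QT(W_1,W_2,W_3)$. A weight-$V$ two-pointed thick disk carries Liouville insertion $\beta_V=\gamma+(2-V)/\gamma$ at \emph{each} of its two marked points; adding a third insertion at a new boundary point leaves these two untouched. Your procedure therefore yields a triangle whose first two vertex weights are both $W_1+W_2$, not $W_1$ and $W_2$ separately. The only situation in which this matches the target is $W_1=W_2$, and then the constraint $W_2+W_3=W_1+2$ forces $W_3=2$, i.e.\ exactly the length-marked disk of Lemma~\ref{lem:QT(W,2,W)}. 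For generic $(W_1,W_2,W_3)$ your construction produces the wrong quantum surface on both sides of the welding.

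Your mechanism for how the SLE drift changes is also incorrect. In the conformal-welding framework the SLE curve is sampled \emph{independently} of the Liouville field, so a field insertion alone cannot alter the $\SLE_{\wt\kappa}(\underline\rho)$ parameters; the imaginary-geometry coupling you invoke is a different GFF--curve relationship and does not transfer here. Your own arithmetic already signals the problem: the proposed new right-side force-point weights sum to $(W_2-2)+(W_1-W_2)=W_1-2$, not the original $W_1+W_2-2$. The proof in \cite{ASY22} does not proceed by perturbing disk welding with a single insertion; rather one works directly with the Liouville-field embedding of $\QT(W+W_1,W+W_2,W_3)$ on $\bbH$ together with an independent $\SLE_{\wt\kappa}(W-2;W_2-2,W_1-W_2)$, and identifies the two cut pieces via conformal covariance of the Liouville field combined with an SLE coordinate-change computation. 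The constraint $W_2+W_3=W_1+2$ arises precisely from matching the insertion at the $W_3$ vertex to the SLE force-point contribution there.
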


\begin{definition}[Weight zero quantum disks and quantum triangles]
We define the weight zero quantum disk to be a line segment  modulo homeomorphisms of $\bbR^2$ parametrized  by quantum length where the total length is $\mathbf {t}\sim \mathds{1}_{t>0}dt$, and write $\Md_{0,2}(0)$ for its law.     For $W_1,W_2,W_3\ge0$ where one or more of $W_1,W_2,W_3$ is zero, we define the measure $\QT(W_1,W_2,W_3)$ using $\Md_2(0)$ in the same way as Definition~\ref{def-qt-thin}.
\end{definition}

\begin{proposition}\label{prop:disk+QT-0}
    Theorem~\ref{thm:disk+QT} holds for $(W_1,W_2,W_3) = (0,\frac{\gamma^2}{2},2-\frac{\gamma^2}{2})$ and $\gamma\neq\sqrt 2$.
\end{proposition}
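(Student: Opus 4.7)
The plan is to derive Proposition~\ref{prop:disk+QT-0} as the $W_1 \to 0^+$ limit of Theorem~\ref{thm:disk+QT} applied with the family of weights $(W, W_1, W_1 + \tfrac{\gamma^2}{2}, 2 - \tfrac{\gamma^2}{2})$, which satisfy the constraint $W_2 + W_3 = W_1 + 2$ for every $W_1 > 0$. For such $W_1$, Theorem~\ref{thm:disk+QT} gives
\[
\QT(W{+}W_1,\, W{+}W_1{+}\tfrac{\gamma^2}{2},\, 2{-}\tfrac{\gamma^2}{2}) \otimes \SLE_{\wt\kappa}(W{-}2;\, W_1{+}\tfrac{\gamma^2}{2}{-}2,\, -\tfrac{\gamma^2}{2}) = c_{W_1}\,\Wd\bigl(\Md_{0,2}(W),\, \QT(W_1, W_1{+}\tfrac{\gamma^2}{2}, 2{-}\tfrac{\gamma^2}{2})\bigr).
\]
I would disintegrate both sides over the length $\ell$ of the welding interface (and, if convenient, over the quantum lengths of the outer boundary arcs), pass to the $W_1 \to 0^+$ limit pointwise in $\ell$, and define the constant of Proposition~\ref{prop:disk+QT-0} by $c := \lim_{W_1 \to 0^+} c_{W_1}$.

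The convergence on the left-hand side is the easier half. The Liouville field $\textup{LF}^{(\beta_1,\infty),(\beta_2,0),(\beta_3,1)}_{\bbH}$ underlying $\QT$ depends continuously on its insertion weights $\beta_i = \gamma + (2-W_i)/\gamma$, so $\QT(W{+}W_1, W{+}W_1{+}\tfrac{\gamma^2}{2}, 2{-}\tfrac{\gamma^2}{2}) \to \QT(W, W{+}\tfrac{\gamma^2}{2}, 2{-}\tfrac{\gamma^2}{2})$; and $\SLE_{\wt\kappa}(\underline\rho)$ varies continuously in $\underline\rho$ away from the continuation threshold. Here the force-point weights satisfy $-\tfrac{\gamma^2}{2} > -2$ for $\gamma < 2$, and $W_1 + \tfrac{\gamma^2}{2} - 2 \geq \tfrac{\gamma^2}{2} - 2 > -2$ uniformly for small $W_1 \geq 0$, so the standard continuity of the driving function applies.

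The delicate step, and the principal obstacle, is the convergence of the right-hand side. By Definition~\ref{def-qt-thin} together with the weight-zero convention, $\QT(0, \tfrac{\gamma^2}{2}, 2-\tfrac{\gamma^2}{2})$ is built by attaching the pure line segment $\Md_{0,2}(0)$ at the weight-$0$ vertex of a spine $\QT(\gamma^2, \tfrac{\gamma^2}{2}, 2-\tfrac{\gamma^2}{2})$, whereas for $W_1 > 0$ the thin disk $\Md_{0,2}(W_1)$ attached at the weight-$W_1$ vertex carries a genuine Poisson cloud of bubbles of intensity $\Md_{0,2}(\gamma^2 - W_1)$ along a line segment of length $\sim (1 - 2W_1/\gamma^2)^{-2} dt$ (Definition~\ref{def-thin-disk}). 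To identify the limit, I would need to show that in the disintegrated welding identity the bubble cloud becomes invisible as $W_1 \to 0^+$: once the welding-interface length is fixed at $\ell > 0$, the contribution of bubble-boundary-length to the welded arc should concentrate near $0$, so that the welding interface is carried asymptotically by the line spine alone. I expect this to follow from the scaling behavior of $\Md_{0,2}(\gamma^2-W_1;\,\cdot)$ together with the prefactor $(1 - 2W_1/\gamma^2)^{-2}$ in Definition~\ref{def-thin-disk}. The hypothesis $\gamma \neq \sqrt 2$ enters because it guarantees that the weight-$(2 - \tfrac{\gamma^2}{2})$ vertex of the spine is not itself the borderline weight $\tfrac{\gamma^2}{2}$, so that the spine's thick-thin decomposition depends continuously on $W_1$ at $W_1 = 0$ without additional complications.
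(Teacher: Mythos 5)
Your approach is genuinely different from the paper's, but it has a gap that is essentially as hard as the proposition itself, and your stated intuition for closing it is suspect.

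The paper does not take a $W_1\to 0^+$ limit of welding identities. It works directly at $W_1 = 0$: disintegrate $\QT(0,\frac{\gamma^2}{2},2-\frac{\gamma^2}{2};\ell)$ using Definition~\ref{def-qt-thin} (a pure line segment $\Md_{0,2}(0)$ concatenated with the thick triangle $\QT(\gamma^2,\frac{\gamma^2}{2},2-\frac{\gamma^2}{2})$), use the marking trick and Lemma~\ref{lem:QT(W,2,W)} to rewrite the line-segment piece as $\QT(W,2,W)$, and then invoke \cite[Corollary 4.11]{SY23} (which applies precisely because the weight-$(2+\gamma^2)$ insertion is zero) to show the welded surface is $\QT(W,W+\frac{\gamma^2}{2},2-\frac{\gamma^2}{2})$ decorated by \emph{some} curve law $\mathsf{m}_W$. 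The curve law is then pinned down not by perturbing the triangle, but by drawing auxiliary curves $\eta_n=\SLE_{\wt\kappa}(W-2-\frac1n;\frac1n-2)$ inside the \emph{already-welded, fixed} surface, observing that $\eta_n$ has a known law by Theorem~\ref{thm:disk+QT}, and passing $n\to\infty$ via Lemma~\ref{lm:SLE-rho--2} (with a separate imaginary-geometry argument for the thick regime). This sidesteps any question about limits of quantum surface measures.

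Your approach instead asserts $\QT(W_1,W_1+\frac{\gamma^2}{2},2-\frac{\gamma^2}{2})\to\QT(0,\frac{\gamma^2}{2},2-\frac{\gamma^2}{2})$ as $W_1\to 0^+$, and that the welding and the constants $c_{W_1}$ are compatible with this limit. This is not established, and it is not obviously true. By Definition~\ref{def-thin-disk}, the weight-$W_1$ vertex carries a Poisson cloud of $\Md_{0,2}(\gamma^2-W_1)$ bubbles along a line segment of length $T\sim(1-\frac{2W_1}{\gamma^2})^{-2}\mathrm{Leb}_{\bbR_+}$. The prefactor tends to $1$, not $0$, and the bead boundary-length intensity $\sim\ell^{-2+2W_1/\gamma^2}\,d\ell$ tends to the critical exponent $-2$, at which the expected bubble boundary length per unit line length diverges. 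Whether, after conditioning on the total welding-arc length being $\ell$, this degenerate limit reproduces the pure line segment of the separately-defined $\Md_{0,2}(0)$ — rather than, say, some nontrivial atomic surface or no limit at all — is precisely the substantive content that needs a proof. Your ``I expect this to follow from the scaling behavior\ldots'' is where the real work lies; and even granting this convergence, you would still need to verify continuity of $\Wd(\cdot,\cdot)$ and convergence of $c_{W_1}$ to a finite nonzero limit, neither of which is addressed. In short, you have not proved the proposition; you have reduced it to a limiting statement that is at least as delicate, whereas the paper's argument avoids any such surface-measure limit entirely.
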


\begin{figure}
    \centering
    \includegraphics[scale=0.6]{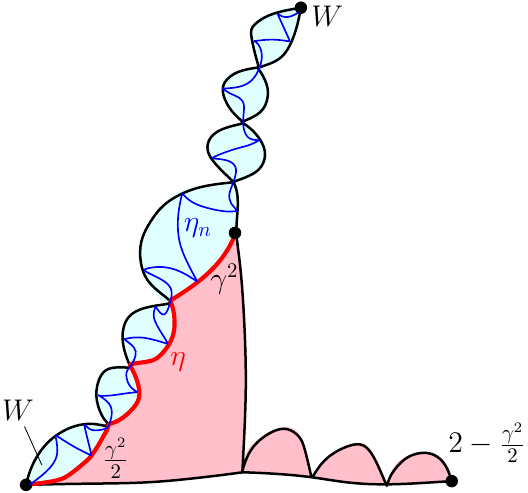}
    \caption{An illustration of Proposition~\ref{prop:disk+QT-0} with $\gamma\in(\sqrt{2},2)$ and $W<\frac{\gamma^2}{2}$.}
    \label{fig:weld-qt-0}
\end{figure}

We impose the constraint $\gamma \neq \sqrt2$ in Proposition~\ref{prop:disk+QT-0} to avoid technical difficulties, but expect that it also holds for $\gamma = \sqrt2$. This suffices for the present work since we only consider $\kappa \in (4,8)$, corresponding to $\gamma \in (\sqrt2, 2)$.

\begin{proof}
    We disintegrate over the quantum length $r$ of the boundary arc between the weight $\gamma^2$ vertex and the weight $\frac{\gamma^2}{2}$ vertex of the weight $(\gamma^2,\frac{\gamma^2}{2},2-\frac{\gamma^2}{2})$ quantum triangle, i.e., for $\ell>0$, we have
    \begin{equation*}
        \QT(0,\frac{\gamma^2}{2},2-\frac{\gamma^2}{2};\ell) = \int_0^\ell\,\Md_{0,2}(0;\ell-r)\times \QT(\gamma^2,\frac{\gamma^2}{2},2-\frac{\gamma^2}{2};r)\,dr
    \end{equation*}
    where $\Md_{0,2}(0;\ell-r)$ stands for a line segment with quantum length $\ell-r$. Now we mark the point on the weight $W$ quantum disk on the interface with distance $r$ to the root. By Lemma~\ref{lem:QT(W,2,W)}, for fixed $\ell>r$, the three-pointed quantum surface has law $c\QT(W,2,W;\ell-r,r)$. Therefore 
    \begin{equation}\label{eq:disk+QT-0-1}
    \begin{split}
        \Wd (\Md_{0,2}(W),\QT(0,\frac{\gamma^2}{2},2-\frac{\gamma^2}{2})) &= c\int_0^\infty\int_0^\ell \Wd\big(\QT(W,2,W;\ell-r,r),\QT(\gamma^2,\frac{\gamma^2}{2},2-\frac{\gamma^2}{2};r)\big)\, dr\,d\ell\\
        &= c\int_0^\infty \Wd\big(\QT(W,2,W;r), \QT(\gamma^2,\frac{\gamma^2}{2},2-\frac{\gamma^2}{2};r)\big)\,dr.
        \end{split}
    \end{equation}
    By~\cite[Corollary 4.11]{SY23}, since the Liouville field insertion size for the weight $2+\gamma^2$ is $\gamma+\frac{2-(2+\gamma^2)}{\gamma} = 0$, it follows that there exists a probability measure $\mathsf{m}_W$ on curves such that~\eqref{eq:disk+QT-0-1} is equal to a constant times $\QT(W,W+\frac{\gamma^2}{2},2-\frac{\gamma^2}{2})\otimes\mathsf{m}_W$.

 Now we identify the law $\mathsf{m}_W$. First assume $W<\frac{\gamma^2}{2}$. We condition on the event where the right boundary arc (i.e., the boundary arc between the weight $W$ vertex and the weight $2-\frac{\gamma^2}{2}$ vertex) has quantum length between $[1,2]$. This event has finite measure following~\cite[Proposition 2.24]{ASY22}. Let $n>W^{-1}$. In each bead of the weight $W$ quantum disk, we draw an $\SLE_{\wt\kappa}(W-2-\frac{1}{n};\frac{1}{n}-2)$ curve and let $\eta_n$ be the concatenation. By Theorem~\ref{thm:disk-welding}, $\eta_n$ cuts the weight $W$ quantum disk into a weight $W-\frac{1}{n}$ quantum disk and a weight $\frac{1}{n}$ quantum disk, and therefore $(\eta_n,\eta)$ are the interfaces under the welding $$\iint_{\bbR_+^2}\Wd\bigg(\Md_{0,2}(W-\frac{1}{n};\ell_1),\Md_{0,2}(\frac{1}{n};\ell_1,\ell_2),\QT(0,\frac{\gamma^2}{2},2-\frac{\gamma^2}{2};\ell_2)\bigg)\,d\ell_1d\ell_2.$$ By the previous paragraph, we can first weld the weight $\frac{1}{n}$ quantum disk with the quantum triangle to get a weight $(\frac{1}{n}, \frac{1}{n}+\frac{\gamma^2}{2},2-\frac{\gamma^2}{2})$ quantum triangle, and therefore it follows from Theorem~\ref{thm:disk+QT} that the marginal law of $\eta_n$ is now $\SLE_{\wt\kappa}(W-\frac{1}{n}-2;\frac{\gamma^2}{2}+\frac{1}{n}-2,-\frac{\gamma^2}{2})$. On the other hand, by Lemma~\ref{lm:SLE-rho--2} below, if we embed $\QT(W,W+\frac{\gamma^2}{2},2-\frac{\gamma^2}{2})\otimes\mathsf{m}_W$ on a compact domain, the Hausdorff distance between $\eta_n$ and $\eta$ converges in probability as $n\to\infty$. Therefore using the continuity of the Loewner chains (see e.g.~\cite[Section 6.1]{Kem17SLE}), we conclude that the law $\mathsf{m}_W$ equals $\SLE_{\wt\kappa}(W-2;\frac{\gamma^2}{2}-2,-\frac{\gamma^2}{2})$ if $W<\frac{\gamma^2}{2}$.
 
 Finally if $W\ge\frac{\gamma^2}{2}$, consider the welding $$\iint_{\bbR_+^2}\Wd\bigg(\Md_{0,2}(W-\frac{\gamma^2}{4};\ell_1),\Md_{0,2}(\frac{\gamma^2}{4};\ell_1,\ell_2),\QT(0,\frac{\gamma^2}{2},2-\frac{\gamma^2}{2};\ell_2)\bigg)\,d\ell_1d\ell_2,$$
 and let $(\eta_0,\eta)$ be the interfaces. Then using Theorem~\ref{thm:disk-welding}, the law of $\eta$ is the desired measure $\mathsf{m}_W$; on the other hand, from the previous paragraph, we know that  $\eta_0$ is an $\SLE_{\wt\kappa}(W-\frac{\gamma^2}{4}-2;\frac{3\gamma^2}{2}-2,-\frac{\gamma^2}{2})$ curve by Theorem~\ref{thm:disk+QT}, whereas $\eta$ is an $\SLE_{\wt\kappa}(\frac{\gamma^2}{4}-2;\frac{\gamma^2}{2}-2,-\frac{\gamma^2}{2})$ curve to the right of $\eta_0$. Therefore from the imaginary geometry theory~\cite[Theorem 1.1]{MS16a} we can read off the marginal law of $\eta$ under this setting, which gives the desired conclusion.
  \end{proof}

\begin{lemma}\label{lm:SLE-rho--2}
   Let $\wt\kappa\in(0,4)$. Let $(D,x,y)$ be a bounded simply connected domain and $x,y\in\partial D$. Let $\rho>-2$, and let $\eta_n$ be an $\SLE_{\wt\kappa}(\rho-\frac{1}{n};\frac{1}{n}-2)$ curve in $\ol D$ from $x$ to $y$ with force points $x^\mp$. Then as $n\to\infty$, the Hausdorff distance between $\eta_n$ and the right boundary arc of $(D,x,y)$ converges to 0 in probability.
\end{lemma}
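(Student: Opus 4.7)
First, by the conformal invariance of $\SLE_{\wt\kappa}(\underline\rho)$, I would reduce to the case $(D,x,y) = (\bbH,0,\infty)$, so that the right boundary arc becomes $[0,+\infty]$. The goal then becomes showing $\eta_n \to [0,+\infty]$ in Hausdorff distance (with respect to the spherical metric on $\overline\bbH \cup \{\infty\}$) in probability as $n\to\infty$.

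The main tool is the imaginary geometry coupling of~\cite{MS16a, ig4}. Setting $\lambda = \pi/\sqrt{\wt\kappa}$ and $\chi = \tfrac{2}{\sqrt{\wt\kappa}} - \tfrac{\sqrt{\wt\kappa}}{2}$, let $h$ be a Dirichlet GFF on $\bbH$ with boundary data $-\lambda(1+\rho)$ on $(-\infty,0)$ and $-\lambda$ on $(0,\infty)$. Then the shifted field $h_n := h + \lambda/n$ has boundary data $-\lambda(1+\rho-\tfrac1n)$ on $(-\infty,0)$ and $\lambda(1+(\tfrac1n-2))$ on $(0,\infty)$, so by~\cite[Theorem 1.1]{MS16a} its angle-zero flow line from $0$ has the law of $\eta_n$. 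Using the identity that adding a constant $c$ to the GFF shifts flow-line angles by $c/\chi$, this angle-zero flow line of $h_n$ is also the angle-$\theta_n$ flow line of $h$ with $\theta_n := \lambda/(n\chi) \to 0^+$, producing a joint coupling of all the $\eta_n$ through a single field $h$. Applying the monotonicity and continuity of flow lines in angle~\cite[Theorems 1.2 and 1.5]{ig4}, the curves $\eta_n$ should be monotonically squeezed toward the angle-zero flow line of $h$ as $\theta_n \downarrow 0$. Formally, this limiting object corresponds to $\SLE_{\wt\kappa}(\rho;-2)$, which is at the continuation threshold on the right and so degenerates to $[0,+\infty]$ itself.

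The main obstacle will be the rigorous identification of the limit at the degenerate threshold $\rho_R=-2$, where the standard IG continuity theorems do not directly apply (they require $\rho_R>-2$). The cleanest workaround I foresee is a squeezing/barrier argument: for any fixed $z \in \bbH$ and any $\epsilon > 0$, it suffices to show $\bbP[\dist(z, \eta_n) < \epsilon] \to 0$ as $n \to \infty$, since $\eta_n$ joins $0$ to $\infty$ and this estimate pins the curve into arbitrarily small neighborhoods of $[0,+\infty]$. This decay can be established either directly, via Bessel or martingale estimates for the $\SLE_{\wt\kappa}(\rho_L;\rho_R)$ driving function as $\rho_R \to -2$, or by using the IG monotonicity in angle in the above coupling to compare $\eta_n$ with auxiliary flow lines whose right force-point weight is held slightly above $-2$ but taken to $-2$ after $n\to\infty$; these auxiliary curves sit between $\eta_n$ and $[0,+\infty]$ and serve as barriers.
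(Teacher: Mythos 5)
Your setup (conformal invariance reduction, imaginary geometry coupling with a fixed field, identifying $\eta_n$ as the angle $\theta_n = \lambda/(n\chi)$ flow line, monotonicity in the angle) matches the paper exactly, and you correctly identify the central obstacle: the standard IG continuity results do not reach the degenerate right weight $-2$. The divergence is in how that obstacle is resolved.

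Your proposed workaround~(b) is circular. The auxiliary "barrier" flow line you describe has right force-point weight $-2+\epsilon$, so proving that it limits onto $[0,\infty]$ as $\epsilon\to 0$ is exactly the same degenerate-threshold question you started from; using it as a left barrier between $\eta_n$ and $[0,\infty]$ therefore assumes what you want to prove. Your workaround~(a) (direct estimates as $\rho_R\to -2$) is the more promising route, but as stated the reduction is incomplete. Knowing $\bbP[\dist(z,\eta_n)<\epsilon]\to 0$ for fixed $z\in\bbH$ does not pin $\eta_n$ into a neighborhood of $[0,\infty]$: it leaves open that $\eta_n$ accumulates along $(-\infty,0)$, which it can approach and (when $\rho\le \wt\kappa/2-2$) even hit. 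You would need the same avoidance estimate for negative boundary points, and, more to the point, the natural quantity to control is $\bbP[z\text{ lies in the right component of }\bbH\setminus\eta_n]\to 1$, not mere avoidance of a ball. You would also need a separate argument for the parameter range where $\eta_n$ touches $(-\infty,0)$, since the curve is then not determined by a single nontrivial two-sided Bessel comparison.

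The paper avoids both difficulties by a complex-analytic argument instead of a stochastic one. Using the monotone increase of the right components $\tilde D_n$, it passes to the limit $\tilde D$, and pins it down via the Carath\'eodory kernel theorem together with Schwarz's lemma, fed by the key quantitative input that the normalized conformal map $\psi_n:\tilde D_n\to\bbH$ satisfies $\psi_n'(-1)\to 1$ in probability. That input comes from the conformal welding of quantum disks of \cite[Theorem~1.1]{AHS21}, where the weight $\frac1n$ disk degenerates as $n\to\infty$. The range $\rho\le\wt\kappa/2-2$ is then handled by conditioning on an auxiliary flow line and running the argument in each pocket, applying the already-proven case. So the approaches genuinely differ: yours aims for a self-contained SLE estimate, the paper's trades that for an LQG welding input plus classical complex analysis. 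If you pursue~(a), replace "avoids $B(z,\epsilon)$" with "$z$ ends up in the right component," cover negative boundary points, and supply the pocket reduction for small $\rho$.
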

\begin{proof}

    First assume that $\rho>\frac{\kappa}{2}-2$. Consider the imaginary geometry field $h$ on $\bbH$ whose boundary values are given by $-\lambda(1+\rho)$ on $(-\infty,0)$ and $-\lambda$ on $(0,\infty)$. Let $\tilde\eta_n$ be the angle $\frac{\lambda}{n\chi}$ flow line of $h$. Then following~\cite[Theorem 1.1]{MS16a}, $\tilde\eta_n$ is an $\SLE_{\wt\kappa}(\rho-\frac{1}{n};\frac{1}{n}-2)$ curve in $\bbH$ from $0$ to $\infty$ with force points $0^\mp$. 
    Moreover, following the monotonicity of flow lines~\cite[Theorem 1.5]{MS16a}, for $m>n$, $\tilde\eta_m$ stays to the right side of $\tilde\eta_n$.  For sufficiently large enough $n$, $\tilde\eta_n\cap(-\infty,0)=\emptyset$. Let $\tilde D_n$ be the connected component of $\bbH\backslash\tilde\eta_n$ with $-1$ on the boundary, and $\psi_n:\tilde D_n\to\bbH$ be the conformal map fixing $0,-1,\infty$. Then $\tilde D_n$ is increasing in $n$, and let $\tilde D$ be the limit.  On the other hand, following~\cite[Theorem 1.1]{AHS21}, $\psi_n'(-1)$ tends to 1 in probability.  Moreover, using Schwartz reflection over $(-\infty,0)$, by the Carath\'{e}odory kernel theorem, $\psi_n^{-1}$ converges uniformly on compact subsets of $\bbH\cap\bbR$ to $\psi^{-1}$, where $\psi$ is the conformal map from $\tilde D$ to $\bbH$ fixing $0,\infty,-1$. Therefore $\psi$ can be viewed as a conformal map from $\tilde D\cup\ol{\tilde D}\cup (-\infty,0)$ to $\mathbb{C}\backslash [0,\infty)$ fixing $-1$ with $\psi'(-1)=1$, which by Schwartz's lemma implies that $\tilde D = \bbH$. Therefore the conclusion follows by taking a conformal map $f$ from $\bbH$ to $D$. Indeed, assume on the contrary and there exists $\e_0>0$ such that for any $n$, there exists some point $z_n$ lying on the right hand side of $f(\tilde\eta_n)$ and stays at least $\e_0$ distance away from the right boundary arc of $\partial D$. Using monotonicity of $f(\tilde D_n)$ and the boundedness of $D$, one can find some point $z$ which lies on $\ol D\backslash f(\tilde D)$ and stays at least $\e_0$ distance away from the right boundary arc of $\partial D$. Then this would contradict with $\tilde D = \bbH$. 

    Finally if $\rho\in(-2,\frac{\kappa}{2}-2]$, consider the imaginary geometry field $h$ on $\bbH$ whose boundary value is $-\lambda$ on $\bbR$. Let $\tilde\eta$ be the angle $\frac{\lambda(2+\rho)}{\chi}$ flow line of $h$, and $\tilde\eta_n$ be the angle $\frac{\lambda}{n\chi}$ flow line $\tilde\eta_n$ of $h$. Then by the previous paragraph, for any conformal map $\varphi$ from $\bbH$ to a bounded simply connected domain, $\varphi(\tilde\eta_n)$ converges to $\varphi((0,\infty))$ in Hausdorff topology. Moreover, following~\cite[Theorem 1.1]{MS16a}, the conditional law of $\tilde\eta_n$ given $\tilde\eta$ is $\SLE_{\wt\kappa}(\rho-\frac{1}{n};\frac{1}{n}-2)$ in each connected component of $\bbH\backslash\tilde\eta$ to the right of $\tilde\eta$. Therefore we conclude the proof by conditioning on $\tilde\eta$, pick a connected component of $\bbH\backslash\tilde\eta$ to the right of $\tilde\eta$ and conformally map to $D$.
\end{proof}

We end this section with the following result on conformal welding of forested line segments.
\begin{proposition}[Proposition 3.25 of~\cite{AHSY23}]\label{prop:weld:segment}
     Let $\kappa\in(4,8)$ and $\gamma = \frac{4}{\sqrt{\kappa}}$. Consider a quantum disk $\mathcal{D}$ of weight $W=2-\frac{\gamma^2}{2}$, and let $\tilde\eta$ be the concatenation of an independent $\SLE_\kappa(\frac{\kappa}{2}-4;\frac{\kappa}{2}-4)$ curve on each bead of $\cD$. Then for some constant $c$, $\tilde\eta$
 divides $\cD$ into two   forested lines segments $\wt\cL_-,\wt\cL_+$, whose law is
 \begin{equation}\label{eq:weld:segment}
      c\int_0^\infty \mathcal{M}_2^\mathrm{f.l.}(\ell)\times\mathcal{M}_2^\mathrm{f.l.}(\ell)d\ell.
 \end{equation}
{Moreover, $\wt\cL_\pm$ a.s.\ uniquely determine $(\cD,\tilde\eta)$ in the sense that $(\cD,\tilde\eta)$ is measurable with respect to the $\sigma$-algebra generated by $\wt\cL_\pm$.}
\end{proposition}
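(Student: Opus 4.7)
The plan is to reduce Proposition~\ref{prop:weld:segment} to the simple-SLE welding theorem (Theorem~\ref{thm:disk-welding}) by combining the Poisson point process structure of the thin quantum disk with the imaginary geometry description of the non-simple $\SLE_\kappa(\kappa/2-4;\kappa/2-4)$ curve. First, since $\gamma \in (\sqrt2, 2)$ gives $W = 2-\gamma^2/2 \in (0, \gamma^2/2)$, the disk $\mathcal D$ is thin: by Definition~\ref{def-thin-disk} it is a Poisson point process of beads, each a thick doubly-marked quantum disk of weight $\gamma^2 - W = 3\gamma^2/2 - 2$. By construction, $\tilde \eta$ is the concatenation of independent $\SLE_\kappa(\kappa/2-4;\kappa/2-4)$ curves on the individual beads, and both the forested line structure and the welding integral respect this Poisson concatenation, so it suffices to analyze a single thick bead.

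On a single bead $\mathcal B$, I would use imaginary geometry (Section~\ref{subsec:ig}) to describe $\tilde \eta$ as a non-simple SLE bouncing off both boundary arcs, with its left and right outer boundaries being simple $\SLE_{\wt\kappa}$-type curves (these are precisely the flow-line boundaries from~\cite{MS16a}). Conditionally on the outer boundaries, the pockets sandwiched between each outer boundary and $\tilde\eta$ are independent boundary-filling $\SLE_{\wt\kappa}(\tfrac{\wt\kappa}{2}-4;\tfrac{\wt\kappa}{2}-4)$ excursions in quantum disks. Applying Theorem~\ref{thm:disk-welding} to the outer boundary (with appropriate matching of weights) identifies the ``line part'' of each side as a quantum disk of weight $2-\gamma^2/2$, while the pockets become a countable collection of independent quantum disks (the ``bubbles'') hanging off this line.

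The remaining step is to recognize the resulting structure on each side of $\tilde \eta$ as a forested line segment in the sense of Definition~\ref{def:line-segment}. For this, one needs the sequence of pocket boundary lengths, as one traverses the interface in quantum natural time, to form a stable L\'evy process of index $\kappa/4$ with only positive jumps. This is the usual mating-of-trees / KPZ scaling relation for non-simple $\SLE_\kappa$ on $\gamma$-LQG with $\gamma = 4/\sqrt\kappa$; see~\cite{DMS14, AHSY23}. Matching $\ell$ with the common generalized quantum length of the two interfaces then produces the integral $\int_0^\infty \mathcal{M}_2^\mathrm{f.l.}(\ell)\times \mathcal{M}_2^\mathrm{f.l.}(\ell)\,d\ell$ up to the multiplicative constant $c$. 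Finally, the measurability claim that $(\mathcal D, \tilde\eta)$ is determined by $(\wt{\mathcal L}_-, \wt{\mathcal L}_+)$ reduces to the conformal removability of the welding interfaces; when $\kappa < \kappa_1 \approx 5.61$ this follows from~\cite{kavvadias2023conformal}, and for general $\kappa \in (4,8)$ it is handled as in the welding-uniqueness arguments of~\cite{DMS14, AHSY23} via the mating-of-trees Brownian excursion encoding.

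The main obstacle is the L\'evy-index identification in the second step: one must show that the pocket boundary lengths, read in quantum natural time along $\tilde \eta$, form a $\kappa/4$-stable subordinator matching exactly the process defining a forested line. This is a quantum version of the boundary-touching structure of non-simple $\SLE_\kappa$, and it is what makes ``forested line segment'' the correct generalized quantum surface to appear in the welding; without this matching, the disintegration over $\ell$ on the right-hand side would not correspond to a clean welding of two independent measures.
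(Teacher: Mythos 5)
The paper does not prove this proposition; it is cited verbatim as Proposition~3.25 of \cite{AHSY23}, so there is no in-paper proof to compare against. Evaluated on its own, your proposal has a genuine gap in its central steps.

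The imaginary-geometry step is misdescribed. For $\kappa \in (4,8)$, the curve $\SLE_\kappa(\frac{\kappa}{2}-4;\frac{\kappa}{2}-4)$ in a single bead is boundary-filling on both sides (this is used in the proof of Lemma~\ref{lem:dense-chordal} in the paper). Consequently its left and right outer boundaries are not separate simple $\SLE_{\wt\kappa}$-type flow lines: they coincide a.s.\ with the two boundary arcs of the bead itself. There are therefore no ``pockets sandwiched between each outer boundary and $\tilde\eta$''; the bubbles are the components of the bead minus the trace, each bounded entirely by $\tilde\eta$. Your follow-up description of the pockets as $\SLE_{\wt\kappa}(\frac{\wt\kappa}{2}-4;\frac{\wt\kappa}{2}-4)$ excursions also fails: for $\wt\kappa = 16/\kappa \in (2,4)$ the force-point weight $\frac{\wt\kappa}{2}-4 < -2$ is below the continuation threshold, so that process is not even defined, and the bubbles are in any case filled by $\SLE_\kappa$, not $\SLE_{\wt\kappa}$.

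Because of this, the appeal to Theorem~\ref{thm:disk-welding} cannot be made to work: that theorem is a welding theorem for \emph{simple} $\SLE_{\wt\kappa}$ with $\wt\kappa = \gamma^2 < 4$, whereas $\tilde\eta$ is a non-simple boundary-filling $\SLE_\kappa$, and there is no separate ``outer boundary'' curve to which it could be applied. The statement that it ``identifies the line part of each side as a quantum disk of weight $2-\gamma^2/2$'' is also a type error: the line boundary arc of a forested line segment is a one-dimensional object carrying quantum length, not a two-dimensional quantum disk. What is actually needed---and what \cite{AHSY23} does---is a direct mating-of-trees argument: parametrize $\tilde\eta$ by quantum natural time and show that the left and right quantum boundary length processes are independent $\frac{4}{\gamma^2}$-stable L\'evy processes with only positive jumps, matching the L\'evy process in Definition~\ref{def:forested-line}. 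That is the substance of the proposition; you name it as ``the main obstacle'' but do not supply it, and the intermediate steps you offer do not lead to it. Finally, citing \cite{AHSY23} for the measurability claim is circular, since this proposition is the cited source.
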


\subsection{Mating-of-trees descriptions of quantum surfaces}\label{subsec:mot-qs}
Mating-of-trees theorems allow us to identify special SLE-decorated LQG surfaces with 2D Brownian motion trajectories. In Section~\ref{subsec-mot-meas} 
we discuss the map sending Brownian trajectories to SLE-decorated LQG surfaces. In Section~\ref{subsec-mot-triangle} we use the Markov property of Brownian motion to obtain a new mating-of-trees theorem for $\QT(2-\frac{\gamma^2}2, \gamma^2, \frac{\gamma^2}2)$ (Proposition~\ref{prop-mot-triangle-down}), and in Section~\ref{subsec-mot-welding} we use the Markov property in a different way to obtain a conformal welding identity (Proposition~\ref{prop-mot-QD11}) which will be used to prove Theorem~\ref{thm:welding} in Section~\ref{subsec:pf-welding}.

Let $\mathbbm a^2 = 2/\sin(\frac{\pi\gamma^2}4)$ be the mating-of-trees variance, as derived in \cite[Theorem 1.3]{ARS21}. Consider Brownian motion $Z:=(L_t, R_t)_{t \geq 0}$ with 
\eqb\label{eq-cov}
\mathrm{Var}(L_t) = \mathrm{Var}(R_t) = t \quad \text{and} \quad  \mathrm{Cov}(L_t, R_t) = -\cos(\frac{\pi\gamma^2}4) \mathbbm a^2 t \qquad \text{ for }t \geq 0.
\eqe
We will introduce versions of the process $Z$ taking values in the positive quadrant $\bbR_+^2 = (0,\infty)^2$; as we will see, these variants will correspond to special quantum disks and triangles. 

Let $\mu^\gamma(t,z)$ denote the law of Brownian motion with covariance~\eqref{eq-cov} started at $z \in \bbC$ and run for time $t > 0$. Let $\mu^\gamma(t; z,w)$ be the disintegration of $\mu^\gamma(t; z)$ over its endpoint, so each measure $\mu^\gamma(t; z,w)$ is supported on the set of paths from $z$ to $w$, and $\mu^\gamma(t; z) = \int_{\bbC} \mu^\gamma(t; z,w) \, dw$. Note that $|\mu^\gamma(t; z)| = 1$ for all $t,z$, but $|\mu^\gamma(t;z,w)|$ is typically not 1 (rather, it is the probability density function for the endpoint of a sample from $\mu^\gamma(t; z)$).
The Markov property of Brownian motion can then be written as 
\eqb\label{eq-markov-bulk}
\mu^\gamma(t_1 + t_2; z_1, z_2) = \int_{\bbC} \mu^\gamma(t_1; z_1, w) \times \mu^\gamma(t_2; w, z_2)\, dw,
\eqe
meaning a sample from the left hand side can be obtained by concatenating the pair of paths sampled from the right hand side. 

We can define Brownian motion from $z$ to $w$ without fixing its duration via $\mu^\gamma(z, w) := \int_0^\infty \mu^\gamma(t; z,w) \, dt$. This measure is scaling-invariant: for $\lambda > 0$, the law of a sample from $\mu^\gamma(z,w)$ after Brownian rescaling by a factor of $\lambda$ is $\mu^\gamma(\lambda z, \lambda w)$. (The standard Brownian bridge, having null covariance, has the stronger property of \emph{conformal} invariance.) For a planar domain $D \subset \bbC$ and distinct points $z, w \in D$, let $\mu^\gamma_D(z;w)$ be the restriction of $\mu^\gamma(z,w)$ to paths staying in $D$.

We now discuss Brownian motions which start or end on the boundary of certain domains. Note that after defining $\mu^\gamma_D(z,w)$ for $z, w \in \ol D$, we can disintegrate by duration to obtain $\mu^\gamma_D(t; z,w)$ for each $t>0$. 

\noindent \textbf{Bulk to boundary in $\bbH$:} 
For $z \in \bbH$, let $\mu_{\bbH,\mathrm{exit}}^\gamma (z)$ be the law of Brownian motion started at $z$ and run until it hits $\bbR$; this is a probability measure. Let $\{\mu^\gamma_{\bbH}(z, x)\}_{x \in \bbR}$ be the disintegration of $\mu_{\bbH ,\mathrm{exit}}^\gamma (z)$ over the endpoint $x$ of the trajectory: $\mu_{\bbH, \mathrm{exit}}^\gamma (z) = \int_{\bbR} \mu^\gamma_{\bbH}(z, x) \, dx$. 

\noindent \textbf{Boundary to bulk in $\bbH$:} For $z \in \bbH$ and $x \in \bbR$, define 
$\mu^\gamma_{\bbH}(x, z) = \lim_{\eps \to 0} C \eps^{-1} \mu^\gamma_{\bbH}(x + \eps i , z)$ where $C>0$ is a constant. We can choose $C$ such that $\mu^\gamma_{\bbH} (z, x) = \mathrm{Rev}_* \mu^\gamma_{\bbH}(x,z)$, where $\mathrm{Rev}$ is the function which sends a curve to its time-reversal. See~\cite[Section 3.2.3]{LW04} for details on the limiting definition and time-reversal property. 

\noindent \textbf{Bulk to boundary in $\bbR_+^2$:}
For $z \in \bbR_+^2$ and nonzero $x \in \partial (\bbR_+^2)$ we can define $\mu^\gamma_{\bbR_+^2}(z,x)$ by disintegration and continuity. Equivalently, if $x \in \bbR_+$ then $\mu^\gamma_{\bbR_+^2}(z,x)$ is the restriction of $\mu^\gamma_{\bbH}(z,x)$ to paths lying in $\ol{\bbR_+^2}$; a similar statement holds for $x \in \{0\} \times \bbR_+$. On the other hand, for the atypical boundary point $x = 0$ one must take a limit:  $
 \mu^\gamma_{{\bbR_+^2}} (z, 0) = \lim_{\eps \to 0} \eps^{-4/\gamma^2} \mu^\gamma_{{\bbR_+^2}}(z, \eps e^{i\pi/4})$. 

 \noindent \textbf{Boundary to origin in $\bbR_+^2$:} For $x \in \partial (\bbR_+^2)$ and $t>0$, let the law $\mu^\gamma_{\bbR_+^2} (t;x, 0) = \int_{\bbR_+^2} \mu^\gamma_{\bbR_+^2}(\frac t2, x, z) \mu^\gamma_{\bbR_+^2}(\frac t2, z, 0) \, dz$, meaning that a sample from $\mu^\gamma_{\bbR_+^2} (t;x, 0)$ is defined to be the concatenation of a pair of paths sampled from the right hand side. We set $\mu^\gamma_{\bbR_+^2}(x,0) = \int_0^\infty\mu^\gamma_{\bbR_+^2}(t;x,0)\,dt$.

These measures all satisfy Markov properties inherited from~\eqref{eq-markov-bulk}. The limiting definition of $\mu_{\bbR_+^2}^\gamma(z,0)$ can be seen to make sense by \cite{shimura-cone}, see~\cite[Section 4.1]{AG19} or~\cite[Section 7]{AHS20} for details.

\subsubsection{Obtaining an SLE-decorated quantum surface from Brownian motion}\label{subsec-mot-meas}
In this section we explain that certain Brownian motion/excursion trajectories can be identified with  SLE-decorated quantum surfaces via a map we denote by $F$. We introduce $F$ in the setting of the original  mating-of-trees theorem of \cite{DMS14}, and will later use $F$ in other settings. 

If $(\bbC, \phi, 0, \infty)/{\sim}$ is an embedding of a \emph{$\gamma$-quantum cone}, and $\eta$ is an independent \emph{space-filling $\SLE_\kappa$ curve in $\bbC$ from $\infty$ to $\infty$} parametrized by LQG area, then one can define a \emph{boundary length process} $(L_t, R_t)_{(-\infty,\infty)}$ keeping track of the changes in the left and right quantum boundary lengths of $\eta([t, \infty))$ as $t$ varies. \cite[Theorem 1.9]{DMS14} shows that this process is two-sided Brownian motion: the covariance of $(L_t, R_t)_{[0,\infty)}$ is~\eqref{eq-cov}, and $(L_{-t}, R_{-t})_{[0,\infty)} \stackrel d= (L_t, R_t)_{[0,\infty)}$. Moreover, $(\bbC, \phi, \eta, 0, \infty)/{\sim}$ is measurable with respect to $(L_t, R_t)_{(-\infty, \infty)}$. 

 For each $a>0$, let $x_{L,a}$ be the point on the left boundary arc of $\eta([0,a])$ furthest from $0$ such that the clockwise boundary arc from $0$ to $x_{\ell,a}$ is a subset of the left boundary of $\eta([0,\infty))$, and similarly define $x_{r,a}$. 
 \cite[Section 2.4]{AY23} explains that the curve-decorated quantum surface $\cC = (\eta([0,a]), \eta|_{[0,a]}, x_{\ell, a}, x_{r, a})/{\sim}$  is measurable with respect to $(L_\cdot, R_\cdot)_{[0,a]}$; let $F$ be the map such that $F((L_\cdot, R_\cdot)_{[0,a]}) = \cC$ a.s.. 
The map $F$ satisfies two key properties which we now state. Let $\partial_\ell^- \cC$ and $\partial_\ell^+ \cC$ denote the successive clockwise boundary arcs of $\cC$ from $0$ to $x_{\ell, a}$ to $\eta(a)$, and let $\partial_r^- \cC$ and $\partial_r^+ \cC$ be the successive counterclockwise boundary arcs from $0$ to $x_{r, a}$ to $\eta(a)$.
 \smallskip 
 
 \noindent 
 \textbf{Reversibility:} Setting $(\wt L_t, \wt R_t)_{[0,a]} = (R_{a - t} - R_a, L_{a - t} - L_a)_{[0,a]})$ and $\wt \eta_a := \eta(a - \cdot)|_{[0,a]}$, we have $F((\wt L_\cdot, \wt R_{\cdot})_{[0,a]}) = (\eta([0,a]), \wt \eta_a, x_{r,a}, x_{\ell,a})/{\sim}$ a.s. \cite[Lemma 2.14]{AY23}. 
\smallskip 

 \noindent 
 \textbf{Concatenation compatibility:} Let $a_1, a_2>0$. 
 For the SLE-decorated quantum surface  $\cC = 
 F((L_t, R_t)_{[0,a_1+a_2]})$, a.s.\ the quantum surfaces $\cC_1$ and $\cC_2$ obtained by restricting to the domains parametrized by its curve on time intervals $[0,a_1]$ and $[a_1, a_1+a_2]$ satisfy $\cC_1 = F((L_t, R_t)_{[0,a_1]})$ and $\cC_2 = F((L_{t+a_1}, R_{t+a_1})_{[0,a_2]})$ \cite[Lemma 2.15]{AY23}. Moreover, $\cC$ can be recovered from $\cC_1$ and $\cC_2$ by identifying the endpoint of the curve of $\cC_1$ with the starting point of the curve of $\cC_2$, conformally welding 
 $\partial_\ell^+ \cC_1$ to $\partial_\ell^- \cC_2$ such that the entirety of the shorter boundary arc is welded to the corresponding segment of the longer boundary arc, and likewise conformally welding $\partial_r^+ \cC_1$ to $\partial_r^- \cC_2$.
 
 Finally, while $F$ is a priori only defined for Brownian motion trajectories, it can be extended to Brownian excursion trajectories by conformal welding. For instance, if $Z$ is a sample from $\mu_{\bbH}(1 ; z, 0)$ and $(t_n)_{n \geq 0}$ is a deterministic increasing sequence with $t_0 = 0$ and $\lim_{n \to \infty} t_n = 1$, then $F(Z)$ can be defined as the conformal welding of $F(Z|_{[t_{n}, t_{n+1}]})$ for $n = 0, 1, \dots$; by concatenation compatibility, the resulting $F(Z)$ does not depend on the choice of $(t_n)_{n \geq 0}$.

\subsubsection{Mating of trees for the quantum triangle with weight $(2-\frac{\gamma^2}2, \gamma^2, \frac{\gamma^2}2)$}\label{subsec-mot-triangle}

\begin{figure}
    \centering
    \includegraphics[scale=0.37]{}
    \caption{ 
    \textbf{Top left:} The left hand side is an embedding of a sample from $\QT^\uparrow(2-\frac{\gamma^2}2, \gamma^2, \frac{\gamma^2}2)$ with the vertices of weights $2-\frac{\gamma^2}2, \gamma^2, \frac{\gamma^2}2$ colored black, green, red respectively, the right hand side is its boundary length process. Colored boundary arcs (left hand side) have their lengths depicted by the same color (right hand side). The boundary length law 
    of $\QT^\uparrow(2-\frac{\gamma^2}2, \gamma^2, \frac{\gamma^2}2)(b)$ for $b>0$ is identified as Brownian motion in Lemma~\ref{lem-mot-triangle}.
    \textbf{Bottom left:} An embedding of a sample from $\QT^\downarrow(2-\frac{\gamma^2}2, \gamma^2, \frac{\gamma^2}2)$ and its boundary length process. For $\QT^\downarrow(2-\frac{\gamma^2}2, \gamma^2, \frac{\gamma^2}2)(b)$ the boundary length process is identified in Proposition~\ref{prop-mot-triangle-down}. 
    \textbf{Top right:} Diagram for the proof of Lemma~\ref{lem-mot-triangle}. The pink region corresponds to $\QT^\uparrow(2-\frac{\gamma^2}2, \gamma^2, \frac{\gamma^2}2)$ and the orange region corresponds to $\Md_{0,2}(2-\frac{\gamma^2}2)$. \textbf{Bottom right:} Diagram for the proof of Proposition~\ref{prop-mot-QD11}. The pink region corresponds to $\QT^\downarrow(2-\frac{\gamma^2}2, \gamma^2, \frac{\gamma^2}2)$ and the blue region corresponds to $\QT(2-\frac{\gamma^2}2, 2-\frac{\gamma^2}2, 2)$.
    }
    \label{fig:mot-triangle}
\end{figure}

Embed a sample from $\QT(2-\frac{\gamma^2}2, \gamma^2, \frac{\gamma^2}2)$ as $(D, \phi, -i, -1, \infty)$ such that the points $-i, -1, \infty$ correspond to the vertices having weights $2-\frac{\gamma^2}2, \gamma^2, \frac{\gamma^2}2$ respectively, the connected component of $D$ having $-1$ on its boundary is $\bbH$, and $\ol{D \backslash \bbH} \cap \bbR = 0$. See Figure~\ref{fig:mot-triangle} (left). Independently sample a space-filling $\SLE_{\kappa}$ curve in $D \backslash \bbH$ from $-i$ to $0$ and a space-filling $\SLE_\kappa(\frac{\kappa}{2}-4;0)$ curve in $\bbH$ from $0$ to $\infty$ with force point at $-1$. Let $\eta^\uparrow$ be the concatenation of these two curves parametrized by quantum area, so it is a space-filling curve in $D$ from $-i$ to $\infty$, and let $\eta^\downarrow$ be the time-reversal of $\eta^\uparrow$. Let $\QT^{\uparrow/\downarrow}(2-\frac{\gamma^2}2, \gamma^2, \frac{\gamma^2}2)$ be the law of $(D, \phi,\eta^{\uparrow/\downarrow}, -i, -1, \infty)/{\sim}$. 

Now we define the boundary length process $(L_t, R_t)$ associated to a sample from $\QT^\uparrow(2-\frac{\gamma^2}2, \gamma^2, \frac{\gamma^2}2)$. 
Let $T$ be the duration of $\eta^\uparrow$ and let $\tau$ be the first time $\eta^\uparrow$ hits $-1$. For  $t \leq T$ let $R_t$ be the  quantum length of the right boundary arc of $\eta^\uparrow([t, T])$. For $t \leq \tau$, consider the left boundary arc of $\eta^\uparrow([0,t])$; let $L_t^+$ (resp.\ $L_t^-$) be the quantum length of the segment inside $D$ (resp.\ on $\partial D$), and let $L_t = L_t^+ - L_t^-$. For $t \in (\tau, T]$ let $L_t$ be $L_\tau$ plus the quantum length of the  left  boundary arc of $\eta^\uparrow([\tau, t])$. 

We likewise define the boundary length process $(L_t, R_t)$ of  a sample from $\QT^\downarrow(2-\frac{\gamma^2}2, \gamma^2, \frac{\gamma^2}2)$: Let $T$ be the duration of $\eta^\downarrow$ and let $\tau$ be the first time $\eta^\downarrow$ hits $-1$. For $t \leq T$ let $L_t$ be the  quantum length of the left boundary arc of $\eta^\downarrow([0,t])$. For $t \leq \tau$, consider the right boundary arc of $\eta^\downarrow([0,t])$; let $R_t^+$ (resp.\ $R_t^-$) be the quantum length of the segment inside $D$ (resp.\ on $\partial D$), and let $R_t = R_t^+ - R_t^-$. For $t \in (\tau, T]$ let $R_t$ be $R_\tau$ plus the quantum length of the  right boundary arc of $\eta^\downarrow([\tau,t])$. 

By definition, for a sample from $\QT^\uparrow(2-\frac{\gamma^2}2, \gamma^2, \frac{\gamma^2}2)$ (resp.\ $\QT^\downarrow(2-\frac{\gamma^2}2, \gamma^2, \frac{\gamma^2}2)$), the quantum lengths of the boundary arcs clockwise from the weight $2-\frac{\gamma^2}2$ vertex are given by $(-\inf_{t \leq T} L_t, L_T -\inf_{t \leq T} L_t, R_0)$ (resp.\ $(R_T - \inf_{t \leq T} R_t, -\inf_{t \leq T} R_t, L_t)$).

The result we need from this section is the following mating-of-trees result for $\QT^\downarrow(2-\frac{\gamma^2}2, \gamma^2, \frac{\gamma^2}2)$. Let $\QT^{\uparrow/\downarrow} (2-\frac{\gamma^2}2, \gamma^2, \frac{\gamma^2}2)(b)$ be the disintegration of $\QT^{\uparrow/\downarrow}(2-\frac{\gamma^2}2, \gamma^2, \frac{\gamma^2}2)$  according to the quantum length of the boundary arc between the vertices of weights $2-\frac{\gamma^2}2$ and $\frac{\gamma^2}2$.  (For $\QT^\downarrow(2-\frac{\gamma^2}2, \gamma^2, \frac{\gamma^2}2)$ this length equals $L_T$.)

\begin{proposition}\label{prop-mot-triangle-down}
Assume $\gamma \neq \sqrt2$. There is a constant $C$ such that for all $b>0$,
    the boundary length process of a sample from $\QT^\downarrow(2-\frac{\gamma^2}2, \gamma^2, \frac{\gamma^2}2)(b)$ has law $C \int_{\bbR}\mu^\gamma_{\bbR_+ \times \bbR}(0, b + ci) \, dc$. {Moreover, the map $F$ from Section~\ref{subsec-mot-meas} a.s.\ recovers the decorated quantum surface from its boundary length process. }
\end{proposition}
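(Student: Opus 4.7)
The strategy is to derive Proposition~\ref{prop-mot-triangle-down} from Lemma~\ref{lem-mot-triangle}, which establishes the analogous Brownian motion identification for $\QT^\uparrow(2-\frac{\gamma^2}{2}, \gamma^2, \frac{\gamma^2}{2})(b)$. Since $\eta^\downarrow$ is by definition the time-reversal of $\eta^\uparrow$, the two boundary length processes are related by a pathwise transformation that swaps left and right sides and reverses time, and the claim for $\QT^\downarrow$ follows by applying the corresponding transformation to the Brownian motion law for $\QT^\uparrow$.

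First I would write down an explicit pathwise relation between the two boundary length processes. As sets $\eta^\uparrow([t,T]) = \eta^\downarrow([0, T-t])$, and the left boundary of the remaining region (walking along $\eta^\uparrow$) coincides with the right boundary of the explored region (walking along $\eta^\downarrow$). A case analysis, splitting at the time $\tau$ when the curve hits the middle vertex $-1$, matches up the $L^\pm$ and $R^\pm$ contributions in the definitions of the two processes. The resulting identity expresses $(L_s^\downarrow, R_s^\downarrow)$ as an affine function of $(L_{T-s}^\uparrow, R_{T-s}^\uparrow)$ in which the two coordinates are swapped.

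Next, I would apply this pathwise transformation to the Brownian law supplied by Lemma~\ref{lem-mot-triangle}. The covariance~\eqref{eq-cov} is symmetric under the swap $(L,R) \leftrightarrow (R, L)$, and time-reversal of Brownian motion maps $\mu^\gamma_D(z,w)$ to $\mu^\gamma_D(w,z)$. Tracking the endpoints carefully, using that $(L_0^\downarrow, R_0^\downarrow) = (0, 0)$ and that $L_T^\downarrow = b$ while $R_T^\downarrow \in \bbR$ is unconstrained by the disintegration, the resulting measure takes exactly the claimed form $C \int_\bbR \mu^\gamma_{\bbR_+ \times \bbR}(0, b+ci)\,dc$; the integration over $c$ reflects the single constraint imposed by the disintegration.

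Finally, the recovery of the decorated quantum surface from its boundary length process via the map $F$ follows from the analogous statement for $\QT^\uparrow$ in Lemma~\ref{lem-mot-triangle}, combined with the reversibility property of $F$ stated in Section~\ref{subsec-mot-meas}. The main obstacle is the bookkeeping in the first step: the definitions of $L_t^{\uparrow/\downarrow}$ and $R_t^{\uparrow/\downarrow}$ change at the time $\tau$ when the curve hits $-1$, and the $L^\pm$ and $R^\pm$ corrections must be tracked carefully before and after $\tau$. An alternative, which sidesteps this bookkeeping, would be to cut both $\QT^\uparrow$ and $\QT^\downarrow$ at $\tau$ into two pieces, identify each piece as a mating-of-trees surface built from either an ordinary space-filling $\SLE_\kappa$ or from a space-filling $\SLE_\kappa(\frac{\kappa}{2}-4; 0)$ with a boundary force point, and then invoke the Markov property~\eqref{eq-markov-bulk} of Brownian motion to combine the two identifications into the one claimed by the proposition.
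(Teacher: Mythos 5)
Your proposal is correct and follows essentially the same route as the paper: invoke Lemma~\ref{lem-mot-triangle} for $\QT^\uparrow$, relate the two boundary length processes by the pathwise identity $(R_t^\downarrow, L_t^\downarrow)= (L^\uparrow_{T - t} - L^\uparrow_T, R^\uparrow_{T-t})$ (the swap-and-translate you describe), apply time-reversal symmetry of Brownian motion, and use the reversibility of $F$ for the second claim. The ``alternative'' you sketch via cutting at $\tau$ is in fact closer to the proof of Lemma~\ref{lem-mot-triangle} itself rather than of Proposition~\ref{prop-mot-triangle-down}.
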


In order to prove Proposition~\ref{prop-mot-triangle-down} we will need the following mating-of-trees result for $\Md_{0,2}(2-\frac{\gamma^2}2)$. Let $\Md_{0,2}(2-\frac{\gamma^2}2; \ell, r) \otimes \SLE_\kappa^\mathrm{sf}$ denote the law of a sample from $\Md_{0,2}(2-\frac{\gamma^2}2; \ell, r)$ decorated by an independent space-filling $\SLE_\kappa$ curve between its two marked points. 

\begin{proposition}\label{prop-thin-disk-mot}
    There is a constant $C$ such that the following is true for all $\ell, r$. Sample from  $\Md_{0,2}(2-\frac{\gamma^2}2; \ell, r) \otimes \SLE_{16/\gamma^2}^\mathrm{sf}$, and parametrize the SLE curve $\eta$ by quantum area covered (so the total duration $T$ equals the total quantum area). For $t \leq T$ let $L_t$ and $R_t$ denote the left and right boundary lengths of $\eta([t,T])$. Then the law of $(L_t, R_t)_{[0,T]}$ is $C \mu_{{\bbR_+^2}} (\ell + r i, 0)$. {Moreover, the map $F$ from Section~\ref{subsec-mot-meas} a.s.\ recovers the decorated quantum surface from its boundary length process. }
\end{proposition}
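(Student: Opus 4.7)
The plan is to obtain the mating-of-trees identity for the thin disk $\Md_{0,2}(2-\tfrac{\gamma^2}{2};\ell,r)$ by using its Poisson point process (PPP) decomposition into thick beads, combined with a bead-level mating-of-trees result, the concatenation compatibility of the map $F$ from Section~\ref{subsec-mot-meas}, and the Markov property \eqref{eq-markov-bulk} of 2D Brownian motion. I focus on the main case $\gamma \in (\sqrt{2},2)$, where $2-\tfrac{\gamma^2}{2}<\tfrac{\gamma^2}{2}$ so the disk is genuinely thin.

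First, by Definition~\ref{def-thin-disk}, a sample from $\Md_{0,2}(2-\tfrac{\gamma^2}{2})$ is a chain of thick beads of weight $W:=\tfrac{3\gamma^2}{2}-2\geq \tfrac{\gamma^2}{2}$, and decorating each bead with an independent space-filling $\SLE_\kappa$ from one marked point to the other yields the decoration of the thin disk. The first ingredient is a bead-level mating-of-trees: for a thick bead with boundary arc lengths $(\ell_i,r_i)$, the boundary length process $(L,R)$ of the uncovered region, with the curve parametrized by quantum area, is a Brownian bridge in $\overline{\bbR_+^2}$ from $(\ell_i,r_i)$ to $(0,0)$ with covariance \eqref{eq-cov}, and $F$ a.s.\ recovers the decorated bead. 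I would derive this as a disintegration of the mating-of-trees for the doubly marked quantum disk $\QD$ (in the spirit of the arguments in \cite{DMS14} and the disk setting of \cite{AG19}), using reversibility and concatenation compatibility of $F$ in Section~\ref{subsec-mot-meas} to identify the process on each bead in local boundary-length coordinates.

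Next, by the PPP construction, the bead boundary lengths $\{(\ell_i,r_i)\}_i$ follow the intensity prescribed in Definition~\ref{def-thin-disk}, and conditional on these the beads are independent samples from $\Md_{0,2}(W;\ell_i,r_i)^{\#}$. Applying the bead-level statement together with concatenation compatibility of $F$, the total boundary length process is the concatenation of the per-bead bridges, and $F$ recovers the entire decorated thin disk from the concatenated path. By the Markov property \eqref{eq-markov-bulk}, the concatenation of Brownian bridges through a sequence of intermediate interior points in $\bbR_+^2$ has law $C\,\mu^\gamma_{\bbR_+^2}(\ell+ri,0)$ provided the PPP intensity of bead sizes agrees (up to a universal constant) with the natural intensity governing the sequence of ``bead transition points'' of a sample from $\mu^\gamma_{\bbR_+^2}(\ell+ri,0)$. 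A scaling/excursion computation, using the correlation $-\cos(\tfrac{\pi\gamma^2}{4})$ of \eqref{eq-cov} and the intensity $(1-\tfrac{2W}{\gamma^2})^{-2}$ from Definition~\ref{def-thin-disk}, verifies the match at the specific weight $W=\tfrac{3\gamma^2}{2}-2$ and determines the constant $C$.

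The main obstacle I anticipate is this last intensity-matching step: one must verify that the PPP intensity of Definition~\ref{def-thin-disk} agrees up to a universal constant with the intensity of bead transitions of the correlated 2D Brownian motion conditioned to exit the quadrant at the apex. This involves a delicate scaling argument for correlated Brownian motion in a cone, but is forced (up to the constant $C$) by the scaling symmetries of both sides in $(\ell,r)$ and by the explicit form of the Poisson intensity. The case $\gamma\leq\sqrt{2}$ is simpler since $\Md_{0,2}(2-\tfrac{\gamma^2}{2})$ is then already a thick disk, and the conclusion follows directly from the bead-level mating-of-trees without any concatenation.
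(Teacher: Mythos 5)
The paper's proof is essentially a citation plus a short scaling argument: the case $r=1$ is exactly \cite[Proposition 7.3]{AHS20}, and the general $(\ell,r)$ case follows by rescaling, combining the scaling of the thin-disk measure from \cite[Lemma 2.24]{AHS20} with the scaling invariance of $\mu^\gamma_{\bbR_+^2}(z,w)$ and the limiting definition of $\mu^\gamma_{\bbR_+^2}(z,0)$. Your proposal instead re-derives the mating-of-trees identity for the thin disk from scratch via the PPP bead decomposition, which is a genuinely different (and much heavier) route: you are effectively proposing to re-prove \cite[Proposition 7.3]{AHS20} rather than invoke it.

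The two steps you defer are precisely the technical core of that proposition, and as written your argument does not close either gap. First, the bead-level mating-of-trees you posit — that a weight-$(\tfrac{3\gamma^2}{2}-2)$ thick bead decorated by a chordal space-filling $\SLE_\kappa$ has boundary length process given by a Brownian bridge/excursion in the quadrant — is not a disintegration of the mating-of-trees for $\QD$. The quantum disk $\QD$ has weight $2$, while the bead weight is $\gamma^2 - (2-\tfrac{\gamma^2}{2}) = \tfrac{3\gamma^2}{2}-2 \neq 2$ for generic $\gamma\in(\sqrt2,2)$, so a separate argument (e.g.\ via a quantum wedge computation) would be required, and such an argument would likely pass through the same machinery as \cite[Proposition 7.3]{AHS20}. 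Second, the intensity-matching step — that the PPP intensity $(1-\tfrac{2W}{\gamma^2})^{-2}\mathrm{Leb}_{\bbR_+}\times\Md_{0,2}(\gamma^2-W)$ of Definition~\ref{def-thin-disk} matches the pinch-point (excursion) decomposition of $\mu^\gamma_{\bbR_+^2}(\ell+ri,0)$ — is the delicate heart of the matter. Scaling in $(\ell,r)$ pins down exponents but does not identify the full joint law of bead sizes together with their order, nor the normalizing constant. You acknowledge this gap, but it is not a routine verification; it is exactly what the cited proposition establishes, which is why the paper cites and rescales rather than re-proves.
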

\begin{proof}
    For the case where $r = 1$, this is stated as \cite[Proposition 7.3]{AHS20}. The general $r$ case follows from the $r=1$ case by rescaling, since for any $\lambda>0$
    \[ \frac{|\Md(2-\frac{\gamma^2}2; \lambda \ell, \lambda r)|}{|\Md(2-\frac{\gamma^2}2;  \ell, r)|} = \lambda^{- \frac4{\gamma^2}} = 
    \frac{\lim_{\eps \to 0} (\lambda \eps)^{-4\gamma^2} |\mu_{{\bbR_+^2}}(\lambda \ell + \lambda r i, \lambda \eps e^{i\pi/4})|}{\lim_{\eps \to 0} \eps^{-4\gamma^2}|\mu_{{\bbR_+^2}}(\ell + r i, \eps e^{i\pi/4})|}
    = \frac{|\mu_{{\bbR_+^2}}(\lambda \ell + \lambda r i, 0)|}{|\mu_{{\bbR_+^2}}(\ell + r i, 0)|}.\]
    The first equality follows from \cite[Lemma 2.24]{AHS20} with $W = 2-\frac{\gamma^2}2$, the second from the scaling-invariance of $\mu_{\bbR_+^2}(z, w)$, and the third from the limiting definition of $\mu^\gamma_{\bbR_+^2}(z, 0)$.
\end{proof}

\begin{lemma}\label{lem-mot-triangle}
Assume $\gamma \neq \sqrt2$. 
    There is a constant $C$ such that for any $b>0$, the boundary length process of a sample from $\QT^\uparrow(2-\frac{\gamma^2}2, \gamma^2, \frac{\gamma^2}2)(b)$ has law $C \mu^\gamma_{\bbH, \mathrm{exit}}(bi)$. {Moreover, the map $F$ from Section~\ref{subsec-mot-meas} a.s.\ recovers the decorated quantum surface from its boundary length process. }
\end{lemma}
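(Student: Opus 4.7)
The strategy is to conformally weld an independent sample from $\Md_{0,2}(2-\gamma^2/2)$ (the orange region in the top right panel of Figure~\ref{fig:mot-triangle}) to a sample from $\QT^\uparrow(2-\gamma^2/2, \gamma^2, \gamma^2/2)$ (the pink region), producing a pointed thin disk whose mating of trees is given by Proposition~\ref{prop-thin-disk-mot}. The boundary length process of $\QT^\uparrow(2-\gamma^2/2, \gamma^2, \gamma^2/2)(b)$ is then extracted by the strong Markov property of 2D Brownian motion at an appropriate stopping time.

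First I would apply Proposition~\ref{prop:disk+QT-0} with $W = 2-\gamma^2/2$ and $(W_1, W_2, W_3) = (0, \gamma^2/2, 2-\gamma^2/2)$, and use Lemma~\ref{lem:QT(W,2,W)} to rewrite the left-hand side, to obtain the welding identity
\[
\Md_{0,2,\bullet}(2-\tfrac{\gamma^2}{2}) \otimes \SLE_{\tilde\kappa}(-\tfrac{\gamma^2}{2};\,\tfrac{\gamma^2}{2}-2,\,-\tfrac{\gamma^2}{2}) \;=\; c \cdot \mathrm{Weld}\big(\Md_{0,2}(2-\tfrac{\gamma^2}{2}),\,\QT(0, \tfrac{\gamma^2}{2}, 2-\tfrac{\gamma^2}{2})\big).
\]
The factor $\QT(0, \gamma^2/2, 2-\gamma^2/2)$ has the structure of $\QT^\uparrow(2-\gamma^2/2, \gamma^2, \gamma^2/2)$ together with an additional, independent weight-$0$ line segment (Lebesgue-distributed quantum length) attached at the weight-$\gamma^2$ vertex, since the thick-triangle components of the two triangles are the same up to cyclic permutation.

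Next I would upgrade the welding to an identity for space-filling $\SLE_\kappa$-decorated surfaces. Using the imaginary geometry framework of Section~\ref{subsec:ig}, the chordal $\SLE_{\tilde\kappa}$ in the welding above is identified as the appropriate flow line of a space-filling $\SLE_\kappa$ curve in $\Md_{0,2,\bullet}(2-\gamma^2/2)$ drawn from one boundary endpoint to the other, stopped at the first time $T_p$ when it encircles the marked point. This identifies the restrictions of the space-filling $\SLE_\kappa$ to either side of the interface as the independent space-filling curves on the orange and the pink-plus-line-segment pieces, respectively. Now applying Proposition~\ref{prop-thin-disk-mot} to the outer disk, its BL process is $\mu^\gamma_{\bbR_+^2}(\ell+ri, 0)$, and the map $F$ a.s.\ recovers the decorated surface. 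By the concatenation compatibility of $F$ and the strong Markov property of 2D Brownian motion at $T_p$, the BL process on the pink-plus-line-segment piece is an independent 2D Brownian motion from $(L_{T_p}, R_{T_p})$ to the origin in $\bbR_+^2$, with $R_{T_p}$ playing the role of the right-arc length $b$ of the pink triangle.

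To conclude, I would disintegrate the pink-plus-line-segment piece over the Lebesgue-distributed line segment length $\ell_{\rm seg}$. Under the signed convention $L_t = L_t^+ - L_t^-$ of the $\QT^\uparrow$ boundary length process, the portion of the left boundary arc lying on $\partial D$ (which includes the line segment) contributes to $L_t^-$, so integrating $\ell_{\rm seg}$ against Lebesgue measure shifts the initial $L$-coordinate over all of $\bbR$ and converts the stopping condition ``$L=0$ or $R=0$'' of the quadrant BM into the half-plane stopping condition ``$R=0$''. With $R_{T_p}$ fixed to $b$, this yields the law $C \mu^\gamma_{\bbH,\mathrm{exit}}(bi)$ on the pink piece, as claimed. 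The measurability of the decorated pink surface with respect to its BL process follows from the analogous statement of Proposition~\ref{prop-thin-disk-mot} combined with the measurability of the splitting at $T_p$. The main obstacles are the imaginary-geometry identification in Step 2 (since the welding from Proposition~\ref{prop:disk+QT-0} a priori only gives a single chordal $\SLE_{\tilde\kappa}$, not a space-filling curve) and the careful bookkeeping in the final step to reconcile the $\bbR_+^2$-valued convention of Proposition~\ref{prop-thin-disk-mot} with the $\bbH$-valued convention used for $\QT^\uparrow$ via the contribution of the weight-$0$ line segment to $L_t^-$.
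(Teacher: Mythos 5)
Your proposal takes essentially the same route as the paper's proof: it is built on the same three ingredients, namely Proposition~\ref{prop:disk+QT-0} combined with Lemma~\ref{lem:QT(W,2,W)} to realize $\Md_{0,2,\bullet}(2-\frac{\gamma^2}{2})$ as a welding along a chordal $\SLE_{\gamma^2}(\frac{\gamma^2}{2}-2,-\frac{\gamma^2}{2};-\frac{\gamma^2}{2})$ interface, the imaginary-geometry identification of that interface as the left boundary of the space-filling $\SLE_\kappa$ stopped upon hitting the marked boundary point, and Proposition~\ref{prop-thin-disk-mot} plus the strong Markov property for the quadrant Brownian motion split at that hitting time. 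Your explicit observation that $\QT(0,\frac{\gamma^2}{2},2-\frac{\gamma^2}{2})$ is $\QT^\uparrow(2-\frac{\gamma^2}{2},\gamma^2,\frac{\gamma^2}{2})$ with a weight-$0$ line segment appended at the weight-$\gamma^2$ vertex is used implicitly in the paper as well.

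The one place where the paper's bookkeeping is tighter than the heuristic you sketch for the final quadrant-to-half-plane conversion: rather than ``integrating $\ell_{\rm seg}$ over Lebesgue measure to shift the $L$-coordinate,'' the paper fixes the left boundary length $a$ of the outer thin disk, translates the quadrant trajectory $Z$ by $-(a+i)$ so that $Z^1 := Z-a-i$ starts at $bi$ and exits by hitting $\{\mathrm{Im}=0\}$ (constrained to stay in $\{\mathrm{Re}>-a\}$), matches the integration variables $(x,y)$ in the disintegration of $\QT^\uparrow(\cdot)(x,y,b)$ with the events $\{-\inf_t\mathrm{Re}(Z^1)=x,\,Z^1(\tau)=y-x\}$, and then lets $a\to\infty$ to drop the constraint. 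This makes the correspondence with $\mu^\gamma_{\bbH}(bi,\,\cdot\,;\cdot)$ precise without having to reason about the line segment's contribution to $L_t^-$, which is delicate since the line segment sits on the welding interface rather than on $\partial D$ of the triangle $\QT^\uparrow$ itself. Your step~2 concern (upgrading from the chordal interface to the space-filling curve) is handled in the paper by simply starting from the space-filling-decorated disk and reading off the interface, which avoids the upgrade altogether.
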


\begin{proof}
Let $a>0$. 
Consider a sample $\cD$ from  $\Md_{0,2}(2-\frac{\gamma^2}2; a, b+1)\otimes \SLE_{16/\gamma^2}^\mathrm{sf}$; let $u$ and $v$ be the starting and ending points of the space-filling SLE  $\eta$. Let $p$ be the point on the right boundary arc of $\cD$ at quantum length $b$ from $u$. See Figure~\ref{fig:mot-triangle} (top right).  The law of $\cD$ further marked by point $p$ is $\int_0^\infty \QT(2-\frac{\gamma^2}2, 2-\frac{\gamma^2}2, 2; a, b, 1)\otimes \SLE_{16/\gamma^2}^\mathrm{sf} \, da$. 

Let $\eta_1$ be $\eta$ run until the time it hits the point $p$, and let $\eta_2$ be $\eta$ starting from the time it hits $p$. Let $q$ be the last point on the left boundary of $\cD$ hit by $\eta_1$. 
Let $\cT$ be the quantum surface parametrized by the trace of $\wt \eta$, decorated by $\eta_1$ and points $u, p, q$.  Let $\cD'$ be the quantum surface parametrized by the trace of $\eta_2$, decorated by $\eta_2$ and points $p, v$. Let $\cD_0$ be the connected component of $\cD$ containing $p$, and $u',v'$ be the starting and ending points of $\eta|_{\cD_0}$.  Then as explained in Section~\ref{subsec:ig}, the interface between $\eta_1$ and $\eta_2$ is a chordal $\SLE_{\gamma^2}(\frac{\gamma^2}{2}-2,-\frac{\gamma^2}{2};-\frac{\gamma^2}{2})$ curve in $\cD_0$ from $p$ to $v'$ with force points $p^-,u';p^+$, respectively. Therefore, by 
Proposition~\ref{prop:disk+QT-0}, the law of $(\cT, \cD')$ is
\eqb\label{eq-triangle-mot-proof}
C \int_0^\infty \int_0^a \QT^\uparrow(2-\frac{\gamma^2}2, \gamma^2, \frac{\gamma^2}2)(x, y, b) \times \left(\Md_{0,2}(2-\frac{\gamma^2}2; a-x +y ; 1)\otimes \SLE_{16/\gamma^2}^\mathrm{sf}\right) \,dx \, dy.
\eqe
Here $\QT^\uparrow(2-\frac{\gamma^2}2, \gamma^2, \frac{\gamma^2}2)(x, y, b)$ is the disintegration of $\QT^\uparrow(2-\frac{\gamma^2}2, \gamma^2, \frac{\gamma^2}2)$ where $x,y,b$ are the quantum lengths of the boundary arcs in clockwise order from the weight $2-\frac{\gamma^2}2$ vertex.

By Proposition~\ref{prop-thin-disk-mot}, the boundary length process $Z$ of $\cD$ has law $C' \mu_{\bbR_+^2}(a + (b+1)i, 0)$ for some $C'$. Let $T$ be the random duration of $Z$, let $\tau$ be the time $Z$ hits $\{ \mathrm{Im} (z) = 1\}$, and let $(Z^1(t))_{[0,\tau]} = (Z(t) - a - i)_{[0,\tau]}$ and $(Z_2(t))_{[0,T-\tau]} = (Z(t + \tau))_{[0, T-\tau]}$. By the Markov property of Brownian motion, the law of $(Z^1, Z^2)$ is 
\[ C' \int_{-a}^\infty (1_{E_a} \mu^\gamma_{{\bbH} } (b i, c) )  \times \mu_{{\bbR_+^2}} (a+ c+  i, 0) \, dc, \qquad E_a = \{ \text{curve stays in } \{ \mathrm{Re}(z) > -a\}. \]
We can disintegrate $\mu_\bbH^\gamma(bi, c) = \int_\bbR \mu_\bbH^\gamma(bi, c; x)$ according to $X = -\inf_t \mathrm{Re}(Z^1(t))$, then use a change of variables $y = x+c$ to write the law of $(Z^1, Z^2)$ as 
\[C'  \int_0^\infty \int_{0}^a\mu_\bbH^\gamma(bi, y-x; x) \times \mu_{\bbR_+^2}^\gamma(a-x+y + i, 0) 
 \, dx \, dy .\]
 From the definition of the boundary length process $Z$ (Proposition~\ref{prop-thin-disk-mot}), the integration variables $(x,y)$ immediately above agree with those in~\eqref{eq-triangle-mot-proof}; that is, for each $x \in (0,a)$ and $y>0$ 
 \[\{ \text{boundary lengths of }\cT \text{ are }(x,y,b)\} = \{-\inf_t \mathrm{Re} (Z^1(t)) = x \text{ and } Z^1(\tau) = y-x\}.\] By disintegrating, we conclude that for all $x \in (0,a)$ and $y>0$, for $\cT$ sampled from $\QT^\uparrow(2-\frac{\gamma^2}2, \gamma^2, \frac{\gamma^2}2)(x,y,b)$, the boundary length process of $\cT$ has law $C'' \mu_\bbH^\gamma(bi, y-x; x)$. Since $a$ was arbitrary we can remove the restriction on $x$ to get the first claim. 

 The second claim on recovering the curve-decorated quantum surface from its boundary length process follows from the second claim of Proposition~\ref{prop-thin-disk-mot}.
\end{proof}

\begin{proof}[{Proof of Proposition~\ref{prop-mot-triangle-down}}]
Let $\cT^\uparrow$ be a sample from $\QT^\uparrow (2-\frac{\gamma^2}2, \gamma^2, \frac{\gamma^2}2)(b)$. Let $\cT^\downarrow$ be $\cT^\uparrow$ with its curve replaced by its time-reversal, so the law of $\cT^\downarrow$ is $\QT^\downarrow (2-\frac{\gamma^2}2, \gamma^2, \frac{\gamma^2}2)(b)$. Denote the respective boundary length processes of $\cT^{\uparrow/\downarrow}$ by $(L_t^{\uparrow/\downarrow}, R_t^{\uparrow/\downarrow})_{[0,T]}$.
By Lemma~\ref{lem-mot-triangle},  the law of $(L_t^\uparrow, R_t^\uparrow)_{[0,T]}$ of $\cT^\uparrow$ is $C\mu^\gamma_{\bbH, \mathrm{exit}}(bi) = \int_\bbR \mu^\gamma_\bbH(bi, c)\, dc$. Directly from the definitions we have  $(R_t^\downarrow, L_t^\downarrow)= (L^\uparrow_{T - t} - L^\uparrow_T, R^\uparrow_{T-t})$, so the law of $(R_t^\downarrow, L_t^\downarrow)$ is $C\int_\bbR  \mu^\gamma_\bbH(0, bi-c) \, dc$; reflecting $\bbH$ along the main diagonal to get $\bbR_+ \times \bbR$ gives the first claim. 
The second claim follows from the reversibility of $F$ and the second claim of Lemma~\ref{lem-mot-triangle}.
\end{proof}

\subsubsection{Conformal welding identity from mating of trees for the quantum disk}\label{subsec-mot-welding}
The following mating-of-trees result for the quantum disk was first proved for $\gamma \in (\sqrt2,2)$ by \cite{DMS14} and subsequently extended to the full range $\gamma \in (0,2)$ by \cite{AG19}.
\begin{proposition}\label{prop-mot-disk}
Let $(\bbD, \phi, 1)$ be an embedding of a sample from $\QD_{0,1}$ and let $\eta$ be an independent counterclockwise space-filling $\SLE_{16/\gamma^2}$ loop in $\bbD$ rooted at $1$. Parametrize $\eta$ by quantum area and let $T$ be its duration. For $t \in [0,T]$ let $L_t$ (resp.\ $R_t$) be the quantum length of the left (resp.\ right)  boundary arc of $\eta([t,T])$. Then the law of $(L_t, R_t)_{[0,T]}$ is $C \int_0^\infty \mu_{\bbR_+^2}(ri, 0) \, dr$ for some $C>0$. 
\end{proposition}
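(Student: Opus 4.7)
The plan is to derive this identity by combining Proposition~\ref{prop-thin-disk-mot} with the Markov property of the space-filling loop. The central observation is that although $\QD_{0,1}$ is a thick disk, the unexplored remainder $\eta([t,T])$ acquires pinch points from the self-touches of the loop, giving it the beaded structure of a thin quantum disk of weight $2-\tfrac{\gamma^2}{2}$.

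First I would establish a Markov-type identification: for any deterministic $t \in (0,T)$, conditional on $(L_t, R_t) = (\ell, r)$, the curve-decorated quantum surface $(\eta([t,T]), \eta|_{[t,T]}, \eta(t), 1)/{\sim}$ is independent of the past and distributed, up to a universal constant, as $\Md_{0,2}(2-\tfrac{\gamma^2}{2}; \ell, r) \otimes \SLE_{16/\gamma^2}^{\mathrm{sf}}$. This should follow from target invariance of the space-filling SLE loop combined with the boundary welding identities of Theorem~\ref{thm:disk-welding}. Plugging into Proposition~\ref{prop-thin-disk-mot}, the conditional law of $(L_s, R_s)_{s \in [t,T]}$ given $(L_t, R_t) = (\ell, r)$ would then be a constant multiple of the Brownian measure $\mu^\gamma_{\bbR_+^2}(\ell + r i, 0)$. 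As this holds for every $t$, the process $(L_s, R_s)_{s \in [0,T]}$ would be a planar Brownian motion in $\overline{\bbR_+^2}$ with covariance~\eqref{eq-cov}, absorbed at the origin.

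To identify the initial distribution, I would use that $\eta$ is a counterclockwise loop rooted at $1$, so at $t = 0$ the left boundary arc of $\eta([0,T]) = \bbD$ from $\eta(0) = 1$ to $\eta(T) = 1$ degenerates at the root while the right boundary arc traces all of $\partial \bbD$. Hence $L_0 = 0$ and $R_0 = \nu_\phi(\partial \bbD)$. Disintegrating the $\QD_{0,1}$-law of $R_0$ over $r \in (0, \infty)$ then produces the representation $C \int_0^\infty \mu^\gamma_{\bbR_+^2}(r i, 0)\, dr$ for the appropriate constant $C$, whose value would be pinned down by matching the marginal boundary length density under $\QD_{0,1}$ with the scaling $|\mu^\gamma_{\bbR_+^2}(ri, 0)|$ as a function of $r$.

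The main obstacle would be the first step: rigorously identifying the remainder $\eta([t,T])$ with a weight $2-\tfrac{\gamma^2}{2}$ thin quantum disk at prescribed boundary lengths. Although natural from the welding theorems, this is delicate because the loop self-touches along the remainder's boundary, so one must recover the beaded structure from the pinch points produced by the SLE. This is essentially the content of the quantum-disk mating-of-trees, proved for $\gamma \in (\sqrt{2}, 2)$ in~\cite{DMS14} and extended to the full range $\gamma \in (0,2)$ in~\cite{AG19}, so in practice one would invoke these results directly rather than reprove them.
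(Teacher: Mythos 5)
Your approach diverges substantially from the paper's, which is a three-line proof: cite \cite[Theorem 1]{AG19} for the boundary length process of $\QD_{0,1}(1)$ (fixed unit boundary length), rescale to all $r>0$ exactly as in the proof of Proposition~\ref{prop-thin-disk-mot}, and integrate over $r$. Your route instead tries to reconstruct the Brownian structure by establishing a Markov decomposition for the curve-decorated surface and then appealing to Proposition~\ref{prop-thin-disk-mot}.

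The key gap is your first step: the assertion that, for a deterministic quantum-area time $t$, conditional on $(L_t, R_t) = (\ell, r)$, the remainder $(\eta([t,T]), \eta|_{[t,T]}, \eta(t), 1)/{\sim}$ is independent of the past and distributed (after normalization) as $\Md_{0,2}(2-\tfrac{\gamma^2}{2}; \ell, r) \otimes \SLE_{16/\gamma^2}^{\mathrm{sf}}$. You say this "should follow from target invariance of the space-filling SLE loop combined with the boundary welding identities of Theorem~\ref{thm:disk-welding}," but neither of these gives it. Theorem~\ref{thm:disk-welding} is a statement about welding two weighted quantum disks along a \emph{chordal} $\SLE_{\gamma^2}$; it says nothing about the joint structure of a space-filling $\SLE_{16/\gamma^2}$ loop's past and future on a single quantum disk, nor about the measurability of the pinched-bead decomposition of the remainder. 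Establishing this conditional independence plus the identification with a thin disk at prescribed lateral lengths \emph{is} the mating-of-trees theorem for the quantum disk, i.e.\ \cite[Theorem 1]{AG19}. Your own closing paragraph concedes this, which renders the intermediate Markov machinery circular: once you invoke \cite{AG19} you have the proposition at $r=1$ directly and need only rescale, so there is no remaining role for the conditional decomposition. A secondary issue is that "independence of the past" is asserted but never argued — without it, knowing the law of the future given $(L_t,R_t)$ for each $t$ does not by itself make $(L_s,R_s)$ a Markov process.

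Your treatment of the initial condition and of the constant $C$ (matching the boundary-length density of $\QD_{0,1}$ against $|\mu^\gamma_{\bbR_+^2}(ri,0)|$) is the right idea and is essentially the rescaling argument the paper uses; that part is fine.
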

\begin{proof}
\cite[Theorem 1]{AG19} gives the result when $\QD_{0,1}$ and  $C \int_0^\infty \mu_{\bbR_+^2}(ri, 0) \, dr$ are replaced by $\QD_{0,1}(r)$ and  $C \mu_{\bbR_+^2}(ri, 0)$ for $r = 1$. By rescaling as in the proof of Proposition~\ref{prop-thin-disk-mot} we can remove the condition $r=1$, and integrating over all $r>0$ then yields the result. 
\end{proof}
 
\begin{proposition}\label{prop-mot-QD11} Let  $\gamma \neq \sqrt2$.
Let $(\bbD, \phi, 0, 1)$ be an embedding of a sample from $\QD_{1,1}$ and let $\eta$ be an independent counterclockwise space-filling $\SLE_\kappa$ loop in $\bbD$ rooted at $1$. Let $T_1,T_2 \subset \ol \bbD$ be the regions traced by $\eta$ before and after hitting $0$, and let $q$ be the endpoint of the largest counterclockwise arc from $1$ in $\partial T_1 \cap \partial \bbD$.
Then the joint law of $\cT_1 = (T_1, \phi, 0, q, 1)/{\sim}$ and $\cT_2 = (D, \phi, 0, 1, q)/{\sim}$ is
\eqb\label{eq:prop-mot-QD11} C\int_0^\infty \int_0^\infty \QT(2-\frac{\gamma^2}2, \gamma^2, \frac{\gamma^2}2)(b, x) \times \QT(2- \frac{\gamma^2}2, 2-\frac{\gamma^2}2, 2)(x,b) \, db \, dx \quad \text{ for some }C>0. \eqe
Here, $\QT(2-\frac{\gamma^2}2, \gamma^2, \frac{\gamma^2}2)(b,x)$ denotes the disintegration of $\QT(2-\frac{\gamma^2}2, \gamma^2, \frac{\gamma^2}2)$ where the quantum lengths of the two boundary arcs clockwise from the weight $\frac{\gamma^2}2$ vertex are $b$ and $x$ respectively, and $\QT(2- \frac{\gamma^2}2, 2-\frac{\gamma^2}2, 2)(b,x)$ denotes the disintegration of $\QT(2- \frac{\gamma^2}2, 2-\frac{\gamma^2}2, 2)$ where the quantum lengths of the two boundary arcs clockwise from the weight $2$ vertex are $x$ and $b$ respectively. 
\end{proposition}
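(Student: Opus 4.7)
The plan is to exploit the mating-of-trees description of $\QD_{0,1}$ decorated with a space-filling $\SLE_\kappa$ loop from Proposition~\ref{prop-mot-disk}, together with the Markov property of Brownian motion in the quadrant, in order to decompose the resulting boundary length process at the time when $\eta$ first hits the interior marked point, and then to identify each resulting Brownian piece with the corresponding quantum triangle.

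First I would use Definition~\ref{def:QD} to write a sample from $\QD_{1,1}$ decorated by an independent counterclockwise space-filling $\SLE_\kappa$ loop $\eta$ (parametrized by quantum area with total duration $T$) as a sample from $\QD_{0,1}$ with the same type of decoration, weighted by total quantum area $T$, with interior marked point $0=\eta(\tau)$ for $\tau$ sampled uniformly on $[0,T]$. By Proposition~\ref{prop-mot-disk} the boundary length process of the $\QD_{0,1}$-decoration has law $C\int_0^\infty \mu^\gamma_{\bbR_+^2}(ri,0)\,dr$; combining the area weighting with the uniform $\tau$ and applying the Markov property of Brownian motion in $\bbR_+^2$ yields
\begin{equation*}
C\int_0^\infty dr \int_0^T d\tau\cdot \mu^\gamma_{\bbR_+^2}(ri, 0) \;=\; C\int_0^\infty \int_{\bbR_+^2} \mu^\gamma_{\bbR_+^2}(ri, w)\times\mu^\gamma_{\bbR_+^2}(w, 0) \,dw\,dr.
\end{equation*}

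Next I would identify each Brownian piece with one of the target quantum triangles, using the map $F$ from Section~\ref{subsec-mot-meas} and its concatenation compatibility. For the second piece $\mu^\gamma_{\bbR_+^2}(w, 0)$ with $w=(\ell, r)$, Proposition~\ref{prop-thin-disk-mot} identifies it with $\Md_{0,2}(2-\tfrac{\gamma^2}{2};\ell,r)$ decorated by a space-filling $\SLE_\kappa$ running between $\eta(\tau)=0$ and $\eta(T)=1$. The additional marked point $q$ lies on the left boundary arc of $T_2$ at distance $x$ from $0$ and $\ell-x$ from $1$, and Lemma~\ref{lem:QT(W,2,W)} together with a cyclic relabelling of the marked points identifies the three-pointed surface as $\cT_2 \sim \QT(2-\tfrac{\gamma^2}{2}, 2-\tfrac{\gamma^2}{2}, 2)(x, b)$ with $b=r$. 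The integration over the third arc length in the $\cT_2$ disintegration corresponds to integrating over the length of $T_2\cap\partial\bbD$, which combined with the analogous integration for $\cT_1$ reconstructs the integration over $\partial\bbD$. For the first piece $\mu^\gamma_{\bbR_+^2}(ri, w)$, I would analogously identify it with $\cT_1 \sim \QT(2-\tfrac{\gamma^2}{2}, \gamma^2, \tfrac{\gamma^2}{2})(b, x)$ decorated by $\eta|_{[0,\tau]}$ using Proposition~\ref{prop-mot-triangle-down}.

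The hardest step will be the identification of the first Brownian piece with $\cT_1$. Proposition~\ref{prop-mot-triangle-down} encodes the boundary length process of $\QT^\downarrow(2-\tfrac{\gamma^2}{2},\gamma^2,\tfrac{\gamma^2}{2})(b)$ as Brownian motion in the right half-plane $\bbR_+\times\bbR$ starting at the origin, tracking the \emph{filled} region of $\eta^\downarrow$ and allowing $R_t$ to become negative during the boundary-tracing phase. In contrast, our first piece is Brownian motion in the quadrant $\bbR_+^2$ starting on the imaginary axis $\{0\}\times\bbR_+$, tracking the \emph{remaining} region $\eta([t,T])$ of $\bbD$. Reconciling these two descriptions will require carefully relating the filled and remaining boundary length processes via the reversibility of the mating-of-trees map, accounting for the shared interior arcs $b$ and $x$ between $T_1$ and $T_2$, and then using concatenation compatibility to produce the correct joint law on $(\cT_1, \cT_2)$.
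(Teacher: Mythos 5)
Your outline follows the paper's own proof: use Definition~\ref{def:QD} and Proposition~\ref{prop-mot-disk} to describe the $\QD_{1,1}$-decorated boundary length process, apply the Markov property to split at the hitting time of $0$, and then identify the two pieces with quantum triangles via Propositions~\ref{prop-mot-triangle-down}, \ref{prop-thin-disk-mot} and Lemma~\ref{lem:QT(W,2,W)}. The treatment of the second piece and the addition of $q$ by splitting an arc are both on target.

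The gap is exactly the step you flag as "hardest," and your proposed resolution does not close it. Your first piece is $\mu_{\bbR_+^2}^\gamma(ri,z)$ — a path in the quadrant launched from the imaginary axis — whereas Proposition~\ref{prop-mot-triangle-down} produces a path in the half-plane $\bbR_+\times\bbR$ started at the origin. Appealing to "reversibility of the mating-of-trees map" does not bridge this: reversibility is already built into Proposition~\ref{prop-mot-triangle-down} (that is how $\QT^\downarrow$ is obtained from $\QT^\uparrow$), and invoking it again gives you the time-reversed curve on $T_1$, not the process you have. What is actually needed is to write $\mu_{\bbR_+^2}^\gamma(ri,z)$ as the restriction of $\mu_{\bbR_+\times\bbR}^\gamma(ri,z)$ to paths with positive imaginary part, disintegrate over the running infimum $G:=\inf_t\mathrm{Im}(Z^1(t))$, and translate $\wt Z^1=Z^1-Z^1(0)$. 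This produces a $\bbR_+\times\bbR$-path from the origin with imaginary infimum $-(r-G)$, which after the reparametrization $y=r-g$, $b=\mathrm{Re}(z)$, $x=\mathrm{Im}(z)-g$ matches Proposition~\ref{prop-mot-triangle-down} and simultaneously isolates the auxiliary variable $g$ that splits the arc of $T_2$ for the marked point $q$ (so that Lemma~\ref{lem:QT(W,2,W)} can be applied to $\Md_{0,2}(2-\frac{\gamma^2}2;x+g,b)$). Without this disintegration-plus-translation you cannot extract the four arc lengths $b,x,y,g$, and the identifications $\cT_1\sim\QT(2-\frac{\gamma^2}2,\gamma^2,\frac{\gamma^2}2)(b,x)$ and $\cT_2\sim\QT(2-\frac{\gamma^2}2,2-\frac{\gamma^2}2,2)(x,b)$ remain assertions rather than derived facts.
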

\begin{proof}
See Figure~\ref{fig:mot-triangle} (bottom right). Parametrize $\eta$ by quantum area.
Let $Z$ denote the boundary length process of $\eta$ in $(\bbD, \phi, 1)$, let $t$ be the duration of $Z$, and let $s_1$ be the time $\eta$ hits $0$. Since the marked bulk point is sampled from the quantum area measure, by Proposition~\ref{prop-mot-disk} the joint law of $(s_1,t, Z)$ is $C \cdot 1_{0 < s_1 < t}\,ds_1\, 1_{t>0} dt \,
    \int_0^\infty \mu_{{\bbR_+^2}}(t;ri, 0) (dZ) \, dr$. Reparametrizing $s_2 := t - s_1$, the joint law of $(s_1, s_2, Z)$ is $C \cdot 1_{s_1>0}\,ds_1\, 1_{s_2>0} ds_2 \,
    \int_0^\infty \mu_{{\bbR_+^2}}(s_1+s_2;ri, 0) (dZ)\,dr$.
    
    Let $Z^1 = Z|_{[0,s_1]}$ and $Z^2 = Z(\cdot + s_1)|_{[0, s_2]}$. 
    By a variant of the Markov property~\eqref{eq-markov-bulk}, the joint law of $(Z^1, Z^2)$ is 
    \[C \int_0^\infty \int_0^\infty  \int_0^\infty \int_{\bbR_+^2} \mu^\gamma_{\bbR_+^2 } (s_1;ri, z) \mu^\gamma_{\bbR_+^2}(s_2;z, 0) \, dz  \, dr  \, ds_1\, ds_2 = C \int_0^\infty \int_{\bbR_+^2} \mu_{\bbR_+^2}^\gamma (ri, z) \mu_{\bbR_+^2}^\gamma(z,0)\, dz \, dr. \]
The measure $\mu_{\bbR_+^2}^\gamma(ri, z)$ is the restriction of $\mu_{\bbR_+ \times \bbR}^\gamma(ri, z)$ to paths $Z^1$ such that $G := \inf_t \mathrm{Im}(Z^1(t)) > 0$. Disintegrate over the value of $G$ to get $\mu_{\bbR_+^2}^\gamma(ri, z) = \int_0^\infty \mu_{\bbR_+ \times \bbR}^\gamma (ri, z; g) \, dg$. Letting $\wt Z^1 = Z^1 - Z^1(0)$, the joint law of $(\wt Z^1, Z^2)$ is 
\[ C \int_0^\infty \int_0^r \int_{\bbR_+^2} \mu_{\bbR_+\times \bbR}^\gamma (0, z - ri; g-r) \mu_{\bbR_+^2}^\gamma (z, 0)  \, dz \, dg \, dr.\]
Reparametrizing $y = r - g$, $b = \mathrm{Re}(z)$ and $x = \mathrm{Im}(z) - g$, we can rewrite as 
\[ C \int_0^\infty \int_0^\infty \int_0^\infty \mu_{\bbR_+\times \bbR}^\gamma (0, b + (x-y)i; -y) \left( \int_0^\infty \mu_{\bbR_+^2}^\gamma (b + (x+g)i, 0) \, dg\right) \, dy \, db \,dr.\]
By Propositions~\ref{prop-mot-triangle-down} and~\ref{prop-thin-disk-mot}, writing $\cD = (D, \phi, 0, 1)/{\sim}$, the joint law of $\cT_1$ and $\cD$ is 
\[ C\int_0^\infty \int_0^\infty \QT(2-\frac{\gamma^2}2, \gamma^2, \frac{\gamma^2}2)(b, x) \times \left( \int_0^\infty \Md_{0,2}(2-\frac{\gamma^2}2 ;x+g, b)\, dg \right) \, db \, dx. \]
To conclude, by Lemma~\ref{lem:QT(W,2,W)}, a sample from $\QT(2-\frac{\gamma^2}2, 2-\frac{\gamma^2}2, 2; g, x, b)$ can be obtained from a sample from $C \Md_{0,2}(2-\frac{\gamma^2}2 ;x+g, b)$ by adding a marked point to the boundary splitting the length $x+g$ arc into arcs of lengths $g$ and $x$. 
\end{proof}

\subsection{Proof of Theorem~\ref{thm:welding}}\label{subsec:pf-welding}

\begin{figure}[htb]
    \centering
    \includegraphics[scale=0.6]{}
    \caption{Diagram for proof of Theorem~\ref{thm:welding}. \textbf{Left:} The conformal welding result in Proposition~\ref{prop-mot-QD11}. The blue and green curves are the boundaries $\eta_0^L$ and $\eta_0^R$ of the space-filling $\SLE_\kappa$ loop stopped when hitting 0. \textbf{Middle:} The continuation of the $\eta_0^R$ after merging into the boundary (green). This cuts the weight $(2-\frac{\gamma^2}{2},\frac{\gamma^2}{2},\gamma^2)$ quantum triangle in the left panel from Proposition~\ref{prop-mot-QD11} into a weight $(2-\frac{\gamma^2}{2},2-\frac{\gamma^2}{2},2)$ quantum triangle (which equals a constant times $\Md_{0,2}(2-\frac{\gamma^2}{2})$) and a weight $\gamma^2-2$ quantum disk (black). \textbf{Right:} By drawing $\SLE_\kappa(\frac{\kappa}{2}-4;\frac{\kappa}{2}-4)$ curves (red) in each pocket of the weight $2-\frac{\gamma^2}{2}$ quantum disk (pink), we obtain two forested line segments by Proposition~\ref{prop:weld:segment}. Conformally welding the blue and green boundaries then gives the desired welding picture in Theorem~\ref{thm:welding} as on the top right, and it follows from Proposition~\ref{prop:space-filling-loop-radial} that the concatenation of the interfaces form a radial $\SLE_\kappa(\kappa-6)$ curve from 1 to 0 with force point $e^{i0^-}$.}
    \label{fig:radial-weld}
\end{figure}

Let $\wt\kappa=\gamma^2=\frac{16}{\kappa}$. We begin with the setting of Proposition~\ref{prop-mot-QD11}, where $\eta$ is a counterclockwise space-filling $\SLE_\kappa$ loop in $\bbD$ from 1 to 1 drawn on a quantum disk from $\Md_{1,1}(\gamma;\gamma)$ (Recall that by Proposition~\ref{prop:mdisk11-QD} $\Md_{1,1}(\gamma;\gamma) = C\QD_{1,1}$ for some constant $C$). Let $\tau_0$ be the first time $\eta$ hits $0$, and $\eta_0^L,\wt{\eta}_0^R$ be the left and right boundaries of $\eta([0,\tau_0])$. Then $\eta_0^L,\wt{\eta}_0^R$ are the interfaces under the conformal welding~\eqref{eq:prop-mot-QD11}. Let $\tau_0^R$ be the time when $\wt{\eta}_0^R$ hits $\partial\bbD$, and $\sigma_0^R$ be the last time before $\tau_0^R$ when $\wt{\eta}_0^R$ hits $\eta_0^L$. See also the left panel of Figure~\ref{fig:radial-weld} for the setup.

We draw an independent $\SLE_{\wt\kappa}(\wt\kappa-4;-\frac{\wt\kappa}{2})$ curve $\hat{\eta}^R$ from $\wt{\eta}_0^R(\tau_0^R)$ to 1 in the connected component of $\eta([0,\tau_0])$ with 1 on the boundary, where the force points are located at $\wt{\eta}_0^R(\tau_0^R)^-$ and $\wt{\eta}_0^R(\sigma_0^R)$, and let $\eta_0^R$ be its concatenation with $\wt{\eta}_0^R$. Then by Theorem~\ref{thm:disk+QT}, the quantum disk $(\bbD,\phi,0,1)$ decorated with $\eta_0^L$ and $\eta_0^R$ is equal to
\begin{equation}\label{eq:radial-weld-pf}
    \iiint_{\bbR_+^3}\Wd\left(\QT(2-\frac{\gamma^2}{2},2-\frac{\gamma^2}{2},2;b,x,y), \Md_{0,2}(\gamma^2-2;y), \QT(2-\frac{\gamma^2}{2},2-\frac{\gamma^2}{2},2;x,b) \right)dbdxdy.
\end{equation}
Here in~\eqref{eq:radial-weld-pf}, $b$ and $x$ represent the quantum lengths of $\eta_0^L$ and $\eta_0^R$, where $y$ represents the quantum length of $\hat\eta^R$. As in the middle panel of Figure~\ref{fig:radial-weld}, $b,x,y$ correspond to the quantum lengths of the blue, dark green and light green curve segments. Now we perform a change of variables $r = x+y$ and $s = y$, and rewrite~\eqref{eq:radial-weld-pf} as
\begin{equation}\label{eq:radial-weld-pf-A}
    \iint_{\bbR_+^2}\int_0^r\Wd\left(\QT(2-\frac{\gamma^2}{2},2-\frac{\gamma^2}{2},2;b,r-s,s), \Md_{0,2}(\gamma^2-2;s), \QT(2-\frac{\gamma^2}{2},2-\frac{\gamma^2}{2},2;r-s,b) \right)ds\,drdb.
\end{equation}
On one hand, by Definition~\ref{def-qt-thin}, we have the natural disintegration 
\begin{equation*}
    \QT(2-\frac{\gamma^2}{2},2-\frac{\gamma^2}{2},\gamma^2-2;r,b) = \int_0^r \QT(2-\frac{\gamma^2}{2},2-\frac{\gamma^2}{2},2;r-s,b) \times  \Md_{0,2}(\gamma^2-2;s)\,ds
\end{equation*}
where $r$ represent the quantum length of the boundary arc of a weight $(2-\frac{\gamma^2}{2},2-\frac{\gamma^2}{2},\gamma^2-2)$ quantum triangle from the weight $\gamma^2-2$ vertex to the weight $2-\frac{\gamma^2}{2}$ vertex. On the other hand, by Lemma~\ref{lem:QT(W,2,W)}, up to a constant, $\QT(2-\frac{\gamma^2}{2},2-\frac{\gamma^2}{2},2;b,r-s,s)$ can be generated by marking the point on the right boundary of a quantum disk from $\Md_{0,2}(2-\frac{\gamma^2}{2};b,r)$ with distance $s$ to the bottom vertex. As a consequence, by a re-arranging and forgetting the marked point $\wt{\eta}_0^R(\tau_R^0)$ (the dark green dot in Figure~\ref{fig:radial-weld}), ~\eqref{eq:radial-weld-pf-A} is now a constant times
\begin{equation}\label{eq:radial-weld-pf-B}
    \iint_{\bbR_+^2} \Wd\left(\Md_{0,2}(2-\frac{\gamma^2}{2};b,r),   \QT(2-\frac{\gamma^2}{2},2-\frac{\gamma^2}{2},\gamma^2-2;r,b) \right)drdb,
\end{equation}
where $r$ correspond to the quantum length of $\eta_0^R$.

Finally, in each pocket $D$ of $\bbD\backslash(\eta_0^L\cup\eta_0^R)$ between $\eta_0^L$ and $\eta_0^R$, we draw an independent $\SLE_\kappa(\frac{\kappa}{2}-4;\frac{\kappa}{2}-4)$ curve $\eta_D$. By Proposition~\ref{prop:weld:segment}, the quantum surface $(\bbD,\phi,0,1)$ decorated with the curves $\eta_0^L,\eta_0^R,(\eta_D)_D$ is equal to 
\begin{equation}\label{eq:radial-weld-pf-C}
     \iint_{\bbR_+^2}\int_0^\infty \Wd\left(\mathcal{M}_2^{\text{f.l.}}(b;\ell), \mathcal{M}_2^{\text{f.l.}}(r;\ell),  \QT(2-\frac{\gamma^2}{2},2-\frac{\gamma^2}{2},\gamma^2-2;r,b) \right)d\ell\,drdb.
\end{equation}
If we further forest the outer boundary of $(\bbD,\phi,0,1)$, then from Definition~\ref{def:q.t.f.s.} (along with the identification ~\eqref{eq:def-forest-bdry}), the surface from~\eqref{eq:radial-weld-pf-C} equals a constant times the right hand side of~\eqref{eq:thm-welding}. On the other hand, by Proposition~\ref{prop:space-filling-loop-radial} (and a conformal map from $\bbD$ to $\bbH$), the union of $\eta_0^L$, $\eta_0^R$ along with all the $\eta_D$'s is equal to the trace of a radial $\SLE_\kappa(\kappa-6)$ curve with force point at $e^{i0^-}$. Therefore we conclude the proof of Theorem~\ref{thm:welding}.
\qed
 
\section{Derivation of the touching probability and CLE conformal radii moments}
\label{sec:proof}

In this section we prove Theorems~\ref{thm:touch}--\ref{thm:CR-widetilde-D}. 
In Section~\ref{subsec:weld-touch}, we cut a generalized quantum disk using an independent radial ${\rm SLE}_\kappa(\kappa-6)$ until the first time it closes a loop around the marked bulk point; Proposition~\ref{prop:weld-wind} identifies the two resulting generalized quantum surfaces.
In Section~\ref{subsec:computation-proof}, we state Proposition~\ref{prop:CRratio}, which gives the ratio between the moments of conformal radii of clockwise/counterclockwise loops, and prove Theorems~\ref{thm:touch}--\ref{thm:CR-widetilde-D} via Proposition~\ref{prop:CRratio}. In Sections~\ref{subsec:computation-prelim}--\ref{subsec:computation} we use exact formulas from the Liouville CFT theory to carry out the computations and prove Proposition~\ref{prop:CRratio}.

\subsection{Boundary touching event from conformal welding}\label{subsec:weld-touch}
The goal of this section is to prove Proposition~\ref{prop:weld-wind}. Consider a forested quantum triangle $\cT^f$ of weights $(2-\frac{\gamma^2}{2},2-\frac{\gamma^2}{2},\gamma^2-2)$ in Theorem~\ref{thm:welding}. By Definition~\ref{def-qt-thin}, we have the decomposition $(\cT_1^f,\cD^f)$ of $\cT^f$:
\eqb\label{eq:qt-decom}
(\cT_1^f,\cD^f)\sim  \QT^f(\frac{3\gamma^2}{2}-2,2-\frac{\gamma^2}{2},\gamma^2-2)\times \Mfd_{0,2}(2-\frac{\gamma^2}{2}).
\eqe
In other words, $\cT^f$ can be generated by  connecting $(\cT_1^f,\cD^f)$ sampled from~\eqref{eq:qt-decom} as in Definition~\ref{def-qt-thin}.
We write $L_1$ and $L_2$ for the generalized boundary lengths for the left and right boundary arcs of $\cT_1$; see Figure~\ref{fig:qt-decomposition} for an illustration.
\begin{figure}
    \centering
    \begin{tabular}{cc}
      \includegraphics[scale=0.55]{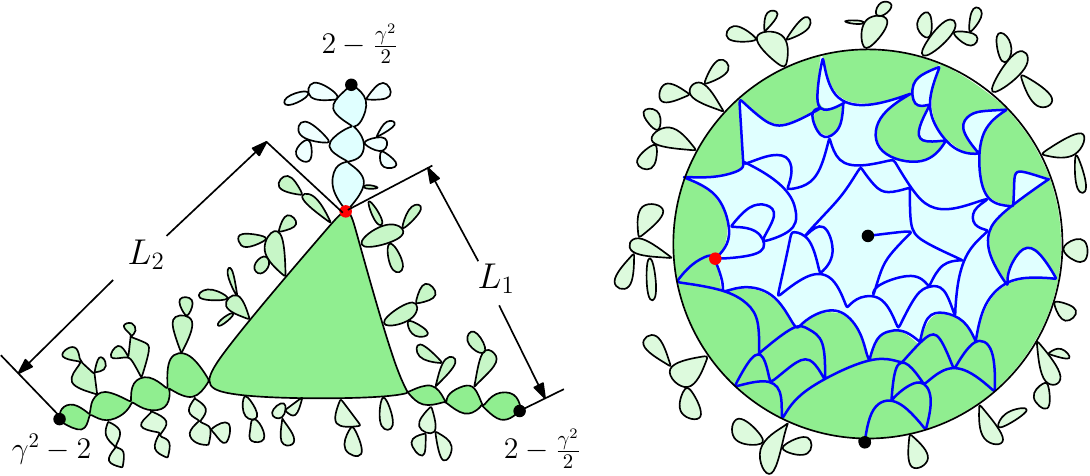}    &  \   \includegraphics[scale=0.6]{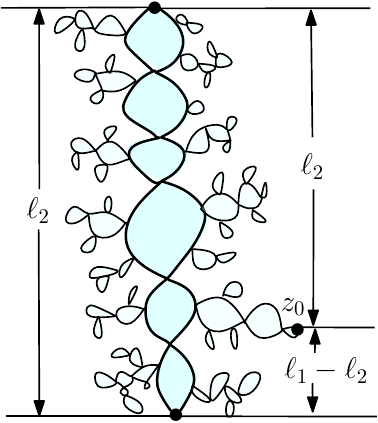}
    \end{tabular}
    \caption{\textbf{Left:} The decomposition $(\cT_1^f,\cD^f)$ of the weight $(2-\frac{\gamma^2}{2},2-\frac{\gamma^2}{2},\gamma^2-2)$ quantum triangle. \textbf{Middle:} The red marked point is the point at which $\eta$ first closes a loop around 0. Under the event $L_2>L_1$, the first loop is counterclockwise, and therefore the CLE loop surrounding 0 touches the boundary. \textbf{Right:} The welding of the $\cD^f$ to itself in Lemma~\ref{lem:-wd-QTf(W,2,W)}.}
    \label{fig:qt-decomposition}
\end{figure}

Consider the conformal welding of $\cT^f$ as in Theorem~\ref{thm:welding} and let $\eta$ be the interface. Since the left and right boundaries of $\cT^f$ are glued together according to the generalized quantum length, it turns out that on the event $\{L_2>L_1\}$, a fraction of the right boundary of $\cD^f$ is glued to a fraction of the left boundary of $\cT_1^f$. This forces the first loop around 0 made by the radial $\SLE_\kappa(\kappa-6)$ interface $\eta$ to be counterclockwise and therefore by Proposition~\ref{prop:prelim} the CLE loop surrounding 0 touches the boundary. On the other hand, on the event $\{L_2<L_1\}$, a fraction of the left boundary of $\cD^f$ is glued to a fraction of the right boundary of $\cT_1^f$, and thus the first loop is clockwise. This gives an expression of the boundary touching event for the CLE in terms of boundary lengths $L_1,L_2$ of $\cT_1^f$.

Let $W>0$. Recall the definition of $\Md_{0,2,\bullet}(W)$ given above Lemma~\ref{lem:QT(W,2,W)}. We now  define $\mathcal{M}_{0,2, \bullet}^{\textup{f.d.}}(W)$ analogously.   First sample  a forested quantum disk from $\Mfd_{0,2}(W)$ and weight its law by the generalized quantum length of its left boundary arc. Then sample a marked point on the left boundary according to the probability measure proportional to the generalized quantum length. We denote the law of the triply marked quantum surface  by $\mathcal{M}_{0,2, \bullet}^{\textup{f.d.}}(W)$.
\begin{lemma}\label{lem:QTf(W,2,W)}
    For $W \in (0, \frac{\gamma^2}{2}) \cup (\frac{\gamma^2}{2}, \infty)$, we have
    \begin{equation}
         \Mfd_{0,2,\bullet}(W) = C_0 \QT^f(W,\gamma^2-2,W) \quad \mbox{with} \quad C_0 = \frac{\gamma^2}{4}.
    \end{equation}
\end{lemma}
\begin{proof}
     By Definition~\ref{def-thin-disk}, a sample from $\QT^f(W,\gamma^2-2,W)$ can be constructed by concatenating samples from $\QT^f(W,2,W)\times \Mfd_{0,2}(\gamma^2-2)$. By Lemma~\ref{lem:QT(W,2,W)} and~\cite[Lemma 3.15]{AHSY23}, we get Lemma~\ref{lem:QTf(W,2,W)}.
\end{proof}
Combining with Theorem~\ref{thm:welding}, we have the following lemma; see the right panel of Figure~\ref{fig:qt-decomposition}.
\begin{lemma}\label{lem:-wd-QTf(W,2,W)}
    Let $\cD^f$ be a sample from $\Mfd_{0,2}(2-\frac{\gamma^2}{2})$ restricted to the event where its left boundary length $\ell_2$ is less than right boundary length $\ell_1$. Mark the point $z_0$ on the right boundary with distance $\ell_2$ to its top vertex. Then if we weld the left boundary of $\cD^f$ to its right boundary starting from the top vertex, then the resulting curve-decorated surface has law $C_0C_{\gamma}^{-1}\Mfd_{1,1}(\gamma,\gamma)\otimes\rm{raSLE}_\kappa(\kappa-6)$, where  $C_\gamma$ and $C_0$ are the constants from Theorem~\ref{thm:welding} and Lemma~\ref{lem:QTf(W,2,W)}, respectively.
\end{lemma}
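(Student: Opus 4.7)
The plan is to realize the pair $(\cD^f, z_0)$ of Lemma~\ref{lem:-wd-QTf(W,2,W)} as a specific disintegration of a forested quantum triangle, so that its self-welding coincides exactly with the welding in Theorem~\ref{thm:welding}; then applying that theorem finishes the proof.

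First I would exploit the left-right symmetry of $\Md_{0,2}(W)$ (inherited from the symmetric construction in Definition~\ref{def-quantum-disk}) together with Lemma~\ref{lem:QTf(W,2,W)} to obtain a right-boundary analogue of that lemma: sampling $\cD^f \sim \Mfd_{0,2}(2-\frac{\gamma^2}{2})$, weighting by the right boundary length, and then sampling a marked point uniformly on the right boundary produces the same measure as $C_0 \, \QT^f(2-\frac{\gamma^2}{2},\gamma^2-2,2-\frac{\gamma^2}{2})$. Under this identification, the top and bottom of $\cD^f$ correspond to the two weight-$(2-\frac{\gamma^2}{2})$ triangle vertices, while the extra marked point on the right boundary corresponds to the weight-$(\gamma^2-2)$ vertex.

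Next I would disintegrate both sides over the boundary lengths and the position of the extra marked point on the right boundary. On the triangle side, the left arc of $\cD^f$ (length $\ell_2$) corresponds to the arc between the two weight-$(2-\frac{\gamma^2}{2})$ vertices; the right subarc from the top to the marked point (of length equal to its distance from the top) corresponds to the adjacent arc sharing the top vertex; and the remaining right subarc is the third arc. Placing $z_0$ at distance $\ell_2$ from the top and restricting to $\ell_2 < \ell_1$, as in Lemma~\ref{lem:-wd-QTf(W,2,W)}, then matches, after a cyclic relabeling of vertices of $\QT^f$, the disintegration $C_0 \int_0^\infty \QT^f(2-\frac{\gamma^2}{2},2-\frac{\gamma^2}{2},\gamma^2-2;\ell,\ell)\,d\ell$ appearing on the right hand side of~\eqref{eq:thm-welding}, with the two length-$\ell$ arcs being precisely those to be welded.

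Applying Theorem~\ref{thm:welding} then gives that the welded curve-decorated surface has law $C_0 C_\gamma^{-1}\Mfd_{1,1}(\gamma,\gamma)\otimes \mathrm{raSLE}_\kappa(\kappa-6)$. The main obstacle is the bookkeeping of cyclic labels of triangle vertices across three pictures, namely $(\cD^f,z_0)$, the triangle from Lemma~\ref{lem:QTf(W,2,W)}, and the triangle in Theorem~\ref{thm:welding}; in particular one must verify that the shared endpoint of the two welded arcs, i.e., the top of $\cD^f$, corresponds under an appropriate cyclic rotation to a weight-$(2-\frac{\gamma^2}{2})$ vertex adjacent to the arc between the two weight-$(2-\frac{\gamma^2}{2})$ vertices of $\QT^f$, as required by the welding prescription in Theorem~\ref{thm:welding}.
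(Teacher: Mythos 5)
Your proposal is correct and is essentially the same as the paper's proof: identify the triply-marked surface $(\cD^f, z_0)$ with a disintegration of $\QT^f(2-\frac{\gamma^2}{2},\gamma^2-2,2-\frac{\gamma^2}{2})$ via Lemma~\ref{lem:QTf(W,2,W)} (invoking the left/right symmetry of $\Md_{0,2}$ to mark the right arc rather than the left), then match this, after a change of variables, with the disintegration $\int_0^\infty \QT^f(2-\frac{\gamma^2}{2},2-\frac{\gamma^2}{2},\gamma^2-2;\ell,\ell)\,d\ell$ appearing in Theorem~\ref{thm:welding}. The paper states this in two displayed equations and one sentence; the only thing it leaves implicit that you correctly flag is the cyclic-relabeling bookkeeping, which checks out because both welded arcs in the theorem's prescription are the ones adjacent to the shared weight-$(2-\frac{\gamma^2}{2})$ vertex, and these correspond under your identification to the left arc of $\cD^f$ and the initial length-$\ell_2$ segment of its right arc, both incident to the top vertex.
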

\begin{proof}
    By marking the point $z_0$ on $\cD^f$, by a disintegration over Lemma~\ref{lem:QTf(W,2,W)}, the surface $\cD^f$ has law 
    \begin{equation*}
       C_0\int_0^{\infty} \int_{\ell_2}^\infty  \QT^f(2-\frac{\gamma^2}{2},\gamma^2-2,2-\frac{\gamma^2}{2};\ell_2,\ell_2,\ell_1-\ell_2)\,d\ell_1 \, d\ell_2.
    \end{equation*}
    By a change of variables, the above expression is the same as
    \begin{equation*}
       C_0 \int_0^\infty \int_0^\infty \QT^f(2-\frac{\gamma^2}{2},\gamma^2-2,2-\frac{\gamma^2}{2};\ell_2,\ell_2,\ell_1')\,d\ell_2 \, d\ell_1'.
    \end{equation*}
    Then the lemma follows directly follows from Theorem~\ref{thm:welding}.
\end{proof}

For $\alpha\in\bbR$, we write $\Delta_\alpha = \frac{\alpha}{2}(Q-\frac{\alpha}{2})$. Let $\mathsf{m}$ be the law of a radial $\SLE_\kappa(\kappa-6)$ curve $\wt\eta$ from 1 to 0 with force point $1e^{i0^-}$ stopped at the first time $\sigma_1$ when it closes a loop  around 0 as in Section~\ref{subsec:pre-CLE}. Recall that $T$ is the event where $\Loop$ touches the boundary, which by Proposition~\ref{prop:prelim} is the same as the event where $\wt\eta$ forms a counterclockwise loop. Let $D_{\wt\eta}$ be the connected component of $\bbD\backslash{\wt\eta}$ containing 0. Define the measure $\mathsf{m}^\alpha(\widetilde \eta)$ by $\frac{d \mathsf{m}^\alpha(\widetilde \eta)}{d \mathsf{m}(\widetilde \eta)} = {\rm CR}(0,D_{\widetilde \eta})^{2\Delta_\alpha-2}$.

Now we prove the main result of this section. See Figure~\ref{fig:weld5} for an illustration.
\begin{proposition}\label{prop:weld-wind} Let $\gamma \in (\sqrt2, 2)$ and $\alpha\in\bbR$. For some constant $C_0$ depending only on $\gamma$, we have
\begin{equation}
\label{eq:weld2}
\begin{aligned}
&\Mfd_{1,1}(\alpha,\gamma) \otimes \mathsf{m}^\alpha(\widetilde \eta) \mathbbm{1}_T = C_0 \iint_{L_2>L_1>0} {\rm Weld}( {\rm QT}^f (\frac{3\gamma^2}{2}-2,2-\frac{\gamma^2}{2},\gamma^2-2;L_1,L_2), \Mfd_{1,1}(\alpha,\gamma; L_2-L_1)) dL_1dL_2\,;\\
&\Mfd_{1,1}(\alpha,\gamma) \otimes \mathsf{m}^\alpha(\widetilde \eta) \mathbbm{1}_{T^c} = C_0 \iint_{L_1>L_2>0} {\rm Weld}( {\rm QT}^f (\frac{3\gamma^2}{2}-2,2-\frac{\gamma^2}{2},\gamma^2-2;L_1,L_2), \Mfd_{1,1}(\alpha,\gamma; L_1-L_2)) dL_1dL_2.
\end{aligned}
\end{equation}
Here, for the quantum triangle we conformally weld the two forested boundary arcs adjacent to the weight $\frac{3\gamma^2}{2}-2$ vertex, starting by identifying the weight $\gamma^2-2$ vertex with the weight $2-\frac{\gamma^2}2$ vertex, and conformally welding until the shorter boundary arc has been completely welded to the longer boundary arc. Then, the quantum disk is conformally welded to the remaining segment of the longer boundary arc, identifying its boundary marked point with the weight $\frac{3\gamma^2}2-2$ vertex of the quantum triangle.
\end{proposition}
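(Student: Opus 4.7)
The plan is to first establish~\eqref{eq:weld2} in the base case $\alpha=\gamma$ as a direct consequence of Theorem~\ref{thm:welding} combined with the decomposition~\eqref{eq:qt-decom}, and then extend it to arbitrary $\alpha$ by a Girsanov-type weighting by a power of the conformal radius.

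\textbf{Base case $\alpha=\gamma$.} Starting from the welding identity of Theorem~\ref{thm:welding}, I would apply the decomposition~\eqref{eq:qt-decom} to the quantum triangle $\cT^f$, writing it as a pair $(\cT_1^f,\cD^f)$ with $\cT_1^f\sim\QT^f(\tfrac{3\gamma^2}{2}-2,2-\tfrac{\gamma^2}{2},\gamma^2-2)$ and $\cD^f\sim\Mfd_{0,2}(2-\tfrac{\gamma^2}{2})$. Let $L_1,L_2$ denote the generalized quantum lengths of the two boundary arcs of $\cT_1^f$ meeting at the weight $\tfrac{3\gamma^2}{2}-2$ vertex, which is the vertex glued to $\cD^f$. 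The welding in Theorem~\ref{thm:welding} identifies the two boundaries of $\cT^f$ according to generalized quantum length; because $\cD^f$ contributes equal lengths to the two opposing sides, this welding effectively identifies the two boundary arcs of $\cT_1^f$ in opposite directions starting from their shared vertex, and closes its first loop around $0$ at precisely the instant when the shorter of the two arcs is exhausted. This is the stopping time $\sigma_1$. As explained in Section~\ref{subsec:weld-touch} and by Proposition~\ref{prop:prelim}, the events $\{L_2>L_1\}$ and $\{L_1>L_2\}$ correspond to $T$ and $T^c$ respectively, matching the counterclockwise/clockwise case distinction. The remaining unwelded portion of $\cT_1^f$ (of length $|L_2-L_1|$) together with $\cD^f$ forms the domain $D_{\widetilde\eta}$, and a direct application of Lemma~\ref{lem:-wd-QTf(W,2,W)} (used in reverse on this remainder) identifies its law as $C_0\,\Mfd_{1,1}(\gamma,\gamma;|L_2-L_1|)$.

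\textbf{Extension to general $\alpha$.} The law of $\widetilde\eta$ stopped at $\sigma_1$ depends on the bulk target point $0$ but is independent of the weight of any Liouville insertion placed there; this is the content of target invariance (Proposition~\ref{prop:target-invariance}) underlying the CLE exploration tree. Comparing Definitions~\ref{def:lf-insertion} and~\ref{def:mdisk11} for $\LF_{\bbH}^{(\alpha,i),(\gamma,0)}$ versus $\LF_{\bbH}^{(\gamma,i),(\gamma,0)}$ and applying the coordinate-change rule~\eqref{eq:lqg-changecoord} to any embedding of $D_{\widetilde\eta}$, the Radon-Nikodym derivative between $\Mfd_{1,1}(\alpha,\gamma)$ and $\Mfd_{1,1}(\gamma,\gamma)$, restricted to the surface containing the bulk insertion, equals $\mathrm{CR}(0,D_{\widetilde\eta})^{2\Delta_\alpha-2\Delta_\gamma}$. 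Since $\Delta_\gamma=\tfrac{\gamma}{2}(Q-\tfrac{\gamma}{2})=1$, this power simplifies to $2\Delta_\alpha-2$, which is exactly the weighting converting $\mathsf{m}$ into $\mathsf{m}^\alpha$. Weighting both sides of the $\alpha=\gamma$ identity by $\mathrm{CR}(0,D_{\widetilde\eta})^{2\Delta_\alpha-2}$ then yields~\eqref{eq:weld2}; on the right-hand side the weighting simultaneously shifts the bulk insertion on $D_{\widetilde\eta}$ from $\gamma$ to $\alpha$, turning $\Mfd_{1,1}(\gamma,\gamma;|L_2-L_1|)$ into $\Mfd_{1,1}(\alpha,\gamma;|L_2-L_1|)$.

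\textbf{Main obstacle.} The most delicate step is the change-of-insertion argument: verifying that the Radon-Nikodym derivative between $\Mfd_{1,1}(\alpha,\gamma)$ and $\Mfd_{1,1}(\gamma,\gamma)$, when evaluated on the random subdomain $D_{\widetilde\eta}$ carved out by the SLE up to $\sigma_1$, takes exactly the form $\mathrm{CR}(0,D_{\widetilde\eta})^{2\Delta_\alpha-2\Delta_\gamma}$. This requires tracking the $|w|_+^{-2\alpha(Q-\alpha)}$ and $(2\,\mathrm{Im}\,w)^{-\alpha^2/2}$ factors in Definition~\ref{def:lf-insertion} under the composition of the welding conformal map and the $\bullet_\gamma$ coordinate change, using the primary-field transformation rule for the bulk Liouville insertion at $0$, and confirming that no additional $\alpha$-dependent constants are introduced so that the single constant $C_0$ (inherited from Lemma~\ref{lem:-wd-QTf(W,2,W)}) works uniformly in $\alpha$.
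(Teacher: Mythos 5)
Your overall strategy matches the paper's: establish the $\alpha=\gamma$ case from Theorem~\ref{thm:welding}, the decomposition~\eqref{eq:qt-decom}, and Lemma~\ref{lem:-wd-QTf(W,2,W)}, then tilt to general $\alpha$. However, the geometric identification in your base case is off. You assert that ``the remaining unwelded portion of $\cT_1^f$ (of length $|L_2-L_1|$) together with $\cD^f$ forms the domain $D_{\widetilde\eta}$'' and invoke Lemma~\ref{lem:-wd-QTf(W,2,W)} ``in reverse on this remainder.'' In the paper's argument, $D_{\widetilde\eta}$ is exactly $\cD^f$ welded to itself: one first applies Lemma~\ref{lem:-wd-QTf(W,2,W)} forward to $\cD^f$ (with the $\ell$-disintegration in~\eqref{eq:prop-weld-wind} integrated out), and, since that lemma already produces $\Mfd_{1,1}(\gamma,\gamma)\otimes\mathrm{raSLE}_\kappa(\kappa-6)$ rather than a bare surface, discards the SLE inside to obtain $\wt{\cD}^f\sim C_0C_\gamma^{-1}\Mfd_{1,1}(\gamma,\gamma;L_2-L_1)$. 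No portion of $\cT_1^f$ belongs to $D_{\widetilde\eta}$; the remaining length-$|L_2-L_1|$ arc of $\cT_1^f$ is the boundary of the \emph{complementary} piece to which $\partial D_{\widetilde\eta}$ gets welded. Likewise, the welding of $\cT_1^f$'s two arcs proceeds from the far endpoints (identifying the weight $\gamma^2-2$ vertex with the weight $2-\tfrac{\gamma^2}{2}$ vertex), not ``starting from their shared vertex.''

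For the extension to general $\alpha$, the claim that the Radon--Nikodym derivative between $\Mfd_{1,1}(\alpha,\gamma)$ and $\Mfd_{1,1}(\gamma,\gamma)$ on $D_{\widetilde\eta}$ is $\mathrm{CR}(0,D_{\widetilde\eta})^{2\Delta_\alpha-2\Delta_\gamma}$ is not correct as stated: that derivative is a Girsanov tilt which is a function of the \emph{field} (a limit of circle averages of $X=\phi\circ\psi_\eta^{-1}+Q\log|(\psi_\eta^{-1})'|$ at the origin), not a deterministic function of the conformal radius. The $\mathrm{CR}(0,D_{\widetilde\eta})^{2\Delta_\alpha-2}$ factor that converts $\mathsf{m}$ into $\mathsf{m}^\alpha$ appears only after integrating out the field conditionally on $\widetilde\eta$. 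The paper's proof of this step is not a direct computation but defers to~\cite[Theorem 4.6, Lemmas 4.7--4.8]{ARS21}, which carry out precisely the bookkeeping you flag as the ``main obstacle.'' Finally, the reference to target invariance (Proposition~\ref{prop:target-invariance}) is a red herring here: $\widetilde\eta$ is independent of $\alpha$ simply because $\Mfd_{1,1}(\alpha,\gamma)\otimes\mathsf{m}^\alpha$ is by definition an independent product, not because of any property of the SLE exploration tree.
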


\begin{figure}[htb]

\centering
\includegraphics[scale=0.55]{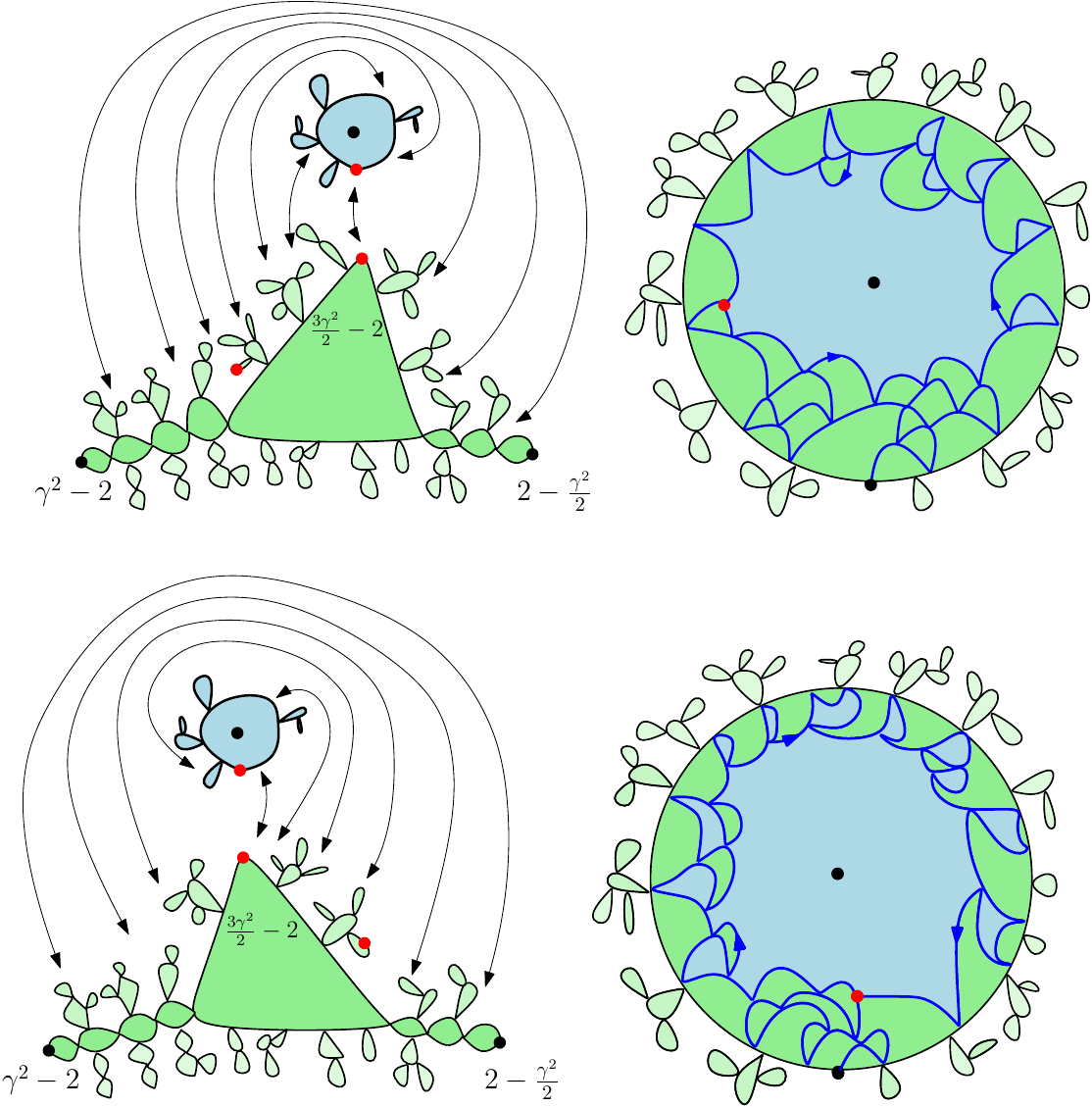}
\caption{Illustration of Proposition~\ref{prop:weld-wind}. The first panel corresponds to the case of $L_2>L_1$, and the second panel corresponds to the the case of $L_2<L_1$. The red point $z_0$ corresponds to the location that $\widetilde \eta$ first closes a loop around 0.}\label{fig:weld5}
\end{figure}

\begin{proof}
We start with the $\alpha=\gamma$ case and restrict to the event $\{L_2>L_1\}$. Let $\cT^f$ be a sample from $\QT^f(2-\frac{\gamma^2}{2},2-\frac{\gamma^2}{2},\gamma^2-2)$ and let $(\cT_1^f,\cD^f)$ be the decomposition of $\cT^f$ in~\eqref{eq:qt-decom}.  By Theorem~\ref{thm:welding}, for some constant $C_\gamma\in(0,\infty)$, the conformal welding on right hand side of~\eqref{eq:thm-welding} when restricted to the event $\{L_2>L_1\}$ can be written as
\begin{equation}\label{eq:prop-weld-wind}
     C_\gamma \iiint_{\ell>L_2>L_1>0} \mathrm{Weld}\big(\QT^f(\frac{3\gamma^2}{2}-2,2-\frac{\gamma^2}{2},\gamma^2-2;L_1,L_2),\Mfd_{0,2}(2-\frac{\gamma^2}{2};\ell-L_1,\ell-L_2) \big)\,dL_1dL_2d\ell
\end{equation}
and equals $\Mfd_{1,1}(\gamma,\gamma)\otimes \mathbbm{1}_{T}\rm{raSLE}_\kappa(\kappa-6)$. Since $L_2>L_1$, we have $\ell-L_2<\ell-L_1$. Mark the point $z_0$ on the right boundary of $\cD^f$ with distance $\ell-L_2$ to the top vertex. By Lemma~\ref{lem:-wd-QTf(W,2,W)} and a disintegration, we can first weld $\cD^f$ to itself to get a forested quantum disk $\wt{\cD}^f$, whose law is $C_0C_\gamma^{-1}\Mfd_{1,1}(\gamma,\gamma;L_2-L_1)$. This corresponds to integrating over $\ell$ in the expression~\eqref{eq:prop-weld-wind}. Then \eqref{eq:weld2} for $\alpha=\gamma$ then follows by welding $\wt{\cD}^f$ to $\cT_1^f$ as in the top panel of Figure~\ref{fig:weld5}. The setting where $\{L_2<L_1\}$ and $\alpha=\gamma$ follows analogously with the same constant $C_0$. 

For $\alpha\neq\gamma$, let $(\phi,\eta)$ be a sample from the left hand side of~\eqref{eq:weld2}. Let $\psi_\eta:D_\eta\to\bbD$ be the conformal map fixing $0$  and sending $z_\eta$ to $1$, where $z_\eta$ is the terminal point of $\eta$ (and $\eta$ closes a loop surrounding 0). Set $X = \phi\circ\psi_\eta^{-1}+Q\log|(\psi_\eta^{-1})'|$. Then the claim follows by weighting the law of $(\phi,\eta)$ by $\e^{\alpha^2-\gamma^2}e^{\frac{\alpha-\gamma}{2}X_\e(0)}$ and sending $\e\to0$, where $X_\e(0)$ is the average of the field $X$ around the origin. The proof is identical to that of~\cite[Theorem 4.6]{ARS21} by taking~\cite[Lemmas 4.7 and 4.8]{ARS21} as input. We omit the details. 
\end{proof}

\subsection{Proof of Theorems~\ref{thm:touch}--\ref{thm:CR-widetilde-D}}\label{subsec:computation-proof}

Based on Proposition~\ref{prop:weld-wind} and exact formulas from LCFT, we shall prove the following in Sections~\ref{subsec:computation-prelim}--\ref{subsec:computation}.
\begin{proposition}
    \label{prop:CRratio}
    For any $Q<\alpha<\frac{4}{\gamma}$, we have
    \eqb\label{eq:prop-CRratio}
    \frac{\mathbb{E} [{\rm CR}(0,D_{\widetilde \eta})^{2 \Delta_\alpha-2}  \mathbbm{1}_{T^c}]}{\mathbb{E} [{\rm CR}(0,D_{\widetilde \eta})^{2 \Delta_\alpha-2}  \mathbbm{1}_{T}]} =  \frac{\sin(\pi(\gamma-\frac{2}{\gamma})(Q-\alpha))}{2\cos(\pi(1-\frac{\gamma^2}{4})) \sin(\pi(\frac{2}{\gamma}-\frac{\gamma}{2})(Q-\alpha))}\,.
    \eqe
\end{proposition}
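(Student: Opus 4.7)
The plan is to take total masses of both identities in Proposition~\ref{prop:weld-wind} and divide them. The left-hand sides are the same weighted measure $\Mfd_{1,1}(\alpha,\gamma) \otimes \mathsf{m}^\alpha(\widetilde\eta)$, restricted to $T$ and $T^c$ respectively; after disintegrating in the two forested boundary arc lengths $L_1,L_2$ of the welded triangle, the common (infinite) prefactor $|\Mfd_{1,1}(\alpha,\gamma)|$ cancels and the left-hand ratio reduces to the target ratio $\mathbb{E}[{\rm CR}(0,D_{\widetilde\eta})^{2\Delta_\alpha - 2}\mathbbm{1}_{T^c}]\,/\,\mathbb{E}[{\rm CR}(0,D_{\widetilde\eta})^{2\Delta_\alpha - 2}\mathbbm{1}_T]$. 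This will give
\[
\frac{\mathbb{E}[{\rm CR}(0,D_{\widetilde\eta})^{2\Delta_\alpha - 2}\mathbbm{1}_{T^c}]}{\mathbb{E}[{\rm CR}(0,D_{\widetilde\eta})^{2\Delta_\alpha - 2}\mathbbm{1}_T]} = \frac{I_-}{I_+}, \qquad I_{\pm} := \iint_{\pm(L_1-L_2)>0} |\QT^f(\cdots; L_1,L_2)|\,|\Mfd_{1,1}(\alpha,\gamma;|L_1-L_2|)|\, dL_1\, dL_2,
\]
where $\QT^f(\cdots) := \QT^f(\tfrac{3\gamma^2}{2}-2,\,2-\tfrac{\gamma^2}{2},\,\gamma^2-2)$. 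Here $|\Mfd_{1,1}(\alpha,\gamma;\ell)|$ is the explicit power of $\ell$ furnished by Lemma~\ref{lem:M11-f.s.length}.

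The central difficulty is to control $|\QT^f(\cdots; L_1,L_2)|$. A direct route would invoke the boundary LCFT three-point structure constant from~\cite{RZ22}, which the paper wishes to avoid. Instead, I would introduce an auxiliary conformal welding: conformally weld an additional forested quantum disk of convenient weight (for instance $\gamma^2/2$ or $\gamma^2-2$) onto a vertex of the triangle so as to produce a triangle whose new vertex has a special weight ($2$ or $\gamma^2$). Such triangles are equivalent to forested quantum disks with one bulk and one boundary insertion via the forested analog of Lemma~\ref{lem:QTf(W,2,W)} (respectively, of Proposition~\ref{prop:mdisk11-QD}), whose boundary-length disintegrations are power functions by Lemma~\ref{lem:M11-f.s.length}. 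Inverting this welding through the length disintegrations (using Theorem~\ref{thm:disk+QT} or Proposition~\ref{prop:disk+QT-0}) will express $|\QT^f(\cdots;L_1,L_2)|$ as an integral of products of explicit power functions of boundary and welding lengths.

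Substituting this expression into $I_\pm$ will reduce both integrals to beta-type integrals of products of power functions. After suitable substitutions (for instance, $u=L_1-L_2$, $v=\min(L_1,L_2)$, together with the auxiliary welding length as a third variable), each integral should evaluate to a product of gamma functions, and the reflection identity $\Gamma(x)\Gamma(1-x) = \pi/\sin(\pi x)$ will collapse the ratio $I_-/I_+$ to a ratio of sines. Matching the resulting prefactors with the range condition $Q<\alpha<\tfrac{4}{\gamma}$ should reproduce the right-hand side of~\eqref{eq:prop-CRratio}.

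The hardest step will be correctly setting up the auxiliary welding to express $\QT^f(\tfrac{3\gamma^2}{2}-2,\,2-\tfrac{\gamma^2}{2},\,\gamma^2-2)$ using only two-point disk data, while respecting the disintegration in $L_1,L_2$ and the forested-versus-unforested structure of the three boundary arcs. Beyond that, the remaining calculation is a direct evaluation of beta integrals and application of the gamma reflection identity in the spirit of \cite{AHS21,ARS21,AS21}.
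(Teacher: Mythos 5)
Your setup and high-level plan are essentially right and match the paper's strategy: you correctly identify that dividing the two identities in Proposition~\ref{prop:weld-wind} reduces the left side to the target ratio, that the obstacle is controlling the triangle boundary-length law, and that an auxiliary conformal welding should be used to avoid the boundary three-point structure constant. The paper does exactly this, welding a forested line segment together with a weight-$(\gamma^2-2)$ quantum disk to the bottom arc of $\wt{\QT}(\frac{3\gamma^2}{2}-2,2-\frac{\gamma^2}{2},\gamma^2-2)$, thereby reducing to $\Md_{0,2,\bullet}(\frac{3}{2}\gamma^2-2)$.

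However, the rest of your plan has a genuine gap, and the prediction of ``beta-type integrals of power functions collapsing via gamma reflection'' is not what occurs. You never address how to pass from the \emph{generalized} quantum lengths $L_1,L_2$ of the forested boundary arcs (the variables in Proposition~\ref{prop:weld-wind}) back to the \emph{quantum} lengths $L_{12},L_{13}$ of the spine's boundary arcs. That translation brings in the stable subordinator $Y$ with $\mathbb{E}e^{-tY_\ell}=e^{-t^{\gamma^2/4}\ell}$, and the quantity that must be computed is $\mathbb{E}(Y_{L_{12}}-Y'_{L_{13}})_+^{p}$ for $p=\frac{\gamma}{2}\alpha-1$, which is Lemma~\ref{lem:levy}; the answer is the genuinely complex-valued, non-power function $g(a,b)=\mathrm{Re}\bigl[e^{i\pi(p+1)/2}(e^{i\pi\gamma^2/8}a+e^{-i\pi\gamma^2/8}b)^{4p/\gamma^2}\bigr]$. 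The quantity $\Md_{0,2,\bullet}(W)[g(L_{12},L_{13})e^{-L_{23}}]$ is then computed not as a beta integral but via the boundary reflection coefficient (Lemma~\ref{lem:Rfunction}), whose Laplace transform has the form $(\mu_1^2-2\cos(\pi\gamma^2/4)\mu_1\mu_2+\mu_2^2)/(\mu_1^{4/\gamma^2}+\mu_2^{4/\gamma^2})$ rather than a product of powers; the resulting integral requires a contour deformation (Lemma~\ref{lem:integral2}) and collapses to cotangent identities, not the gamma reflection formula. Without the Lévy-subordinator step your calculation cannot even be set up, and without the reflection-coefficient structure the final cancellation to a ratio of sines does not emerge from ``beta integrals.''
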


Using Proposition~\ref{prop:CRratio}, we can now prove Theorems~\ref{thm:touch}, \ref{thm:CR}, and \ref{thm:CR-widetilde-D}.

\begin{proof}[Proof of Theorems~\ref{thm:CR} and \ref{thm:CR-widetilde-D}]
    For $Q<\alpha<\frac{4}{\gamma}$, let \begin{align*}
        A(\alpha) = \mathbb{E} [{\rm CR}(0,D_{\Loop})^{2 \Delta_\alpha-2}  \mathbbm{1}_{T}]\quad \mbox{and} \quad B(\alpha) = \mathbb{E} [{\rm CR}(0,D_{\Loop})^{2 \Delta_\alpha-2}  \mathbbm{1}_{T^c}].
    \end{align*}
    By Equation \eqref{eq:SSW09},
    \begin{equation}
        \label{eq:AB1}
        A(\alpha) + B(\alpha) = \mathbb{E} [{\rm CR}(0,D_{\Loop})^{2 \Delta_\alpha-2} ] = \frac{\cos(\pi(1-\frac{\gamma^2}{4}))}{\cos(\pi\frac{\gamma}{2}(Q-\alpha))}.
    \end{equation}
    By Proposition~\ref{prop:prelim}, we have
    \begin{equation}
        \label{eq:B}
        \begin{aligned}
        \mathbb{E} [{\rm CR}(0,D_{\Loop})^{2 \Delta_\alpha-2} |T ] &= 
    {\mathbb{E} [{\rm CR}(0,D_{\widetilde \eta})^{2 \Delta_\alpha-2} |  T]} ;\\
    \mathbb{E} [{\rm CR}(0,D_{\Loop})^{2 \Delta_\alpha-2} |T^c ] &= 
    {\mathbb{E} [{\rm CR}(0,D_{\widetilde \eta})^{2 \Delta_\alpha-2} |  T^c]} \cdot \mathbb{E} [{\rm CR}(0,D_{\Loop})^{2 \Delta_\alpha-2} ].
        \end{aligned}
    \end{equation}
    Therefore by Proposition~\ref{prop:CRratio} and Equation~\eqref{eq:SSW09},
    \begin{equation}
        \label{eq:AB2}
        \begin{aligned}
            \frac{B(\alpha)}{A(\alpha)} &=\mathbb{E} [{\rm CR}(0,D_{\Loop})^{2 \Delta_\alpha-2} ] \times  \frac{\mathbb{E} [{\rm CR}(0,D_{\widetilde \eta})^{2 \Delta_\alpha-2}  \mathbbm{1}_{T^c}]}{\mathbb{E} [{\rm CR}(0,D_{\widetilde \eta})^{2 \Delta_\alpha-2}  \mathbbm{1}_{T}] }  = \frac{\sin(\pi(\gamma-\frac{2}{\gamma})(Q-\alpha))}{2\cos(\pi\frac{\gamma}{2}(Q-\alpha)) \sin(\pi(\frac{2}{\gamma}-\frac{\gamma}{2})(Q-\alpha)) }.
        \end{aligned}
    \end{equation}
    Combining \eqref{eq:AB1} and \eqref{eq:AB2}, we get that
    \begin{equation}
    \label{eq:AB3}
    \begin{aligned}
        &A(\alpha) = \mathbb{E} [{\rm CR}(0,D_{\Loop})^{2 \Delta_\alpha-2}  \mathbbm{1}_{T}] =  \frac{2\cos(\pi (1-\frac{\gamma^2}{4})) \sin(\pi(\frac{2}{\gamma}-\frac{\gamma}{2})(Q-\alpha))}{\sin(\pi\frac{2}{\gamma}(Q-\alpha))}; \\
        &B(\alpha) = \mathbb{E} [{\rm CR}(0,D_{\Loop})^{2 \Delta_\alpha-2}  \mathbbm{1}_{T^c}] =  \frac{\cos(\pi (1-\frac{\gamma^2}{4})) \sin(\pi(\gamma - \frac{2}{\gamma})(Q-\alpha))}{\cos(\pi \frac{\gamma}{2}(Q-\alpha))\sin(\pi\frac{2}{\gamma}(Q-\alpha))}.
    \end{aligned}
    \end{equation}
    By the analytic extension in $\alpha$, see e.g.~\cite[Lemma 4.15]{NQSZ}, the first equation holds for $\alpha \in (Q-\frac{\gamma}{2} , Q+\frac{\gamma}{2})$, and the second equation holds for $\alpha  \in (Q-\frac{1}{\gamma}, Q+\frac{1}{\gamma})$. This proves Theorem~\ref{thm:CR}.
    
To see Theorem~\ref{thm:CR-widetilde-D}, recall from Lemma~\ref{lem:wt-D} that $\wt D = D_{\wt \eta}$ a.s.\ on the event $T^c$. Now by~\eqref{eq:B}
    $$
    \mathbb{E}[{\rm CR}(0,\wt D)^{2 \Delta_\alpha-2}  \mathbbm{1}_{T^c}] = \mathbb{E} [{\rm CR}(0,D_{\Loop})^{2 \Delta_\alpha-2}  \mathbbm{1}_{T^c}]/ \mathbb{E} [{\rm CR}(0,D_{\Loop})^{2 \Delta_\alpha-2}].
    $$
    We conclude the proof of Theorem~\ref{thm:CR-widetilde-D} by using~\eqref{eq:SSW09}, \eqref{eq:AB3}, and analytic extensions.
\end{proof}

\begin{proof}[Proof of Theorem~\ref{thm:touch}]
    Taking $\lambda = 0$ in the first claim of Theorem~\ref{thm:CR} yields the result.
\end{proof}

The rest of this section is devoted to the proof of Proposition~\ref{prop:CRratio}. In principle, one can use the conformal welding result Proposition~\ref{prop:weld-wind} to express the ratio on the left side of~\eqref{eq:prop-CRratio} via the boundary length distribution of samples from the generalized quantum triangle ${\rm QT}^f (\frac{3\gamma^2}{2}-2,2-\frac{\gamma^2}{2},\gamma^2-2)$, which in turn can be expressed via three-point structure constant of boundary LCFT computed in~\cite{RZ22}. However, these formulae are highly complicated. To arrive at the simple expression on the right side of~\eqref{eq:prop-CRratio}, we use an auxiliary conformal welding result to reduce the problem to calculations only about the two-pointed quantum disk of weight $\frac{3}{2}\gamma^2-2$, which can be computed via the more tractable boundary reflection coefficient of LCFT. We perform these calculations in Section~\ref{subsec:2ptQD}, after supplying two elementary ingredients in Section~\ref{subsec:computation-prelim}. We then conclude the proof of Proposition~\ref{prop:CRratio} in Section~\ref{subsec:computation}.

\subsection{Preliminary calculations}\label{subsec:computation-prelim}
We record two elementary  results (Lemmas~\ref{lem:levy}-\ref{lem:integral2}) that will be used in the proof of  Proposition~\ref{prop:CRratio}. We need the following conventions.
For $x\in\bbR$, we write $x_+:=\max\{x,0\}$.
We define fractional powers of complex numbers as follows: for $z = r e^{i \theta}$ with $r \in [0,\infty)$ and $\theta \in (-\pi, \pi]$, let $z^p = r^p e^{i \theta p}$.
\begin{lemma}
\label{lem:levy}
Fix $\gamma \in (0,2)$. For $\ell_1,\ell_2 > 0$, let $Y_{\ell_1} $ and $Y_{\ell_2}'$ be two independent random variables satisfying $\mathbb{E} e^{-t Y_{\ell_1}} = e^{-t^{\frac{\gamma^2}{4}}\ell_1}$ and $\mathbb{E} e^{-t Y_{\ell_2}'} = e^{-t^{\frac{\gamma^2}{4}}\ell_2}$ for every $t>0$. 
Then, for any $p \in (-1,0)$ we have 
$$
\mathbb{E} (Y_{\ell_1} - Y_{\ell_2}')_+^p = \frac{4}{\pi \gamma^2} \Gamma(-\frac{4}{\gamma^2}p)\Gamma(p+1) \times \mathrm{Re} \big[ e^{\frac{i \pi (p+1)}{2}} ( e^{\frac{i \pi \gamma^2}{8} } \ell_1 +  e^{-\frac{i \pi \gamma^2}{8} } \ell_2 )^{\frac{4}{\gamma^2}p}  \big] .
$$
\end{lemma}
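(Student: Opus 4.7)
The plan is to identify the law of $Z := Y_{\ell_1} - Y_{\ell_2}'$ through its characteristic function and then extract the moment $\mathbb{E}[Z_+^p]$ by pairing the density of $Z$ against the (distributional) Fourier transform of $x_+^p$.

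First I would compute the characteristic function of $Z$. Set $\alpha = \gamma^2/4 \in (0,1)$. The Laplace transform $\mathbb{E}[e^{-tY_\ell}] = e^{-t^\alpha \ell}$ extends analytically in $t$ to the right half-plane as $e^{-z^\alpha \ell}$ (principal branch). Evaluating at $z = -is$ with $s>0$, using $(-is)^\alpha = s^\alpha e^{-i\pi\alpha/2}$, gives $\mathbb{E}[e^{isY_\ell}] = \exp(-s^\alpha e^{-i\pi\alpha/2}\ell)$, and by independence and the analogous computation for $Y_{\ell_2}'$,
$$\phi_Z(-s) := \mathbb{E}[e^{-isZ}] = \exp(-s^\alpha A) \qquad \text{for } s>0, \quad \text{where } A := e^{i\pi\alpha/2}\ell_1 + e^{-i\pi\alpha/2}\ell_2,$$
and $\phi_Z(s) = \overline{\phi_Z(-s)}$. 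Crucially $\mathrm{Re}(A) = \cos(\pi\alpha/2)(\ell_1+\ell_2) > 0$.

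Next I would apply Parseval. For $p \in (-1,0)$, analytically continuing $\int_0^\infty x^p e^{-zx}\,dx = \Gamma(p+1)z^{-p-1}$ from $\mathrm{Re}(z)>0$ to $z = is$ yields the Fourier transform of $x_+^p$: on $s>0$ it equals $\Gamma(p+1)s^{-p-1}e^{-i\pi(p+1)/2}$, and on $s<0$ it is the complex conjugate. Since $\phi_Z$ decays like $\exp(-\mathrm{Re}(A)|s|^\alpha)$, the density $p_Z$ is smooth and bounded, and the distributional pairing yields
$$\mathbb{E}[Z_+^p] = \frac{1}{2\pi}\int_{\mathbb{R}} \widehat{x_+^p}(s)\,\phi_Z(s)\,ds = \frac{\Gamma(p+1)}{\pi}\,\mathrm{Re}\!\left[ e^{i\pi(p+1)/2}\int_0^\infty s^{-p-1}e^{-s^\alpha A}\,ds\right],$$
after combining the two half-line contributions, which are complex conjugates.

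Finally, the remaining $s$-integral is elementary: substituting $u = s^\alpha$ and rotating the contour (valid because $\mathrm{Re}(A) > 0$) gives $\int_0^\infty s^{-p-1}e^{-s^\alpha A}\,ds = \tfrac{1}{\alpha}\Gamma(-p/\alpha)A^{p/\alpha}$, and plugging in $\alpha = \gamma^2/4$ reproduces the claimed formula. The main delicate step is the Parseval identity, since $x_+^p$ is neither in $L^1$ nor in $L^2$; a concrete alternative is to regularize by $x_+^p e^{-\epsilon x}$, apply ordinary Parseval, and send $\epsilon \downarrow 0$ via monotone convergence on the probabilistic side and dominated convergence on the Fourier side (with $p\in(-1,0)$ ensuring integrability at $s=0$ and the exponential decay of $\phi_Z$ providing domination at infinity).
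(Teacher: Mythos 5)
Your argument is correct and follows essentially the same route as the paper: both express $x_+^p$ through its Fourier/Mellin representation, pair it against the characteristic function of $Z = Y_{\ell_1}-Y_{\ell_2}'$, and reduce to the Gamma integral $\int_0^\infty s^{-p-1}e^{-s^{\gamma^2/4}A}\,ds$. The only difference is bookkeeping for the integral interchange — the paper truncates the frequency integral at $N$ and shows the tail vanishes using boundedness of the density and $\mathbb{E}|Z|^p<\infty$, whereas you regularize $x_+^p$ by $e^{-\epsilon x}$ and pass to the limit; both are valid.
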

\begin{proof}
Fix $-1 < p < 0$.  Using the identity $\int_0^\infty u^{-p-1} e^{iu} du = \Gamma(-p) e^{-\frac{i \pi p}{2}}$, we can derive the following: 
\begin{equation}
\label{eq:lem4.8-1}
\frac{\Gamma(p+1)}{\pi} \int_0^\infty u^{-p-1} \cos(\frac{\pi}{2}(p+1) - u) du = 1 \quad \mbox{and} \quad \frac{\Gamma(p+1)}{\pi} \int_0^\infty u^{-p-1} \cos(\frac{\pi}{2}(p+1) + u) du = 0.
\end{equation}
Therefore, for any $x \in \mathbb{R} \setminus \{0\}$, $x_+^p = \frac{\Gamma(p+1)}{\pi} \int_0^\infty u^{-p-1} \cos(\frac{\pi}{2}(p+1) - ux)du $.

Next, we will use the identity for $x_+^p$ and the characteristic function for $Y_{\ell_1} - Y_{\ell_2}'$ to compute $\mathbb{E} (Y_{\ell_1} - Y_{\ell_2}')_+^p$. By the preceding identity,
\begin{equation}
\label{eq:lem4.8-2}
\begin{aligned}
&\quad \frac{\pi}{\Gamma(p+1)}\mathbb{E} (Y_{\ell_1} - Y_{\ell_2}')_+^p =\mathbb{E} \Big( \int_0^\infty u^{-p-1} \cos(\frac{\pi}{2}(p+1) - u (Y_{\ell_1} - Y_{\ell_2}') )du  \Big)\\
&= \underbrace{\mathbb{E} \Big(\int_0^N u^{-p-1} \cos(\frac{\pi}{2}(p+1) - u (Y_{\ell_1} - Y_{\ell_2}') )du \Big)}_{J_N^1} + \underbrace{\mathbb{E}\Big( \int_N^\infty u^{-p-1} \cos(\frac{\pi}{2}(p+1) - u (Y_{\ell_1} - Y_{\ell_2}') )du \Big)}_{J_N^2}
\end{aligned}
\end{equation}
for any $N > 0$. Denote the two integrals by $J_N^1$ and $J_N^2$, respectively. We will take $N$ to infinity and calculate the limit of $J_N^1$. We will also show that $\lim_{N \rightarrow \infty} J_N^2 = 0$. Combining these yields the desired lemma.

We first consider $J_N^1$. For any $t \in \mathbb{R}$, the characteristic function of $Y_{\ell}$ is given by $\mathbb{E} e^{it Y_{\ell}} = \exp [-e^{-\frac{i \pi \gamma^2}{8} {\rm sgn}(t)} |t|^{\frac{\gamma^2}{4}} \ell ]$, where ${\rm sgn}(t)$ is the sign of $t$. Thus, exchanging the integral in $J_N^1$ gives 
$$
J_N^1 = \int_0^N u^{-p-1}{\rm Re} \Big[ e^{\frac{i \pi (p+1)}{2}} \exp(-u^{\frac{\gamma^2}{4}}( e^{\frac{i \pi \gamma^2}{8}}\ell_1 +  e^{-\frac{i \pi \gamma^2}{8}}\ell_2)) \Big] du.
$$
For $z \in \mathbb{C}$ with ${\rm Re} z > 0$, $\int_0^\infty u^{-p-1} e^{-u^{\frac{\gamma^2}{4}} z } du =\frac{4}{\gamma^2}\Gamma(-\frac{4}{\gamma^2}p) z^{\frac{4}{\gamma^2} p}$. Using this identity, we further have
\begin{equation}
\label{eq:lem4.8-3}
\begin{aligned}
    \lim_{N \rightarrow \infty} J_N^1 =\frac{4}{\gamma^2}\Gamma(-\frac{4}{\gamma^2}p) {\rm Re} \Big[e^{\frac{i \pi (p+1)}{2}} ( e^{\frac{i \pi \gamma^2}{8}}\ell_1 +  e^{-\frac{i \pi \gamma^2}{8}}\ell_2)^{\frac{4}{\gamma^2} p} \Big].
\end{aligned}
\end{equation}

Now we consider $J_N^2$. By~\eqref{eq:lem4.8-1}, there exists $M>0$ such that $\sup_{s \geq 0} |\int_s^\infty u^{-p-1}\cos(\frac{\pi}{2}(p+1) \pm u) du | \leq M$. In addition, we have $\lim_{T \rightarrow \infty}\sup_{s \geq T} |\int_s^\infty u^{-p-1}\cos(\frac{\pi}{2}(p+1) \pm u) du | = 0$. Therefore, as $N$ tends to infinity,
\begin{align*}
    |J_N^2| &\leq \mathbb{E} \Big{|} \int_N^\infty u^{-p-1} \cos(\frac{\pi}{2}(p+1) - u (Y_{\ell_1} - Y_{\ell_2}') )du \mathbbm{1}_{|Y_{\ell_1} - Y_{\ell_2}'| < \sqrt{N}} \Big{|} \\
    &\quad + \mathbb{E} \Big{|}  \int_N^\infty u^{-p-1} \cos(\frac{\pi}{2}(p+1) - u (Y_{\ell_1} - Y_{\ell_2}') )du \mathbbm{1}_{|Y_{\ell_1} - Y_{\ell_2}'| \geq \sqrt{N}} \Big{|} \\
    &\leq o_N(1) \mathbb{E} \big( |Y_{\ell_1} - Y_{\ell_2}'|^p\mathbbm{1}_{|Y_{\ell_1} - Y_{\ell_2}'| < \sqrt{N}} \big) + M \mathbb{E}\big( |Y_{\ell_1} - Y_{\ell_2}'|^p\mathbbm{1}_{|Y_{\ell_1} - Y_{\ell_2}'| \geq \sqrt{N}}\big).
\end{align*}
In the second equality, we take $u' =u |Y_{\ell_1} - Y_{\ell_2}'| $ and apply the preceding inequalities. Since the density of $Y_\ell$ is uniform bounded (see e.g., \cite[Equation (2.25)]{Hitting-time-density}) and $\mathbb{E} Y_\ell^\beta < \infty$ for any $\beta < \frac{\gamma^2}{4}$, we have $\mathbb{E} |Y_{\ell_1} - Y_{\ell_2}'|^p < \infty$. Therefore, $\lim_{N \rightarrow \infty}J_N^2 = 0$. This, combined with~\eqref{eq:lem4.8-2} and \eqref{eq:lem4.8-3}, yields the desired result. \qedhere
\end{proof}

\begin{lemma}
\label{lem:integral2}
For $\gamma \in (\sqrt{2}, 2)$ and $p \in (\frac{\gamma^2}{4}-1, 0)$, we have
\begin{align*}
&\quad \int_0^\infty \frac{z^{-\frac{4p}{\gamma^2}-1}}{z-1} \bigg(  \frac{e^{-\frac{ i \pi \gamma^2}{4}} z^2 + e^{\frac{i \pi \gamma^2}{4}}  - 2\cos(\frac{\gamma^2}{4}\pi)  z }{ z^{\frac{4}{\gamma^2}}  -1}  + \frac{i \gamma^2}{2} \sin(\frac{\pi \gamma^2}{4})z^{2-\frac{4}{\gamma^2}}  \bigg)dz \\
\smallskip
&= \frac{\pi \gamma^2}{4} \cos ( \frac{\pi \gamma^2}{4})\Big( \cot (\pi (p- \frac{\gamma^2}{4})) - \cot(\pi p) \Big) + \frac{i \pi \gamma^2}{4} \sin ( \frac{\pi \gamma^2}{4})  \Big( 2 \cot (\frac{4 \pi (p+1 )}{\gamma^2} ) - \cot (\pi p) - \cot (\pi (p- \frac{\gamma^2}{4})) \Big).
\end{align*}
\end{lemma}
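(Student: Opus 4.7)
The plan is to first simplify the algebraic structure of the integrand. The quadratic $e^{-i\pi\gamma^2/4}z^2 - 2\cos(\pi\gamma^2/4)z + e^{i\pi\gamma^2/4}$ has discriminant $-4\sin^2(\pi\gamma^2/4)$ and factors as $e^{-i\pi\gamma^2/4}(z-1)(z - e^{i\pi\gamma^2/2})$. This exposes the fact that the factor $z-1$ in the denominator cancels the one coming from the first summand in the bracket, leaving only a removable singularity at $z=1$ (matching the vanishing of the full bracket there, which we verified from the hypothesis). I would then substitute $z = y^{\gamma^2/4}$ to remove all fractional powers; writing $\beta = \gamma^2/4$, the integral becomes
\begin{equation*}
I = \beta e^{-i\pi\beta}\int_0^\infty \frac{y^{-p-1}(y^\beta - e^{2i\pi\beta})}{y-1}\,dy + 2i\beta^2\sin(\pi\beta)\int_0^\infty \frac{y^{2\beta-p-2}}{y^\beta - 1}\,dy.
\end{equation*}

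Next, each summand is evaluated as a Cauchy principal value via the standard identity
\begin{equation*}
\mathrm{PV}\int_0^\infty \frac{y^{s-1}}{y-1}\,dy = -\pi\cot(\pi s), \qquad 0 < s < 1,
\end{equation*}
which follows from a keyhole contour around the branch cut of $y^{s-1}$. For the first summand I would split the numerator as $(y^\beta - 1) + (1 - e^{2i\pi\beta})$ and apply the identity to each piece, yielding $\pi[\cot(\pi(p-\beta)) - \cot(\pi p)] + (1 - e^{2i\pi\beta})\pi\cot(\pi p)$. For the second, substituting $u = y^\beta$ reduces it to the same form with $s = 2 - (p+1)/\beta$, giving $\frac{\pi}{\beta}\cot(\pi(p+1)/\beta)$. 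The parameter constraints $\gamma \in (\sqrt{2},2)$ and $p \in (\gamma^2/4 - 1, 0)$ are precisely what place each Mellin exponent in the admissible strip $(0,1)$.

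Finally, combining everything and separating real and imaginary parts through $e^{\pm i\pi\beta} = \cos(\pi\beta) \pm i\sin(\pi\beta)$ collapses the answer to the claimed identity. The main subtlety is that the two pieces into which I split the first summand are each only conditionally convergent as principal values, even though the original integrand is absolutely convergent at $y=1$. To justify the manipulations rigorously I would regularize by replacing $\frac{1}{z-1}$ with $\frac{1}{z - 1 - i\eta}$ for small $\eta > 0$, carry out the splitting and contour evaluations in this regularized setting (where each piece is an analytic function of $\eta$), and then send $\eta \to 0^+$; the matching $O(\eta)$ imaginary corrections from the two pieces cancel because the original bracket vanishes at $z=1$, so no boundary term survives. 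Once this regularization is set up, the rest of the proof is bookkeeping of the elementary Mellin identity together with trigonometric simplification.
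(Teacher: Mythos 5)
Your proposal is correct, and it takes a noticeably different route from the paper's. Your key structural move is factoring the quadratic numerator as $e^{-i\pi\gamma^2/4}(z-1)(z-e^{i\pi\gamma^2/2})$, which cancels the $(z-1)$ in the denominator at the outset; you then substitute $z=y^{\gamma^2/4}$ while the integrand is still complex, and evaluate two Mellin-type pieces by the principal-value identity $\mathrm{PV}\int_0^\infty \frac{y^{s-1}}{y-1}\,dy = -\pi\cot(\pi s)$. The paper instead never factors the quadratic; it splits the integrand into its real and imaginary parts first (observing that the real part of the bracket is $\cos(\tfrac{\pi\gamma^2}{4})(z-1)^2/(z^{4/\gamma^2}-1)$, so the $z-1$ cancels separately for $\mathrm{Re}\,K$), and only then substitutes $z=t^{\gamma^2/4}$, reducing everything to the absolutely-convergent ``difference'' form $\int_0^\infty\frac{t^{-a}-t^{-b}}{t-1}\,dt = \pi(\cot(\pi b)-\cot(\pi a))$ of the same cotangent identity. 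Both arguments must confront the fact that the split pieces are only conditionally convergent at $y=1$ even though the original integrand is not singular there: you propose an $i\eta$-regularization of $\tfrac{1}{z-1}$ (which would in fact need to regularize both occurrences of the $z=1$ pole consistently, or one can simply observe that the two PV singularities cancel because the bracket vanishes at $z=1$), while the paper takes a symmetric $\epsilon$-cutoff and bounds the mismatch term $K_\epsilon$ explicitly. Your factorization makes the algebraic structure cleaner and the bookkeeping lighter; the paper's real/imaginary split matches the form of the final answer more directly and avoids PV integrals in favor of an absolutely convergent identity. Both are correct; you should just tighten the regularization paragraph as indicated to make it airtight.
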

\begin{proof}
    We first recall an integral formula whose proof can be found in \cite[Lemma 4.14]{NQSZ}:
    \begin{equation}\label{eq:lem-integral1}
    \int_0^\infty \frac{t^{-a} - t^{-b}}{t-1} dt = \pi(\cot(\pi b) - \cot( \pi a)) \quad \mbox{for all } -1 < a, b < 0.
    \end{equation}
    
    Denote the integral in the lemma by $K$. We will compute the real and imaginary parts of $K$ separately. The real part of $K$ is equal to $\cos(\frac{\pi \gamma^2}{4}) \int_0^\infty z^{-\frac{4p}{\gamma^2} -1 } (z-1)/(z^{\frac{4}{\gamma^2}}  -1) dz$. Setting $z = t^{\frac{\gamma^2}{4}}$ and then applying~\eqref{eq:lem-integral1} yields that
    $$
        \mathrm{Re}\, K =  \frac{\gamma^2}{4} \cos ( \frac{\pi \gamma^2}{4}) \int_0^\infty \frac{t^{-p+\frac{\gamma^2}{4}-1} - t^{-p-1}}{t-1} dt = \frac{\pi \gamma^2}{4} \cos ( \frac{\pi \gamma^2}{4})\Big( \cot (\pi (p- \frac{\gamma^2}{4})) - \cot(\pi p) \Big).
    $$
    Furthermore, the imaginary part of $K$ equals 
    $$
        \sin ( \frac{\pi \gamma^2}{4}) \lim_{\epsilon \rightarrow 0} \bigg(\int_{(0, 1-\epsilon) \cup (1+\epsilon, \infty) }  \frac{-z^{-\frac{4p}{\gamma^2}-1} - z^{-\frac{4p}{\gamma^2}}}{ z^{\frac{4}{\gamma^2}}  -1} dz + \frac{\gamma^2}{2} \int_{(0, 1-\epsilon) \cup (1+\epsilon, \infty) }  \frac{z^{1-\frac{4}{\gamma^2}(p+1)}}{z-1} dz \bigg) .
    $$
    Taking $z = t^{\frac{\gamma^2}{4}}$ in the first integral gives
    \begin{align*}
        \frac{1}{ \sin( \frac{\pi \gamma^2}{4})} {\rm Im} \, K &= \lim_{\epsilon \to 0} \frac{\gamma^2}{4} \int_{(0, (1-\epsilon)^{\frac{4}{\gamma^2}}) \cup ((1+\epsilon)^{\frac{4}{\gamma^2}}, \infty) }  \frac{-t^{-p-1} - t^{-p+\frac{\gamma^2}{4}-1} }{t-1} dt + \frac{\gamma^2}{2} \int_{(0, 1-\epsilon) \cup (1+\epsilon, \infty) }  \frac{z^{1-\frac{4}{\gamma^2}(p+1)}}{z-1} dz\\
        &=\lim_{\epsilon \rightarrow 0} \frac{\gamma^2}{4} \int_{(0, 1-\epsilon) \cup (1+ \epsilon, \infty) }  \frac{2 t^{1-\frac{4}{\gamma^2}(p+1)} -t^{-p-1} - t^{-p+\frac{\gamma^2}{4}-1} }{ t -1} dt - K_\epsilon
    \end{align*}
    where the error term  $ K_\epsilon = \int_{((1-\epsilon)^{\frac{4}{\gamma^2}}, 1-\epsilon) \cup (1+\epsilon, (1+\epsilon)^{\frac{4}{\gamma^2}})} \frac{-t^{-p-1} - t^{-p+\frac{\gamma^2}{4}-1} }{t-1} dt $ further equals
    \begin{align*}
        \int_{1 - \frac{4}{\gamma^2} \epsilon +o(\epsilon)}^{1-\epsilon} \Big(\frac{-2}{t-1} + o(\epsilon^{-1})\Big) dt + \int_{1+\epsilon}^{1 + \frac{4}{\gamma^2} \epsilon +o(\epsilon)} \Big( \frac{-2}{t-1} + o(\epsilon^{-1})\Big) dt = o(1) \quad \mbox{as }\epsilon \to 0.
    \end{align*}
    Therefore \({\rm Im} \, K = \frac{\gamma^2}{4} \sin ( \frac{\pi \gamma^2}{4}) \int_0^\infty  \frac{2 t^{1-\frac{4}{\gamma^2}(p+1)} -t^{-p-1} - t^{-p+\frac{\gamma^2}{4}-1} }{ t -1} dt .\) Now applying~\eqref{eq:lem-integral1} yields the desired result. \qedhere
\end{proof}

\subsection{Calculation for the two-pointed quantum disk with weight $\frac{3}{2}\gamma^2-2$}\label{subsec:2ptQD}

Recall $\mathcal{M}^{\rm disk}_{0,2, \bullet}(W)$ from Lemma~\ref{lem:QT(W,2,W)} which is the law of the quantum surface obtained by adding a marked point to the left boundary of a sample from $\mathcal{M}^{\rm disk}_{0,2}(W)$. In this subsection we perform a  calculation concerning the quantum lengths of the three boundary arcs for a sample of $\mathcal{M}^{\rm disk}_{0,2, \bullet}(W)$ with $W = \frac{3}{2}\gamma^2-2$, which is crucial to the proof of Proposition~\ref{prop:CRratio}.
 \begin{lemma}
\label{lem:lem5.8}
Let $W = \frac{3}{2}\gamma^2-2$ and consider a sample from $\mathcal{M}^{\rm disk}_{0,2, \bullet}(W)$. Let $L_{12}$, $L_{13}$, $L_{23}$ be the quantum lengths of the three boundary arcs ordered counterclockwise with $L_{12}$ being the length for the arc between the two weight-$W$  vertices; see Figure~\ref{fig:weldnew2} (right) for an illustration. The following holds for any $\frac{\gamma^2}{4}-1<p<0$ with a real constant $C_2$ depending only on $\gamma$ and $p$:  
    \begin{align*}
        &\quad \mathcal{M}^{\rm disk}_{0,2, \bullet}(W)[( e^{\frac{i \pi \gamma^2}{8} } L_{12} +  e^{-\frac{i \pi \gamma^2}{8} } L_{13} )^{\frac{4p}{\gamma^2}} e^{- L_{23}}] \\
        &= C_2 e^{\frac{i \pi p}{2} - \frac{i \pi \gamma^2}{4}} \Big(  \cot (\pi (p- \frac{\gamma^2}{4})) - \cot(\pi p)  + i  \tan ( \frac{\pi \gamma^2}{4})  \big( 2 \cot (\frac{4\pi (p+1 )}{\gamma^2} ) - \cot (\pi p) - \cot (\pi (p- \frac{\gamma^2}{4}) )\big)  \Big).
    \end{align*}
\end{lemma}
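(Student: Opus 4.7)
The plan is to compute the LHS of Lemma~\ref{lem:lem5.8} directly via the joint boundary length distribution of the two-pointed quantum disk $\Md_{0,2}(W)$ with $W = \tfrac{3}{2}\gamma^2-2$, and reduce the resulting expression to the integral evaluated in Lemma~\ref{lem:integral2}. First, I will unfold the extra marked point: since $\Md_{0,2,\bullet}(W)$ is obtained from $\Md_{0,2}(W)$ by weighting by the left boundary length and placing a uniform mark on it, setting $L_r := L_{12}$ and $L_l := L_{13}+L_{23}$ and letting $u$ parametrize the uniform mark so that $L_{13}=u$ and $L_{23}=L_l-u$, the LHS becomes
\begin{equation*}
\Md_{0,2}(W)\Big[\int_0^{L_l}\bigl(e^{i\pi\gamma^2/8}L_r + e^{-i\pi\gamma^2/8}u\bigr)^{4p/\gamma^2}\,e^{u-L_l}\,du\Big].
\end{equation*}

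For $W = \tfrac{3}{2}\gamma^2-2$ the boundary Liouville insertion has weight $\beta = \tfrac{4}{\gamma}-\tfrac{\gamma}{2}$, satisfying $\beta < Q$ for $\gamma \in (\sqrt{2},2)$ (thick regime). The scaling of $\Md_{0,2}(W)$ under the field shift $\phi \mapsto \phi+c_0$ forces its joint boundary length density to take the form $|\Md_{0,2}(W)|(L_l, L_r) = L_l^{4/\gamma^2 - 4}\,g(L_r/L_l)$ for an explicit function $g$ coming from the boundary Liouville reflection coefficient at $\beta$. I then change variables $L_r = zL_l$, $u = vL_l$ and integrate out $L_l$ first via the Gamma identity $\int_0^\infty L_l^{\alpha-1}e^{-L_l(1-v)}\,dL_l = \Gamma(\alpha)(1-v)^{-\alpha}$ with $\alpha = 4(p+1)/\gamma^2 - 1 > 0$ (which corresponds to the hypothesis $p > \gamma^2/4-1$), reducing the expectation to an explicit double integral in $(z,v) \in (0,\infty)\times(0,1)$.

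Next I perform the inner $v$-integral, combine with $g(z)$, and show that the resulting integrand in $z$ coincides with the one in Lemma~\ref{lem:integral2}: the factor $(z^{4/\gamma^2}-1)^{-1}$ there arises from the reflection coefficient structure of $g$, and the phases $e^{\pm i\pi\gamma^2/4}$ come from raising $e^{\pm i\pi\gamma^2/8}$ to the power $4p/\gamma^2$. In bridging the complex-valued $v$-integrand to a real Mellin-type integral, I will invoke Lemma~\ref{lem:levy} (its real/imaginary decomposition of $(e^{i\pi\gamma^2/8}\ell_1 + e^{-i\pi\gamma^2/8}\ell_2)^{4p/\gamma^2}$ is exactly the right tool). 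Applying Lemma~\ref{lem:integral2} then yields the trigonometric combination on the RHS of Lemma~\ref{lem:lem5.8}, with the overall factor $e^{i\pi p/2 - i\pi\gamma^2/4}$ emerging from extracting the phase $(e^{i\pi\gamma^2/8})^{4p/\gamma^2} = e^{i\pi p/2}$ together with an $e^{-i\pi\gamma^2/4}$ phase from the LCFT structure, and all $\Gamma$-function and numerical constants absorbed into $C_2$.

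The main obstacle is making the density $g(z)$ explicit for this non-generic $\beta$ and carrying out the algebraic matching to Lemma~\ref{lem:integral2}; the weight $W = \tfrac{3}{2}\gamma^2-2$ is presumably chosen (via the auxiliary conformal welding reduction alluded to at the end of Section~\ref{subsec:computation-proof}) precisely so that the LCFT boundary two-point structure produces the integrand of Lemma~\ref{lem:integral2} after these simplifications, avoiding the much more complicated three-point boundary structure constant of~\cite{RZ22}.
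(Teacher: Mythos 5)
Your general direction—unfold the $\Md_{0,2,\bullet}(W)$ mark to reduce to $\Md_{0,2}(W)$, exploit homogeneity, and push the expectation onto Lemma~\ref{lem:integral2}—is the right one, and the identification $\beta = \frac4\gamma-\frac\gamma2$ and the exponent $\alpha = \frac{4(p+1)}{\gamma^2}-1$ are correct. But there is a genuine gap in the central step, and a misuse of Lemma~\ref{lem:levy}.

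The gap: you propose to compute the expectation in real space using the joint boundary-length density $|\Md_{0,2}(W)|(L_l,L_r) = L_l^{4/\gamma^2-4}g(L_r/L_l)$, integrate out $L_l$ via the Gamma identity, and then match the remaining $(z,v)$-integral to Lemma~\ref{lem:integral2}. This requires $g$ explicitly. What is actually available is only the \emph{Laplace transform} of $(L,R)$, namely the boundary reflection coefficient $R(\beta;\mu_1,\mu_2)$ of Lemma~\ref{lem:Rfunction}, which gives $\Md_{0,2}(W)[e^{-\mu_1 L-\mu_2 R}-1]$ as an explicit rational expression (with denominator $\mu_1^{4/\gamma^2}+\mu_2^{4/\gamma^2}$, note the plus sign). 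Inverting this to obtain a usable closed form for $g$ is precisely the hard part you do not address, and nothing in your outline explains how the inner $v$-integral combined with $g(z)$ yields the integrand of Lemma~\ref{lem:integral2}; in particular the factor $(z^{4/\gamma^2}-1)^{-1}$ there is \emph{not} a direct feature of $g$—it only appears after writing the integrand in terms of the Laplace transform, substituting $t = e^{-i\pi\gamma^2/8}z$, and deforming the contour from $e^{i\pi\gamma^2/8}\bbR_+$ back to $\bbR_+$. The paper avoids your obstacle by applying the Gamma representation to the complex power $(\cdot)^{4p/\gamma^2}$ \emph{before} integrating over the field, so that the expectation reduces exactly to Laplace transforms of $(L,R)$, evaluated by Lemma~\ref{lem:Rfunction} at the two points $(1,e^{-i\pi\gamma^2/8}t)$ and $(e^{i\pi\gamma^2/8}t,e^{-i\pi\gamma^2/8}t)$; after this one never needs $g$.

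Secondary issues: Lemma~\ref{lem:levy} plays no role in the proof of this lemma. In the paper it converts the L\'evy subordinator quantity $\mathbb{E}(Y_{L_{12}}-Y'_{L_{13}})_+^p$, coming from the foresting of boundary arcs, into the weight $g(L_{12},L_{13})$ appearing in Lemma~\ref{lem:5.5}; it is not a ``real/imaginary decomposition'' tool for a Mellin integral. Also, your regularization via the Gamma identity on $e^{-L_l(1-v)}$ needs more care: the density is only $\sigma$-finite near the origin (hence the subtraction of $1$ in Lemma~\ref{lem:Rfunction}), so the finiteness of the double integral you produce has to be justified, which the paper handles by first proving $\mathcal{M}^{\rm disk}_{0,2,\bullet}(W)[L_{13}^{4p/\gamma^2}e^{-L_{23}}]<\infty$ via conformal welding with a thin disk of weight $-2p$.
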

To prove this, 
we need the following fact extracted from
the boundary reflection coefficient of LCFT.
\begin{lemma}
\label{lem:Rfunction}
Let $W = \frac{3}{2}\gamma^2-2$. Let $L$ and $R$ be the left and right boundary lengths of a two-pointed quantum disk sampled from $\mathcal{M}^{\rm disk}_{0,2}(W)$. For any $\mu_1, \mu_2 \in \mathbb{C}$ with ${\rm Re}(\mu_1), {\rm Re}(\mu_2) > 0$, we have
$$
\mathcal{M}^{\rm disk}_{0,2}(W)[e^{-\mu_1 \LL - \mu_2 \RR}-1] = C_1 \frac{\mu_1^2 - 2 \cos(\frac{\pi\gamma^2}{4}) \mu_1 \mu_2 + \mu_2^2}{\mu_1^{4/\gamma^2} + \mu_2^{4/\gamma^2}}\,,
$$
where $C_1$ is a real constant depending only on $\gamma$.
\end{lemma}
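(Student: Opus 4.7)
The plan is to identify the Laplace transform as a boundary two-point structure constant of Liouville CFT carrying asymmetric cosmological constants $\mu_1, \mu_2$ on the two arcs, and then exploit the specific (near-degenerate) value of the insertion weight to extract the closed form.

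First, from the Brownian construction of Definition~\ref{def-quantum-disk} together with the Liouville field definition (Definition~\ref{def-lf-H-bdry}), I would verify that $\Md_{0,2}(W)$ with $W = \frac{3\gamma^2}{2}-2$ equals a $\gamma$-dependent constant times $\LF_\bbH^{(\beta, 0),(\beta, \infty)}$, where $\beta = \gamma + \frac{2-W}{\gamma} = \frac{4}{\gamma}-\frac{\gamma}{2}$. Identifications of this type are developed in \cite{AHS17,cercle2021unit,ASY22}: the drifts $\beta$ and $2Q-\beta$ appearing in the two-sided Brownian motion $Y_t$ of Definition~\ref{def-quantum-disk} match the deterministic radial profile $-2Q\log|z|_+ + \tfrac{\beta}{2}(G_\bbH(0,z)+G_\bbH(\infty,z))$ prescribed by Definition~\ref{def-lf-H-bdry}, so the conditioning on staying below a line in $Y$ is absorbed into the overall constant. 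Under this identification, $L$ and $R$ become the boundary Liouville lengths $\nu_\phi((-\infty,0))$ and $\nu_\phi((0,\infty))$.

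Next, I would evaluate $\LF_\bbH^{(\beta,0),(\beta,\infty)}[e^{-\mu_1 L-\mu_2 R}-1]$ by isolating the zero mode $\mathbf c$ of $\phi$. Writing $L = e^{\gamma\mathbf c/2}\wt L$ and $R = e^{\gamma\mathbf c/2}\wt R$ with $\wt L,\wt R$ independent of $\mathbf c$, and recalling from Definition~\ref{def-lf-H-bdry} that $\mathbf c$ is sampled from $e^{(\beta-Q)c}dc$, the $\mathbf c$-integration is a Mellin transform producing a Gamma factor $\Gamma\bigl(-\tfrac{2(\beta-Q)}{\gamma}\bigr)$ multiplied by the bordered GMC moment $\mathbb{E}\bigl[(\mu_1\wt L+\mu_2\wt R)^{2(\beta-Q)/\gamma}\bigr]$. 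Up to an explicit prefactor this expectation is the boundary two-point structure constant of LCFT with asymmetric cosmological constants on the two arcs.

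For our specific $\beta$, the reflected insertion $2Q-\beta = \frac{3\gamma}{2} = \gamma+\frac{\gamma}{2}$ decomposes as a tame $\gamma$-insertion dressed by a single degenerate Kac insertion of weight $\frac{\gamma}{2}$. By the BPZ mechanism of \cite{BPZ84}, the bordered GMC moment thus satisfies a second-order hypergeometric ODE in the cosmological ratio $\mu_2/\mu_1$. Solving this ODE and fixing the normalization by matching the $\mu_1=\mu_2$ special case---whose value is immediate from the marginal boundary-length density $|\Md_{0,2}(W;\ell)|\propto \ell^{4/\gamma^2-3}$, itself obtainable by a KPZ-scaling argument directly from the Brownian construction---produces the stated rational expression, with numerator factoring as $(\mu_1-e^{i\pi\gamma^2/4}\mu_2)(\mu_1-e^{-i\pi\gamma^2/4}\mu_2)$ and denominator $\mu_1^{4/\gamma^2}+\mu_2^{4/\gamma^2}$.

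The main obstacle is the last step: matching the specific hypergeometric solution to the stated rational function. One can proceed either by comparing asymptotics as $\mu_2/\mu_1\to 0,\infty$ together with analyticity constraints, or by specializing the general boundary structure constant from \cite{RZ22} and carrying out the Barnes double-Gamma simplifications particular to $\beta=\frac{4}{\gamma}-\frac{\gamma}{2}$. A welding-based alternative through Theorem~\ref{thm:disk-welding}---gluing $\Md_{0,2}(W)$ to an auxiliary weight-$\frac{\gamma^2}{2}$ disk---could in principle bootstrap the bordered Laplace transform from simpler one-variable transforms, though the welded weight $W+\frac{\gamma^2}{2}=2\gamma^2-2$ is not itself obviously simpler, so I expect the BPZ-ODE route to be the cleanest.
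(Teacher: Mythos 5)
Your overall framework is on target: you correctly identify the Laplace transform as (up to a constant) the boundary two-point structure constant of Liouville CFT with asymmetric cosmological constants $\mu_1, \mu_2$, i.e.\ the boundary reflection coefficient $R(\beta;\mu_1,\mu_2)$ at $\beta = \frac{4}{\gamma}-\frac{\gamma}{2}$, and you note that specializing the general formula of \cite{RZ22} is one way to finish. That alternative is essentially what the paper does, though even more directly: it cites \cite[Proposition~3.4]{AHS21} for the identity $\Md_{0,2}(W)[e^{-\mu_1 L-\mu_2 R}-1]=\frac{\gamma}{2(Q-\beta)}R(\beta;\mu_1,\mu_2)$ (which packages your first two steps, the Liouville-field identification of $\Md_{0,2}(W)$ and the zero-mode Gamma-factor extraction), then simply evaluates $R$ from the explicit formulas \cite[Eqs.~(3.2),(3.3),(3.5)]{AHS21}, invoking \cite[Theorem~1.8]{RZ22} to extend from real to complex $\mu_1,\mu_2$.

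The step I would flag as a genuine gap is the BPZ claim. You assert that because $2Q-\beta=\gamma+\frac{\gamma}{2}$ contains a degenerate $\frac{\gamma}{2}$-insertion, the bordered GMC moment satisfies a second-order hypergeometric ODE in the cosmological ratio $\mu_2/\mu_1$. The BPZ mechanism gives ODEs in the \emph{position} of a degenerate field; there is no insertion position varying here, and the known rigorous derivations of the boundary reflection coefficient (Remy–Zhu) go through shift/functional equations obtained from a BPZ ODE for a genuine three-point function and a subsequent collision limit, not through an ODE in $\mu_2/\mu_1$. What actually produces the elementary form in the lemma is that the generic reflection coefficient, written in the FZZ parametrization of $\mu_1,\mu_2$, is a ratio of double-sine (Barnes double-Gamma) functions, and for this particular $\beta$ the double-sine ratio collapses to the trigonometric/rational expression stated. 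So the BPZ-ODE route as written would not go through without a substantially different argument; your fallback route via the explicit \cite{RZ22}/\cite{AHS21} formulas is the one that works and matches the paper.
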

\begin{proof} 
Note that $W \in (\frac{\gamma^2}{2}, \gamma^2)$. \cite[Proposition 3.4]{AHS21} gives the identity
\[\Md_{0,2}(W) [e^{-\mu_1 L - \mu_2 R} - 1] = \frac{\gamma}{2(Q-\beta)} R(\beta; \mu_1, \mu_2), \qquad \beta := Q + \frac\gamma2 - \frac W \gamma = \frac 4\gamma - \frac\gamma2\]
where $R(\beta; \mu_1, \mu_2)$ is the so-called \emph{boundary reflection coefficient} for LCFT. \cite[Proposition
3.4]{AHS21} is stated for $\mu_1, \mu_2 \in \mathbb{R}$, but it can be easily extended to our setting by using \cite[Theorem 1.8]{RZ22}. The value of $R(\beta; \mu_1, \mu_2)$ is easily calculated using \cite[Equations (3.2), (3.3), (3.5)]{AHS21}.
\end{proof}

\begin{proof}[Proof of Lemma~\ref{lem:lem5.8}]
    Let $I:= \mathcal{M}^{\rm disk}_{0,2, \bullet}(W) [( e^{\frac{i \pi \gamma^2}{8} } L_{12} +  e^{-\frac{i \pi \gamma^2}{8} } L_{13} )^{\frac{4p}{\gamma^2}} e^{- L_{23}}]$. We first show the absolute integrability of the integral in $I$ by verifying that $\mathcal{M}^{\rm disk}_{0,2, \bullet}(W) [L_{13}^{\frac{4p}{\gamma^2}} e^{- L_{23}}] < \infty$. Recall from Lemma~\ref{lem:QT(W,2,W)} that $\mathcal{M}^{\rm disk}_{0,2, \bullet}(W) = C {\rm QT}(W,2,W)$. Let $W' = -2p \in (0,\frac{\gamma^2}{2})$. By \cite[Proposition 3.6]{AHS21}, the right boundary length of a sample from $\mathcal{M}_{0,2}^{\rm disk}(W')$ follows the law $C 1_{\ell > 0}\ell^{\frac{4p}{\gamma^2}} d \ell$. Therefore, by taking $(W, W_1, W_2, W_3) = (W', \frac{3}{2}\gamma^2-2, 2, \frac{3}{2}\gamma^2-2$) in Theorem~\ref{thm:disk+QT} and using the conformal welding from~\eqref{eqn:disk+QT}, we get that 
    $$
    \mathcal{M}^{\rm disk}_{0,2, \bullet}(W)[L_{13}^{\frac{4p}{\gamma^2}} e^{- L_{23}}] = C {\rm QT}(W,2,W) [L_{13}^{\frac{4p}{\gamma^2}} e^{- L_{23}}] = C' {\rm QT}(W+W', 2+W', W)[e^{- L_{23}'} ],
    $$
    where $L_{23}'$ is the quantum length of the boundary arc between the $2+W'$ and $W$ weight vertices of a sample from ${\rm QT}(W+W', 2+W', W)$. By \cite[Proposition 2.23]{ASY22}, the law of $L_{23}'$ is $C 1_{\ell > 0}\ell^{\frac{4(p+1)}{\gamma^2} - 2} d \ell$. Therefore, $\mathcal{M}^{\rm disk}_{0,2, \bullet}(W) [L_{13}^{\frac{4p}{\gamma^2}} e^{- L_{23}}] < \infty$, and thus, the integral in $I$ is absolutely integrable.
    
    Next, we will calculate $I$. For $z \in \mathbb{C}$ with ${\rm Re}(z) > 0$, we have the identity $z^{\frac{4p}{\gamma^2}} = \frac{1}{\Gamma(-\frac{4p}{\gamma^2})} \int_0^\infty  e^{-zt} t^{-\frac{4p}{\gamma^2}-1} dt$. Using this identity and exchanging the integral in $I$ yields that
    \begin{equation}
    \label{eq:disk-2}
    \begin{aligned}
        I&= \frac{1}{\Gamma(-\frac{4p}{\gamma^2})} \int_0^\infty \mathcal{M}^{\rm disk}_{0,2, \bullet}(W)  \Big[e^{-(e^{\frac{i \pi \gamma^2}{8} }L_{12} + e^{-\frac{i \pi \gamma^2}{8} }L_{13})t } e^{- L_{23}} \Big] t^{-\frac{4p}{\gamma^2}-1} dt .
    \end{aligned}
    \end{equation}

    Recall that the law of $\mathcal{M}^{\rm disk}_{0,2, \bullet}(W)$ is obtained by adding a marked point to the left boundary of a sample from $\mathcal{M}^{\rm disk}_{0,2}(W)$. Therefore, denoting the left and right boundary lengths of a sample from $\Md_{0,2}(W)$ by $(L, R)$, we have
    \begin{align*}
        &\quad \mathcal{M}^{\rm disk}_{0,2, \bullet}(W)  \Big[e^{-(e^{\frac{i \pi \gamma^2}{8} }L_{12} + e^{-\frac{i \pi \gamma^2}{8} }L_{13})t } e^{- L_{23}} \Big] = \mathcal{M}^{\rm disk}_{0,2}(W) \Big[ \int_0^\LL e^{-e^{\frac{i \pi \gamma^2}{8}} t s - (\LL-s)}ds \cdot e^{-e^{-\frac{i \pi \gamma^2}{8}}t \RR} \Big] \\
        &=\mathcal{M}^{\rm disk}_{0,2}(W)\Big[ \frac{1 }{e^{\frac{i \pi \gamma^2}{8} } t - 1} \Big(e^{- \LL -e^{-\frac{i \pi \gamma^2}{8} }t \RR }-e^{-e^{\frac{i \pi \gamma^2}{8} }t \LL -e^{-\frac{i \pi \gamma^2}{8} }t \RR}\Big)    \Big].
    \end{align*} 
    Putting this into~\eqref{eq:disk-2} and then applying Lemma~\ref{lem:Rfunction} with $(\mu_1, \mu_2) = (1, e^{-\frac{i \pi \gamma^2}{8} }t)$ and $(e^{\frac{i \pi \gamma^2}{8} }t, e^{-\frac{i \pi \gamma^2}{8} }t)$ gives
    \begin{align*}
        I&=\frac{C_1}{\Gamma(-\frac{4p}{\gamma^2})} \int_0^\infty  \frac{t^{-\frac{4p}{\gamma^2}-1}}{e^{\frac{i \pi \gamma^2}{8}}t - 1} \bigg( \frac{1 - 2 \cos(\frac{\pi \gamma^2}{4}) e^{-\frac{i \pi \gamma^2}{8}}  t + e^{-\frac{i \pi \gamma^2}{4}} t^2}{1 + e^{-\frac{i \pi }{2}} t^{\frac{4}{\gamma^2}}}  -\frac{\gamma^2}{2} \sin(\frac{\pi \gamma^2}{4})  t^{2-\frac{4}{\gamma^2}}\bigg)dt .
    \end{align*}
    Setting $t = e^{-\frac{i \pi \gamma^2}{8}  } z$, we further have $I$ is equal to
    \begin{equation*}
    \begin{aligned}
        \frac{ - C_1 e^{\frac{i \pi p}{2} - \frac{i \pi \gamma^2}{4}} }{\Gamma(-\frac{4p}{\gamma^2})} \int_{e^{\frac{i \pi \gamma^2}{8}  }\mathbb{R}_+} \frac{z^{-\frac{4p}{\gamma^2}-1}}{z-1} \bigg(  \frac{e^{-\frac{ i \pi \gamma^2}{4}} z^2 + e^{\frac{i \pi \gamma^2}{4}}  - 2\cos(\frac{\gamma^2}{4}\pi)  z }{ z^{\frac{4}{\gamma^2}}  -1}  + \frac{i \gamma^2}{2} \sin(\frac{\pi \gamma^2}{4})z^{2-\frac{4}{\gamma^2}}  \bigg)dz.
    \end{aligned}
    \end{equation*}
    As the function inside the integral is analytic for $z \in \mathbb{C} \backslash (-\infty, 0]$ and decays as $|z|^{-\frac{4(p+1)}{\gamma^2}}$ at infinity, which is faster than $|z|^{-1}$ when $p > \frac{4}{\gamma^2}-1$, we can deform the integral contour from $e^{\frac{i \pi \gamma^2}{8}  }\mathbb{R}_+$ to $\mathbb{R}_+$ without changing its value. Finally, applying Lemma~\ref{lem:integral2} yields the desired result, where we take $C_2 = -C_1 \frac{\pi \gamma^2}{4} \cos(\frac{\pi \gamma^2}{4})  /\Gamma(-\frac{4 p}{\gamma^2}) \in \mathbb{R}$.
\end{proof}

\subsection{Proof of Proposition~\ref{prop:CRratio}}\label{subsec:computation}
 
\begin{figure}
\label{fig:QTtilde}
\centering
\includegraphics[width=0.5\textwidth]{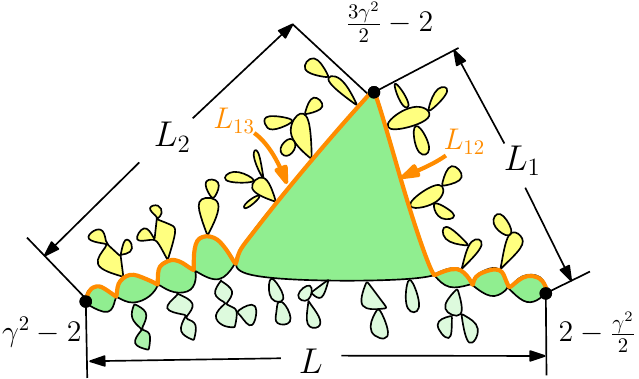}
\caption{Illustration of the decomposition~\eqref{eq:forest-tri}. The green surface corresponds to $\wt{\rm QT}(W_1,W_2,W_3; L_{12},L_{13},L)$. The yellow pieces correspond to $\mathcal{M}_2^{\rm{f.l.}}(L_{12};L_1) $ and $  \mathcal{M}_2^{\rm{f.l.}}(L_{13};L_2)$, and they are glued to the green surface along the two orange boundary arcs as shown in~\eqref{eq:forest-tri}.}\label{fig:weldnew3}
\end{figure}
In this section, we will prove Proposition~\ref{prop:CRratio} based on Proposition~\ref{prop:weld-wind} and results from Sections~\ref{subsec:computation-prelim} and \ref{subsec:2ptQD}. The following lemma expresses the desired ratio in Proposition~\ref{prop:CRratio} in terms of quantities related to $\Md_{0,2,\bullet}(\frac{3}{2}\gamma^2-2)$ that are computed in Lemma~\ref{lem:lem5.8}. 
\begin{lemma}\label{lm:wt-cT_1}
    For any $\alpha \in (Q, \frac{4}{\gamma})$ and $a,b>0$, let
\begin{equation}
\label{eq:def-g}
p = \frac{\gamma}{2} \alpha - 1 \in (\frac{\gamma^2}{4}-1,0) \quad \mbox{and} \quad g(a,b) = \mathrm{Re} [ e^{\frac{i \pi (p+1)}{2}} ( e^{\frac{i \pi \gamma^2}{8} } a +  e^{-\frac{i \pi \gamma^2}{8} } b )^{\frac{4p}{\gamma^2}}  ] > 0.
\end{equation}
Let $W = \frac{3}{2}\gamma^2-2$. Recall  notations  from Lemma~\ref{lem:lem5.8}. We have:
\begin{equation}
\label{eq:transf1}
\frac{\mathbb{E} [{\rm CR}(D_{\widetilde \eta},0)^{2 \Delta_\alpha-2}  \mathbbm{1}_{T^c}]}{\mathbb{E} [{\rm CR}(D_{\widetilde \eta},0)^{2 \Delta_\alpha-2}  \mathbbm{1}_{T}]}  = \frac{\mathcal{M}^{\rm disk}_{0,2, \bullet}(W)[g(L_{12},L_{13})e^{- L_{23}}]}{\mathcal{M}^{\rm disk}_{0,2, \bullet}(W)[g(L_{13},L_{12})e^{- L_{23}}]}.
\end{equation}
\end{lemma}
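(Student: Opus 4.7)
The plan is to combine Proposition~\ref{prop:weld-wind} with an auxiliary conformal welding from Theorem~\ref{thm:disk+QT} to identify the ratio of conformal-radius moments with the ratio of two integrals over $\Md_{0,2,\bullet}(W)$ for $W = \tfrac{3\gamma^2}{2}-2$.

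First, I would weight both sides of the two identities in Proposition~\ref{prop:weld-wind} by $e^{-\nu}$, where $\nu$ is the total generalized boundary length of the welded surface. On the LHS, $\nu$ depends only on the forested quantum disk (independent of the curve), so the weighted identity factors as $\Mfd_{1,1}(\alpha,\gamma)[e^{-\nu}]\cdot\mathbb{E}[{\rm CR}(D_{\widetilde\eta},0)^{2\Delta_\alpha-2}\mathbbm{1}_T]$, and analogously for $T^c$. On the RHS, $\nu$ equals the generalized length $L_3^f$ of the third arc of $\QT^f$ (the one not adjacent to the weight $\tfrac{3\gamma^2}{2}-2$ vertex), and the disk contributes $|\Mfd_{1,1}(\alpha,\gamma;L_2-L_1)| = c(L_2-L_1)^{p-1}$ with $p=\tfrac{\gamma\alpha}{2}-1$ by Lemma~\ref{lem:M11-f.s.length}. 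Taking the ratio of the $T^c$ and $T$ identities cancels the common normalization $\Mfd_{1,1}(\alpha,\gamma)[e^{-\nu}]$ and yields
\[
\frac{\mathbb{E}[{\rm CR}^{2\Delta_\alpha-2}\mathbbm{1}_{T^c}]}{\mathbb{E}[{\rm CR}^{2\Delta_\alpha-2}\mathbbm{1}_T]} = \frac{\iint_{L_1>L_2}B(L_1,L_2)(L_1-L_2)^{p-1}\,dL_1\,dL_2}{\iint_{L_2>L_1}B(L_1,L_2)(L_2-L_1)^{p-1}\,dL_1\,dL_2},
\]
where $B(L_1,L_2):=|\QT^f(\tfrac{3\gamma^2}{2}-2,2-\tfrac{\gamma^2}{2},\gamma^2-2;L_1,L_2)[e^{-L_3^f}]|$.

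Next, I would unforest via Definition~\ref{def:q.t.f.s.}, writing $B(L_1,L_2)$ as a triple integral over the non-forested arc lengths $(t_1,t_2,t_3)$ against the stable-subordinator densities $f(t_i,L_i)=|\mathcal{M}_2^\mathrm{f.l.}(t_i;L_i)|$ together with the factor $e^{-t_3^{\gamma^2/4}} = \int e^{-L_3^f}f(t_3,L_3^f)\,dL_3^f$. Swapping the order of integration, the inner integrals $\iint(L_1-L_2)_+^{p-1}f(t_1,L_1)f(t_2,L_2)\,dL_1\,dL_2 = \mathbb{E}(Y_{t_1}-Y_{t_2}')_+^{p-1}$ are evaluated explicitly by Lemma~\ref{lem:levy}, yielding a complex-analytic factor in $(t_1,t_2)$ of the same general shape as $g$ but with the exponent $4(p-1)/\gamma^2$ instead of $4p/\gamma^2$. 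To trade the complicated triangle $\QT(\tfrac{3\gamma^2}{2}-2,2-\tfrac{\gamma^2}{2},\gamma^2-2)$ for $\QT(\tfrac{3\gamma^2}{2}-2,2,\tfrac{3\gamma^2}{2}-2)$, which is proportional to $\Md_{0,2,\bullet}(W)$ by Lemma~\ref{lem:QT(W,2,W)}, I would apply Theorem~\ref{thm:disk+QT} with $(W,W_1,W_2,W_3) = (\tfrac{\gamma^2}{2},\gamma^2-2,2-\tfrac{\gamma^2}{2},\tfrac{3\gamma^2}{2}-2)$ (for which $W_1+2=W_2+W_3=\gamma^2$), welding $\Md_{0,2}(\tfrac{\gamma^2}{2})$ with the complicated triangle. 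Under the corresponding change of variables $t_3 \to s_2$ (where $s_2$ is the remaining boundary length of the welded disk and becomes $L_{23}$ of the new triangle), the factor $e^{-t_3^{\gamma^2/4}}$ should translate into $e^{-L_{23}}$, and the integration against the $\Md_{0,2}(\tfrac{\gamma^2}{2})$ boundary-length distribution should supply the missing $(e^{i\pi\gamma^2/8}\cdot + e^{-i\pi\gamma^2/8}\cdot)^{4/\gamma^2}$ factor that promotes Lemma~\ref{lem:levy}'s exponent $4(p-1)/\gamma^2$ to the $4p/\gamma^2$ appearing in $g$, so that the two integrals on the right-hand side of the ratio become proportional to $\Md_{0,2,\bullet}(W)[g(L_{13},L_{12})e^{-L_{23}}]$ and $\Md_{0,2,\bullet}(W)[g(L_{12},L_{13})e^{-L_{23}}]$ respectively, giving the claim.

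The main obstacle will be verifying the last identification, namely that the auxiliary welding with $\Md_{0,2}(\tfrac{\gamma^2}{2})$ produces exactly the exponent shift $4(p-1)/\gamma^2 \to 4p/\gamma^2$ with the matching complex phase and the $e^{-L_{23}}$ weighting. The weight $W = \gamma^2/2$ sits at the critical thick/thin boundary, where the LCFT insertion parameter equals $Q$ and the standard boundary reflection formulas such as Lemma~\ref{lem:Rfunction} degenerate, so the relevant boundary-length distribution of this disk (after integration against $e^{-s_2}$) must be analyzed by a direct argument rather than through the reflection coefficient, and one must carefully track the complex phases arising from Lemma~\ref{lem:levy} through the auxiliary welding to confirm they line up with the $e^{i\pi(p+1)/2}$ prefactor in the definition of $g$.
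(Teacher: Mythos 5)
Your overall route---reweighting Proposition~\ref{prop:weld-wind}, unforesting, applying Lemma~\ref{lem:levy}, then passing to $\Md_{0,2,\bullet}(W)$ via an auxiliary welding---is close to the paper's, but the final step has a genuine gap, and the exponent mismatch you want the welding to repair is an artifact.

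On the exponent: for $\alpha\in(Q,\tfrac{4}{\gamma})$ one has $\tfrac{\gamma\alpha}{2}-1\in(\tfrac{\gamma^2}{4},1)$, outside the stated range $(\tfrac{\gamma^2}{4}-1,0)$. The parametrization consistent with that range, with Lemma~\ref{lem:lem5.8}, and with the exponent $\tfrac{\gamma}{4}(2\alpha+\gamma-2Q)-1=\tfrac{\gamma\alpha}{2}-2$ from Lemma~\ref{lem:M11-f.s.length} is $p=\tfrac{\gamma\alpha}{2}-2$ (the ``$-1$'' in~\eqref{eq:def-g} is a typo). With this $p$, feeding $|\Mfd_{1,1}(\alpha,\gamma;\ell)|\propto\ell^{p}$ into Lemma~\ref{lem:levy} produces exactly the function $g$; there is no exponent $\tfrac{4(p-1)}{\gamma^2}$ that needs ``promotion'' to $\tfrac{4p}{\gamma^2}$. (Separately, $\int e^{-L_3^f}f(t_3,L_3^f)\,dL_3^f=\bbE[e^{-Y_{t_3}}]=e^{-t_3}$ by the chosen normalization of the subordinator, not $e^{-t_3^{\gamma^2/4}}$.)

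The serious gap is the proposed matching of weights in the final step. After the auxiliary welding, you would need the weight $e^{-t_3}$ on the unforested third arc to arise as $m(t_3):=\int_0^\infty e^{-L_{23}}\,|\Md_{0,2}(\tfrac{\gamma^2}{2};t_3,L_{23})|\,dL_{23}$. This is impossible: $|\Md_{0,2}(W;a,b)|$ is homogeneous of degree $-(1+\tfrac{2W}{\gamma^2})$, hence of degree $-2$ at $W=\tfrac{\gamma^2}{2}$, so $\Md_{0,2}(\tfrac{\gamma^2}{2})[e^{-\mu_1 L-\mu_2 R}]$ depends on $\mu_1/\mu_2$ alone and is invariant under $\mu_1\leftrightarrow\mu_2$; whereas $m(t_3)\propto e^{-t_3}$ would force this Laplace transform to be $(\mu_1/\mu_2+1)^{-1}$, which is not invariant under $\mu\mapsto 1/\mu$. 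No direct analysis at the critical weight can save this, because the identity you need simply fails. The paper's proof of Lemma~\ref{lem:5.5} never evaluates $m$ at all: Step~2 shows that the disintegrated ratio
\[
\frac{\iint|\wt{\QT}(W_1,W_2,W_3;L_{12},L_{13},L)|\,g(L_{12},L_{13})\,dL_{12}\,dL_{13}}{\iint|\wt{\QT}(W_1,W_2,W_3;L_{12},L_{13},L)|\,g(L_{13},L_{12})\,dL_{12}\,dL_{13}}
\]
is independent of $L$ (for each $L$ it equals the fixed conformal-radius ratio via~\eqref{eq:forest-tri-9}--\eqref{eq:forest-tri-10}); Step~3 then uses the welding only to express the two $\Md_{0,2,\bullet}(W)$ expectations as averages over $L$, against some unevaluated kernel, of that ratio's numerator and denominator, so the two sides of~\eqref{eq:transf1} coincide. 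This $L$-independence observation is the missing ingredient in your proposal.
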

Lemma~\ref{lm:wt-cT_1} is immediate from the following Lemma~\ref{lem:5.5} concerning the forested quantum triangle with weights:
\begin{equation}
    \label{eq:weight}
    W_1 = \frac{3\gamma^2}{2}-2\,, \quad  W_2 = 2-\frac{\gamma^2}{2}\,, \quad \mbox{and} \quad W_3 = \gamma^2-2 \,.
\end{equation}
Suppose $\cT_1$ is a sample from $\QT(W_1,W_2,W_3)$ and $\cT_1^f$ is obtained by foresting the three boundary arcs of $\cT_1$. Here we abuse notation and  use $L_{12} $ (resp.\ $L_{13}$) to denote the quantum length of the boundary arc of $\cT_1$ between 
the $W_1$ and $W_2$ (resp.\ $W_3$) weight vertices by 
$L_{12}$ (resp.\ $L_{13}$), as used for the boundary lengths of $\mathcal{M}^{\rm disk}_{0,2, \bullet}(W)$ in Lemma~\ref{lem:lem5.8}. 
(It will be clear from the proof of Lemma~\ref{lem:5.5} that this abuse of notation is natural.)
Let $\wt{\cT_1}$ be the quantum surface obtained by only foresting the boundary arc of $\cT_1$ between the weight $W_2$ and $W_3$ vertices, and we write $\wt{\QT}(W_1,W_2,W_3)$ for its law. Then for any $L_1,L_2,L>0$, by Definition~\ref{def:f.s.}, we have
\begin{equation}
\label{eq:forest-tri}
 {\rm QT}^f(W_1,W_2,W_3;L_1,L_2,L) = \iint_{\bbR_+^2} {\rm Weld}(\wt{\rm QT}(W_1,W_2,W_3; L_{12},L_{13},L), {\mathcal{M}_2^{\rm{f.l.}}}(L_{12};L_1),   {\mathcal{M}_2^{\rm{f.l.}}}(L_{13};L_2)) dL_{12}dL_{13}\,
\end{equation}
where $L$ indicates the generalize quantum length of the bottom boundary arc. See Figure~\ref{fig:weldnew3}.
\begin{lemma}
\label{lem:5.5} 
For $L>0$,    the numerator and denominator on the following ratio are both finite:
\begin{equation}
\label{eq:lem5.5} 
\frac{\iint_{\bbR_+^2} | \wt{\rm QT} (W_1,W_2,W_3;L_{12},L_{13},L)|\cdot g(L_{12},L_{13}) dL_{12}dL_{13}}{\iint_{\bbR_+^2} | \wt{\rm QT} (W_1,W_2,W_3;L_{12},L_{13},L)|\cdot g(L_{13},L_{12}) dL_{12}dL_{13}}.
\end{equation}
Moreover, this ratio equals both 
\begin{equation}
 \label{eq:lem4.10-step2}
 \frac{\mathbb{E} [{\rm CR}(D_{\widetilde \eta},0)^{2 \Delta_\alpha-2}  \mathbbm{1}_{T^c}]}{\mathbb{E} [{\rm CR}(D_{\widetilde \eta},0)^{2 \Delta_\alpha-2}  \mathbbm{1}_{T}]}
  \quad 
 \textrm{and}\quad 
\frac{\mathcal{M}^{\rm disk}_{0,2, \bullet}(W)[g(L_{12},L_{13})e^{- L_{23}}]}{\mathcal{M}^{\rm disk}_{0,2, \bullet}(W)[g(L_{13},L_{12})e^{- L_{23}}]}.
\end{equation}
\end{lemma}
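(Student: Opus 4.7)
The plan is to prove both equalities in~\eqref{eq:lem4.10-step2} by disintegrating the welding identity of Proposition~\ref{prop:weld-wind} by the generalized outer boundary length $L$. The enabling observation is that under this welding, the bottom forested arc of ${\rm QT}^f(W_1,W_2,W_3;L_1,L_2,L)$ is not touched and becomes the outer forested boundary of the resulting forested disk $\Mfd_{1,1}(\alpha,\gamma;L)$. Hence, at each fixed $L>0$, Proposition~\ref{prop:weld-wind} yields
\[
|\Mfd_{1,1}(\alpha,\gamma;L)|\cdot\mathbb{E}[{\rm CR}(0,D_{\widetilde\eta})^{2\Delta_\alpha-2}\mathbbm{1}_T]=C_0\!\iint_{L_2>L_1>0}|{\rm QT}^f(W_1,W_2,W_3;L_1,L_2,L)|\cdot|\Mfd_{1,1}(\alpha,\gamma;L_2-L_1)|\,dL_1\,dL_2,
\]
and an analogous identity with $T$ replaced by $T^c$. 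Finiteness of the two integrals in~\eqref{eq:lem5.5} for each $L>0$ follows from a pointwise bound on $g$ together with the explicit boundary length integrability of $|\wt{\rm QT}|$, inherited from its thick--thin decomposition and Lemma~\ref{lem:QT(W,2,W)}.

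For the first equality (ratio equals CR ratio), substitute $|\Mfd_{1,1}(\alpha,\gamma;\ell)|\propto\ell^p$ from Lemma~\ref{lem:M11-f.s.length} and disintegrate $|{\rm QT}^f(L_1,L_2,L)|$ via~\eqref{eq:forest-tri} into $|\wt{\rm QT}(L_{12},L_{13},L)|$ weighted by independent forested line segment densities $|\mathcal{M}_2^{\rm f.l.}(t;\ell)|=f_{Y_t}(\ell)$. Integrating over $L_1,L_2$ collapses the inner integral to $\mathbb{E}[(Y_{L_{12}}-Y'_{L_{13}})_+^p]$ in the $T^c$-numerator and $\mathbb{E}[(Y'_{L_{13}}-Y_{L_{12}})_+^p]$ in the $T$-denominator; Lemma~\ref{lem:levy} then identifies these with positive real multiples of $g(L_{12},L_{13})$ and $g(L_{13},L_{12})$ respectively. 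Taking the ratio cancels the $|\Mfd_{1,1}(\alpha,\gamma;L)|$ factor and yields the first half of~\eqref{eq:lem4.10-step2} for every $L>0$; this in particular shows that the ratio~\eqref{eq:lem5.5} is $L$-independent.

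For the second equality, take the Laplace transform in $L$ against $e^{-L}$. Since the bottom arc of $\wt{\rm QT}$ is forested using the $\frac{\gamma^2}{4}$-stable subordinator $Y$ normalized so that $\mathbb{E}[e^{-Y_t}]=e^{-t}$, this converts $|\wt{\rm QT}(W_1,W_2,W_3;L_{12},L_{13},L)|$ into $\int|{\rm QT}(W_1,W_2,W_3;L_{12},L_{13},L_{23})|\,e^{-L_{23}}\,dL_{23}$. An auxiliary conformal welding identity then relates this Laplace-transformed measure, integrated against $g$, with a positive constant multiple of $\mathcal{M}^{\rm disk}_{0,2,\bullet}(W_1)[g\cdot e^{-L_{23}}]$. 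Since the underlying ratio has already been shown to be $L$-independent, one simply reads off the second equality in~\eqref{eq:lem4.10-step2}.

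The main obstacle is constructing this auxiliary welding, which avoids invoking the three-point boundary LCFT structure constants of~\cite{RZ22}. A direct application of Theorem~\ref{thm:disk+QT} to pass from $(W_1,W_2,W_3)$ to $(W_1,2,W_1)$ by welding a weight-$\frac{\gamma^2}{2}$ disk along the bottom arc would require $W_2+W_3=W_1+2$, i.e.\ $\gamma^2=\frac{8}{3}$, which is only the $\kappa=6$ case. The expected workaround is to weld two auxiliary quantum disks along the two top boundary arcs adjacent to the weight-$W_1$ vertex, iteratively apply Theorem~\ref{thm:disk+QT} together with its $W_1=0$ extension Proposition~\ref{prop:disk+QT-0}, and use the $e^{-L_{23}}$ weight to integrate out the extra arc-length parameters. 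The structural identity driving the calculation is that the thick part of $\wt{\rm QT}(W_1,W_2,W_3)$ under the thick--thin decomposition of Definition~\ref{def-qt-thin} is exactly $\QT(W_1,W_1,2)\propto\mathcal{M}^{\rm disk}_{0,2,\bullet}(W_1)$, with the two thin disks at the $W_2,W_3$ vertices contributing precisely the correction absorbed by $e^{-L_{23}}$.
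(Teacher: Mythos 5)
Your Steps 1 and 2 track the paper's argument closely: the disintegration of Proposition~\ref{prop:weld-wind} over the generalized boundary length $L$, the substitution $|\Mfd_{1,1}(\alpha,\gamma;\ell)|\propto\ell^p$ from Lemma~\ref{lem:M11-f.s.length}, the passage through~\eqref{eq:forest-tri} to $\wt{\rm QT}$, and the identification of the inner L\'evy expectations with $g(L_{12},L_{13})$ and $g(L_{13},L_{12})$ via Lemma~\ref{lem:levy} are all the paper's Step~2, and your finiteness discussion parallels the paper's Step~1 (which establishes it first for a.e.\ $L$ via Lemma~\ref{lem:lem5.8} and then for every $L$ by homogeneity of $|\wt{\rm QT}(\,\cdot\,;L_{12},L_{13},L)|$).

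The gap is in your Step~3. You assert that applying Theorem~\ref{thm:disk+QT} to weld a weight-$\frac{\gamma^2}{2}$ disk along the bottom arc of $\QT(\frac{3\gamma^2}{2}-2,\,2-\frac{\gamma^2}{2},\,\gamma^2-2)$ would force the constraint $W_2+W_3=W_1+2$ with $W_1=\frac{3\gamma^2}{2}-2$, i.e.\ $\gamma^2=\frac83$, and on this basis you invent a workaround via auxiliary weldings on the top arcs. This is a mislabeling of the theorem's variables. In Theorem~\ref{thm:disk+QT} the vertex whose weight is left unchanged plays the role of $W_3^{\rm thm}$; here that must be the top vertex, $W_3^{\rm thm}=\frac{3\gamma^2}{2}-2$. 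With the remaining bottom vertices ordered as $(W_1^{\rm thm},W_2^{\rm thm})=(\gamma^2-2,\,2-\frac{\gamma^2}{2})$, the constraint reads
\[
W_2^{\rm thm}+W_3^{\rm thm}=\Bigl(2-\tfrac{\gamma^2}{2}\Bigr)+\Bigl(\tfrac{3\gamma^2}{2}-2\Bigr)=\gamma^2=(\gamma^2-2)+2=W_1^{\rm thm}+2,
\]
which is an identity for every $\gamma$. Welding a weight-$\frac{\gamma^2}{2}$ disk then yields $\QT\bigl(\frac{\gamma^2}{2}+\gamma^2-2,\,\frac{\gamma^2}{2}+2-\frac{\gamma^2}{2},\,\frac{3\gamma^2}{2}-2\bigr)=\QT(W,2,W)$, which is proportional to $\Md_{0,2,\bullet}(W)$ by Lemma~\ref{lem:QT(W,2,W)}. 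The paper does exactly this, realizing the weight-$\frac{\gamma^2}{2}$ disk as the concatenation of a forested line segment welded along the forested bottom arc (Proposition~\ref{prop:weld:segment} then produces a weight-$(2-\frac{\gamma^2}{2})$ disk) with a further weight-$(\gamma^2-2)$ disk (Theorem~\ref{thm:disk-welding}). The crucial $L$-independence of the ratio~\eqref{eq:lem5.5}, established in your Step~2, then lets one read off the second equality directly from the Step~1 identity for $\Md_{0,2,\bullet}(W)[g\cdot e^{-L_{23}}]$, with no auxiliary weldings on the top arcs. So you should delete the paragraph proposing the workaround and instead perform the welding along the bottom arc with the labeling above.
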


\begin{proof}

   \textbf{Step 1: Finiteness in~\eqref{eq:lem5.5}.} By the definition of $g$, Lemma~\ref{lem:lem5.8} gives explicit formulas for $\mathcal{M}^{\rm disk}_{0,2, \bullet}(W)[g(L_{12},L_{13})e^{- L_{23}}]$ and $\mathcal{M}^{\rm disk}_{0,2, \bullet}(W)[g(L_{13},L_{12})e^{- L_{23}}]$ which are in particular finite.
   
   Now we weld a forested line segment along with a quantum disk of weight $\gamma^2-2$ to the bottom boundary arc of a sample from $\wt{\QT}(W_1,W_2,W_3)$ as in Figure~\ref{fig:weldnew2}. By Proposition~\ref{prop:weld:segment} and Theorem~\ref{thm:disk+QT}, this gives a quantum triangle of weights $(\frac{3}{2}\gamma^2-2,2,\frac{3}{2}\gamma^2-2) = (W,2,W)$. By Lemma~\ref{lem:QT(W,2,W)}, it is also equal to $\mathcal{M}^{\rm disk}_{0,2, \bullet}(W)$ up to a constant. Therefore, we have
   \begin{align*}
     &\quad \mathcal{M}^{\rm disk}_{0,2, \bullet}(W)[g(L_{12},L_{13})e^{- L_{23}}] \\
     &=C \iint_{\mathbb{R}_+^3} \Big(\iint_{\bbR_+^2} | \wt{\rm QT} (W_1,W_2,W_3;L_{12},L_{13},L)|\cdot g(L_{12},L_{13}) dL_{12}dL_{13}\Big) |\mathcal{M}_2^{\rm f.l.}(\ell, L)| |\mathcal{M}_{0,2}^{\rm disk}(\ell, L_{23})| e^{- L_{23}} d \ell dL dL_{23}
   \end{align*}
   for some constant $C \in (0, \infty)$ depending only on $\gamma$. From this equality and the fact that $\mathcal{M}^{\rm disk}_{0,2, \bullet}(W)[g(L_{12},L_{13})e^{- L_{23}}]< \infty$, we see that the numerator in~\eqref{eq:lem5.5} is finite for a.e.\ $L$. This extends to any $L>0$ by noting that $| \wt{\rm QT} (W_1,W_2,W_3;L_{12},L_{13},L)|$ is a homogeneous function. That is, there exists $\alpha \in \mathbb{R}$ such that $| \wt{\rm QT} (W_1,W_2,W_3;tL_{12},tL_{13},tL)| = t^\alpha | \wt{\rm QT} (W_1,W_2,W_3;L_{12},L_{13},L)|$ for any $L_{12}, L_{13}, L, t>0$. This holds because using Definition~\ref{def:f.s.} and similar arguments to \cite[Proposition 2.24]{ASY22}, the Laplace transform of $| \wt{\rm QT} (W_1,W_2,W_3;L_{12},L_{13},L)|$ can be explicitly expressed in terms of the LCFT boundary reflection coefficient and three-point function. Both of these functions are homogeneous function (see e.g.\ \cite{RZ22}) hence $|\wt{\rm QT} (W_1,W_2,W_3;L_{12},L_{13},L)|$ is homogeneous. Similarly, the denominator in~\eqref{eq:lem5.5} is also finite for any $L>0$ since $\mathcal{M}^{\rm disk}_{0,2, \bullet}(W)[g(L_{13},L_{12})e^{- L_{23}}]< \infty$. 

   \textbf{Step 2: Equality with the first ratio in~\eqref{eq:lem4.10-step2}.} 
   By Lemma~\ref{lem:M11-f.s.length}, we can disintegrate   \eqref{eq:weld2} over the generalized quantum length $L$ of the boundary of both sides of~\eqref{eq:weld2}. This, together with Lemma~\ref{lem:M11-f.s.length}, gives
\begin{equation}
\label{eq:forest-tri-9}
\begin{aligned}
\frac{\mathbb{E} [{\rm CR}(D_{\widetilde \eta},0)^{2 \Delta_\alpha-2}  \mathbbm{1}_{T^c}]}{\mathbb{E} [{\rm CR}(D_{\widetilde \eta},0)^{2 \Delta_\alpha-2}  \mathbbm{1}_{T}]}  &= \frac{\iint_{L_1>L_2>0} |{\rm QT}^f (W_1,W_2,W_3;L_1,L_2,L)| \cdot |\Mfd_{1,1}(\alpha,\gamma; L_1-L_2)| dL_1dL_2}{\iint_{L_2>L_1>0} |  {\rm QT}^f (W_1,W_2,W_3;L_1,L_2,L)|\cdot |\Mfd_{1,1}(\alpha,\gamma; L_2-L_1)| dL_1dL_2} \\
&=\frac{\iint_{L_1>L_2>0} |  {\rm QT}^f (W_1,W_2,W_3;L_1,L_2,L)| (L_1-L_2)^p dL_1dL_2}{\iint_{L_2>L_1>0} | {\rm QT}^f (W_1,W_2,W_3;L_1,L_2,L)| (L_2-L_1)^p dL_1dL_2}.
\end{aligned}
\end{equation}
Recall the stable L\'{e}vy process $(X_t)_{t\ge0}$ of index $\frac{4}{\gamma^2}$ with only upward jumps from Definition~\ref{def:forested-line}, and $Y_t = \inf\{s\ge0: X_s\le -t\}$. Using~\eqref{eq:forest-tri} and the definition of generalized quantum length, ~\eqref{eq:forest-tri-9} equals 
\begin{equation}\label{eq:forest-tri-10}
    \frac{\iint_{\bbR_+^2} | \wt{\rm QT} (W_1,W_2,W_3;L_{12},L_{13},L)|\cdot \mathbb{E} (Y_{L_{12}} - Y_{L_{13}}')_+^p dL_{12}dL_{13}}{\iint_{\bbR_+^2} | \wt{\rm QT} (W_1,W_2,W_3;L_{12},L_{13},L)| \cdot \mathbb{E} (Y_{L_{13}}' - Y_{L_{12}})_+^p dL_{12}dL_{13}}
\end{equation}
where $(Y_t')_{t\ge0}$ is an independent copy of $(Y_t)_{t\ge0}$. 
The result follows from Lemma~\ref{lem:levy}.

\begin{figure}
\centering
\includegraphics[scale=0.68]{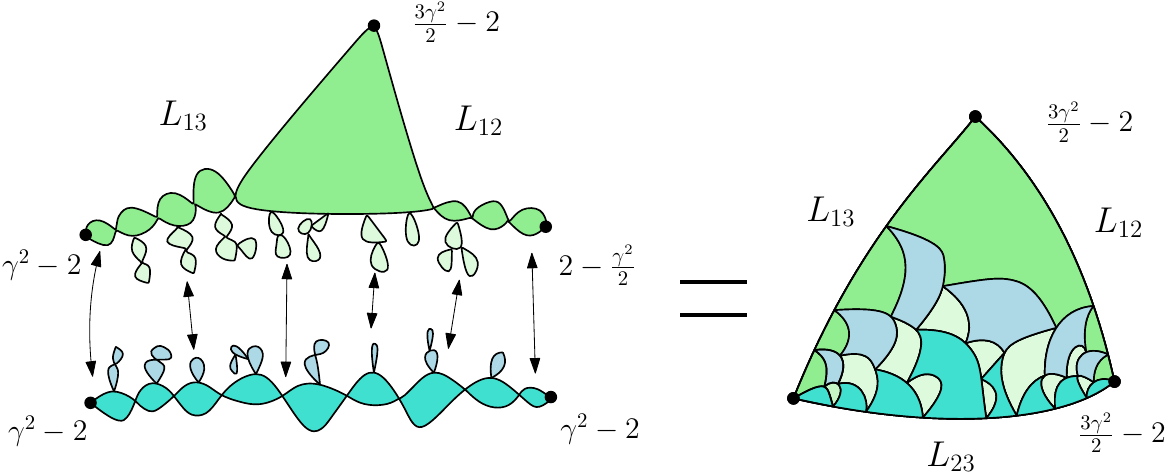}
\caption{An illustration of welding the forested line segment with the weight $\gamma^2-2$ quantum disk to the bottom boundary arc of a sample from $\wt{\QT}(W_1,W_2,W_3)$. The right-hand side corresponds to $\mathcal{M}^{\rm disk}_{0,2, \bullet}(W)$.}\label{fig:weldnew2}
\end{figure}
   \textbf{Step 3: Equality with the second ratio of~\eqref{eq:lem4.10-step2}.} By Step 2, \eqref{eq:lem5.5} does not depend on $L$ since it is equal to the first ratio of~\eqref{eq:lem4.10-step2}. Moreover, the operation of welding a quantum surface to the bottom boundary of a sample from $\wt{\QT}(W_1,W_2,W_3)$ does not change $L_{12}$ and $L_{23}$. Through the conformal welding as in Figure~\ref{fig:weldnew2}, we get the equality. \qedhere
\end{proof}

Now we complete the proof of Proposition~\ref{prop:CRratio} using Lemmas~\ref{lem:lem5.8}  and \ref{lm:wt-cT_1}.
\begin{proof}[Proof of Proposition~\ref{prop:CRratio}]
By Lemma~\ref{lm:wt-cT_1} and the definition of  $g$ from~\eqref{eq:def-g}, we have
\begin{equation}
\label{eq:prop-cr-ratio-1}
\begin{aligned}
&\frac{\mathbb{E} [{\rm CR}(D_{\widetilde \eta},0)^{2 \Delta_\alpha-2}  \mathbbm{1}_{T^c}]}{\mathbb{E} [{\rm CR}(D_{\widetilde \eta},0)^{2 \Delta_\alpha-2}  \mathbbm{1}_{T}]} =  \frac{e^{\frac{i \pi (p+1)}{2}} O_1 + e^{-\frac{i \pi (p+1)}{2}} O_2 }{e^{\frac{i \pi (p+1)}{2}} O_2 + e^{-\frac{i \pi (p+1)}{2}} O_1} \\
&\qquad \qquad \mbox{with} \quad  O_1 = \mathcal{M}^{\rm disk}_{0,2, \bullet}(W)[( e^{\frac{i \pi \gamma^2}{8} } L_{12} +  e^{-\frac{i \pi \gamma^2}{8} } L_{13} )^{\frac{4p}{\gamma^2}} e^{- L_{23}}];\\
& \qquad \qquad \qquad\quad  O_2 = \mathcal{M}^{\rm disk}_{0,2, \bullet}(W)[( e^{\frac{i \pi \gamma^2}{8} } L_{13} +  e^{-\frac{i \pi \gamma^2}{8} } L_{12} )^{\frac{4p}{\gamma^2}} e^{- L_{23}}].
\end{aligned}
\end{equation}
We now calculate the above ratio using Lemma~\ref{lem:lem5.8}. Let 
$$
\mathsf{a} = e^{i \pi \frac{\gamma^2}{4}}, \quad \mathsf{b} = e^{i \pi \frac{4}{\gamma^2}}, \quad \mathsf{c} = e^{i \pi \frac{4 p}{\gamma^2}}, \quad \mbox{and} \quad \mathsf{d} = e^{i \pi p}.
$$
Then, we have $\cot(\pi (p - \frac{\gamma^2}{4})) = i \frac{\mathsf{d}^2 + \mathsf{a}^2}{\mathsf{d}^2 - \mathsf{a}^2}$, $\cot(\pi p) = i \frac{\mathsf{d}^2 + 1}{\mathsf{d}^2 - 1}$, $\tan(\frac{\pi \gamma^2}{4}) = \frac{\mathsf{a}^2-1}{i(\mathsf{a}^2+1)}$, and $\cot (\frac{4 \pi (p+1 )}{\gamma^2} ) = i \frac{\mathsf{b}^2 \mathsf{c}^2 + 1}{\mathsf{b}^2 \mathsf{c}^2 -1} $. Therefore, by Lemma~\ref{lem:lem5.8},
\begin{align*}
    &\quad \frac{\mathcal{M}^{\rm disk}_{0,2, \bullet}(W)[( e^{\frac{i \pi \gamma^2}{8} } L_{12} +  e^{-\frac{i \pi \gamma^2}{8} } L_{13} )^{\frac{4p}{\gamma^2}} e^{- L_{23}}]}{\mathcal{M}^{\rm disk}_{0,2, \bullet}(W)[( e^{-\frac{i \pi \gamma^2}{8} } L_{12} + e^{\frac{i \pi \gamma^2}{8} } L_{13} )^{\frac{4p}{\gamma^2}} e^{- L_{23}}]} \\
    &= \frac{\frac{\sqrt{\mathsf{d}}}{\mathsf{a}}\Big(i \frac{\mathsf{d}^2 + \mathsf{a}^2}{\mathsf{d}^2 - \mathsf{a}^2} - i \frac{\mathsf{d}^2 + 1}{\mathsf{d}^2 - 1} + i \frac{\mathsf{a}^2-1}{i(\mathsf{a}^2+1)} \big( 2 i \frac{\mathsf{b}^2 \mathsf{c}^2 + 1}{\mathsf{b}^2 \mathsf{c}^2 -1} - i \frac{\mathsf{d}^2 + 1}{\mathsf{d}^2 - 1} - i \frac{\mathsf{d}^2 + \mathsf{a}^2}{\mathsf{d}^2 - \mathsf{a}^2} \big)\Big) }{\frac{\mathsf{a}}{\sqrt{\mathsf{d}}}\Big(i \frac{\mathsf{d}^2 + \mathsf{a}^2}{\mathsf{d}^2 - \mathsf{a}^2} - i \frac{\mathsf{d}^2 + 1}{\mathsf{d}^2 - 1} - i \frac{\mathsf{a}^2-1}{i(\mathsf{a}^2+1)} \big( 2 i \frac{\mathsf{b}^2 \mathsf{c}^2 + 1}{\mathsf{b}^2 \mathsf{c}^2 -1} - i \frac{\mathsf{d}^2 + 1}{\mathsf{d}^2 - 1} - i \frac{\mathsf{d}^2 + \mathsf{a}^2}{\mathsf{d}^2 - \mathsf{a}^2} \big)\Big)} = \frac{\mathsf{d}}{\mathsf{a}^2} \frac{\mathsf{d}^4 - (\mathsf{a}^2+1)\mathsf{d}^2 + \mathsf{a}^2\mathsf{b}^2\mathsf{c}^2 }{-\mathsf{d}^4 + (\mathsf{a}^2+1)\mathsf{b}^2\mathsf{c}^2\mathsf{d}^2 - \mathsf{a}^2\mathsf{b}^2\mathsf{c}^2}.
\end{align*}
Note that $e^{\frac{i \pi (p+1)}{2}} = i \sqrt{d}$ and $e^{-\frac{i \pi (p+1)}{2}} = -i \frac{1}{\sqrt{d}}$. After simplifying, \eqref{eq:prop-cr-ratio-1} becomes
\begin{align*}
    \frac{\mathbb{E} [{\rm CR}(D_{\widetilde \eta},0)^{2 \Delta_\alpha-2}  \mathbbm{1}_{T^c}]}{\mathbb{E} [{\rm CR}(D_{\widetilde \eta},0)^{2 \Delta_\alpha-2}  \mathbbm{1}_{T}]} = \frac{-1}{\mathsf{a} + \mathsf{a}^{-1}} \frac{\mathsf{a}^{-2} \mathsf{b}^{-1}  \mathsf{c}^{-1} \mathsf{d}^2 - \mathsf{a}^2 \mathsf{b}  \mathsf{c} \mathsf{d}^{-2}}{\mathsf{a}^{-1} \mathsf{b}^{-1}  \mathsf{c}^{-1} \mathsf{d} - \mathsf{a} \mathsf{b}  \mathsf{c} \mathsf{d}^{-1}} = \frac{-1}{2 \cos(\frac{\pi \gamma^2}{4})}\frac{2i \sin(\pi( - \frac{\gamma^2}{2} - \frac{4}{\gamma^2} - \frac{4 p}{\gamma^2} +2p)}{2i \sin(\pi(- \frac{\gamma^2}{4} - \frac{4}{\gamma^2} - \frac{4p}{\gamma^2} + p))}.
\end{align*}
Recall from~\eqref{eq:def-g} that $p = \frac{\gamma}{2}\alpha - 1$. This proves Proposition~\ref{prop:CRratio}.
\end{proof}

\section{The nested-path exponent for CLE: proof of Theorem~\ref{thm:nested-path-value}}
\label{sec:thm1.4}
For $a>0$, let $\mathrm{Root}(a) $ be the unique solution smaller than $1-\frac{\kappa}{8}$ to the equation~\eqref{eq:sol-nested-path}.
For $\lambda \in \mathbb{R}$, let $\Lambda(\lambda) = \log \mathbb{E}[{\rm CR}(0, \widetilde D)^{-\lambda}|T^c]$. By Theorem~\ref{thm:CR-widetilde-D}, $\Lambda(\lambda)$ is an increasing convex function and $\Lambda(\lambda) = \infty$ for $\lambda \geq 1 - \frac{\kappa}{8}$. 
Moreover, $\mathbb{E}[{\rm CR}(0, \widetilde D)^{-\mathrm{Root}(a) } \mid  T^c] = \frac{1}{a \mathbb{P}[T^c]}$ hence $\Lambda(\mathrm{Root}(a) ) =- \log (a \mathbb{P}[T^c])$. 
To prove Theorem~\ref{thm:nested-path-value}, it suffices to show that
\begin{equation}\label{eq:NP-key}
 \limsup_{\epsilon\to 0} \frac{\log \mathbb{E}[a^{\ell_\epsilon} \mathbbm{1}_{\mathcal{R}_\epsilon}]}{\log \epsilon}\le \Lambda^{-1}(- \log (a \mathbb{P}[T^c]))\quad\textrm{and}\quad 
  \liminf_{\epsilon\to 0} \frac{\log \mathbb{E}[a^{\ell_\epsilon} \mathbbm{1}_{\mathcal{R}_\epsilon}]}{\log \epsilon}\ge \Lambda^{-1}(- \log (a \mathbb{P}[T^c])).
\end{equation}

Recall the notion of open circuit in the definition of nested-path exponent for CLE above~\eqref{eq:def-nested-path-continuum}.
We define a sequence of nested open circuits  $g_0, g_1, \ldots, g_\tau$ as follows. Let $g_0 = \partial \bbD$ be the zeroth open circuit. If $T = \{\Loop \cap \partial \bbD \neq \emptyset\}$ occurs, we stop the exploration and set $\tau=0$. Otherwise, if ${\Loop} \cap \partial \bbD =  \emptyset$, we let $g_1 = \partial \widetilde D$ be the first open circuit. By the domain Markov property, conditioning on $\widetilde D$ we have an independent CLE inside. Inductively, given the $k$-th open circuit $g_k$, which is a simple loop surrounding the origin, if it intersects $\Loop$, we stop and let $\tau=k$. Otherwise, we iterate the procedure to find the $(k+1)$-th open circuit $g_{k+1}$ surrounding the origin.
For $0 \leq i \leq \tau$, let ${\rm CR}(0, g_i)$ be the conformal radius of the domain enclosed by $g_i$ as seen from the origin. By the domain Markov property of CLE, the law of $\{ {\rm CR}(0, g_i) \}_{0 \leq i \leq \tau}$ can be described as follows. Let $X_1, X_2, \ldots$ be a sequence of i.i.d.\ random variables sampled from $\mathbb{P}[{\rm CR}(0, \widetilde D) \in \cdot | T^c]$, and let $\sigma$ be an independent random variable sampled from the geometric distribution with success probability $\mathbb{P}[T^c]$. Namely, $\mathbb{P}[\sigma \geq k] = \mathbb{P}[T^c]^{k}$ for any integer $k \geq 0$. Then we have 
\begin{equation}
\label{eq:law-sequence-CR}
    \Big( \frac{{\rm CR}(0, g_1)}{{\rm CR}(0, g_0)}, \frac{{\rm CR}(0, g_2)}{{\rm CR}(0, g_1)}, \ldots, \frac{{\rm CR}(0, g_{\tau})}{{\rm CR}(0, g_{\tau-1})} \Big) \overset{d}{=} (X_1, \ldots, X_\sigma).
\end{equation}
This is because, given the event $\tau \geq i$ and $g_0,g_1,\ldots, g_i$, with probability of $1-\mathbb{P}[T^c]$ we have $\tau = i$ and the sequence terminates; with  probability $\mathbb{P}[T^c]$ we have $\tau \geq i+1$ and $g_{i+1}$ is the open circuit defined within the domain enclosed by $g_i$. In particular, $g_{i+1}$ has the same law as $f(\partial \widetilde D)$ conditioned on $T^c$, where $f$ is the conformal map from $\bbD$ to the domain enclosed by $g_i$, fixing $0$. Therefore, $\frac{{\rm CR}(0, g_{i+1})}{{\rm CR}(0, g_i)}$ has the same law as $\mathbb{P}[{\rm CR}(0, \widetilde D) \in \cdot | T^c]$. The right-hand side of~\eqref{eq:law-sequence-CR} is sampled in the same way, and thus, \eqref{eq:law-sequence-CR} holds.

We now prove~\eqref{eq:NP-key} in the case  $0 < a \mathbb{P}[T^c] < 1$. Let $u = \log \frac{1}{\epsilon}$ and $c_1 = \frac{1}{\mathbb{E} [-\log {\rm CR}(0, \widetilde D)|T^c]}$. Let $\Lambda^*$ be the Legendre transform of $\Lambda$, namely $\Lambda^*(t) = \sup_{\lambda \in \mathbb{R}} \{ \lambda t  - \Lambda(\lambda) \}$ for $t \in \mathbb{R}$.\footnote{Here we record some properties of $\Lambda^*(t)$: for $t \leq 0$, $\Lambda^*(t) = \infty$; for $0 < t < c_1^{-1}$, $\Lambda^*(t) > 0$ with the supremum taken at negative $\lambda$; at $t = c_1^{-1}$, $\Lambda^*(t) = 0$; and for $t > c_1^{-1}$, $\Lambda^*(t) > 0$ with the supremum taken at positive $\lambda$. The first property follows from $\Lambda(\lambda) \sim - \sqrt{|\lambda|}$ as $\lambda \rightarrow -\infty$. When $t>0$, the supremum is taken at the solution to $\Lambda'(\lambda) = t$. Note that $\Lambda'(\lambda)$ is an increasing function on $(-\infty, 1 - \frac{\kappa}{8})$ which takes value 0 at $-\infty$ and $\infty$ at $1 -\frac{\kappa}{8}$. Hence, the supremum is taken at negative $\lambda$ if $t < \Lambda'(0) = c_1^{-1}$; positive $\lambda$ if $t > \Lambda'(0)$. Since $\Lambda(0) = 0$, we always have $\Lambda^*(t) \geq 0$, and $\Lambda^*(t) = 0$ if and only if $t = c_1^{-1}$.} Fix $0 < t < c_1$. Applying Cram\'er's theorem from \cite[Theorem 2.2.3]{dembo-ld} with the random variable $\frac{1}{\log X_i}$, $n = \lfloor tu \rfloor$, and the same $\Lambda^*$, we have
    \begin{equation}
    \label{eq:large-deviation-nested}
    \mathbb{P}\Big[\sum_{i=1}^{\lfloor tu \rfloor} \log \frac{1}{X_i} > u \Big] = \exp(- t \Lambda^*(t^{-1})u  + o(u) ) \quad \mbox{as } \epsilon \rightarrow 0.
    \end{equation}
    Furthermore, for any fixed $\delta>0$, as $\epsilon$ tends to 0, we have
    \begin{equation}
    \label{eq:large-deviation-nested-two}
    \mathbb{P}\Big[\sum_{i=1}^{\lfloor (1 - \delta) t u \rfloor} \log  \frac{1}{X_i} <  u - \log 4, \quad \sum_{i=1}^{\lfloor tu \rfloor} \log  \frac{1}{X_i} > u \Big] = \exp(- t \Lambda^*(t^{-1})u  + o(u) ).
    \end{equation}
    (The upper bound follows directly from~\eqref{eq:large-deviation-nested} and the lower bound follows by noting that $\mathbb{P}[\sum_{i=1}^{\lfloor (1 - \delta) t u \rfloor} \log  \frac{1}{X_i} \geq  u - \log 4] = \exp(-(1 - \delta) t u \Lambda^*(\frac{1}{1 - \delta} t^{-1}) + o(u))$. Using the convexity of $\Lambda^*$ and $\Lambda^*(c_1^{-1}) = 0$, we see that this probability is exponentially smaller than the right-hand side of~\eqref{eq:large-deviation-nested-two}. Hence, by~\eqref{eq:large-deviation-nested}, \eqref{eq:large-deviation-nested-two} holds.) By the definition of  $g_0, g_1, \ldots, g_\tau$, if the Euclidean distance between $g_\tau$ and $0$ is smaller than $\epsilon$, then the event $\mathcal{R}_\epsilon$ occurs and
    $\ell_\epsilon $ counts the number of open circuits in $g_1, \ldots, g_\tau$ that surround $\epsilon \bbD$. Therefore, by the Koebe $1/4$ theorem, on the event $\tau \geq \lfloor tu \rfloor$ and ${\rm CR}(0, g_{\lfloor tu \rfloor}) < \epsilon < \frac{1}{4} {\rm CR}(0, g_{\lfloor (1-\delta) tu \rfloor})$, the event $\mathcal{R}_\epsilon$ occurs and $ \lfloor (1-\delta) tu \rfloor  \leq \ell_\epsilon < \lfloor tu \rfloor $.   By~\eqref{eq:law-sequence-CR} and \eqref{eq:large-deviation-nested-two}, we obtain: 
    \begin{align*}
        \mathbb{E}[a^{\ell_\epsilon} \mathbbm{1}_{\mathcal{R}_\epsilon}] &\geq \min \{ a^{\lfloor (1-\delta) tu \rfloor } , a^{\lfloor tu \rfloor } \} \times \mathbb{P}\Big[\tau \geq \lfloor tu \rfloor , {\rm CR}(0, g_{\lfloor tu \rfloor}) < \epsilon < \frac{1}{4} {\rm CR}(0, g_{\lfloor (1-\delta) tu \rfloor}) \Big] \\
        &= \min \{ a^{\lfloor (1-\delta) tu \rfloor } , a^{\lfloor tu \rfloor } \} \times \mathbb{P}\Big[\sigma \geq \lfloor tu \rfloor , \sum_{i=1}^{\lfloor tu \rfloor} \log  \frac{1}{X_i} > u,  \sum_{i=1}^{\lfloor (1 - \delta) t u \rfloor} \log  \frac{1}{X_i} <  u - \log 4 \Big]\\
        &= \min \{ a^{-\delta t u}, 1\} \exp\big(-t \Lambda^*(t^{-1}) u  +\log (a \mathbb{P}[T^c]) \cdot t u  + o(u) \big). 
    \end{align*}
    First taking $\delta$ to 0 and then taking the supremum over $t \in (0,c_1)$ yields that
    \begin{equation}
    \label{eq:thm1.4-lower-bound}
    \begin{aligned}
        \liminf_{\epsilon \rightarrow 0} \frac{1}{u} \log \mathbb{E}[a^{\ell_\epsilon} \mathbbm{1}_{\mathcal{R}_\epsilon}] \geq \sup_{t \in (0,c_1)} \{ \log (a \mathbb{P}[T^c]) \cdot t -t \Lambda^*(t^{-1}) \}.
    \end{aligned}
    \end{equation}
    Now we show that 
    \begin{equation}
        \label{eq:nested-path-01}
        \sup_{t \in (0,c_1)} \{ \log (a \mathbb{P}[T^c]) \cdot t -t \Lambda^*(t^{-1}) \} = -  \Lambda^{-1}(- \log (a \mathbb{P}[T^c])).
    \end{equation}Let $r(t)$ be the Legendre transform of the convex function $-\Lambda^{-1}(-\lambda)$. Then $r(t) = t \Lambda^*(t^{-1})$ for $t > 0$ and $r(t) = \infty$ for $t \leq 0$. Since the iteration of the Legendre transform is identity (see e.g.\ \cite[Lemma 4.5.8]{dembo-ld}), we obtain that $\sup_{t \in \mathbb{R}} \{ \log (a \mathbb{P}[T^c]) \cdot t - r(t) \} = -  \Lambda^{-1}(- \log (a \mathbb{P}[T^c]))$. Since $\log (a \mathbb{P}[T^c]) < 0$, the supremum in the former term is taken when $t \in (0, c_1)$ and thus~\eqref{eq:nested-path-01} holds. Combining~\eqref{eq:thm1.4-lower-bound} and ~\eqref{eq:nested-path-01} yields the first inequality in~\eqref{eq:NP-key}.

    The second inequality in~\eqref{eq:NP-key} can be obtained using the large deviation principle and \eqref{eq:nested-path-01}. Fix $\delta' >0$. By definition, ${\rm CR}(0, g_{\ell_\epsilon}) \geq \epsilon$. We first consider the case of ${\rm CR}(0, g_{\ell_\epsilon}) \in [\epsilon, \epsilon^{1- \delta'}]$. In this case, we have
    \begin{align*}
    \mathbb{E}\Big[a^{\ell_\epsilon} \mathbbm{1}_{\{ {\rm CR}(0, g_{\ell_\epsilon}) \in [\epsilon, \epsilon^{1- \delta'}] \} }\Big] &\leq  \mathbb{E}\Big[a^{\ell_\epsilon} \mathbbm{1}_{\{ {\rm CR}(0, g_{\ell_\epsilon}) \leq \epsilon^{1- \delta'}, \ell_\epsilon \leq (1 - \delta')c_1 u \} }\Big] + \mathbb{E}\Big[a^{\ell_\epsilon} \mathbbm{1}_{\{ \ell_\epsilon > (1 - \delta')c_1 u \} }\Big]\\
    &= \mathbb{E}\Big[a^{\sigma} \mathbbm{1}_{\{ \sum_{i=1}^{\sigma} \log \frac{1}{X_i} \geq (1- \delta') u , \sigma \leq (1 - \delta')c_1 u \} }\Big] + \mathbb{E}\Big[a^{\sigma} \mathbbm{1}_{\{ \sigma > (1 - \delta')c_1 u \} }\Big].
    \end{align*}
    We bound the first term by decomposing the possible values of $\sigma / u $ into small intervals, whose length tends to zero with $\epsilon$, and then applying Cram\'er's theorem similarly to~\eqref{eq:large-deviation-nested}. Using $ a \mathbb{P}[T^c] < 1$, the second term is bounded by $\frac{1}{1-a \mathbb{P}[T^c]}(a \mathbb{P}[T^c])^{(1 - \delta')c_1 u}$. Therefore, by~\eqref{eq:nested-path-01}, we have
    \begin{equation}
    \label{eq:sec5-proof-1}
    \begin{aligned}
        \mathbb{E}\Big[a^{\ell_\epsilon} \mathbbm{1}_{\{ {\rm CR}(0, g_{\ell_\epsilon-1}) \in [\epsilon, \epsilon^{1- \delta'}] \} }\Big] &\leq \exp\big( u \cdot \sup_{t \in (0, (1-\delta')c_1]}\{ \log (a \mathbb{P}[T^c]) \cdot t -t \Lambda^*((1 - \delta')t^{-1}) \} + o(u)\big) \\
        &= \exp \big(- u ( 1-\delta') \Lambda^{-1}(- \log (a \mathbb{P}[T^c])) +o(u) ).
    \end{aligned}
    \end{equation}
    
    Now we consider the case of ${\rm CR}(0, g_{\ell_\epsilon}) \in [\epsilon^{1 - \delta'}, \epsilon^{1- 2\delta'}]$. Similar to before, we have:
    \begin{equation}\label{eq:upper-bound-1}
    \mathbb{E}\Big[a^{\ell_\epsilon} \mathbbm{1}_{\{ {\rm CR}(0, g_{\ell_\epsilon}) \in [\epsilon^{1 - \delta'}, \epsilon^{1- 2\delta'}] \} }\Big] \leq \exp \big(- u ( 1-2 \delta') \Lambda^{-1}(- \log (a \mathbb{P}[T^c])) +o(u) ).
    \end{equation}
    Now we show that 
    \begin{equation}\label{eq:upper-bound-2}
    \mathbb{P}\big[\mathcal{R}_\epsilon | \ell_\epsilon, \{{\rm CR}(0, g_{\ell_\epsilon}) \in [\epsilon^{1- \delta'}, \epsilon^{1- 2 \delta'}]\}\big] \leq \exp \big(- u \delta' \Lambda^{-1}(- \log (a \mathbb{P}[T^c])) +o(u)).
    \end{equation}
    To make the event $\mathcal{R}_\epsilon$ happen, we know that either ${\rm CR}(0, g_{\ell_\epsilon + 1}) < \epsilon^{\delta'} {\rm CR}(0, g_{\ell_\epsilon})$, or $\tau = \ell_\epsilon$ and the CLE loop $\Loop$ has a conformal radius of at most $\epsilon^{\delta'} {\rm CR}(0, g_{\ell_\epsilon})$. By the first claim in Theorem~\ref{thm:CR} and Theorem~\ref{thm:CR-widetilde-D}, on the event ${\rm CR}(0, g_{\ell_\epsilon}) \in [\epsilon^{1- \delta'}, \epsilon^{1- 2 \delta'}]$ and given $\ell_\epsilon$, the probabilities of both events are at most $C_\eta \epsilon^{(1-\frac{\kappa}{8}) \delta' - \eta}$ for any $\eta>0$. Using the fact that $\Lambda^{-1}(- \log (a \mathbb{P}[T^c])) < 1-\frac{\kappa}{8}$, we obtain~\eqref{eq:upper-bound-2}.
    
    Combining~\eqref{eq:upper-bound-1} and \eqref{eq:upper-bound-2}, we further have
    \begin{align*}
        \mathbb{E}\Big[a^{\ell_\epsilon} \mathbbm{1}_{\mathcal{R}_\epsilon \cap \{{\rm CR}(0, g_{\ell_\epsilon}) \in [\epsilon^{1- \delta'}, \epsilon^{1- 2 \delta'}] \}} \Big] &= \mathbb{E}\Big[a^{\ell_\epsilon} \mathbbm{1}_{\{{\rm CR}(0, g_{\ell_\epsilon}) \in [\epsilon^{1- \delta'}, \epsilon^{1- 2 \delta'}] \}} \mathbb{P}\big[\mathcal{R}_\epsilon | \ell_\epsilon, \{{\rm CR}(0, g_{\ell_\epsilon}) \in [\epsilon^{1- \delta'}, \epsilon^{1- 2 \delta'}]\}\big] \Big] \\
        &\leq \exp \big(- u ( 1-\delta') \Lambda^{-1}(- \log (a \mathbb{P}[T^c])) +o(u) ).
    \end{align*}
    The same inequality holds for the case of ${\rm CR}(0, g_{\ell_\epsilon}) \in [\epsilon^{1- n \delta'}, \epsilon^{1- (n+1) \delta'}]$ for any $2 \leq n \leq \lfloor \frac{1}{\delta'} \rfloor$. Summing all these inequalities together and taking $\delta'$ to 0 yields the second inequality in~\eqref{eq:NP-key}.

    The case $ a\mathbb{P}[T^c] > 1$ can be treated similarly, as we now elaborate. Fix $t > c_1$ and $\delta''>0$ which will tend to zero in the end. Similar to~\eqref{eq:large-deviation-nested-two}, by Cram\'er's theorem, we have
    $$
    \mathbb{P}\Big[\sum_{i=1}^{\lfloor t u \rfloor} \log  \frac{1}{X_i} <  u - \log 4, \quad \sum_{i=1}^{\lfloor (1+\delta'')tu \rfloor} \log \frac{1}{X_i} > u \Big] = \exp(- t \Lambda^*(t^{-1})u  + o(u) ) \quad \mbox{as } \epsilon \rightarrow 0.
    $$
    Moreover, on the event $\tau \geq \lfloor (1+\delta'')tu \rfloor$ and $\frac{1}{4} {\rm CR}(0, g_{\lfloor tu \rfloor}) > \epsilon >  {\rm CR}(0, g_{\lfloor (1+\delta'') tu \rfloor})$, the event $\mathcal{R}_\epsilon$ occurs and $ \lfloor  tu \rfloor  \leq \ell_\epsilon < \lfloor (1+\delta'')  tu \rfloor $. Therefore, together with~\eqref{eq:law-sequence-CR}, we get
    \begin{equation}
    \label{eq:sec5-large-a-1}
    \begin{aligned}
    \mathbb{E}[a^{\ell_\epsilon} \mathbbm{1}_{\mathcal{R}_\epsilon}] &\geq \min \{ a^{\lfloor tu \rfloor } , a^{\lfloor (1+\delta'') tu \rfloor } \} \times \mathbb{P}\Big[\sigma \geq \lfloor (1+\delta'') tu \rfloor , \sum_{i=1}^{\lfloor tu \rfloor} \log \frac{1}{X_i} < u - \log 4,  \sum_{i=1}^{\lfloor (1 + \delta'') t u \rfloor} \log \frac{1}{X_i} >  u  \Big]\\
    &= \min \{ a^{-\delta t u}, 1\} \times \mathbb{P}[T^c]^{\delta'' tu} \times \exp\big(-t \Lambda^*(t^{-1}) u  +\log (a \mathbb{P}[T^c]) \cdot t u  + o(u) \big). 
    \end{aligned}
    \end{equation}
    We have the following variant of \eqref{eq:nested-path-01} in the case when $ a\mathbb{P}[T^c] > 1$ and the proof follows verbatim the same argument:
    $$
    \sup_{t \in (c_1, \infty)} \{ \log (a \mathbb{P}[T^c]) \cdot t -t \Lambda^*(t^{-1}) \} = -  \Lambda^{-1}(- \log (a \mathbb{P}[T^c])).
    $$
    Similar to before, first taking $\delta''$ to 0 and then taking the supremum of the right side of~\eqref{eq:sec5-large-a-1} over $t \in (c_1, \infty)$ yields the first inequality in~\eqref{eq:NP-key}. The proof for the second inequality is similar to before and we omit it here. Finally, the case  $a =  \mathbb{P}[T^c]^{-1}$ follows by taking the limit. This concludes the proof.

\bibliographystyle{alpha}
\bibliography{theta}

\end{document}